\documentclass[12pt,letter,reqno]{amsart}

\usepackage[text={420pt,660pt},centering]{geometry}

\usepackage{amssymb,amsfonts,amsthm,mathrsfs}
\usepackage{marginnote}
\usepackage{comment}
\usepackage{enumitem}
\usepackage{graphicx}
\usepackage{bm}
\usepackage{xfrac}

\usepackage[dvipsnames]{color}

\usepackage[colorlinks=true, pdfstartview=FitV, linkcolor=black, citecolor=blue, urlcolor=blue]{hyperref}

\graphicspath{{figures1D/}}

\def\Xint#1{\mathchoice
{\XXint\displaystyle\textstyle{#1}}%
{\XXint\textstyle\scriptstyle{#1}}%
{\XXint\scriptstyle\scriptscriptstyle{#1}}%
{\XXint\scriptscriptstyle%
\scriptscriptstyle{#1}}%
\!\int}
\def\XXint#1#2#3{{\setbox0=\hbox{$#1{#2#3}{%
\int}$ }
\vcenter{\hbox{$#2#3$ }}\kern-.6\wd0}}
\def\barint{\, \Xint -} 
\def\bariint{\barint_{} \kern-.4em \barint}
\def\bariiint{\bariint_{} \kern-.4em \barint}
\renewcommand{\iint}{\int_{}\kern-.34em \int} 
\renewcommand{\iiint}{\iint_{}\kern-.34em \int} 

\DeclareMathAlphabet{\mathcal}{OMS}{cmsy}{m}{n}

\theoremstyle{plain}

\newtheorem{theorem}{Theorem}[section]

\newtheorem{lemma}[theorem]{Lemma}

\newtheorem{corollary}[theorem]{Corollary}
\newtheorem{proposition}[theorem]{Proposition}

\theoremstyle{definition}
\newtheorem{remark}[theorem]{Remark}

\newcommand{\R}{\mathbb{R}}

\newcommand{\N}{\mathbb{N}}
\newcommand{\Z}{\mathbb{Z}}
\newcommand{\T}{\mathbb{T}}

\newcommand{\bI}{\mathbf{I}}

\newcommand{\p}{\partial}

\newcommand{\norm}[1]{\lVert #1 \rVert}

\let\div\relax
\DeclareMathOperator{\div}{div}

\DeclareMathOperator{\sgn}{sgn}
\let\tilde\relax
\newcommand{\tilde}[1]{\widetilde{#1}}
\let\hat\relax
\newcommand{\hat}[1]{\widehat{#1}}
\renewcommand{\bar}[1]{\overline{#1}}
\newcommand{\abs}[1]{\left\lvert #1 \right\rvert}

\newcommand{\mc}[1]{\mathcal{#1}}

\renewcommand{\T}{\mathbb{T}}

\numberwithin{equation}{section}
\setlist[enumerate]{leftmargin=*}

\title[]{Boundary layers and vanishing diffusivity in run-and-tumble models}

\author[Albritton]{Dallas Albritton} 
\address[Dallas Albritton]{University of Wisconsin-Madison, Department of Mathematics, 480 Lincoln Dr, Madison, WI 53706, USA}
\email{dalbritton@wisc.edu}

\author[Ohm]{Laurel Ohm}
\address[Laurel Ohm]{University of Wisconsin-Madison, Department of Mathematics, 480 Lincoln Dr, Madison, WI 53706, USA}
\email{lohm2@wisc.edu}

\author[Yastrzhembskiy]{Timur Yastrzhembskiy}\address[Timur Yastrzhembskiy]{Academia Sinica, Taiwan}
\email{yastr@as.edu.tw}

\date\today

\begin{document}

\begin{abstract}
A notable feature of confined active matter systems is the tendency for motile particles to accumulate near solid boundaries.
In various linear models with no-flux boundary conditions, this accumulation is realized through the development of sharp boundary layers at small particle diffusivity $\kappa$. In this paper, we present the first rigorous investigation of \emph{nonlinear} boundary layers in the context of confined active matter. Specifically, we consider a family of 1D run-and-tumble models with nonlinear advection and tumbling on the half-line $\R_+$. We rigorously prove the vanishing diffusivity limit with quantitative convergence rates. In the limiting system, the boundary mass enters as a new variable which solves a nonlinear ODE, coupled to the PDE through a dynamic boundary condition. Interestingly, the nonlinearity on the boundary at $\kappa = 0$ cannot be obtained without reference to the boundary layer analysis at $\kappa \ll 1$. Numerically, these models exhibit rich behavior, including phase transition and hysteresis in the boundary layer. 
\end{abstract}

\maketitle

\setcounter{tocdepth}{1}
\tableofcontents

\parskip   2pt plus 0.5pt minus 0.5pt


\section{Introduction}
\label{sec:introduction}

We consider a family of 1D run-and-tumble models, the simplest example of which is
\begin{equation}
    \label{eq:telegrapher}
\begin{aligned}
        \p_t c_+ + \p_x c_+ &= \kappa \p_x^2 c_+ - c_+ + c_- \\
        \p_t c_- - \p_x c_- &= \kappa \p_x^2 c_- + c_+ - c_- \, .
\end{aligned}
\end{equation}
The variables $c_{\pm}$ represent concentrations of left- and right-moving (running) agents, which may spontaneously change direction (tumbling). The concentrations are moreover subject to weak translational diffusion with diffusivity $0 < \kappa \ll 1$.

Models of this type arise naturally as simple descriptions of various active matter systems, including myxobacteria swarms \cite{ScheelStevensWavenumberSelection,KangScheelStevens,flynn2020self}, suspensions of swimming bacteria \cite{tailleur2008statistical,cates2015motility}, and active Brownian particles \cite{bressloff2025stochastic, bruna2022phase,bechinger2016active,evans2018run}. The system~\eqref{eq:telegrapher} in particular is sometimes known as the generalized \emph{telegrapher's equation} with diffusion \cite{malakar2018steady, angelani2015run, weiss2002some}, originating in electromagnetic theory, or as a version of the Goldstein-Taylor model \cite{bruna2022phase} used as a prototypical simplified kinetic equation.

Our focus will be on boundary effects, and we therefore consider~\eqref{eq:telegrapher} on the half-line $\R_+ := \{ x > 0 \}$ with a no-flux boundary condition at $x=0$:
\begin{equation}
    \label{eq:telegrapherBC}
    (1-\kappa \p_x) c_+\big|_{x=0} = 0 \, , \quad (1+\kappa \p_x) c_-\big|_{x=0} = 0 \, .
\end{equation}
When $0 < \kappa \ll 1$, left-moving agents are observed to accumulate at the ``wall" in a boundary layer of width $O(\kappa)$, where they wait to reverse direction (see Figure~\ref{fig:compare}). When $\kappa \to 0^+$, the mass in the boundary layer becomes a new variable, $b_-(t)$, and the system~\eqref{eq:telegrapher}-\eqref{eq:telegrapherBC} becomes\footnote{We often abuse terminology and refer to this system as ``inviscid", although it is perhaps more accurately ``non-diffusive".}
\begin{equation}
    \label{eq:inviscidtelegrapher}
    \p_t c_{\pm} \pm \p_x c_{\pm} = \mp c_+ \pm c_-
    \end{equation}
    with the so-called ``sticky" boundary conditions 
    \begin{equation}
        \label{eq:inviscidtelegrapherBC}
    c_+\big|_{x=0} = b_- \, , \quad \dot b_- = -b_- + c_-\big|_{x=0} \, .
    \end{equation}
The model~\eqref{eq:inviscidtelegrapher}-\eqref{eq:inviscidtelegrapherBC} essentially appears in~\cite{angelani2017confined, angelani2023one, bressloff2023encounter, bressloff2025run, bressloff2025stochastic}. These boundary conditions are already interesting in that \emph{the boundary layer enters as a new variable in the inviscid problem}. This is in contrast to various singular perturbation problems, including run-and-tumble processes with Dirichlet (absorbing) boundary conditions, for which the boundary layer disappears in the limit. 
\begin{figure}[!ht]
    \centering
    \includegraphics[scale=0.4]{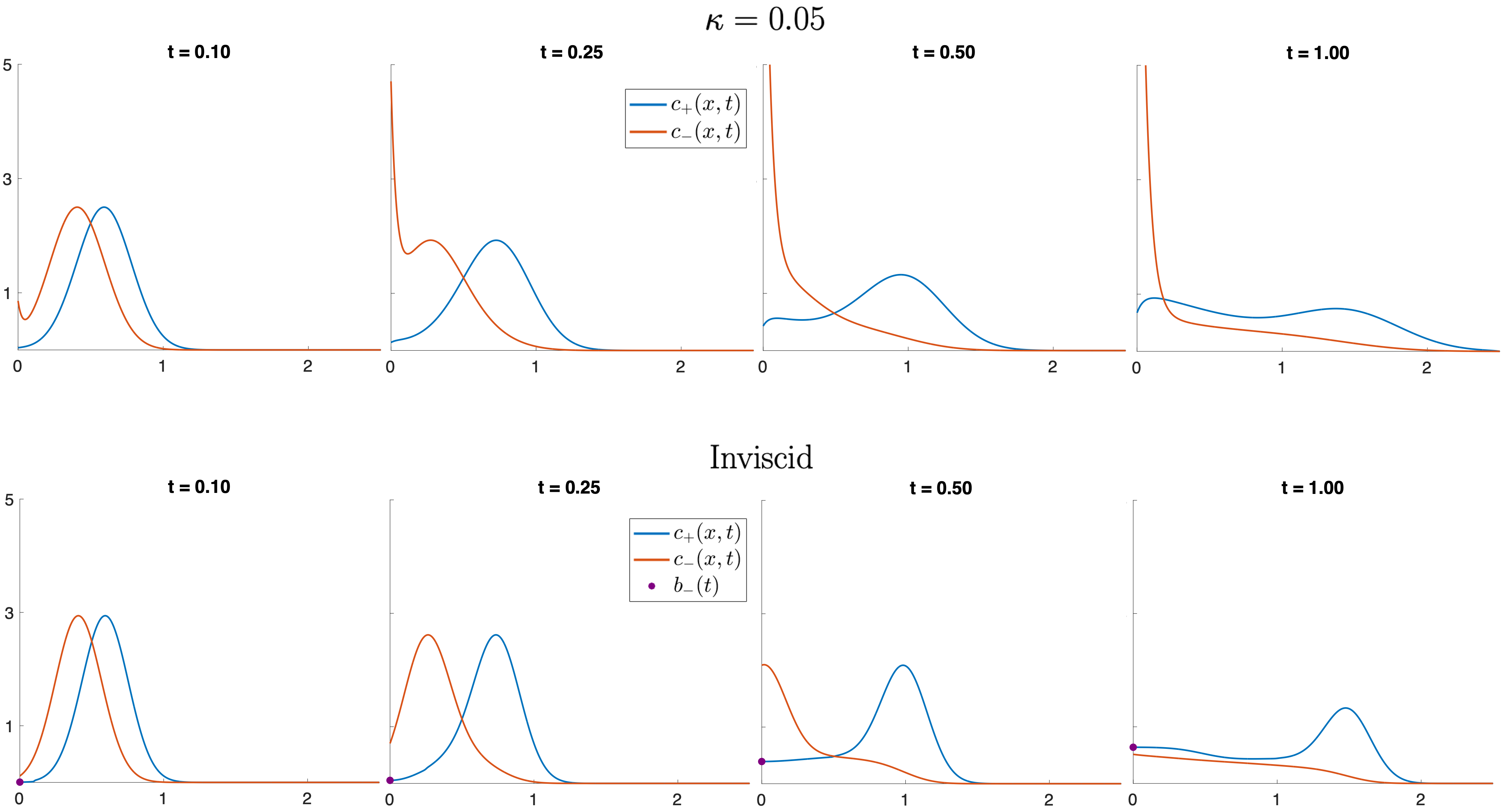}
    \caption{Comparison of the viscous dynamics (top) of \eqref{eq:telegrapher}-\eqref{eq:telegrapherBC} using $\kappa=0.05$ with the inviscid dynamics (bottom) of \eqref{eq:inviscidtelegrapher}-\eqref{eq:inviscidtelegrapherBC}. Here the initial condition is $c_+=c_-=3e^{-20(x-\frac{1}{2})^2}$. In the viscous dynamics, we note the immediate appearance of an $O(\kappa)$ boundary layer for $c_-(x,t)$ about $x=0$. In both cases, the particles all eventually leave the system at $x=+\infty$, resulting in only the trivial steady state $c_+=c_-=0$.  }
    \label{fig:compare}
\end{figure}

More realistically, agents are subject to nonlinear effects, a selection of which we consider here. First, agents interact non-locally by generating a macroscopic velocity field $V$ which captures, for example, the tendency for certain species to aggregate. Second, the tumbling rate may be nonlinear and depend, e.g., upon the probability of encountering an oppositely-oriented agent.

To incorporate these nonlinear effects, we introduce the family of equations
\begin{equation}
    \label{eq:viscousequation}
\begin{aligned}
    \p_t c_+ + \p_x ((V+\beta_+) c_+) &= \kappa \p_x^2 c_+ + f(c_+,c_-) \\
    \p_t c_- + \p_x ((V-\beta_-) c_-) &= \kappa \p_x^2 c_- - f(c_+,c_-)
\end{aligned} 
\end{equation}
with no-flux boundary conditions
\begin{equation}\label{eq:nofluxBCs}
\begin{aligned}
    (V+\beta_+) c_+ - \kappa \p_x c_+ &= 0 \\
    (V-\beta_-) c_- - \kappa \p_x c_- &= 0
\end{aligned} \quad  \text{ at } x=0 \, .
\end{equation}
Here $\beta_+,\beta_- > 0$ are the possibly different propulsion speeds. Different speeds may arise naturally in the presence of a non-zero mean flow; in this case, the boundary condition at $x=0$ acts as a ``filter" through which the background medium may flow but which the agents cannot penetrate.

\emph{Non-local effects}. Let $\rho := c_+ + c_-$ represent the total density. We consider a velocity operator $V[\rho]$ induced by an interaction kernel $K(x,y)$:\footnote{
Sometimes a local term $W(\rho)$ is included in the velocity~\cite{mogilner1999non,topaz2004swarming}; however, our methods used to obtain convergence in Section~\ref{sec:vanishingdiff} do not seem well-adapted to this term.
}
\begin{equation}\label{eq:Vexample}
    V[\rho](x) := \int_{\R_+} K(x,y) \rho(y) \, dy \, .
\end{equation}
Here we suppose
\begin{equation}
    \label{eq:smoothnessofK}
K(x,y) \in C^\infty([0,+\infty)^2)
\end{equation}
and further assume decay conditions on the kernel, namely,
\begin{equation}
    \label{eq:decayonkernel}
\exists \,\eta > 0 \text{ s.t. } \p_x^i \p_y^j K(x,y) \lesssim_{i,j} \langle x-y \rangle^{-\eta}  \quad \forall i, j \in \N_0 \, .
\end{equation}
A typical example would be
\begin{equation}
    \label{eq:convolutiontype}
    K(x,y) = H(x-y) \, ,
\end{equation}
where $H : \R \to \R$ is a smooth function decaying algebraically at infinity; commonly $H$ is odd and attractive, i.e., $H\big|_{x \geq 0} \leq 0$. (See, e.g., \cite{mogilner1999non,topaz2004swarming,topaz2006nonlocal} for examples.) We are sometimes interested in velocities satisfying the boundary condition $V\big|_{x=0} = 0$, in which case we should impose $K(0,y) = 0$. \emph{Supposing that $H$ is odd}, i.e., $H(z) =- H(-z)$, a typical example is obtained via reflection:
\begin{equation}
    \label{eq:reflectedkernel}
    K(x,y) = H(x-y) + H(x+y) \, .
\end{equation}
One way to view the boundary condition $V\big|_{x=0} = 0$ is as a one-dimensional caricature of the no-slip condition~\eqref{eq:noslipcond} for a particle-generated fluid flow $\bm{u}$.

\emph{Nonlinear tumbling}. We study reaction terms of the following form:
\begin{equation}
    \label{eq:fintermsofr}
    f(c_+,c_-) = - c_+ r(c_-) + c_- r(c_+) \, ,
\end{equation}
where $r : [0,+\infty) \to (0,+\infty)$ is a positive smooth function. In particular, the rate of orientation-reversal is dependent on encounters with oppositely-oriented agents, which is relevant to the dynamics of myxobacteria (see \cite{LuSte02}). Such models are common, see, e.g., \cite{LuSte02,ScheelStevensWavenumberSelection,KangScheelStevens,flynn2020self} and references therein. We further assume
\begin{equation}
    \label{eq:tumblingassumptiononr}
\sup_{u \geq 0} |\p_u^k r(u)| < +\infty  \quad \forall k \in \N_0 \,.     
\end{equation}
A typical example is
\begin{equation}\label{eq:example_r}
    r(u) = r_0 + \frac{r_1 u^2}{1+\zeta u^2} \, ,
\end{equation}
where $r_0 > 0$, $r_1 \in \R$, and $\zeta > 0$. We think of such nonlinearities as \emph{saturated}.\footnote{An alternative class of tumbling operators would be $f(c_+,c_-) = (-c_+ + c_-) r(c_++c_-)$, so that the rate depends on the total density. This would lead to further complications in the asymptotic analysis related to the determination of $C_{\pm}^{(0)}$, $C_{\pm}^{(1)}$ (see Section~\ref{subsec:innersol}).}

The inviscid system corresponding to~\eqref{eq:viscousequation}-\eqref{eq:nofluxBCs}, with $V$ and $f$ given by~\eqref{eq:Vexample} and~\eqref{eq:fintermsofr}, respectively, is
\begin{equation}
    \label{eq:inviscidequation}
\begin{aligned}
    \p_t c_+ + \p_x ((V+\beta_+) c_+) &=  f(c_+,c_-) \\
    \p_t c_- + \p_x ((V-\beta_-) c_-) &= - f(c_+,c_-)
\end{aligned}
\end{equation}
\begin{equation}\label{eq:Vinviscid}
    V[\rho,b](x) = \int_{\R_+} K(x,y) \rho(y) \, dy + K(x,0) b_- \, .
\end{equation}
In analogy to~\eqref{eq:inviscidtelegrapherBC}, this system should be supplemented with equations for the variable $b_-(t)$ representing the mass of the left-moving agents at the boundary. We are primarily concerned with two cases, for which we can rigorously prove the vanishing diffusivity limit:

\emph{Case 1. $r \equiv r_0$ is constant.} In this case, the relevant boundary conditions are directly related to the telegrapher conditions~\eqref{eq:inviscidtelegrapherBC}:\footnote{
Notice that the conditions~\eqref{eq:bminuseqn}-\eqref{eq:cinv_plus_bval} are consistent with the dimension counting $[c] = M/L$ (mass/length), $[b] = M$, $[\beta_{\pm}] = [V] = L/T$, and $[r_0] = 1/T$. When $V=0$, the telegrapher's equation~\eqref{eq:telegrapher} is obtained by non-dimensionalizing time by $1/r_0$ and length$/$time by $\beta$.}
\begin{equation}\label{eq:bminuseqn}
    \dot b_- = (\beta_- - V\big|_{x=0}) c_-\big|_{x=0} - r_0 b_-
\end{equation}
\begin{equation}\label{eq:cinv_plus_bval}
    c_+\big|_{x=0} = \frac{r_0 b_-}{(V\big|_{x=0}+\beta_+)} \, .
\end{equation}
Equation~\eqref{eq:bminuseqn} reflects that the boundary gains mass from left-moving agents and loses mass due to tumbling and subsequent movement to the right, as captured by~\eqref{eq:cinv_plus_bval}. In view of this, we only consider the inviscid problem while the conditions
\begin{equation}\label{eq:V0condition}
    V\big|_{x=0}-\beta_-<0 \, , \quad V\big|_{x=0}+\beta_+ > 0
\end{equation}
remain satisfied.

\emph{Case 2. $V\big|_{x=0} = 0$.} Here, due to the nonlinear tumbling, the relevant boundary conditions now involve a \emph{nonlinear} ODE for the boundary mass:
\begin{equation}\label{eq:bminuseqncase2}
    \dot b_- = - \beta_+ c_\infty(\beta_- b_-,\beta_+,\beta_-) + \beta_- c_-\big|_{x=0}
\end{equation}
\begin{equation}\label{eq:cinv_plus_bvalcase2}
    c_+\big|_{x=0} = c_\infty(\beta_- b_-,\beta_+,\beta_-)
\end{equation}
where
\begin{equation} \label{eq:cinftydef}
c_\infty(q,\gamma,\nu) := \lim_{X \to +\infty} A(q,\gamma,\nu)(X) 
\end{equation}
is the value at $X=+\infty$ of a certain bounded solution $A(q,\gamma,\nu)(X)$ to the inner problem
\begin{equation}
    \label{eq:Aequationintro}
\begin{aligned}
    \gamma \p_X A - \p_X^2 A &= q e^{-\nu X} r(A) \\
    (\gamma - \p_X) A\big|_{X=0} &= 0 \, ,
\end{aligned}
\end{equation}
which arises in the determination of the structure of $c_+$ in the boundary layer. We term this equation~\eqref{eq:Aequationintro} the \emph{incoming ODE}, for agents reentering the domain.

In either case, the dynamic boundary conditions can be written in a unified way:
\begin{equation}
\begin{aligned}\label{eq:cplusunifiedbc}
  c_+\big|_{x=0} &= c_\infty \big((\beta_- - V\big|_{x=0}) b_-,V\big|_{x=0}+\beta_+,\beta_--V\big|_{x=0}\big), \\
  \dot b_- &= - (V\big|_{x=0} + \beta_+) c_\infty\left((\beta_- - V\big|_{x=0}) b_-,V\big|_{x=0}+\beta_+,\beta_--V\big|_{x=0}\right)\\
    &\qquad + (\beta_- - V\big|_{x=0}) c_-\big|_{x=0} \, ,
\end{aligned}
\end{equation}
 where $c_\infty$ is explicit in Case~1; see~\eqref{eq:Aincase1}.

Not only does the nonlinear tumbling enter as a nonlinearity in the ODE for $b_-$, but moreover, \emph{it is not possible to determine the nonlinearity $c_\infty$ without appealing to the structure of the diffusive boundary layer}. This is in contrast to~\eqref{eq:bminuseqn}-\eqref{eq:cinv_plus_bval}, which in principle can be correctly ``guessed" directly at the inviscid level (see \cite{angelani2017confined}).

When $q$ is sufficiently small, the equation~\eqref{eq:Aequationintro} for $A$ is uniquely solvable. However, for certain choices of $r$, we observe numerically that equation~\eqref{eq:Aequationintro} has non-unique solutions -- see Figure \ref{fig:ODEforA}. In such cases, one expects that the ``correct" choice should be a dynamically stable solution to~\eqref{eq:Aequationintro}. 

A valuable check on the consistency of the inviscid equations is that the total mass is conserved. For example, in Case~1, we have 
\begin{equation}
    \frac{d}{dt} (b_- + \int_{\R_+} \rho) = \dot b_- + (V+\beta_+) c_+\big|_{x=0} + (V-\beta_-) c_-\big|_{x=0} = 0 \, ,
\end{equation}
and similarly in Case 2. \\

To quantify the convergence, we define the \emph{Fortet-Mourier} (bounded Lipschitz) \emph{norm}
\begin{equation} \label{eq:fmnorm}
   \| \mu \|_{\rm FM (\overline{\R}_{+})} := \sup_\phi  \int \phi \, d\mu  \, , \quad \forall \mu \in \mathcal{M}_b(\overline{\R}_+) \, ,
\end{equation}
where $\mathcal{M}_b(\overline{\R}_+)$ are finite (signed) Radon measures, and the supremum is over $\phi \in W^{1,\infty}(\R_+)$ with $\| \phi \|_{W^{1,\infty}(\R_+)} \leq 1$. It is well known that the convergence in the norm \eqref{eq:fmnorm} is equivalent to the weak (narrow) convergence of nonnegative measures $\mu_n \Rightarrow \mu$ (see~\cite[Theorem 8.3.2]{BogachevMeasureTheory}): 
\begin{equation}
 \mu_n \Rightarrow \mu \quad \text{ means } \quad \lim_{n \to \infty}  \int f \, d\mu_n  = \int f \, d\mu \, , \;  \forall f \in C_b (\bar \R_{+})\,,
\end{equation}
which is fundamental in probability theory.

\begin{theorem}
    \label{thm:mainthm}
Suppose that $K$ satisfies~\eqref{eq:smoothnessofK}-\eqref{eq:decayonkernel} and $f$ satisfies~\eqref{eq:fintermsofr}-\eqref{eq:tumblingassumptiononr}. Suppose that Case~1 ($r \equiv r_0 > 0$ const.) or Case~2 ($K(0,y) = 0$) holds.

Let $c^{\rm in}_{\pm} \in W^{2,1}(\R_+)$ satisfy the compatibility conditions~\eqref{eq:compat1}-\eqref{eq:compat3} with $b_-^{\rm in}=0$, and let $(c^{\rm inv}_{\pm},b_-)$ be the solution to the inviscid system~\eqref{eq:inviscidequation}-\eqref{eq:Vinviscid} with initial data $(c^{\rm in}_{\pm},0)$ on its maximal interval of existence $[0,T^*)$. Let $c^\kappa_{\pm}$, $\kappa > 0$, be the solution to the diffusive system~\eqref{eq:viscousequation}-\eqref{eq:nofluxBCs} with the same initial data. Then, for each $t \in [0, T^*)$,
\begin{equation}
\label{eq:muinvdef}
\left\lbrace
\begin{aligned}
& c^\kappa_+ \Rightarrow c_+^{\rm inv} =: \mu^{\rm inv}_+ \\
    & c^\kappa_- \Rightarrow c_-^{\rm inv} + \delta_0(x) b_-(t) =: \mu^{\rm inv}_-
\end{aligned}
\right.
     \quad \text{ as } \kappa \to 0^+ \, ,
\end{equation}
where $c^\kappa_{\pm}$, $c^{\rm inv}_{\pm}$ are identified with the measures $c^\kappa_{\pm} \, dx$, $c^{\rm inv}_{\pm} \, dx$ on $\overline{\R}_+$, and $\delta_0$ is the Dirac mass at the origin. Moreover, we have the convergence rate
\begin{equation}\label{eq:cappdiffthm}
    \sup_{t \in [0,T]} \| c^{\kappa}_{\pm} - \mu^{\rm inv}_{\pm} \|_{\rm FM (\overline{\R}_{+})} \lesssim_{\theta,T} \kappa^{1-\theta} \, , \quad \forall \theta \in (0,1] \, , \; T \in (0,T^*) \, .
\end{equation}
\end{theorem}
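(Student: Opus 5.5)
The proof proceeds by matched asymptotics plus a stability estimate in a weak (dual) norm. Fix $T<T^*$. On $[0,T]$ the inviscid solution $(c^{\rm inv}_\pm,b_-)$ is smooth enough, since the data lie in $W^{2,1}$ and satisfy the compatibility conditions, and~\eqref{eq:V0condition} holds uniformly.

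\textbf{Step 1: approximate solution.} We build an approximate viscous solution $c^{\rm app}_\pm = c^{\rm inv}_\pm + (\text{boundary layer correctors})$. In the inner variable $X=x/\kappa$, the leading left-moving profile is
\[
\kappa^{-1}\nu b_-(t)e^{-\nu X},\qquad \nu=\beta_--V|_{x=0},
\]
which has mass $b_-$. The leading right-moving correction is $A((\beta_--V|_{x=0})b_-,\gamma,\nu)(X)-c_\infty$, with $\gamma=V|_{x=0}+\beta_+$, where $A$ is the bounded solution of the incoming ODE~\eqref{eq:Aequationintro}. In Case~1 this is explicit; in Case~2 it is well defined along the trajectory. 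The matching $c_+|_{x=0}=c_\infty$ is exactly the dynamic boundary condition, and mass balance in the layer is the ODE for $b_-$. We add first-order inner correctors $C^{(1)}_\pm$ and an outer $O(\kappa)$ correction so that the no-flux conditions~\eqref{eq:nofluxBCs} hold exactly. The residual $R_\pm$ of~\eqref{eq:viscousequation} should then be small in a negative norm: either $O(\kappa)$ in $L^1$, or of divergence form $\partial_x G$ with $\|G\|_{L^1}\lesssim\kappa$. We also arrange for the initial layer to vanish by using $b_-^{\rm in}=0$ and compatibility. The key observation is that
\[
\|c^{\rm app}_\pm-\mu^{\rm inv}_\pm\|_{\rm FM}\lesssim\kappa,
\]
because a profile $\kappa^{-1}g(x/\kappa)$ of mass $b$ is $O(\kappa)$-close to $b\delta_0$ against Lipschitz test functions, and $\kappa$-width corrections of size $O(1)$ have $L^1$ norm $O(\kappa)$.

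\textbf{Step 2: stability in a dual norm.} We estimate $w_\pm=c^\kappa_\pm-c^{\rm app}_\pm$ by duality, since $L^1$ or energy estimates are not uniform across the layer. For a Lipschitz test function $\phi$ at time $t$, we solve the backward adjoint linearized problem
\[
-\partial_s\psi_\pm-(V+\pm\beta_\pm)\partial_x\psi_\pm-\kappa\partial_x^2\psi_\pm=(\text{linearized tumbling coupling}),
\]
with Neumann conditions $\partial_x\psi_\pm=0$, which are dual to no-flux. We then show that $\|\psi(s)\|_{W^{1,\infty}}\lesssim_T\|\phi\|_{W^{1,\infty}}$ uniformly in $\kappa$. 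This is a maximum-principle argument for $\psi$, applied also to $\partial_x\psi$ after differentiating. The nonlocal term $V[w]$ is Lipschitz in the FM norm, because $\|K(x,\cdot)\|_{W^{1,\infty}}$ is bounded by~\eqref{eq:decayonkernel}; it is treated as a source and closed by Gronwall in $\sup_s\|w(s)\|_{\rm FM}$. The nonlinear tumbling difference is handled the same way, using boundedness of $r$ and its derivatives and $L^1$ bounds on $c^\kappa$ from positivity and mass conservation.

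\textbf{Main obstacle.} The difficulty is the boundary region, where $c^\kappa_-$ and $c^{\rm app}_-$ have size $\kappa^{-1}$. There the nonlinear terms $c_+r(c_-)$ and the linearization error are not small pointwise. For $r(c_-)$ with $c_-\sim\kappa^{-1}$, the saturation bound~\eqref{eq:tumblingassumptiononr} keeps $r$ bounded. The quadratic remainder, however, involves $w_-$ in the layer, and there I would try the dual $W^{1,\infty}$ control of $\psi$ combined with an a priori bound $\|c^\kappa_-\|_{L^\infty}\lesssim\kappa^{-1}$ from comparison with the layer profile. This is where a loss $\kappa^{-\theta}$ seems unavoidable, for instance from interpolating $\|\partial_x\psi\|$ near $x=0$ or from logarithmic Gronwall factors. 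That loss would give the rate $\kappa^{1-\theta}$ in~\eqref{eq:cappdiffthm}, and the narrow convergence~\eqref{eq:muinvdef} then follows from the FM bound.
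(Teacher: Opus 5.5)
\noindent The gap is your Step~2 in Case~2, and it is not a $\kappa^{-\theta}$ loss.

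\noindent\textbf{Why the maximum principle fails.} Linearizing $f(c_+,c_-)=-c_+r(c_-)+c_-r(c_+)$, the adjoint equation for $\psi_+$ contains the coupling $\big(c_-r'(c_+)-r(c_-)\big)(\psi_+-\psi_-)$. In the layer $c_-\approx\kappa^{-1}q e^{-\nu x/\kappa}$. Where $r'>0$ the coefficient of $\psi_-$ is negative and of size $\kappa^{-1}$, so the system is not cooperative and the maximum principle only gives growth like $e^{Ct/\kappa}$. Differentiating to reach $\p_x\psi$ brings in coefficients containing $\p_x c_-\sim\kappa^{-2}$.

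\noindent\textbf{The bound you need is false.} The estimate $\|\p_x\psi\|_{L^\infty}\lesssim1$ fails whenever $r'\not\equiv0$ on the relevant range. At the inner scale, $\psi_-$ is constant to leading order. Meanwhile $\Psi_+$ solves $-\gamma\p_X\Psi_+-\p_X^2\Psi_+=qe^{-\nu X}r'(A)(\Psi_+-\Psi_-)$ with $\p_X\Psi_+(0)=0$, which is the adjoint of the linearized incoming ODE. Take a test function with $\phi_+(0)\neq\phi_-(0)$, for instance one testing $c_+$ alone. Then $\psi_+$ changes by $O(1)$ across a layer of width $O(\kappa)$, so $\p_x\psi_+\sim\kappa^{-1}$.

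\noindent\textbf{Treating tumbling as a source does not help.} Pairing $c^{\rm app}_-\big(r(c^\kappa_+)-r(c^{\rm app}_+)\big)$ against $\|w\|_{\rm FM}$ requires the Lipschitz norm of $c^{\rm app}_-\,r'\,\psi$, which is $O(\kappa^{-2})$. Losses of size $\kappa^{-1}$, $\kappa^{-2}$ or $e^{C/\kappa}$ swamp an $O(\kappa)$ residual, and no interpolation turns them into $\kappa^{-\theta}$.

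\noindent\textbf{A warning sign.} Your argument never uses the Case~2 hypothesis $K(0,\cdot)=0$. If it worked, it would prove convergence for nonlinear tumbling with $V|_{x=0}\neq0$. The paper explicitly leaves that case open, expecting that it requires a stability condition on the boundary layer.

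\noindent\textbf{What the paper does in Case~2.} It avoids dual norms and estimates $\tilde{\bm c}=\bm c^\kappa-\bm c^{\rm app}$ directly in $L^1$, so no derivative of a test function ever appears. The drift term is tamed by $V|_{x=0}=0$, through $\|(V^\kappa-V^{\rm app})/\min(x,1)\|_{L^\infty}\lesssim\|\tilde{\bm c}\|_{L^1}$ and $\|\min(x,1)\p_x\bm c^{\rm app}\|_{L^1}\lesssim1$. Be aware that even there the term $c^{\rm app}_-\big(r(c^\kappa_+)-r(c^{\rm app}_+)\big)$ has $L^1$-Lipschitz constant $\|c^{\rm app}_-\|_{L^\infty}\sim\kappa^{-1}$. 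The paper's one-line bound $\int|f(\bm c^\kappa)-f(\bm c^{\rm app})|\lesssim\int|\tilde{\bm c}|$ is immediate only for constant $r$. For $r'\le0$ it can be rescued by the sign structure, which makes the tumbling difference $L^1$-dissipative. So the obstacle you flagged is real, but a maximum principle on the adjoint does not remove it.

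\noindent\textbf{Case~1.} Given your Step~1 residual bounds, your duality argument is sound and is exactly dual to the paper's mass-variable argument. With $r\equiv r_0$ the adjoint coupling is the Markov matrix $R$, so $\|\psi\|_{L^\infty}\le\|\phi\|_{L^\infty}$. The derivative $\p_x\psi$ solves a system of the same type, with a homogeneous Dirichlet condition and the extra term $-(\p_xV^\kappa)\p_x\psi$, so it is bounded uniformly in $\kappa$. Keep both equations in flux form so the boundary terms cancel. The bad term $\int(V^\kappa-V^{\rm app})\bm c^{\rm app}\cdot\p_x\psi$ is then the dual counterpart of the paper's $(V^\kappa-V^{\rm app})\p_x\bm m^\kappa$ in the $L^1$ estimate for $\tilde{\bm m}$, combined with $\|c\,dx\|_{\rm FM}\le\|m\|_{L^1}$.

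\noindent\textbf{Where the rate comes from.} The paper loses nothing in its stability estimates. The $\kappa^{1-\theta}$ comes from gluing the inner and outer solutions with a cutoff at the intermediate scale $\kappa^\alpha$. This leaves residuals $\kappa^\alpha+\kappa^{2\alpha-1}$, and one then lets $\alpha\uparrow1$.
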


For convenience, we denote $\bm{c} = (c_+,c_-)$. In fact, our analysis produces quantitative estimates on the difference between $\bm{c}^\kappa$ and an approximate solution $\bm{c}^{\rm app}$ constructed via matched asymptotics. Let $T \in (0,T^*)$ and $\theta \in (0,1]$. More precisely, in Case~2, we construct an approximate solution satisfying
\begin{equation}\label{eq:cappdiff0}
    \sup_{t \in [0,T]} \| \bm{c}^\kappa - \bm{c}^{\rm app} \|_{L^1(\R_+)} \lesssim \kappa^{1-\theta} \, .
\end{equation}
In Case~1, our approximate solution satisfies
\begin{equation}\label{eq:cappdiff1}
    \sup_{t \in [0,T]} \| \bm{c}^\kappa - \bm{c}^{\rm app} \|_{\rm FM(\overline{\R}_+)} \lesssim \kappa^{1-\theta} \, .
\end{equation}
In either case, the approximate solutions additionally satisfy
\begin{equation}\label{eq:cappdiff2}
    \sup_{t \in [0,T]} \| c^{\rm app}_{\pm} - \mu^{\rm inv}_{\pm} \|_{\rm FM(\overline{\R}_+)} \lesssim \kappa^{1-\theta} \, ,
\end{equation}
and the convergence in Theorem~\ref{thm:mainthm} follows from the triangle inequality.

To analyze the boundary layer, we introduce the stretched variable $X = \frac{x}{\kappa}$. In this variable, the time derivative $\p_t \bm{c}$ and tumbling $\pm f(c_+,c_-)$ become next-order effects.\footnote{The boundary layer relaxes to a quasi-steady state on a fast timescale $O(\kappa)$, which, if necessary, e.g., for ill-prepared initial data, one could capture by introducing a fast time $T = t/\kappa$.} Our effective ansatz in the boundary layer is
\begin{equation}
\begin{aligned}
    C_-(X,t) &= \frac{1}{\kappa} C^{(0)}_- \,+ && \hspace{-.2cm} C^{(1)}_- \\
    C_+(X,t) &= &&\hspace{-.2cm} C^{(1)}_+ \, ,
\end{aligned}
\end{equation}
where $C^{(0)}_- = q(t) e^{(V|_{x=0}-\beta_-)X}$. This term accounts for nearly all of the boundary layer mass, which becomes $b_- := q/(\beta_- - V\big|_{x=0})$ in the limit. The $C^{(1)}$ terms are responsible for matching to the outer (inviscid) solution, which yields an ODE for $b_-$ (equivalently, $q$). The matched asymptotics predict the correct inviscid system and produce a rigorous approximate solution $\bm{c}^{\rm app}$, predicated on  solvability for the incoming ODE~\eqref{eq:Aequationintro} and the inviscid system. We review both solvability theories in Sections~\ref{sec:Aeqnearly}-\ref{sec:invicsidproblemearly} but delay the (technical) proofs until the end of the paper.

Once ${\bm c}^{\rm app}$ has been constructed, a key difficulty is to establish the \emph{stability} of the construction in the $\kappa \to 0^+$ limit. Roughly speaking, the equation for the difference $\bm{c}^{\kappa} - \bm{c}^{\rm app}$ contains terms like, for example, $(V^{\kappa} - V^{\rm app}) \p_x c^{\rm app}_\pm$, where $\p_x c^{\rm app}_\pm$ acts like an $O(1/\kappa)$ coefficient with no advantageous sign, which can destroy the estimates.\footnote{A further difficulty is that the boundary conditions themselves depend on $V$.} In Case~2, this difficulty is ameliorated by the requirement that $V$ vanishes at the boundary. In Case~1, we instead exploit the \emph{mass variable}
\begin{equation}
    \label{eq:massvariabledefintro}
    m_{\pm}(x) = \int_0^x c_{\pm}(x') \, dx' \, ,
\end{equation}
for which the nonlinear equation becomes
\begin{equation}\label{eq:MAIN_massvars}
\begin{aligned}
    \p_t m_+ + (V+\beta_+) \p_x m_+ = \kappa \p_x^2 m_+ - r_0 (m_+ - m_-) \\
    \p_t m_- + (V-\beta_-) \p_x m_- = \kappa \p_x^2 m_- - r_0 (m_- - m_+)
\end{aligned}
\end{equation}
with Dirichlet conditions
\begin{equation}
    m_{\pm}\big|_{x=0} = 0 \, .
\end{equation}
The change of variables $c_{\pm} \to m_{\pm}$ is very natural from the point of view of probability theory, as $m_{+}+m_{-}$ can be interpreted as a probability distribution function of particles on a half-line. Cumulative variables analogous to~\eqref{eq:massvariabledefintro} have appeared in the context of viscous shocks, see, e.g.,~\cite{GoodmanNonlinearAsymptotic}. Heuristically, the $L^1$ norm of $\bm{m}$ is insensitive to small changes in the boundary layer, compared to the $L^1$ norm of $\bm{c}$. Practically, the bad term mentioned above becomes~$(V^{\kappa} - V^{\rm app}) \p_x m^{\rm app}_\pm$, and $\| \p_x m^{\rm app}_\pm \|_{L^1} \lesssim 1$.

\subsection{Discussion and further questions}

\emph{(i) Pinning and depinning:} The non-diffusive limit and inviscid system are valid provided that the conditions $V\big|_{x=0} + \beta_+ > 0$ and $V\big|_{x=0} - \beta_- < 0$ in~\eqref{eq:V0condition} hold. However, it is not difficult to imagine situations in which $V$ decreases enough such that $V\big|_{x=0} + \beta_+$ becomes \emph{negative};\footnote{This could even be imposed extrinsically, i.e., one simply considers a time-dependent background field $V = V(t)$.} in this case, one would require a variable $b_+(t)$ measuring the mass of $+$ agents pinned to the boundary, and $(b_-,b_+)$ would solve an ODE system. The diffusive problem would involve boundary layers in both $c_+$ and $c_-$. In the opposite scenario, $V$ increases so that $V\big|_{x=0} - \beta_-$ becomes positive; in this case, the $b_-$ mass would move into the interior of the domain (see Figure \ref{fig:bdrymassleaves}), and it would therefore be necessary to deal with \emph{measure-valued solutions}. From this perspective, it would be interesting to establish the vanishing diffusivity limit in a class of solutions which admits this behavior. For relevant work, see \cite{fetecau2022,Zhang2018}. It may be possible to analyze the $\kappa \to 0^+$ asymptotics in the presence of such ``switching", i.e., when the velocity at the boundary changes sign, particularly when the ``switch" occurs with non-zero speed $\dot V\big|_{x=0} \neq 0$ at the switching time. The stochastic interpretation of the PDE may also be useful in this scenario. 

\begin{figure}[!ht]
    \centering
    \includegraphics[scale=0.42]{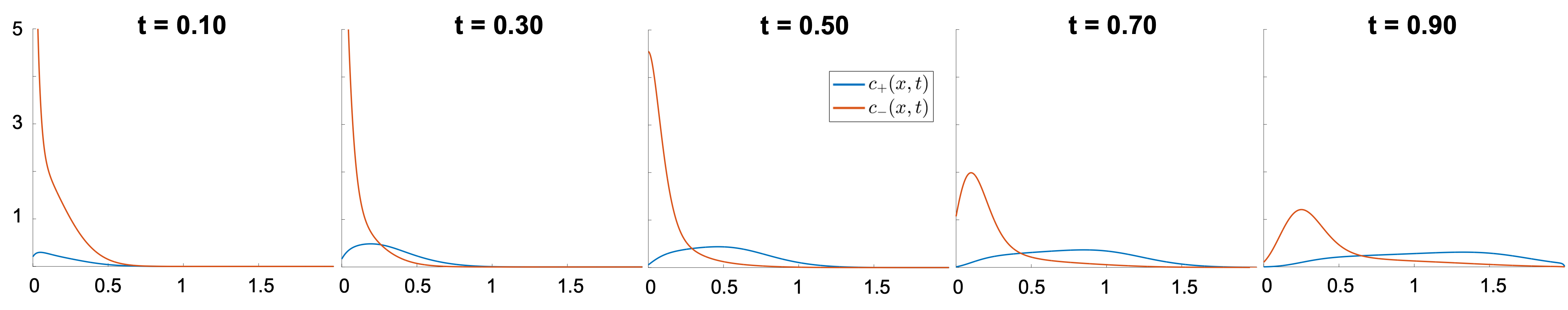}
    \caption{An example of velocity ``switching'' in the diffusive case ($\kappa=0.025$) with linear tumbling ($r_0=1$), leading to the boundary mass entering the bulk. Here the background field $V=2t$ is imposed, so that $V-\beta=2t-1$ switches from negative to positive at $t=0.5$. The width of the ``peak" leaving the boundary after $t=0.5$ is controlled by the value of $\kappa$. }
    \label{fig:bdrymassleaves}
\end{figure}

\emph{(ii) Hysteretic boundary layers:}
So far, we consider vanishing diffusivity only while the incoming ODE for $A$~\eqref{eq:Aequationintro} has unique solutions. However, our numerical simulations suggest that the incoming ODE has non-unique solutions for quite reasonable parameter values. We considered the model nonlinearity~\eqref{eq:example_r}, which was studied also in~\cite{ScheelStevensWavenumberSelection}, with parameter values $r_0 = r_1 = 1$, $\zeta = 0.05$, $\alpha=\gamma=1$:\footnote{The nonlinearity becomes stronger as $\zeta$ is decreased. It is worth mentioning that, when $r' \leq 0$, no bifurcation is possible -- see Corollary~\ref{cor:rprimeneg}.}
\begin{equation}
    \label{eq:Aequationspecialintro}
\begin{aligned}
    \p_X A - \p_X^2 A &= q e^{-X} \left(1 + \frac{A^2}{1+\zeta A^2} \right) \\
    (1 - \p_X) A\big|_{X=0} &= 0 \, .
\end{aligned}
\end{equation}
As $q \geq 0$ is varied, the solutions to~\eqref{eq:Aequationspecialintro} undergo two saddle-node bifurcations, resulting in an intermediate parameter regime for which two solutions are stable, and for which there is the possibility of phase transition and hysteresis (see Figure \ref{fig:ODEforA}). We suspect that the hysteresis leads to an inviscid problem for which the dynamic boundary condition has \emph{memory}. This is an interesting topic for further investigation.

\begin{figure}[!ht] 
    \centering
    \includegraphics[scale=0.42]{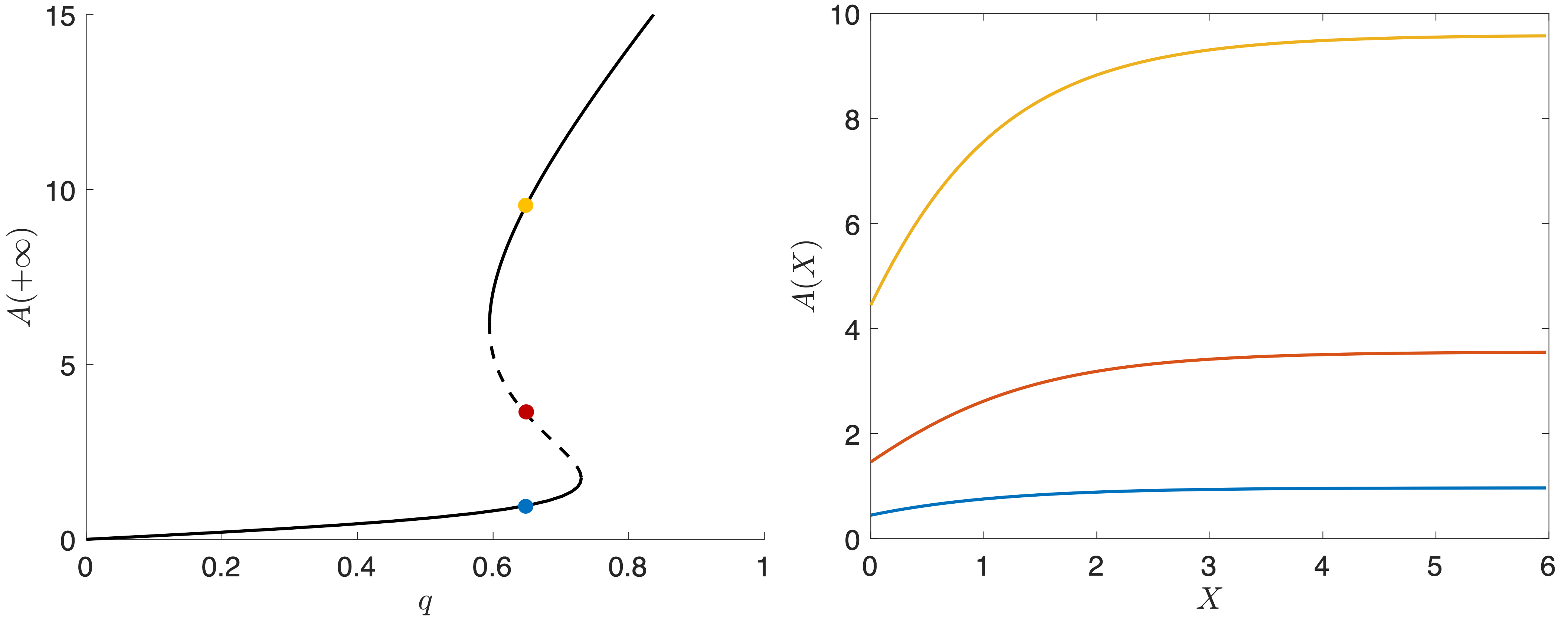}
    \caption{Bifurcation diagram displaying non-uniqueness of solutions to the ODE \eqref{eq:Aequationintro} for $A$ as the parameter $q$ is varied. Plotted is the right-end value $A(+\infty)$ as a function of $q$. Here $\zeta=0.05$, so the nonlinearity is quite strong. The right figure displays the three different solutions $A(X)$ when $q=0.65$. }
    \label{fig:ODEforA}
\end{figure}

The asymptotic structure of the boundary layer may change if the nonlinear tumbling is not saturated, e.g., nonlinearity may enter the ODEs at $O(1/\kappa)$.

\emph{(iii) Further generalizations:}
The energy methods used in Section~\ref{sec:vanishingdiff} to establish estimates on $\bm{c}^{\kappa} - \bm{c}^{\rm app}$ lead us to consider separate Cases~1 and~2. However, it is likely that convergence holds under more general assumptions along the lines of \emph{stability of the boundary layer}, even when $V\big|_{x=0}$ is non-zero and the tumbling is nonlinear. That energy methods are not sharp for this type of problem is well known from the literature on shocks and boundary layers in systems of viscous conservation laws, see, e.g.,~\cite{GoodmanXinViscous,YuZeroDissipation,GrenierRoussetBoundaryLayer,RoussetViscousShock,OlivierMetieverWilliamsZumbrunARMA}. There, energy methods are enough to prove the inviscid limit with small shocks~\cite{GoodmanXinViscous}, whereas the inviscid limit holds more generally under the assumption of spectral stability, which can be phrased in terms of the Evans function. Due to the particular structure of our problem, our energy methods are not specific to small solutions.

\emph{(iv) Active suspensions and higher dimensions:}
The 1D models~\eqref{eq:viscousequation} may be considered as simplified models for active suspensions in 2D and 3D. Our underlying motivation for studying the models~\eqref{eq:viscousequation} is to understand boundary layers in the more complicated Doi-Saintillan-Shelley (DSS) model~\cite{saintillan2008instabilities1,saintillan2008instabilities2,ezhilan2015distribution,SS_15,SS_notes}, which we intend to handle in future work. 
The DSS model describes the evolution of a number density $\psi(\bm{x},\bm{p},t)$ of rod-like swimmers with positions $\bm{x}\in \Omega\subset \R^d$, $d=2,3$, and a continuum of possible orientations $\bm{p}$ belonging to the unit sphere $S^{d-1}$. The swimmers are immersed in a Stokes fluid with velocity field $\bm{u}(\bm{x},t)$, and $\psi$ satisfies the Smoluchowski equation
\begin{equation}\label{eq:smoluchowski}
\p_t \psi + \div_x\big((\beta \bm{p}+\bm{u})\psi\big)+\div_p\big((\bI - \bm{p} \otimes \bm{p}) \nabla \bm{u} \bm{p}\,\psi\big)= \kappa\Delta_x\psi + \nu\Delta_p\psi \,.
\end{equation}
Here tumbling is replaced by orientational diffusion $\nu\Delta_p\psi$ with coefficient $\nu>0$, where $\Delta_p$ is the Laplacian on $S^{d-1}$. The nonlinear term involving $\div_p$, the divergence on $S^{d-1}$, describes the reorientation of elongated particles due to the fluid \cite{jeffery1922motion}. The swimmers interact hydrodynamically by exerting an active, alignment-dependent stress $\bm{\Sigma}$ on the surrounding Stokes fluid:
\begin{equation}\label{eq:activestress}
\begin{aligned}
- \Delta \bm{u} + \nabla q &= \div \bm{\Sigma}, \quad \div\bm{u} =0 \, , \\
\bm{\Sigma}(\bm{x},t)&=\pm \int_{S^{d-1}} \psi(\bm{x},\bm{p},t)\, \bm{p}\otimes\bm{p} \, d\bm{p} \, .  
\end{aligned}
\end{equation}
The no-flux and no-slip boundary conditions
\begin{equation}
    \label{eq:noslipcond}
    \big(\beta \bm{p}\psi -\kappa \nabla_x\psi\big)\cdot\bm{n}\big|_{\p\Omega} =0\,, \quad \bm{u}\big|_{\p\Omega}=0
\end{equation}
are commonly used, and an important question is to determine the effective behavior of this system in the $\kappa\to 0$ limit.

In~\cite[{\sc Conjecture}, p. 5]{berlyand2020kinetic}, Beryland \emph{et al.} conjecture that in certain Fokker-Planck equations, which may be regarded as linear versions of~\eqref{eq:smoluchowski}, the distribution $\psi$ asymptotically decomposes into $\psi_{\rm bulk}$ and $\psi_{\rm wall} \delta_{\p \Omega}$ solving a coupled system of PDEs. 
In~\cite{FuPerthameTang2021,fu2023confined}, a related limiting Fokker-Planck model is proposed and analyzed. In~\cite{berlyand2026multiscale}, such a decomposition is justified rigorously under an assumption that the motion of swimmers is only into the wall; in \eqref{eq:smoluchowski}, this would correspond to $(\bm{u}+\beta\bm{p}) \cdot \bm{n}\big|_{\p \Omega} \geq c > 0$. The rigorous asymptotics remain open under the full range of swimmer-wall interactions (incoming, outgoing, and grazing).

A further difficulty is to incorporate the nonlinear and nonlocal effects of hydrodynamics on the zero diffusivity limit. Our work is a starting point for rigorous convergence results incorporating these effects. The most direct 1D analogue of hydrodynamics~\eqref{eq:activestress} is a self-generated field $V$ satisfying the boundary value problem  
\begin{equation}\label{eq:Vhydro}
    \left(-\p_x^2 + \frac{1}{\ell^2}\right) V = \pm \p_x \rho \, , \quad V\big|_{x=0} = 0 \, ,
\end{equation}
    where, again, $\rho=c_++c_-$, and $\ell > 0$ is a screening length scale. Here $V$ may be represented by a \emph{discontinuous} kernel
    \begin{equation}
    \begin{aligned}
        V = \pm \int G_\ell(x,y)\rho(y)\,dy\,,\quad 
        G_\ell(x,y) = \frac{\ell}{2} \p_x \left( e^{-\frac{|x-y|}{\ell}} + e^{-\frac{|x+y|}{\ell}} \right) \, .
    \end{aligned}
    \end{equation}
    The kernel $G_\ell$ is not technically covered by our convergence theory, which requires a Lipschitz kernel (see Section~\ref{sec:vanishingdiff}), although we expect that the convergence holds. While $V$ vanishes on the boundary for smooth $\rho$, it does not vanish for $\rho$ having a Dirac mass on the boundary, so the boundary layer induces a self-interaction.
    
    In forthcoming work, we investigate the bifurcation structure of steady states for the 1D model with $V$ given by \eqref{eq:Vhydro} on both the torus $\T = \R/\Z$ and interval $[0,1]$ and analyze the effects of the boundary.


\section{Construction of the approximate solutions}\label{sec:approxsols}

Here we construct the approximate solution $\bm{c}^{\rm app}$ used to obtain the bounds \eqref{eq:cappdiff0}-\eqref{eq:cappdiff2}. We begin by recording some important properties of the incoming ODE \eqref{eq:Aequationintro} and the inviscid system \eqref{eq:inviscidequation}-\eqref{eq:Vinviscid} in both Cases 1 and 2. The proofs of these properties are given in Sections \ref{sec:ODE} and \ref{sec:outerproblem}. We then proceed to construct the approximate solution via matched asymptotics and derive residual bounds for the errors $\bm{c}^\kappa-\bm{c}^{\rm app}$.

\subsection{The incoming ODE}\label{sec:Aeqnearly}

Recall the incoming ODE, which we may write for a general function $A$ on the half-line $\{ X > 0\}$ as
\begin{equation}
    \label{eq:Aequationforproposition}
\begin{aligned}
    \gamma \p_X A - \p_X^2 A &= q e^{-\nu X} r(A) \\
    (\gamma - \p_X) A\big|_{X=0} &= 0 \, ,
\end{aligned}
\end{equation}
 where $\gamma, \nu > 0$ and $q \geq 0$. This equation arises in determining $C^{(1)}_+$ in~\eqref{eq:C1pluseqn}. We consider solutions which remain bounded as $X \to +\infty$.

In Case~1, when $r(A) = r_0$, the unique such solution is explicit:
\begin{equation} \label{eq:Aincase1}
    A(q,\gamma,\nu)(X) = \frac{q r_0}{\nu} \left( \frac{1}{\gamma} - \frac{1}{\gamma+\nu} e^{-\nu X} \right) \, .
\end{equation}

In Case~2, the following proposition is sufficient:
\begin{proposition}
    \label{pro:ODEsummaryforstuff}
    Fix $\gamma,\nu > 0$. There exists $q_*(\gamma,\nu) > 0$ and a curve $[0,q_*) \to C^2([0,+\infty)) : q \mapsto A(q,\gamma,\nu)(\cdot)$ of solutions to the ODE problem~\eqref{eq:Aequationforproposition}. The solutions each decay to a well-defined end state
    \begin{equation} 
        c_\infty(q,\gamma,\nu) := \lim_{X \to +\infty} A(q,\gamma,\nu)(X) \, ,
    \end{equation}
    which depends smoothly on $q \in [0,q_*)$. Moreover, for all $k \in \N_0$, the function $q \mapsto \p_X^k(A - c_\infty)$ depends smoothly on $q \in [0,q_*)$ in the weighted norm $\| e^{\gamma X} \cdot \|_{L^\infty(\R_+)}$.
\end{proposition}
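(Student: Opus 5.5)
The plan is to recast \eqref{eq:Aequationforproposition} as a fixed-point problem in the unknowns $(c_\infty, B)$, where $A = c_\infty + B$ and $B$ lives in the weighted space
\[
\mathcal{X} := \{ B \in C^2([0,+\infty)) : \| e^{\gamma X} \p_X^k B \|_{L^\infty(\R_+)} < +\infty, \ k = 0,1,2 \}.
\]
I would then apply the implicit function theorem at $q = 0$, where the solution is $A \equiv 0$, so $c_\infty = 0$ and $B = 0$. I record at the outset a point that decides whether the stated $e^{\gamma X}$ weight is attainable: the explicit Case~1 solution \eqref{eq:Aincase1} has $A - c_\infty = -\frac{q r_0}{\nu(\gamma+\nu)} e^{-\nu X}$. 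Hence for $q > 0$ the difference $A - c_\infty$ lies in $e^{-\gamma X}L^\infty$ only when $\nu \geq \gamma$. I therefore carry out the argument in the form needed for the $e^{\gamma X}$ weight and identify exactly where the hypothesis $\nu \ge \gamma$ is used.

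\emph{Step 1 (linear solution operator).} Set $g := q e^{-\nu X} r(A)$ and $B' = A'$. The equation $\gamma A' - A'' = g$ is first order in $A'$. Its unique bounded solution is
\[
A'(X) = \int_X^\infty e^{-\gamma(Y-X)} g(Y)\, dY .
\]
Integrating from $X$ to $\infty$ gives
\[
B(X) = -\int_X^\infty A'(Y)\, dY .
\]
The Robin condition $\gamma A(0) = A'(0)$ then fixes
\[
c_\infty = \gamma^{-1}A'(0) - B(0) .
\]
This yields a map $\Phi(q; c_\infty, B)$ whose fixed points are exactly the bounded solutions of \eqref{eq:Aequationforproposition}. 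By construction, such solutions have the limit $c_\infty$.

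\emph{Step 2 (weighted bounds).} If $|g| \lesssim e^{-\nu X}$ and $\nu \geq \gamma$, the kernel estimate gives
\[
|A'(X)| \le \|g e^{\nu\cdot}\|_\infty \int_X^\infty e^{-\gamma(Y-X)} e^{-\nu Y}\, dY = \frac{\|g e^{\nu\cdot}\|_\infty}{\gamma+\nu}\, e^{-\nu X} \le \frac{\|g e^{\nu\cdot}\|_\infty}{\gamma+\nu}\, e^{-\gamma X},
\]
and likewise $|B| \lesssim e^{-\gamma X}$. Higher derivatives follow directly from the equation:
\[
A'' = \gamma A' - g, \qquad \p_X^{k+2} A = \gamma \p_X^{k+1}A - \p_X^k g .
\]
The right-hand side is a combination of products of $e^{-\nu X}$ with derivatives of $r(A)$, and these are bounded by \eqref{eq:tumblingassumptiononr}. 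So every $\p_X^k(A-c_\infty)$ is controlled in $\|e^{\gamma X}\cdot\|_{L^\infty}$.

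\emph{Step 3 (smoothness and existence up to $q_*$).} The map $(c_\infty, B) \mapsto q e^{-\nu X} r(c_\infty + B)$ is $C^\infty$ from $\R \times \mathcal{X}$ into $e^{-\nu X}L^\infty$. This is because $r$ has bounded derivatives of all orders by \eqref{eq:tumblingassumptiononr}, so the Nemytskii operator is smooth. The linear operator from Step~1 is bounded. Hence $\Phi$ is smooth, and $\Phi$ is $O(q)$-Lipschitz in $(c_\infty, B)$. Therefore $\mathrm{Id} - D\Phi$ is invertible for small $q$, and the implicit function theorem produces a smooth curve $q \mapsto (c_\infty(q), B(q))$ on $[0, q_*)$. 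The derivative bounds of Step~2 are polynomial in the lower-order quantities, so they transfer smoothness in $q$ to each $\p_X^k(A - c_\infty)$. Uniqueness among bounded solutions near $0$ comes from the contraction.

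\emph{Main obstacle.} The obstacle is exactly the weight in Step~2. The source term $g$ decays only like $e^{-\nu X}$, so the weight $e^{\gamma X}$ is reached only when $\nu \ge \gamma$. For $\nu < \gamma$, the Case~1 formula \eqref{eq:Aincase1} shows that the stated weighted bound cannot hold for $q > 0$. In that regime I would first check whether the surrounding construction only needs the weaker weight $e^{\min(\gamma,\nu)X}$, which the identical argument delivers. If it does, the proposition holds as stated under $\nu \ge \gamma$ and in the weaker weight otherwise.
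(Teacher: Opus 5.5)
Your proposal is correct and is essentially the paper's argument. The paper likewise writes bounded solutions as fixed points of \eqref{eq:integralequationforA}, which is equivalent to your solve-for-$A'$-then-integrate map and gives the same end state $c_\infty=\frac{q}{\gamma}\int_0^\infty e^{-\nu Z}r(A(Z))\,dZ$. It then splits $A=B+c$ in a space weighted by the source's decay rate and applies the implicit function theorem at $q=0$, where $D_A\mathbf{F}=\mathrm{Id}$. The paper goes on to continue the branch in $q$ to a maximal region, which is what later gives $q^*$ its meaning; small $q$ suffices for the statement as written.

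Your objection to the weight is also right. The paper's own proof yields smoothness in $\mathbb{X}_\nu$, i.e.\ with weight $e^{\nu X}$ (cf.\ \eqref{eq:tailest2}), so the $e^{\gamma X}$ in the statement should read $e^{\nu X}$; only $e^{\delta X/2}$ is used downstream in \eqref{eq:C1plus_derivs}. Your Step~2 already gives this sharp weight for all $\gamma,\nu>0$, so the case split $\nu\ge\gamma$ and the fallback to $e^{\min(\gamma,\nu)X}$ are unnecessary.
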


See Section~\ref{sec:ODE} and Proposition~\ref{pro:Abar} for a detailed treatment of this problem.

\subsection{The inviscid problem}
\label{sec:invicsidproblemearly}
For $p \in [1,+\infty)$, $k \in \N$, and $T > 0$, we introduce the function spaces
\begin{align}
\label{eq:Xkdef}
&  X^{k, p} (T) = \big\{ f \in C([0,T];L^p(\R_{+})) : D^j_{t,x} f \in C([0,T];L^p(\R_{+})) \;\; \forall j \leq k \big\}
\end{align}
with norms
\begin{align}
    \label{eq:Xknormdef}
    \|f\|_{X^{k, p} (T)} := \sum_{j=0}^k \|D_{t,x}^j f\|_{ L^{\infty}_t L^p_x([0, T] \times \R_{+})} \, ,
\end{align}
where $D_{t, x}^j f$ denotes the matrix of all derivatives in $t$ and $x$ of $f(t,x)$ of order $j$.

\begin{proposition}
\label{pro:inviscidsolutionexists}
Suppose that $K$ satisfies~\eqref{eq:smoothnessofK}-\eqref{eq:decayonkernel} and $f$ satisfies~\eqref{eq:fintermsofr}-\eqref{eq:tumblingassumptiononr}. Suppose that Case~1 ($r \equiv r_0 > 0$ const.) or Case~2 ($K(0,y) = 0$) holds.

Let $\bm{c}^{\rm in} \in W^{2,1}(\R_+)$, $c_{\pm}^{\rm in} \geq 0$, $b_-^{\rm in} \geq 0$, and $V^{\rm in} := V[\rho^{\rm in},b_-^{\rm in}]$ (see \eqref{eq:Vinviscid}) satisfying the compatibility conditions
\begin{equation}
    \label{eq:compat1}
    c_+^{\rm in}(0) =
c_\infty\big((\beta_- - V^{\rm in}(0)) b^{\rm in}_-,V^{\rm in}(0)+\beta_+,\beta_- -V^{\rm in}(0)\big) =: g(b^{\rm in}_-,V^{\rm in}(0))
\end{equation}
\begin{equation}
    \label{eq:compat2}
    (\p_t c_+)(0,0) = (\p_{b_-} g_b)(b^{\rm in}_-,V^{\rm in}(0)) \dot b_-(0) + (\p_{V} g_b)(b^{\rm in}_-,V^{\rm in}(0)) (\p_t V)(0,0)
\end{equation}
where $(\p_t c_+)(0,0)$ and $\p_t V(0,0)$ are interpreted in the sense of equation~\eqref{eq:inviscidequation} and $\dot b_-(0)$ is interpreted in the sense of equation~\eqref{eq:bminuseqn} (Case 1) or \eqref{eq:bminuseqncase2} (Case 2). In Case~2, further assume that $\beta_- b_-^{\rm in} \in [0,q^*)$, so that~\eqref{eq:compat1} makes sense. Finally, suppose
\begin{equation}
    \label{eq:compat3}
    V^{\rm in}\big|_{x=0} + \beta_+ > 0 \, , \quad  V^{\rm in}\big|_{x=0} - \beta_- < 0 \, .
\end{equation}

Then there exists a maximal existence time $T^* \in (0,+\infty]$ and unique non-negative solution 
\begin{equation}
    c_{\pm} \in X^{2,1}(T) \, , \; b_- \in W^{2,1}([0,T])  \quad \forall T \in (0,T^*)
\end{equation}
to the inviscid system \eqref{eq:inviscidequation}-\eqref{eq:cinv_plus_bvalcase2} satisfying (relevant to Case~1)
\begin{equation}
    \min_{t \in [0,T]} V\big|_{x=0} + \beta_+ > 0 \, , \quad \max_{t \in [0,T]} V\big|_{x=0} - \beta_- < 0  \quad \forall T \in (0,T^*) \, .
\end{equation}

In Case~1, if $T^* < +\infty$, then
\begin{equation}
    \limsup_{t \to T^*_-} \frac{1}{V\big|_{x=0}(t) + \beta_+} + \frac{1}{\big |V\big|_{x=0}(t) - \beta_- \big|} = + \infty \, .
\end{equation}

In Case~2, if $T^* < +\infty$, then
\begin{equation}
    \limsup_{t \to T^*_-} q(t) = q^*(\beta_+,\beta_-)
\end{equation}
where $q(t) := \beta_- b_-$ and $q^*$ is as in Proposition~\ref{pro:ODEsummaryforstuff}.
\end{proposition}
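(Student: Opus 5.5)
The plan is to prove local well-posedness by a contraction (Picard) argument in $X^{2,1}(T)$ for $T$ small, then extend by continuation. The structure of the system helps: the characteristic speeds at the boundary have definite signs, and by \eqref{eq:compat3} and continuity they keep them for short time. Specifically, $V+\beta_+>0$ means $c_+$ is incoming at $x=0$, so it needs exactly one boundary condition, which is \eqref{eq:cinv_plus_bvalcase2} (or \eqref{eq:cinv_plus_bval}). Likewise $V-\beta_-<0$ means $c_-$ is outgoing, so it needs none, and its trace feeds the ODE for $b_-$.

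Given a candidate pair $(\tilde\rho,\tilde b_-)$ in a closed ball of $X^{2,1}(T)\times W^{2,1}([0,T])$ near the data, the map proceeds in four steps.
\begin{enumerate}
\item Define the velocity $\tilde V=V[\tilde\rho,\tilde b_-]$ by \eqref{eq:Vinviscid}. By \eqref{eq:smoothnessofK}-\eqref{eq:decayonkernel}, $\tilde V$ is smooth in $x$ with all $x$-derivatives bounded by $\|\tilde\rho\|_{L^1}+|\tilde b_-|$, and $\p_t\tilde V$, $\p_t^2\tilde V$ are controlled by $\p_t\tilde\rho,\p_t^2\tilde\rho$ in $L^1$ and by $\tilde b_-\in W^{2,1}$.
\item Solve the linear transport equations \eqref{eq:inviscidequation} with speed $\tilde V\pm\beta_\pm$ along characteristics. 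Freeze the tumbling at the previous iterate, or treat it as a Lipschitz zeroth-order term: by \eqref{eq:tumblingassumptiononr}, $f$ is globally Lipschitz with bounded derivatives on nonnegative arguments. For $c_-$, the characteristics starting in $\R_+$ fill the domain, so $c_-$ and its boundary trace are determined.
\item Solve the scalar ODE \eqref{eq:bminuseqn} or \eqref{eq:bminuseqncase2} for $b_-$, with data $b_-^{\rm in}$. Its right-hand side is Lipschitz because $c_\infty$ depends smoothly on $q$ by Proposition~\ref{pro:ODEsummaryforstuff} while $q<q^*$; in Case~1 it is explicit from \eqref{eq:Aincase1}.
\item Determine $c_+$ from the incoming boundary value $g(b_-,\tilde V(0))$.
\end{enumerate}

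Nonnegativity follows along characteristics. The reaction is $-c_+r(c_-)+c_-r(c_+)$ with $r>0$, so the loss term is linear in $c_\pm$ and the gain term is nonnegative whenever the other component is. The boundary inflow is also nonnegative, because $A\ge0$ by a maximum principle for \eqref{eq:Aequationforproposition} with $q\ge0$, $r>0$, and hence $c_\infty\ge0$.

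The main obstacle is estimating in $X^{2,1}$, with two derivatives in $L^1$, without losing derivatives. I would avoid this by differentiating the equations in $t$ instead of in $x$. The quantities $w=\p_tc_\pm$ and $\p_t^2c_\pm$ satisfy transport equations of the same type with source terms involving $\p_t\tilde V\,\p_xc$ and similar terms, and $x$-derivatives are then recovered from the equation itself, $\p_xc_\pm=(\pm f-\p_tc_\pm-\p_x\tilde V c_\pm)/(\tilde V\pm\beta_\pm)$. Here the lower bound $|\tilde V(0)\pm\beta_\pm|\ge c>0$ matters, and I would also need it near the boundary, ideally uniformly in $x$. If it fails away from the boundary, I would instead estimate $\p_x$ directly via the $L^1$ energy inequality $\frac{d}{dt}\|w\|_{L^1}\le \|\p_x\tilde V\|_\infty\|w\|_{L^1}+\dots$ plus the boundary flux terms.

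The boundary flux terms involve the traces $\p_tc_+(t,0)$ and $\p_t^2c_+(t,0)$, which are computed from $\frac{d}{dt}g(b_-(t),\tilde V(t,0))$ and thus from $\dot b_-,\ddot b_-$; this is why $b_-\in W^{2,1}$. The compatibility conditions \eqref{eq:compat1}-\eqref{eq:compat2} are exactly what is needed to avoid a jump of $c_+,\p_tc_+$ across the characteristic emanating from the corner $(0,0)$, so that the solution stays in $X^{2,1}$ rather than only in $BV$. For the contraction itself, I would estimate differences in the lower norm $X^{0,1}$ using the $X^{2,1}$ bounds of the iterates, and then pass to the limit by weak-$*$ lower semicontinuity to keep the high regularity. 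Uniqueness follows from the same difference estimate.

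Finally, for the continuation statement, the local existence time depends only on $\|\bm c\|_{W^{2,1}}$, $b_-$, the lower bounds $V(0)+\beta_+$, $\beta_--V(0)$, and, in Case~2, $q^*-q$. The $W^{2,1}$ norm cannot blow up while these stay controlled, since the high-norm estimates are linear Gronwall bounds once $\|V\|_{W^{k,\infty}}\lesssim$ total mass, which is conserved. So if $T^*<\infty$, one of the listed degeneracies must occur. In Case~2 the condition $K(0,\cdot)=0$ gives $V(0)=0$, so the only possible degeneracy is $q\to q^*$.
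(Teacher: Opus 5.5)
Your architecture is a reasonable alternative to the paper's layered construction. You use a single Picard map with the velocity (and tumbling) lagged, bounded in $X^{2,1}(T)$ and contractive in $C([0,T];L^1)$, with $L^1$ energy estimates replacing explicit characteristic formulas. The paper instead proceeds through linear transport lemmas, then a semilinear fixed point in $\mathbb{Y}^{1,1}$ and $\mathbb{Y}^{2,1}$ with $V$ given, then a quasilinear iteration closed by compactness.

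\textbf{The gap.} The step ``pass to the limit by weak-$*$ lower semicontinuity to keep the high regularity'' fails. The space $X^{2,1}(T)$ requires $D^2_{t,x}c_\pm\in C([0,T];L^1(\R_+))$. A bound in $L^\infty_tL^1_x$ gives no weak compactness in $L^1$: it is not a dual space, and bounded sets need not be equi-integrable. So the limit only has $D^2_{t,x}c_\pm\in L^\infty_t\mathcal{M}_x$, i.e.\ $\p_x c_\pm(t)\in\mathrm{BV}$, with no time continuity at top order. You also cannot get strong convergence in the top norm from your scheme. The quasilinear difference term $\p_x\big((\tilde V^{n}-\tilde V^{n-1})c^{n}\big)$ costs $\p_x^3 c^n$ in an $X^{2,1}$ difference estimate, and interpolating your low-norm contraction against the high-norm bound gives convergence only in $W^{s,1}$, $s<2$.

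The paper hits exactly this obstruction (``the second derivatives may become measures'') and repairs it a posteriori:
\begin{itemize}
\item The limit's velocity $V=V[\rho,b_-]$ is a fixed function with $V,\p_tV,\p_t^2V\in L^\infty_tW^{j,\infty}_x$.
\item With $V$ frozen the problem is only semilinear. There the top-order difference estimate closes, because the boundary ODE supplies the extra time derivative needed for $\ddot b_-$. So the Picard iterates are Cauchy in $\mathbb{Y}^{2,1}(T)$ (Proposition~\ref{pro:X2psol}).
\item Uniqueness in the weaker class (Remark~\ref{rmk:tildeY11rmk}) identifies that solution with your limit.
\item The a priori bounds then propagate the regularity to the whole interval.
\end{itemize}
You need this step or an equivalent one.

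\textbf{Three smaller points.}

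First, recovering $\p_xc_\pm$ from $\p_tc_\pm$ through the equation needs $V\pm\beta_\pm\neq0$ in the interior, which is not assumed. Your fallback therefore has to be the actual argument: direct estimates on $\p_xc_\pm$ and $\p_x^2c_\pm$, with the traces of $\p_xc_+$ and $\p_x^2c_+$ at $x=0$ solved from the differentiated equation using only $|V(t,0)\pm\beta_\pm|\geq\delta$. In the paper, $\delta^{-1}$ enters this way through $\p_x\tau_b$.

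Second, if you lag the tumbling at the previous iterate you destroy the sign structure. The loss term $-c_+^{n-1}r(c_-^{n-1})$ is then no longer proportional to the unknown. Nonnegativity must instead be proved for the coupled solution, including the boundary loop $c_-|_{x=0}\to b_-\to c_+|_{x=0}$, e.g.\ by the negative-part $L^1$ argument of Lemma~\ref{lem:nonnegativity}. This matters: your continuation step uses nonnegativity to turn conservation of $b_-+\int\rho$ into a bound on $\|\rho\|_{L^1}$, hence on $V$.

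Third, the continuation Gronwall bounds are linear only once an $L^\infty$ bound on $c_\pm$ is in hand, because $\p_xf$ contains the bilinear term $c_+r'(c_-)\p_xc_-$. The maximum-principle estimate along characteristics supplies this bound, using that $r$ is bounded and $|g_b|\lesssim M$, as in Lemma~\ref{lem:aprioriestsunderdeltacondition}(i).
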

See Section~\ref{sec:outerproblem} for a detailed treatment of this problem.

\subsection{Formal asymptotics}\label{subsec:formalasymp}
Equipped with Propositions \ref{pro:ODEsummaryforstuff} and \ref{pro:inviscidsolutionexists}, we proceed to construct the approximate solution $\bm{c}^{\rm app}$ used in \eqref{eq:cappdiff0}-\eqref{eq:cappdiff2}.

\subsubsection{Outer solution}
Away from the boundary, where the diffusivity can be considered negligible, we begin with the formal procedure of expanding the solution to the viscous equation \eqref{eq:viscousequation} as
\begin{equation}
    \bm{c} = \bm{c}^{(0)} + \kappa \bm{c}^{(1)} + \kappa^2 \bm{c}^{(2)} + \cdots \,,
\end{equation}
where $\bm{c}^{(0)}$ satisfies an inviscid problem
\begin{equation}
\begin{aligned}
    \p_t c_+ + \p_x ((V+\beta_+) c_+) &=  f(c_+,c_-) \\
    \p_t c_- + \p_x ((V-\beta_-) c_-) &= - f(c_+,c_-) \, .
\end{aligned}
\end{equation}
(One may also wish to expand $V$.) This equation is incomplete without a boundary condition on $c_+^{(0)}$ at $x=0$. With some foresight, we anticipate that $\bm{c}^{(0)}$ should be $c^{\rm inv}_{\pm}(t,x)$, the inviscid solution satisfying \eqref{eq:inviscidequation} on $\R_+$, from Proposition~\ref{pro:inviscidsolutionexists}. Throughout, we will also denote the $V$ generated by the inviscid solution as 
\begin{equation}\label{eq:Vinv}
  V^{\rm inv}(t,x) := \int_{\R_+}K(x,y)\big(c^{\rm inv}_+(t,y)+c^{\rm inv}_-(t,y)\big)\,dy + b_-(t)K(x,0)\,,
\end{equation}
as in \eqref{eq:Vinviscid}.

\subsubsection{Inner solution}\label{subsec:innersol}
Within the boundary layer near $x=0$, we consider the system \eqref{eq:viscousequation} under the rescaling $X=\frac{x}{\kappa}$. To describe the inner solution at these scales, we make the change of variables
\begin{equation}
  C_{\pm}(t,X) = c_{\pm}\left(t,  x \right)\,,
\end{equation}
so that $C_{\pm}$ should satisfy
\begin{equation}\label{eq:innereq}
  \p_tC_\pm +\frac{1}{\kappa}\p_X\big((V(t,\kappa X)\pm \beta_\pm)C_\pm\big) = \frac{1}{\kappa}\p_X^2C_\pm \pm f(C_+,C_-)
\end{equation}
on the spatial domain $\R_+$, along with the no-flux boundary conditions
\begin{equation}\label{eq:innerBC}
  (V \pm \beta_\pm)C_{\pm} - \p_XC_{\pm} =0 \qquad \text{at }X=0\, .
\end{equation}

We formally expand $C_{\pm}(t,X)$ in powers of $\kappa$ as
\begin{equation}\label{eq:Cexpand}
  C_{\pm}(t,X) = \frac{1}{\kappa} C^{(0)}_{\pm}(t,X) + C^{(1)}_{\pm}(t,X)+ \kappa C^{(2)}_{\pm}(t,X)+\cdots\,.
\end{equation}
In addition, we will approximate $V(t,\kappa X)$ in the boundary layer by $V^{\rm inv}(t,\kappa X)$, where $V^{\rm inv}$ is as in \eqref{eq:Vinv}, and which we further Taylor expand about $X=0$ as\footnote{Recall that the full $V$ is induced non-locally by the solution in both the inner \emph{and} the outer regions.} 
\begin{equation}
  V^{\rm inv}(t,\kappa X) = \underbrace{V^{(0)}(t)}_{V^{\rm inv}(t,0)} + \kappa X\underbrace{V^{(1)}(t)}_{\p_XV^{\rm inv}(t,0)} + \kappa^2X^2\underbrace{V^{(2)}(t)}_{\frac{1}{2}\p_X^2V^{\rm inv}(t,0)}+\cdots \,.
\end{equation}
Here we recall the requirements \eqref{eq:V0condition} that $V^{(0)}(t)-\beta_-<0$ and $V^{(0)}(t)+\beta_+ > 0$ on the time interval of interest. 
Using the above expansions in \eqref{eq:innereq} and matching orders in $\kappa$, we obtain the equation for $C_{\pm}^{(0)}$:
\begin{equation}\label{eq:C0eq}
    (V^{(0)}\pm\beta_\pm)\p_XC^{(0)}_{\pm} - \p_X^2 C^{(0)}_{\pm} = 0.
\end{equation}
Using the no-flux boundary condition at $X=0$, we may solve for $C^{(0)}_{\pm}$ as
\begin{align}\label{eq:C0sol}
    C_{\pm}^{(0)}(t,X) = C_{\pm}^{(0)}(t,0)\, e^{(V^{(0)}(t)\pm\beta_\pm) X}\,.
\end{align}
Since $V^{(0)}-\beta_-<0$, $C_-^{(0)}(t,X)$ decays as $X\to\infty$. However, in order to have the possibility for $C_+^{(0)}(t,X)$ to match with the leading outer solution $c^{\rm inv}_{+}$ as $X\to\infty$, we must have 
\begin{align}
 C_+^{(0)}(t,0)\equiv 0\,.
\end{align}
For convenience, we write $C_-^{(0)}(t,0) =: q(t)$, so
\begin{equation}
    \label{eq:Cminusexpressionwithq}
    C_-^{(0)}(t,X) = q(t) e^{(V^{(0)} (t)- \beta_-) X} \, .
\end{equation}
The leading order mass $B_-(t)$ in the boundary layer is obtained by integrating $C_-^{(0)}$ on $\R_+$:
\begin{equation}
    \label{eq:Bminusexpression}
    B_-(t) := \frac{q(t)}{\beta_- - V^{(0)}(t)} \, .
\end{equation}

At next order in $\kappa$, we have that $C^{(1)}_{\pm}$ should satisfy
\begin{equation}\label{eq:C1eq}
\begin{aligned}
  &(V^{(0)}\pm\beta_\pm)\p_XC^{(1)}_{\pm} - \p_X^2 C^{(1)}_{\pm} \\
  &\qquad = -\p_tC^{(0)}_\pm -\p_X(XV^{(1)}C^{(0)}_{\pm}) \pm \kappa [f(C_+,C_-)]_{\rm main} \,,
\end{aligned}
\end{equation}
with no-flux boundary conditions at $X=0$. The brackets $[\cdot]_{\rm main}$ are used to indicate the leading order (in $\kappa$) of the tumbling term $f$, which we now extract. Here we recall that $f$ is of the form  
\begin{equation}
  f(C_+,C_-) = - C_+r(C_-) + C_-r(C_+)\,.
\end{equation}
Using that $C_\pm \approx \frac{1}{\kappa} C^{(0)}_{\pm} + C^{(1)}_{\pm}$ and $C^{(0)}_+ \equiv 0$, we obtain
\begin{equation}
    \kappa f(C_+,C_-) \approx - \kappa C_+^{(1)} r(C_-) + C_-^{(0)} r(C_+^{(1)}) \, ,
\end{equation}
and therefore, by the smoothness and saturation assumptions on $r$, we have
\begin{equation}
    \kappa [f(C_+,C_-)]_{\rm main} := C_-^{(0)} r(C_+^{(1)}) \, .
\end{equation}
Thus,~\eqref{eq:C1eq} and the expression~\eqref{eq:Cminusexpressionwithq} for $C_-^{(0)}$ yield the following equations for $C^{(1)}_{\pm}$:
\begin{align}
 (V^{(0)}+\beta_+)\p_XC^{(1)}_+ - \p_X^2 C^{(1)}_+ 
  &= q(t) e^{(V^{(0)}-\beta_-) X} r(C^{(1)}_+)
  \label{eq:C1pluseqn}\\
  (V^{(0)}-\beta_-)\p_XC^{(1)}_- - \p_X^2 C^{(1)}_-
  &=  -\bigg[\p_t q(t) + q(t) \big(X\p_tV^{(0)}+ r(C^{(1)}_+) \big)\bigg] e^{(V^{(0)}-\beta_-) X} \nonumber \\
  &\qquad  - q(t) \p_X\big(XV^{(1)} e^{(V^{(0)}-\beta_-) X}\big) \,. 
  \label{eq:C1minuseqn}
\end{align}

The equation~\eqref{eq:C1pluseqn} and the no-flux condition together constitute a nonlinear ODE problem for $C^{(1)}_+$ of the form
\begin{equation}
\begin{aligned}
    \gamma \p_X A - \p_x^2 A &= q e^{-\nu X} r(A) \\
    (\gamma - \p_X) A\big|_{x=0} &= 0 \, ,
\end{aligned}
\end{equation}
where
\begin{equation}\label{eq:qgammanu}
    q = q(t) \geq 0 \, , \quad \gamma = V^{(0)}(t) + \beta_+ > 0 \, , \quad \nu = \beta_- - V^{(0)}(t) > 0 \, .
\end{equation}
The solvability of this ODE is discussed in Section~\ref{sec:Aeqnearly}, see Proposition~\ref{pro:ODEsummaryforstuff}. We have
\begin{equation}
    C_+^{(1)}(t,X) = A(q,V^{(0)}(t)+\beta_+,\beta_- - V^{(0)}(t))(X) \, .
\end{equation}

The matching condition on $C^{(1)}_+$ is therefore
\begin{equation}
    c^{\rm inv}_+(t,0) = c_\infty\left(q(t),V^{(0)}(t)+\beta_+,\beta_- -V^{(0)}(t)\right) \, ,
\end{equation}
which can equivalently be expressed in terms of the leading order boundary mass $B_-(t)$ defined in~\eqref{eq:Bminusexpression}.

We now analyze the equation~\eqref{eq:C1minuseqn} for $C^{(1)}_-$. Integrating once in $X$ and using the no-flux boundary condition, we obtain 
\begin{equation}
    \label{eq:initialequationforC1minus}
\begin{aligned}
  \p_X C^{(1)}_- -(V^{(0)}-\beta_-)C^{(1)}_- &= G_a(t)(e^{(V^{(0)}-\beta_-) X} -1) + XG_b(t)e^{(V^{(0)}-\beta_-) X}  \\
  &\quad  - q(t) \int_0^X r(C^{(1)}_+) e^{(V^{(0)} - \beta_-)Y} \, dY \,,
\end{aligned}
\end{equation}
where
\begin{equation}
    \begin{aligned}
  G_a(t)&= \frac{\p_t q(t)}{V^{(0)}-\beta_-} - \frac{\p_tV^{(0)} q(t)}{(V^{(0)}-\beta_-)^2}\\
  G_b(t)&= q(t) \bigg(\frac{\p_tV^{(0)}}{V^{(0)}-\beta_-} +V^{(1)} \bigg)\,.
\end{aligned}
\end{equation}
For purposes of matching, the relevant solution to this ODE should converge to a constant and $\p_X  C^{(1)}_- \to 0$ as $X \to +\infty$.
This will be borne out more explicitly in~\eqref{eq:C1minusexplicitformula}, but we may already utilize the matching condition
\begin{equation}
    \lim_{X \to +\infty} C^{(1)}_-(t,X) = c^{\rm inv}_{-}\big|_{x=0}
\end{equation}
by sending $X \to +\infty$ in~\eqref{eq:initialequationforC1minus}. In particular, this yields an ODE for $q(t)$: 
\begin{equation}\label{eq:qode}
\begin{aligned}
  &\underbrace{\frac{1}{(V^{(0)}-\beta_-)} \p_t q(t) - \frac{\p_tV^{(0)}}{(V^{(0)}-\beta_-)^2} q(t)}_{= - \p_t B_-} - \underbrace{q(t) \int_0^{+\infty} r(C^{(1)}_+) e^{(V^{(0)} - \beta_-)Y}  \, dY}_{= (V^{(0)}+\beta_+) c_\infty(q,V^{(0)}+\beta_+,\beta_--V^{(0)})}\\
  &\qquad = (V^{(0)}-\beta_-) c^{\rm inv}_-(t,0)\,.
\end{aligned}
\end{equation}
where we used the identity \eqref{eq:cformula} to simplify the integral term.
Equivalently, after we use~\eqref{eq:Bminusexpression} to recast $q$ in terms of $B_-$,
\begin{equation}
\begin{aligned}
    \p_t B_- &= - (V^{(0)} + \beta_+) c_\infty\left((\beta_- - V^{(0)}) B_-,V^{(0)}+\beta_+,\beta_--V^{(0)}\right)\\
    &\qquad + (\beta_- - V^{(0)}) c^{\rm inv}_-(t,0) \, ,
\end{aligned}
\end{equation}
or, with arguments suppressed, 
\begin{equation} \label{eq:boundarymassode}
    \p_t B_- = - (V^{(0)} + \beta_+) c_\infty + (\beta_- - V^{(0)}) c^{\rm inv}_-\big|_{x=0} \, .
\end{equation}
This says that the boundary gains mass due to incoming left-swimmers and loses mass due to tumbling and right-moving swimming.

We now specialize to Case~1 and Case~2. First, when $r \equiv r_0$ is constant, then $c_\infty(q,\gamma,\nu) = \frac{r_0 q}{\nu \gamma}$, and then, by \eqref{eq:qgammanu},
\begin{equation}
    \p_t B_- = - r_0 B_- + (\beta_- - V^{(0)}) c^{\rm inv}_-\big|_{x=0} \, .
\end{equation}
Second, when $V\big|_{x=0} = 0$, we have
\begin{equation}
    \p_t B_- = - \beta_+ c_\infty(\beta_- B_-, \beta_+,\beta_-) + \beta_- c^{\rm inv}_-\big|_{x=0} \, .
\end{equation}
 Thus, by \eqref{eq:bminuseqn}-\eqref{eq:cinv_plus_bvalcase2}, $B_- = b_{-}$.

The expression for $C^{(1)}_-$ is obtained by writing Duhamel's formula from the ODE~\eqref{eq:initialequationforC1minus} with free parameter $C^{(1)}_-(t,0)$; we choose $C^{(1)}_-(t,0) = 0$ for convenience.\footnote{If one were expanding to higher order, then this constant would be chosen to obtain a next-order correction to the boundary flux.}  The resulting expression~is 
\begin{equation}
    \label{eq:C1minusexplicitformula}
\begin{aligned}
  C^{(1)}_-(t,X)
  &= G_a(t)\int_0^Xe^{(V^{(0)}-\beta_-)(X-Y)}(e^{(V^{(0)}-\beta_-) Y}-1)\,dY\\
  &\qquad +G_b(t)\int_0^Xe^{(V^{(0)}-\beta_-)(X-Y)}\,Ye^{(V^{(0)}-\beta_-) Y}\,dY \\
  &\qquad - q(t) \int_0^X e^{(V^{(0)} - \beta_-)(X-Y)} \int_0^Y r(C^{(1)}_+) e^{(V^{(0)} - \beta_-)Z} \, dZ \, dY \\
  &= \text{ $G_a$ term } + \text{ $G_b$ term } - \text{ $q(t)$ term} \, .
\end{aligned}
\end{equation}
These terms may be computed more explicitly as (cf. \eqref{eq:qode})
\begin{equation}
\begin{aligned}
    \text{ $G_a$ term } &= G_a(t)\bigg( Xe^{(V^{(0)}-\beta_-)X} +\frac{1}{V^{(0)}-\beta_-}(1-e^{(V^{(0)}-\beta_-) X})\bigg)\\
  \text{ $G_b$ term } &= \frac{X^2}{2}G_b(t)e^{(V^{(0)}-\beta_-)X} \\
    \text{ $q(t)$ term } &= q(t) \int_0^X e^{(V^{(0)} - \beta_-)(X-Y)} \int_0^{+\infty} r(C^{(1)}_+) e^{(V^{(0)} - \beta_-)Z} \, dZ \, dY - R_q \\
    &= \frac{V^{(0)} + \beta_+}{\beta_- - V^{(0)}} c_\infty(q,V^{(0)} + \beta_+,\beta_- - V^{(0)}) (1 - e^{(V^{(0)} - \beta_-) X}) - R_q\\
    R_q &= q(t) \int_0^X e^{(V^{(0)} - \beta_-)(X-Y)} \int_Y^{+\infty} r(C^{(1)}_+) e^{(V^{(0)} - \beta_-)Z} \, dZ \, dY \\
    &= O(e^{(V^{(0)} - \beta_-)X}) \, .
\end{aligned}
\end{equation}
Furthermore, by \eqref{eq:C1minusexplicitformula}, the fact that $G_a (t) = -\partial_t b_{-}$ (cf. \eqref{eq:qode}), and \eqref{eq:boundarymassode},
\begin{equation}\label{eq:c1minusasy}
\begin{aligned} 
    C^{(1)}_-(t,X) = c^{\rm inv}_-\big|_{x=0} +  O(e^{(V^{(0)} - \beta_-)X/2}).
\end{aligned}
\end{equation}

In summary, our leading order approximation for the behavior of the particles within the boundary layer is given by the inner solution 
\begin{equation}
\label{eq:ckappa_pm}
\boxed{
\begin{aligned}
    c_-^{\rm bl}(x,t) &= \frac{1}{\kappa} C^{(0)}_-\left(t,\frac{x}{\kappa}\right) \,+ && \hspace{-.2cm} C^{(1)}_-\left(t,\frac{x}{\kappa}\right) \\
    c_+^{\rm bl}(x,t) &= &&\hspace{-.2cm} C^{(1)}_+\left(t,\frac{x}{\kappa}\right)
\end{aligned}
}
\end{equation}
which is well defined under the assumptions of Theorem~\ref{thm:mainthm}.

\subsection{The boundary layer error}\label{subsec:errBL}

We next consider the quantitative estimates satisfied by our boundary layer approximation. For the remainder of Section~\ref{sec:approxsols}, we suppose the assumptions of Theorem~\ref{thm:mainthm}. In particular, there exists $T>0$ and a solution $\bm{c}^{\rm inv} \in X^{2,1}(T)$ to the inviscid system satisfying  
\begin{equation}
    V^{(0)}(t,0) + \beta_+ \geq \delta > 0 \, , \; V^{(0)}(t,0) - \beta_- \leq -\delta < 0 \, , \quad \forall t \in [0,T] \, .
\end{equation}
On the boundary, we have\footnote{Strictly speaking, our error estimates do not require the $C^2$ time regularity, only $C^1$. For the $V^{(0)},V^{(1)}$ time regularity, see~\eqref{eq:twotimederivativeest}. The $b_-$ time regularity follows by inserting $b_{-}, c_{-} \in C^1$ back into the ODE for $b_-$.}
\begin{equation}
c^{\rm inv}_{\pm}\big|_{x=0} \in C^1([0,T])  \,;\quad  V^{(0)}\,,\; V^{(1)}\,,\; b_- \in C^2([0,T]) \,.
\end{equation}
Subsequently, for $q$ given by \eqref{eq:Cminusexpressionwithq}, we have $q \in C^2 ([0,T])$, from which we deduce that 
\begin{align}
C^{(1)}_+ \,,\; e^{\delta X/2} \p_X^k C^{(1)}_+ &\in C^2([0,T];{\rm BC}(\R_+)) \text{ for all } k \in \N\, , \label{eq:C1plus_derivs}\\
C^{(1)}_- \,,\; e^{\delta X/2} \p_X^k C^{(1)}_- &\in C^1([0,T];{\rm BC}(\R_+)) \text{ for all } k \in \N\,, \label{eq:C1minus_derivs}
\end{align}
which is consistent with the time regularity of $c^{\rm inv}_-\big|_{x=0}$. Here, we are invoking Proposition~\ref{pro:ODEsummaryforstuff} for $C^{(1)}_+$ and the expression~\eqref{eq:C1minusexplicitformula} for $C^{(1)}_-$. Additionally, 
\begin{equation}
    \label{eq:C1expdecaytoendstate}
    \abs{C^{(1)}_{\pm}(X) - c^{\rm inv}_{\pm}\big|_{x=0}} \lesssim e^{-\delta X/2} \, .
\end{equation}
For $C^{(1)}_{-}(X)$, this is justified in \eqref{eq:c1minusasy}, while for $C^{(1)}_{+}(X)$, this follows from the boundary condition \eqref{eq:cplusunifiedbc} and Proposition \ref{pro:ODEsummaryforstuff}.


For the remainder of Section~\ref{sec:approxsols}, we fix $\alpha \in [1/2,1)$ and a length scale
\begin{equation}
    \label{eq:lengthscaledef}
    \ell_\kappa := 2 \kappa^{\alpha} \, .
\end{equation}
It will also be convenient to impose the upper bound $\kappa \leq 1$.

We now estimate the error in the equations satisfied by $c^{\rm bl}_\pm$ within the boundary layer. \emph{For the remainder of the section, the implied constants may depend on the above quantities ($\| \bm{c}^{\rm inv} \|_{X^{2,1}(T)}$, etc.).}

\begin{lemma}[Boundary layer residual]\label{lem:BL_residual}
For $V^{\rm inv}$ as in \eqref{eq:Vinv}, the inner solution $c^{\rm bl}_\pm(t,x)$ given by \eqref{eq:ckappa_pm} satisfies
\begin{equation}
  \p_tc^{\rm bl}_\pm +\p_x\big((V^{\rm inv}\pm \beta_\pm)c^{\rm bl}_\pm\big) -\kappa\p_x^2c^{\rm bl}_\pm \mp f(c^{\rm bl}_+,c^{\rm bl}_-) = \mc{R}_\pm \,.
\end{equation}
Near the boundary, we have the bound 
\begin{equation}
\begin{aligned}
  \norm{\mc{R}_\pm}_{L^\infty_tL^1_x([0,T]\times[0,\ell_\kappa])} &\lesssim \kappa^\alpha + \kappa^{2\alpha-1}\, .
\end{aligned}
\end{equation}
\end{lemma}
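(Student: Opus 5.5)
We compute the residual directly in the stretched variable $X = x/\kappa$, using that the profiles $C^{(0)}_-$, $C^{(1)}_\pm$ were built so that all $O(\kappa^{-2})$ and $O(\kappa^{-1})$ terms cancel. Write $c^{\rm bl}_\pm = \kappa^{-1}C^{(0)}_\pm + C^{(1)}_\pm$, recalling $C^{(0)}_+ \equiv 0$. Then
\begin{equation*}
\mc{R}_\pm = \p_t c^{\rm bl}_\pm + \kappa^{-1}\p_X\big((V^{\rm inv}(t,\kappa X)\pm\beta_\pm)C^{\rm bl}_\pm\big) - \kappa^{-1}\p_X^2 C^{\rm bl}_\pm \mp f .
\end{equation*}
We Taylor expand $V^{\rm inv}(t,\kappa X) = V^{(0)} + \kappa X V^{(1)} + \kappa^2 X^2 \tilde V_2(t,X)$. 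The remainder $\tilde V_2$ is bounded by $\|\p_x^2 V^{\rm inv}\|_{L^\infty}$, and a similar expansion holds for $\p_t V^{\rm inv}(t,\kappa X)$ through the bound~\eqref{eq:twotimederivativeest}. By~\eqref{eq:C0eq}, the $O(\kappa^{-2})$ terms vanish. By~\eqref{eq:C1pluseqn}--\eqref{eq:C1minuseqn}, the $O(\kappa^{-1})$ terms reduce to the defect between $f$ and its main part.

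What survives is a finite list of terms, which we bound pointwise in $X$ and then integrate over $x\in[0,\ell_\kappa]$, i.e. $dx = \kappa\, dX$ over $X \in [0, 2\kappa^{\alpha-1}]$. The terms are:
\begin{enumerate}
\item[(a)] The Taylor-remainder terms $\kappa^{-1}\p_X(\kappa^2X^2\tilde V_2\, \kappa^{-1}C^{(0)}_-)$. These are $O(X^2 e^{-\delta X})$ pointwise and contribute $O(\kappa)$ in $L^1_x$.
\item[(b)] The terms $\kappa^{-1}\p_X\big((V^{\rm inv}(\kappa X) - V^{(0)})C^{(1)}_\pm\big)$, together with $\p_t C^{(1)}_\pm$. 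Using $|V^{\rm inv}(\kappa X)-V^{(0)}|\lesssim \kappa X$ and $\p_x V^{\rm inv}$ bounded, these are $O(1+X e^{-\delta X/2})$ pointwise. Since $C^{(1)}_\pm$ does not decay but tends to $c^{\rm inv}_\pm|_{x=0}$, the bounded part integrates to $O(\ell_\kappa) = O(\kappa^\alpha)$. The regularity~\eqref{eq:C1plus_derivs}--\eqref{eq:C1minus_derivs} justifies these bounds.
\item[(c)] The tumbling defect $\mp\big(f(c^{\rm bl}_+,c^{\rm bl}_-) - \kappa^{-1}C^{(0)}_- r(C^{(1)}_+)\big)$.
\end{enumerate}

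For (c), expand using~\eqref{eq:fintermsofr}:
\begin{equation*}
f = -C^{(1)}_+ r(c^{\rm bl}_-) + \big(\kappa^{-1}C^{(0)}_- + C^{(1)}_-\big) r(C^{(1)}_+).
\end{equation*}
The main part cancels, and the rest, $-C^{(1)}_+ r(c^{\rm bl}_-) + C^{(1)}_- r(C^{(1)}_+)$, is bounded by~\eqref{eq:tumblingassumptiononr} and the boundedness of $C^{(1)}_\pm$. The saturation assumption matters here: $r(c^{\rm bl}_-)$ stays bounded even though $c^{\rm bl}_- \sim \kappa^{-1}$. Hence this term is $O(1)$ pointwise and contributes $O(\kappa^\alpha)$.

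The source of $\kappa^{2\alpha-1}$ is the Taylor expansion of $V$ acting on the non-decaying part of $C^{(1)}_\pm$, which is the one subtle point. If we expand $V$ to first order for all terms, the remainder $\kappa^{-1}\p_X(\kappa^2 X^2 \tilde V_2\, C^{(1)}_\pm)$ is of size $\kappa X$. Its $L^1_x$ norm over $X\le 2\kappa^{\alpha-1}$ is $\kappa\int_0^{2\kappa^{\alpha-1}}\kappa X\,dX \sim \kappa^{2\alpha}$, which is harmless. Similarly, the first-order term $\kappa^{-1}\p_X(\kappa X V^{(1)} C^{(1)}_\pm)$ is $O(X)$ pointwise, and its $L^1_x$ contribution is $\kappa\int_0^{2\kappa^{\alpha-1}} X\,dX \sim \kappa^{2\alpha-1}$.

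Next, verify that the time derivative $\p_t(\kappa^{-1}C^{(0)}_-)$ was absorbed exactly into~\eqref{eq:C1minuseqn}. The time dependence of $q$ and $V^{(0)}$ accounts for this, and it is the term $X\p_tV^{(0)}$ that appears there. Any mismatch decays exponentially and is therefore fine.

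The main obstacle is bookkeeping: confirming that every non-exponentially-decaying leftover is at most $O(X)$ pointwise, so that integration over the window $X\lesssim\kappa^{\alpha-1}$ gives at worst $\kappa^{2\alpha-1}$, while the exponentially localized pieces give $O(\kappa)$. I would organize the proof by sorting the terms into these two classes, exponentially localized or polynomially growing in $X$, and summing to obtain $\kappa^\alpha + \kappa^{2\alpha-1}$.
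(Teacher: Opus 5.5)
Your proposal is correct and uses the same decomposition as the paper. Your item (b) is the paper's $\mc{R}_{a\pm}$, item (a) is $\mc{R}_{b-}$, and item (c) is $\mc{R}_{c-}$. The $O(\kappa^{-2})$ and $O(\kappa^{-1})$ terms cancel exactly by \eqref{eq:C0eq} and \eqref{eq:C1pluseqn}--\eqref{eq:C1minuseqn}.

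The one genuine difference is item (a). You use the exponential localization of $C^{(0)}_-$ to get $O(\kappa)$. (The pointwise bound should read $O((X+X^2)e^{-\delta X})$, since $\p_X$ also hits $X^2\tilde V_2$; this is harmless.) The paper instead applies H\"older on $[0,\ell_\kappa]$: it bounds $\sup_{[0,\ell_\kappa]}|V^{\rm inv}(t,x)-V^{\rm inv}(t,0)-x\p_xV^{\rm inv}(t,0)|\lesssim\kappa^{2\alpha}$ and pairs it with $\norm{\kappa^{-1}\p_xC^{(0)}_-}_{L^1_x}\lesssim\kappa^{-1}$. That crude step is the actual origin of the $\kappa^{2\alpha-1}$ in the statement. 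Your items (a)--(c) give the stronger bound $\kappa^\alpha$, which of course implies the lemma.

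Consequently, your paragraph on ``the source of $\kappa^{2\alpha-1}$'' is mistaken. The first-order term is $\kappa^{-1}\p_X(\kappa XV^{(1)}C^{(1)}_\pm)=V^{(1)}\big(C^{(1)}_\pm+X\p_XC^{(1)}_\pm\big)$. This is $O(1)$ pointwise, not $O(X)$: the derivative falls on the factor $X$, and $X\p_XC^{(1)}_\pm\lesssim Xe^{-\delta X/2}$ by \eqref{eq:C1plus_derivs}--\eqref{eq:C1minus_derivs}. Its contribution on $[0,\ell_\kappa]$ is $O(\kappa^\alpha)$, and it is already contained in your item (b). Since $O(1)\lesssim 1+X$, your $\kappa^{2\alpha-1}$ is only an overestimate and nothing breaks, but the paragraph should be removed.

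Also drop the hedge that ``any mismatch decays exponentially and is therefore fine''. A leftover of size $\kappa^{-1}e^{-\delta X}$ has $L^1_x$ norm $O(1)$, so the absorption of $\p_t(\kappa^{-1}C^{(0)}_-)$ into \eqref{eq:C1minuseqn} must be exact. It is exact, because $\p_tC^{(0)}_-=(\p_tq+qX\p_tV^{(0)})e^{(V^{(0)}-\beta_-)X}$ is precisely the source term there.
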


\begin{proof}
We begin with $c^{\rm bl}_-(t,x) = \frac{1}{\kappa} C^{(0)}_-(t,X) + C^{(1)}_-(t,X)$, which satisfies the equation\footnote{Recall $X = x/\kappa$, so $\p_x = \kappa^{-1} \p_X$} (see \eqref{eq:C0eq} and \eqref{eq:C1eq})
\begin{equation}
\begin{aligned}
  &\p_tc^{\rm bl}_- + \p_x\big((V^{\rm inv}-\beta_-)c^{\rm bl}_-\big) - \kappa\p_x^2c^{\rm bl}_- + f(c^{\rm bl}_+,c^{\rm bl}_-) \\
  &\quad = \underbrace{\frac{1}{\kappa}(V^{\rm inv}\big|_{x=0}-\beta_-)\p_xC^{(0)}_- - \p_x^2C^{(0)}_-}_{=0}   
   \,+\,\mc{R}_{a-} +\mc{R}_{b-} +\mc{R}_{c-} \\
  & +  \underbrace{(V^{\rm inv}\big|_{x=0}-\beta_-)\p_xC^{(1)}_-  - \kappa \p_x^2 C^{(1)}_- 
  +\frac{1}{\kappa}\left(\p_tC^{(0)}_- 
  +\p_x\big(x\p_xV^{\rm inv}\big|_{x=0}\,C^{(0)}_-\big) + C^{(0)}_-\,r(C^{(1)}_+)\right) }_{=0}\,,  
\end{aligned}
\end{equation}
where the remainder terms are given by
\begin{equation}
\begin{aligned}
  \mc{R}_{a-}&= \p_tC^{(1)}_- + \p_x\big((V^{\rm inv}(t,x)-V^{\rm inv}(t,0)) C^{(1)}_-\big)\\
  \mc{R}_{b-}&= \frac{1}{\kappa}\p_x\big((V^{\rm inv}(t,x)-V^{\rm inv}(t,0) - x\p_xV^{\rm inv}(t,0))C^{(0)}_-\big) \\
  \mc{R}_{c-}&= -C^{(1)}_+ \,r\left(\frac{1}{\kappa}C^{(0)}_- + C^{(1)}_-\right) + C^{(1)}_-\,r(C^{(1)}_+) \,.
\end{aligned}
\end{equation}
For $(t,x)\in [0,T]\times[0,\ell_\kappa]$, using \eqref{eq:C1minus_derivs} and \eqref{eq:Vinviscid}, we may estimate 
\begin{equation}\label{eq:Raminusest}
\begin{aligned}
  \norm{\mc{R}_{a-}}_{L^\infty_tL^1_x}&\le \norm{\p_tC^{(1)}_-}_{L^\infty_tL^1_x} + \norm{\p_xV^{\rm inv}}_{L^\infty_tL^\infty_x}\norm{C^{(1)}_-}_{L^\infty_tL^1_x}\\
  &\quad + \norm{V^{\rm inv}(t,x)-V^{\rm inv}(t,0)}_{L^\infty_tL^\infty_x}\norm{\p_xC^{(1)}_-}_{L^\infty_tL^1_x}\\
  &\lesssim\kappa^\alpha + \norm{\p_xV^{\rm inv}}_{L^\infty_tL^\infty_x}\kappa^\alpha
  + \kappa^\alpha\norm{\p_xV^{\rm inv}}_{L^\infty_tL^\infty_x}\norm{\p_xC^{(1)}_-}_{L^\infty_tL^1_x}\\
  &\lesssim \kappa^\alpha\,.
\end{aligned}
\end{equation}

Furthermore, using~\eqref{eq:Cminusexpressionwithq}-\eqref{eq:Bminusexpression}, we have
\begin{equation}
\begin{aligned}
  \norm{\mc{R}_{b-}}_{L^\infty_tL^1_x}&\le \norm{\p_xV^{\rm inv}(t,x)-\p_xV^{\rm inv}(t,0)}_{L^\infty_tL^\infty_x}\norm{\frac{1}{\kappa}C^{(0)}_-}_{L^\infty_tL^1_x} \\
  & +\norm{V^{\rm inv}(t,x)-V^{\rm inv}(t,0) - x\p_xV^{\rm inv}(t,0)}_{L^\infty_tL^\infty_x}\norm{\frac{1}{\kappa}\p_xC^{(0)}_-}_{L^\infty_tL^1_x}\\
  &\lesssim \kappa^\alpha\norm{\p_x^2V^{\rm inv}}_{L^\infty_tL^\infty_x} + \kappa^{2\alpha}\norm{\p_x^2V^{\rm inv}}_{L^\infty_tL^\infty_x}\kappa^{-1}\\
  &\lesssim \kappa^\alpha + \kappa^{2\alpha-1}\,.
\end{aligned}
\end{equation}

Finally, using~\eqref{eq:tumblingassumptiononr}, we may bound
\begin{equation}\label{eq:Rcminus}
\begin{aligned}
  \norm{\mc{R}_{c-}}_{L^\infty_tL^1_x}&\lesssim \norm{C^{(1)}_+}_{L^\infty_tL^1_x} \norm{r(\kappa^{-1}C^{(0)}_- + C^{(1)}_-)}_{L^\infty_tL^\infty_x} \\
  &\quad + \norm{C^{(1)}_-}_{L^\infty_tL^1_x}\norm{r(C^{(1)}_+)}_{L^\infty_tL^\infty_x}\\
  &\lesssim \kappa^\alpha\,.
\end{aligned}
\end{equation}

We next turn to $c^{\rm bl}_+ (t, x) = C^{(1)}_{+} (t, X)$, which satisfies (see \eqref{eq:C1pluseqn} and \eqref{eq:Cminusexpressionwithq})
\begin{equation}
\begin{aligned}
  &\p_tc^{\rm bl}_+ + \p_x\big((V^{\rm inv}+\beta_+)c^{\rm bl}_+\big) - \kappa\p_x^2c^{\rm bl}_+ - f(c^{\rm bl}_+,c^{\rm bl}_-) \\
  &\quad = \underbrace{(V^{\rm inv}(t,0)+\beta_+) \p_x C^{(1)}_+  - \kappa \p_x^2C^{(1)}_+  - \frac{1}{\kappa}C^{(0)}_-\,r(C^{(1)}_+)}_{=0} \, +\,\mc{R}_{a+} -\mc{R}_{c-} \,.
\end{aligned}
\end{equation}
Here $\mc{R}_{c-}$ is as in \eqref{eq:Rcminus}, while the remainder $\mc{R}_{a+}$ is given by 
\begin{equation}
  \mc{R}_{a+}= \p_tC^{(1)}_+ + \p_x\big((V^{\rm inv}(t,x)-V^{\rm inv}(t,0)) C^{(1)}_+\big)\,.
\end{equation}
In the region $(t,x)\in [0,T]\times[0,\ell_\kappa]$, as in \eqref{eq:Raminusest}, we may bound
\begin{equation}
\begin{aligned}
  \norm{\mc{R}_{a+}}_{L^\infty_tL^1_x}
  &\lesssim \kappa^\alpha\,.
\end{aligned}
\end{equation}

Defining $\mc{R}_-=\mc{R}_{a-}+\mc{R}_{b-}+\mc{R}_{c-}$ and $\mc{R}_+=\mc{R}_{a+}-\mc{R}_{c-}$, we obtain Lemma~\ref{lem:BL_residual}.
\end{proof}

\subsection{Matching bounds}
Given the inner solution $c^{\rm bl}_\pm(t,x)$ in \eqref{eq:ckappa_pm}, valid in the sense of Lemma \ref{lem:BL_residual} within the boundary layer near $x=0$, we will construct a full approximate solution $c_\pm^{\rm app}(t,x)$, $x\in\R_+$. In Case~2, this will be done at the level of $c$, whereas in Case~1, this will be done at the level of $m$.

Let $\varphi$ be a smooth cutoff function on $[0,+\infty)$ satisfying $\varphi' \leq 0$ and
\begin{equation}
  \varphi(y) = \begin{cases}
    1 &  \text{for } 0\le y\le 1\\
    0 & \text{for } y\ge 2\,.
  \end{cases}
\end{equation}
We denote
\begin{equation}\label{eq:varphikappa}
  \varphi^\kappa(x) = \varphi\left(\frac{x}{\kappa^\alpha}\right)\,.
\end{equation}

Before constructing an approximate solution $c^{\rm app}_\pm$, we make note of the following matching estimate within the region ${\rm supp}(\varphi^\kappa)\cap {\rm supp}(1-\varphi^\kappa) \subset [\kappa^\alpha,2\kappa^\alpha]$.
\begin{lemma}[Matching region bounds]\label{lem:matchingbd}
  In the matching region $[0,T] \times [\kappa^\alpha,2\kappa^\alpha]$, the difference $c^{\rm bl}_\pm-c_\pm^{\rm inv}$ between the inner and outer solutions may be bounded as
  \begin{equation}
  \begin{aligned}
    \abs{c^{\rm bl}_\pm-c_\pm^{\rm inv}}\lesssim \kappa^\alpha \,,
    \qquad
    \abs{\p_x(c^{\rm bl}_\pm-c_\pm^{\rm inv})}\lesssim 1\, .
  \end{aligned}
  \end{equation}
\end{lemma}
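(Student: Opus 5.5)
The plan is to split the difference as
\[
c^{\rm bl}_\pm - c^{\rm inv}_\pm = \big(c^{\rm bl}_\pm - c^{\rm inv}_\pm(t,0)\big) - \big(c^{\rm inv}_\pm(t,x) - c^{\rm inv}_\pm(t,0)\big)
\]
and bound the two pieces separately on $x \in [\kappa^\alpha, 2\kappa^\alpha]$, where $X = x/\kappa \in [\kappa^{\alpha-1}, 2\kappa^{\alpha-1}]$. Since $\alpha<1$, $X \to +\infty$ as $\kappa \to 0^+$. So the inner piece is exponentially small, and the outer piece is controlled by the regularity of $c^{\rm inv}$.

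\emph{Inner piece.} For $c^{\rm bl}_+ = C^{(1)}_+(t,X)$, the decay estimate \eqref{eq:C1expdecaytoendstate} gives
\[
|c^{\rm bl}_+ - c^{\rm inv}_+(t,0)| \lesssim e^{-\delta X/2} \le e^{-\delta \kappa^{\alpha-1}/2} \lesssim \kappa^\alpha .
\]
For $c^{\rm bl}_-$ there is the extra term $\kappa^{-1} C^{(0)}_- = \kappa^{-1} q(t) e^{(V^{(0)}-\beta_-)X}$. This is bounded by $\kappa^{-1}\|q\|_{L^\infty} e^{-\delta \kappa^{\alpha-1}}$, which is still $\lesssim \kappa^{N}$ for any $N$, and in particular $\lesssim\kappa^\alpha$. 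Together with \eqref{eq:C1expdecaytoendstate} for $C^{(1)}_-$, this gives the $\kappa^\alpha$ bound. For the derivatives, $\p_x = \kappa^{-1}\p_X$. By \eqref{eq:C1plus_derivs}--\eqref{eq:C1minus_derivs}, $|\p_X C^{(1)}_\pm| \lesssim e^{-\delta X/2}$, and $\p_X C^{(0)}_-$ is explicit. Hence $|\p_x(c^{\rm bl}_\pm)| \lesssim \kappa^{-2} e^{-\delta\kappa^{\alpha-1}/2} \lesssim 1$. The only point here is that exponential smallness in $\kappa^{\alpha-1}$ beats every power of $\kappa^{-1}$, with constants uniform in $t\in[0,T]$ thanks to $\inf_t(\beta_- - V^{(0)})\ge\delta$ and the $C^1_t$/$C^2_t$ bounds on $q$ and the coefficients.

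\emph{Outer piece.} We need $|c^{\rm inv}_\pm(t,x) - c^{\rm inv}_\pm(t,0)| \lesssim \kappa^\alpha$ and $|\p_x c^{\rm inv}_\pm| \lesssim 1$ on $[0,2\kappa^\alpha]$. This follows from Proposition~\ref{pro:inviscidsolutionexists}: $c^{\rm inv}_\pm \in X^{2,1}(T)$, so $\p_x c^{\rm inv}_\pm \in L^\infty_t W^{1,1}_x \hookrightarrow L^\infty_t L^\infty_x$ by the one-dimensional Sobolev embedding $W^{1,1}(\R_+)\subset L^\infty$. The mean value theorem then gives $|c^{\rm inv}_\pm(t,x)-c^{\rm inv}_\pm(t,0)| \le x\,\|\p_x c^{\rm inv}_\pm\|_{L^\infty} \lesssim \kappa^\alpha$. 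The derivative bound on the difference is the sum of the two derivative bounds.

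The main obstacle, mild as it is, is bookkeeping the uniformity: the decay rate $\delta/2$ in \eqref{eq:C1expdecaytoendstate} and the weighted bounds \eqref{eq:C1plus_derivs}--\eqref{eq:C1minus_derivs} must hold uniformly in $t\in[0,T]$. For $C^{(1)}_+$ this relies on Proposition~\ref{pro:ODEsummaryforstuff}, which gives smooth dependence of $\p_X^k(A-c_\infty)$ in $\|e^{\gamma X}\cdot\|_{L^\infty}$ along the compact range of $(q,\gamma,\nu)$. One should check that the decay rate used, $\min(\gamma,\nu)$ possibly, is at least $\delta/2$. If $c_\infty$ is approached only at rate $e^{-\nu X}$ (from the forcing), that is still $\ge \delta$, so no problem arises.
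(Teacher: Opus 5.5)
Your proposal is correct and follows essentially the same route as the paper. The paper also splits the difference through $c^{\rm inv}_\pm(t,0)$, bounds the inner piece using \eqref{eq:C1expdecaytoendstate} and the explicit exponential form of $C^{(0)}_-$, and bounds the outer piece by Taylor's theorem with $\p_x c^{\rm inv}_\pm \in L^\infty$. Your derivative bound $\kappa^{-2}e^{-\delta\kappa^{\alpha-1}/2}$ is slightly more careful than the paper's, which drops the $\kappa^{-2}$ prefactor; this is harmless, since the exponential dominates any power of $\kappa^{-1}$.
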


\begin{proof}
First, using the expression \eqref{eq:Cminusexpressionwithq} for $C^{(0)}_-$ and the matching condition \eqref{eq:C1expdecaytoendstate}, we may write the difference $c^{\rm bl}_-(t,x)-c_-^{\rm inv}(t,0)$ as 
\begin{equation}\label{eq:ckappaminus_eq} 
\begin{aligned}
  &c^{\rm bl}_-(t,x)-c_-^{\rm inv}(t,0) = \underbrace{\frac{1}{\kappa}C^{(0)}_-\left(t,\frac{x}{\kappa}\right)}_{\lesssim \frac{1}{\kappa}e^{-\delta x/\kappa}}+ \underbrace{C^{(1)}_-\left(t,\frac{x}{\kappa}\right)-c_-^{\rm inv}(t,0)}_{|\cdot|\,\lesssim \,e^{-\delta x/(2\kappa)}}\,.
\end{aligned}
\end{equation}

Then, on $[\kappa^\alpha,2\kappa^\alpha]$, we may bound
\begin{equation}
  \abs{c^{\rm bl}_-(t,x)-c_-^{\rm inv}(t,0)} \lesssim (1+\kappa^{-1})\,e^{-\delta\kappa^{\alpha-1}/2}\,.
\end{equation}
Furthermore, by Taylor's theorem, on $[\kappa^\alpha,2\kappa^\alpha]$, we have
\begin{equation}
  \abs{c_-^{\rm inv}(t,x)-c_-^{\rm inv}(t,0)} \lesssim \kappa^\alpha \norm{\p_xc^{\rm inv}_-}_{L^\infty_x}\,.
\end{equation}
In addition, using \eqref{eq:Cminusexpressionwithq} and \eqref{eq:C1minus_derivs}, we may bound 
\begin{equation}
  \abs{\p_x c^{\rm bl}_-(t,x)} \lesssim e^{-\delta\kappa^{\alpha-1}/2}\,,
\end{equation}
so that
\begin{equation}
  \abs{\p_x (c^{\rm bl}_-(t,x)-c_-^{\rm inv}(t,x))} \lesssim e^{-\delta\kappa^{\alpha-1}/2} + \norm{c_-^{\rm inv}}_{L^\infty_tC^1_x}\,.
\end{equation}

Similarly, for $c^{\rm bl}_+(t,x)$, by \eqref{eq:C1expdecaytoendstate}, we have
\begin{equation}\label{eq:ckappaplus_diff}
\begin{aligned}
  \abs{c^{\rm bl}_+(t,x) - c_+^{\rm inv}(t,0)}&\lesssim e^{-\delta x/(2\kappa)} \,,
\end{aligned}
\end{equation}
so that on $[\kappa^\alpha,2\kappa^\alpha]$, we may bound
\begin{equation}
  \abs{c^{\rm bl}_+(t,x)-c_+^{\rm inv}(t,0)} \lesssim e^{-\delta\kappa^{\alpha-1}/2}\,.
\end{equation}
In addition, by \eqref{eq:C1plus_derivs}, we have
\begin{equation}
  \abs{\p_x c^{\rm bl}_+(t,x)}\lesssim e^{-\delta\kappa^{\alpha-1}/2}\,,
\end{equation}
so that
\begin{equation}
  \abs{\p_x c^{\rm bl}_+(t,x)-\p_x c_+^{\rm inv}(t,x)}\lesssim e^{-\delta\kappa^{\alpha-1}/2}+ \norm{c_+^{\rm inv}}_{L^\infty_tC^1_x}\,.
\end{equation}
Altogether, we obtain Lemma \ref{lem:matchingbd}.
\end{proof}

\subsection{Construction of approximate solution and residual bounds with nonlinear tumbling (Case~2)}
To prove a convergence result with nonlinear tumbling, we will require that the field $V$ generated by the particles vanishes at the boundary: $V\big|_{x=0}=0$. We are thus in Case 2.

We construct our full approximate solution by gluing the inner approximation $c^{\rm bl}_\pm(t,x)$ to the outer solution $c^{\rm inv}_\pm(t,x)$ as 
\begin{equation}\label{eq:capp}
\boxed{
  c^{\rm app}_\pm(t,x) = \varphi^\kappa(x)c^{\rm bl}_\pm(t,x) + (1-\varphi^\kappa(x))c^{\rm inv}_\pm(t,x)\,.
  }
\end{equation}

We proceed to plug the approximate solution $c^{\rm app}_\pm$ given by \eqref{eq:capp} into the original equations \eqref{eq:viscousequation} and bound the remainders in terms of $\kappa$. We show the following.
\begin{lemma}[Residual bounds for $c^{\rm app}_\pm$]\label{lem:res_bds}
  The approximate solution $c^{\rm app}_\pm$ constructed in \eqref{eq:capp} satisfies 
\begin{equation}
\begin{aligned}
  &\p_tc_\pm^{\rm app} +\p_x\big((V^{\rm app}\pm \beta_\pm)c_\pm^{\rm app}\big) - \kappa\p_x^2c_\pm^{\rm app} \mp f(c_+^{\rm app},c_-^{\rm app}) =E_\pm \,,
\end{aligned}
\end{equation}
where $V^{\rm app}=V[\bm{c}^{\rm app}](x,t)$ is defined in \eqref{eq:Vexample} and $E_\pm$ satisfies 
\begin{equation}
  \norm{E_\pm}_{L^\infty_tL^1_x([0,T]\times\R_+)} \lesssim \kappa^\alpha+ \kappa^{2\alpha-1}\,. 
\end{equation}
\end{lemma}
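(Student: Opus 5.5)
We split $E_\pm$ into three contributions: (a) the residual of the inner solution weighted by $\varphi^\kappa$; (b) the residual of the outer solution weighted by $1-\varphi^\kappa$; (c) commutator and nonlinear-mixing terms supported in the matching region $[\kappa^\alpha,2\kappa^\alpha]$. In addition there is (d) the discrepancy between $V^{\rm app}=V[\bm{c}^{\rm app}]$ and $V^{\rm inv}$, which is used in both Lemma~\ref{lem:BL_residual} and the inviscid equation. The linear operator is $L_\pm c := \p_t c + \p_x((V\pm\beta_\pm)c) - \kappa\p_x^2 c$. With $V$ frozen at $V^{\rm inv}$, this gives
\begin{equation*}
L_\pm c^{\rm app}_\pm = \varphi^\kappa L_\pm c^{\rm bl}_\pm + (1-\varphi^\kappa)L_\pm c^{\rm inv}_\pm + (\p_x\varphi^\kappa)(V^{\rm inv}\pm\beta_\pm)(c^{\rm bl}_\pm - c^{\rm inv}_\pm) - \kappa\bigl(2\p_x\varphi^\kappa\,\p_x + \p_x^2\varphi^\kappa\bigr)(c^{\rm bl}_\pm-c^{\rm inv}_\pm).
\end{equation*}
The first term, with $f(c^{\rm bl})$ subtracted, is $\varphi^\kappa\mc{R}_\pm$ and is controlled by Lemma~\ref{lem:BL_residual}. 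The second term, with $f(c^{\rm inv})$ subtracted, equals $-(1-\varphi^\kappa)\kappa\p_x^2 c^{\rm inv}_\pm$. Its $L^1$ norm is at most $\kappa\|c^{\rm inv}\|_{X^{2,1}}\lesssim\kappa$.

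For the commutator terms we use Lemma~\ref{lem:matchingbd}. Since $|\p_x\varphi^\kappa|\lesssim\kappa^{-\alpha}$ on an interval of length $\kappa^\alpha$, the first commutator has $L^1$ norm $\lesssim \kappa^{-\alpha}\cdot\kappa^\alpha\cdot\kappa^\alpha=\kappa^\alpha$. The term $\kappa\p_x\varphi^\kappa\p_x(\cdot)$ contributes $\lesssim\kappa\cdot\kappa^{-\alpha}\cdot 1\cdot\kappa^\alpha=\kappa$. The term $\kappa\p_x^2\varphi^\kappa(\cdot)$ contributes $\lesssim\kappa\cdot\kappa^{-2\alpha}\cdot\kappa^\alpha\cdot\kappa^\alpha=\kappa$.

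For the nonlinear term we write
\begin{equation*}
f(c^{\rm app}) - \varphi^\kappa f(c^{\rm bl}) - (1-\varphi^\kappa)f(c^{\rm inv}).
\end{equation*}
This vanishes outside $[\kappa^\alpha,2\kappa^\alpha]$. On that interval, $c^{\rm bl}$ and $c^{\rm inv}$ are bounded, since $C^{(0)}_-/\kappa$ is exponentially small there. Because $f$ is locally Lipschitz, the term is $\lesssim|c^{\rm bl}-c^{\rm inv}|\lesssim\kappa^\alpha$ pointwise, so its $L^1$ norm is $\lesssim\kappa^{2\alpha}$.

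The remaining step (d) is the main obstacle. We must bound the extra term
\begin{equation*}
\p_x\bigl((V^{\rm app}-V^{\rm inv})c^{\rm app}_\pm\bigr).
\end{equation*}
Here $c^{\rm app}_-$ carries the $O(1/\kappa)$ spike and $\p_x c^{\rm app}_-$ has $L^1$ norm $O(1/\kappa)$. We need $V^{\rm app}-V^{\rm inv}$ to be small in $W^{1,\infty}$, with a bound like $\lesssim\kappa^\alpha$ together with vanishing at $x=0$. We write
\begin{equation*}
V^{\rm app}(x)-V^{\rm inv}(x)=\int K(x,y)\,(\rho^{\rm app}-\rho^{\rm inv})(y)\,dy - K(x,0)\,b_-.
\end{equation*}
This is tested against the smooth function $K(x,\cdot)$, and each $x$-derivative is tested against $\p_x^i K(x,\cdot)$. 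So it suffices to show that $\rho^{\rm app}\,dy - (\rho^{\rm inv}\,dy+b_-\delta_0)$ is $O(\kappa^\alpha)$ in the Fortet--Mourier sense. For this, $\kappa^{-1}\int\varphi^\kappa C^{(0)}_-(y/\kappa)\,dy=b_-+O(e^{-\delta\kappa^{\alpha-1}})$, and $|\phi(y)-\phi(0)|\le y$ costs $\kappa\cdot\kappa^{-1}\int Y e^{-\delta Y}\,dY$, which is $O(\kappa)$ after scaling. The other parts of the difference are $O(\kappa^\alpha)$ in $L^1$, because they are bounded functions supported on $[0,2\kappa^\alpha]$ or differences estimated by Lemma~\ref{lem:matchingbd}.

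In Case~2, $K(0,y)=0$, so $V^{\rm app}-V^{\rm inv}$ vanishes at $0$. Hence $|V^{\rm app}-V^{\rm inv}|\lesssim\kappa^\alpha x$. The dangerous term $(V^{\rm app}-V^{\rm inv})\p_x c^{\rm app}_-$ is then bounded by
\begin{equation*}
\kappa^\alpha\int x\,\kappa^{-2}e^{-\delta x/\kappa}\,dx\lesssim\kappa^\alpha.
\end{equation*}
The term $\p_x(V^{\rm app}-V^{\rm inv})\,c^{\rm app}$ is bounded by $\kappa^\alpha\|c^{\rm app}\|_{L^1}\lesssim\kappa^\alpha$. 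Summing all contributions gives $\|E_\pm\|\lesssim\kappa^\alpha+\kappa^{2\alpha-1}$.
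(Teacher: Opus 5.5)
Your proposal is correct and follows essentially the paper's route. It uses the same split into $\varphi^\kappa\mc{R}_\pm$, the outer term $(1-\varphi^\kappa)\kappa\p_x^2c^{\rm inv}_\pm$, cutoff commutators controlled by Lemma~\ref{lem:matchingbd}, a tumbling mismatch supported in $[\kappa^\alpha,2\kappa^\alpha]$, and the correction $\p_x\bigl((V^{\rm app}-V^{\rm inv})c^{\rm app}_\pm\bigr)$, which is tamed by $K(0,\cdot)=0$.

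The differences are only bookkeeping. You bound $V^{\rm app}-V^{\rm inv}$ in $W^{1,\infty}$ by a cruder Fortet--Mourier estimate, giving $\kappa^\alpha$ rather than the paper's $\kappa^{2\alpha}+\kappa$ from $R_1,R_2,R_3$. You then compensate by integrating the weight $x$ against $\kappa^{-2}e^{-\delta x/\kappa}$, where the paper takes a sup over ${\rm supp}\,\varphi^\kappa$ and gets $\kappa^\alpha+\kappa^{3\alpha-1}$. One small point: for the outer and matching-region pieces of $\p_x c^{\rm app}_-$, use the plain bound $|V^{\rm app}-V^{\rm inv}|\lesssim\kappa^\alpha$ rather than $\kappa^\alpha x$, since $x$ is unbounded there.
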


\begin{proof}
We begin by writing
\begin{equation}\label{eq:diff_app_term}
\begin{aligned}
  \kappa\p_x^2c^{\rm app}_\pm 
  &=
  \kappa\big(\p_x^2\varphi^\kappa (c^{\rm bl}_\pm-c^{\rm inv}_\pm)+ 2\p_x\varphi^\kappa \p_x(c^{\rm bl}_\pm -c^{\rm inv}_\pm) \\
  &\qquad + \varphi^\kappa \p_x^2c^{\rm bl}_\pm  + (1-\varphi^\kappa)\p_x^2c^{\rm inv}_\pm \big) \\
  &= \varphi^\kappa \,\kappa \p_x^2c^{\rm bl}_\pm + I^\kappa_\pm + I^{\rm inv}_\pm\,,\\
  I^\kappa_\pm &= \kappa\big(\p_x^2\varphi^\kappa (c^{\rm bl}_\pm-c^{\rm inv}_\pm)+ 2\p_x\varphi^\kappa \p_x(c^{\rm bl}_\pm -c^{\rm inv}_\pm) \big)\,,\\
  I^{\rm inv}_\pm &= (1-\varphi^\kappa)\,\kappa\p_x^2c^{\rm inv}_\pm\,.
\end{aligned}
\end{equation}
Using the support of $\p_x\varphi^\kappa$ and Lemma \ref{lem:matchingbd}, we may bound $I^\kappa_\pm$ as
\begin{equation}
\begin{aligned}
  \norm{I^\kappa_\pm}_{L^\infty_tL^1_x} &\lesssim \kappa\kappa^{-2\alpha} \kappa^\alpha \kappa^\alpha + \kappa\kappa^{-\alpha} \kappa^\alpha
  \lesssim \kappa\,.
\end{aligned}
\end{equation}
Furthermore, we may bound $I^{\rm inv}_\pm$ as 
\begin{equation}
\begin{aligned}
  \norm{I^{\rm inv}_\pm}_{L^\infty_tL^1_x} &\le \kappa\norm{\p_x^2c^{\rm inv}_\pm}_{L^\infty_tL^1_x}\,.
\end{aligned}
\end{equation}

Next, for the vector quantity $\bm{c}^{\rm app} = (c_+^{\rm app},c_-^{\rm app})$, we denote 
\begin{equation}
    V^{\rm app} = V[\bm{c}^{\rm app}]= V[\varphi^\kappa \bm{c}^\kappa + (1-\varphi^\kappa)\bm{c}^{\rm inv} ]\,.
\end{equation}
Recalling the definition \eqref{eq:Vinv} of $V^{\rm inv}$ and using the general form \eqref{eq:Vexample} of $V[\cdot]$, we may write
\begin{equation}\label{eq:VappVinv_diff}
\begin{aligned}
  V^{\rm app}-V^{\rm inv} &= V[\bm{c}^{\rm inv}+\varphi^\kappa (\bm{c}^\kappa-\bm{c}^{\rm inv})] - V[\bm{c}^{\rm inv}] - b_-K(x,0) \\
  &= \int_{\R_+}K(x,y)\varphi^\kappa(y) \big(\rho^\kappa(t,y)-\rho^{\rm inv}(t,y)\big)\,dy - b_-K(x,0)\,,
\end{aligned}
\end{equation}
where $\rho^\kappa = c^{\rm bl}_-+c^{\rm bl}_+$ and $\rho^{\rm inv}= c_-^{\rm inv}+c_+^{\rm inv}$. 
We further write
\begin{equation}
\begin{aligned}
  \int_{\R_+}&K(x,y)\varphi^\kappa(y) \big(\rho^\kappa(t,y)-\rho^{\rm inv}(t,y)\big)\,dy - b_-K(x,0) = R_1+R_2+R_3\,,\\
  R_1&= \frac{1}{\kappa}\int_{\R_+}K(x,y)\varphi^\kappa(y)C_-^{(0)}\left(t,\frac{y}{\kappa}\right)\,dy - b_-K(x,0) \\
  R_2 &= \int_{\R_+}K(x,y)\varphi^\kappa(y)\bigg(c^{\rm bl}_-(t,y)- \frac{1}{\kappa}C_-^{(0)}\left(t,\frac{y}{\kappa}\right)- c_-^{\rm inv}(t,0)\\
  &\qquad\qquad  +c^{\rm bl}_+(t,y) - c_+^{\rm inv}(t,0)\bigg)\,dy \\
  R_3 &= \int_{\R_+}K(x,y)\varphi^\kappa(y)\left(c_-^{\rm inv}(t,0)-c_-^{\rm inv}(t,y) +c_+^{\rm inv}(t,0)-c_+^{\rm inv}(t,y)\right)\,dy \,.
\end{aligned}
\end{equation}

We may bound $R_3$ as 
\begin{equation}
\begin{aligned}
  \abs{R_3} &\lesssim \int_{{\rm supp}(\varphi^\kappa)}\abs{K(x,y)}y\big(\norm{\p_xc_-^{\rm inv}}_{L^\infty_tL^\infty_x}+\norm{\p_xc_+^{\rm inv}}_{L^\infty_tL^\infty_x}\big) \,dy\\
  &\lesssim \kappa^{2\alpha}\norm{K(x,\cdot)}_{L^\infty({\rm supp}(\varphi^\kappa))}\,,\\
  \abs{\p_xR_3} &\lesssim \kappa^{2\alpha}\norm{\p_xK(x,\cdot)}_{L^\infty({\rm supp}(\varphi^\kappa))}\,.
\end{aligned}
\end{equation}
Furthermore, using the exponential decay bound \eqref{eq:C1expdecaytoendstate}, we may estimate $R_2$ as 
\begin{equation}
\begin{aligned}
  \abs{R_2} &\lesssim \int_{{\rm supp}(\varphi^\kappa)}\abs{K(x,y)}e^{-\delta y/\kappa}\,dy\\
  &\lesssim \kappa  \norm{K(x,\cdot)}_{L^\infty({\rm supp}(\varphi^\kappa))}\,,\\
  \abs{\p_xR_2} &\lesssim \kappa \norm{\p_xK(x,\cdot)}_{L^\infty({\rm supp}(\varphi^\kappa))}\,.
\end{aligned}
\end{equation}

Finally, we may use \eqref{eq:Bminusexpression} to write $R_1$ as
\begin{equation}
\begin{aligned}
 R_1 &= 
  \underbrace{K(x,0)\frac{1}{\kappa}\int_{\R_+}C_-^{(0)}\left(t,\frac{y}{\kappa}\right)\,dy -b_-K(x,0)}_{=0}\\
  &\quad - \underbrace{K(x,0)\frac{1}{\kappa}\int_{\R_+}(1-\varphi^\kappa(y))C_-^{(0)}\left(t,\frac{y}{\kappa}\right)\,dy}_{R_{1,1}}\\
  &\quad + \underbrace{\frac{1}{\kappa}\int_{\R_+}(K(x,y)-K(x,0))\varphi^\kappa(y)C_-^{(0)}\left(t,\frac{y}{\kappa}\right)\,dy}_{R_{1,2}}\,. 
\end{aligned}
\end{equation}
Using the form \eqref{eq:Cminusexpressionwithq} of $C_-^{(0)}$ and the smoothness of the kernel $K$, we may estimate
\begin{equation}
\begin{aligned}
  \abs{R_{1,1}} &\lesssim e^{-\delta\kappa^{\alpha-1}} |K(x,0)| \\
  \abs{R_{1,2}} &\lesssim \norm{\p_yK(x,\cdot)}_{L^\infty({\rm supp}(\varphi^\kappa))}\int_{{\rm supp}(\varphi^\kappa)}\frac{y}{\kappa} e^{-\delta y/\kappa}\,dy \\
  &\lesssim \kappa \norm{\p_yK(x,\cdot)}_{L^\infty({\rm supp}(\varphi^\kappa))} \, .
\end{aligned}
\end{equation}
In addition, 
\begin{equation}
  \abs{\p_xR_1} \lesssim e^{-\delta\kappa^{\alpha-1}} |\p_x K(x,0)| + \kappa \norm{\p_x\p_yK(x,\cdot)}_{L^\infty({\rm supp}(\varphi^\kappa))}\,.
\end{equation}
We may thus bound the difference $V^{\rm app}-V^{\rm inv}$ as 
\begin{equation}\label{eq:app_inv_diff}
\begin{aligned}
  \abs{V^{\rm app}-V^{\rm inv}} &\le \abs{R_1} + \abs{R_2} + \abs{R_3}\\
  &\lesssim (\kappa^{2\alpha}+\kappa)\norm{K(x,\cdot)}_{L^\infty({\rm supp}(\varphi^\kappa))} \\
  &\qquad + \kappa\norm{\p_yK(x,\cdot)}_{L^\infty({\rm supp}(\varphi^\kappa))}\\
  \abs{\p_x(V^{\rm app}-V^{\rm inv})} &\lesssim (\kappa^{2\alpha}+\kappa)\norm{\p_xK(x,\cdot)}_{L^\infty({\rm supp}(\varphi^\kappa))} \\
  &\qquad+ \kappa\norm{\p_x\p_yK(x,\cdot)}_{L^\infty({\rm supp}(\varphi^\kappa))}\,.
\end{aligned}
\end{equation}
We will further use that $V$ vanishes at $x=0$, i.e. that $K(0,y)=\p_yK(0,y)=0$, to refine the first bound of \eqref{eq:app_inv_diff} to
\begin{equation}\label{eq:app_inv_diff_refine}
\begin{aligned}
  \abs{V^{\rm app}-V^{\rm inv}} 
  \lesssim (\kappa^{2\alpha}+\kappa)\abs{x}\norm{\p_xK}_{  L^{\infty}_{x, y} }
 + \kappa\abs{x}\norm{\p_x\p_yK  }_{  L^{\infty}_{x, y} }\,.
\end{aligned}
\end{equation}

We then rewrite the transport/propulsion terms as  
\begin{equation}
\begin{aligned}
  \p_x\big((V^{\rm app}\pm \beta_\pm)&c^{\rm app}_\pm\big) = 
  \p_x\big(\varphi^\kappa (V^{\rm app}\pm \beta_\pm)c^{\rm bl}_\pm + (1-\varphi^\kappa)(V^{\rm app}\pm \beta_\pm)c^{\rm inv}_\pm \big)\\
  &= \varphi^\kappa \p_x\big((V^{\rm app}\pm \beta_\pm)c^{\rm bl}_\pm\big) + (1-\varphi^\kappa)\p_x\big((V^{\rm app}\pm \beta_\pm)c^{\rm inv}_\pm \big)\\
  &\qquad + \p_x\varphi^\kappa \,(V^{\rm app}\pm \beta_\pm)(c^{\rm bl}_\pm -c^{\rm inv}_\pm)\\
  &=\varphi^\kappa \p_x\big((V^{\rm inv}\pm \beta_\pm)c^{\rm bl}_\pm\big) + (1-\varphi^\kappa)\p_x\big((V^{\rm inv}\pm \beta_\pm)c^{\rm inv}_\pm \big)\\
  &\qquad + H^a_\pm + H^b_\pm + H^c_\pm \,,
\end{aligned}
\end{equation}
where the remainder terms are given by 
\begin{equation}
\begin{aligned}
  H^a_\pm &= \varphi^\kappa \p_x\big((V^{\rm app}-V^{\rm inv})c^{\rm bl}_\pm\big) \\
  H^b_\pm &= (1-\varphi^\kappa)\p_x\big((V^{\rm app}-V^{\rm inv})c^{\rm inv}_\pm \big)\\
  H^c_\pm &= \p_x\varphi^\kappa \,(V^{\rm app}\pm \beta_\pm)(c^{\rm bl}_\pm -c^{\rm inv}_\pm)\,.
\end{aligned}
\end{equation}
Using \eqref{eq:app_inv_diff_refine} and \eqref{eq:app_inv_diff}, we may estimate
\begin{equation}
\begin{aligned}\label{eq:haest}
  \norm{H^a_\pm}_{L^\infty_tL^1_x} &\le \norm{\p_x(V^{\rm app}-V^{\rm inv})}_{L^\infty_tL^\infty_x}\norm{\varphi^\kappa c^{\rm bl}_\pm}_{L^\infty_tL^1_x}\\ 
  &\quad + \norm{V^{\rm app}-V^{\rm inv}}_{L^\infty_tL^\infty_x({\rm supp}(\varphi^\kappa))}\norm{\varphi^\kappa\p_xc^{\rm bl}_\pm}_{L^\infty_tL^1_x} \\
  &\lesssim (\kappa^{2\alpha}+\kappa)\int_{{\rm supp}(\varphi^\kappa)}\frac{1}{\kappa}e^{-\delta x/\kappa}\,dx\\
  &\quad + (\kappa^{3\alpha}+\kappa^{1+\alpha}) \int_{{\rm supp}(\varphi^\kappa)}\frac{1}{\kappa^2}e^{-\delta x/\kappa}\,dx\\
  &\lesssim \kappa^\alpha +\kappa^{3\alpha-1}
\end{aligned}
\end{equation}
as well as 
\begin{equation}
\begin{aligned}
  \norm{H^b_\pm}_{L^\infty_tL^1_x} &\le \norm{\p_x(V^{\rm app}-V^{\rm inv})}_{L^\infty_tL^\infty_x}\norm{(1-\varphi^\kappa)c^{\rm inv}_\pm}_{L^\infty_tL^1_x} \\
  &\quad +
 \norm{V^{\rm app}-V^{\rm inv}}_{L^\infty_tL^\infty_x} \norm{(1-\varphi^\kappa)\p_xc^{\rm inv}_\pm }_{L^\infty_tL^1_x}\\
  &\lesssim (\kappa^{2\alpha}+\kappa)\norm{c^{\rm inv}_\pm }_{L^\infty_t W^{1,1}_x}\,.
\end{aligned}
\end{equation}
By Lemma \ref{lem:matchingbd}, we additionally have
\begin{equation}
\begin{aligned}
  \norm{H^c_\pm}_{L^\infty_tL^1_x} &\lesssim \kappa^{-\alpha}\norm{c^{\rm bl}_\pm -c^{\rm inv}_\pm}_{L^\infty_tL^1_x({\rm supp}(\p_x\varphi^\kappa))} \\
  &\lesssim \kappa^{-\alpha} \kappa^\alpha \kappa^\alpha 
  \lesssim \kappa^\alpha\,,
\end{aligned}
\end{equation}
where we have used that $|{\rm supp}(\p_x\varphi^\kappa)| \leq \kappa^\alpha$.

Finally, we may write the tumbling term $f$ as 
\begin{equation}
\begin{aligned}
  f&(c_+^{\rm app},c_-^{\rm app}) = -c_+^{\rm app}r(c_-^{\rm app}) +c_-^{\rm app}r(c_+^{\rm app}) \\
  &= \varphi^\kappa\big(c^{\rm bl}_-\,r(\varphi^\kappa c^{\rm bl}_+ + (1-\varphi^\kappa)c^{\rm inv}_+) - c^{\rm bl}_+ \,r(\varphi^\kappa c^{\rm bl}_- + (1-\varphi^\kappa)c^{\rm inv}_-) \big) \\
  &\quad + (1-\varphi^\kappa)\big(c^{\rm inv}_-r(\varphi^\kappa c^{\rm bl}_+ + (1-\varphi^\kappa)c^{\rm inv}_+)
  -c^{\rm inv}_+r(\varphi^\kappa c^{\rm bl}_- + (1-\varphi^\kappa)c^{\rm inv}_-)\big) \\
  &= \varphi^\kappa\big(c^{\rm bl}_-\,r(c^{\rm bl}_+) - c^{\rm bl}_+ \,r(c^{\rm bl}_- ) \big) - J^\kappa_+ + J^\kappa_- \\
  &\quad + (1-\varphi^\kappa)\big(c^{\rm inv}_-r(c^{\rm inv}_+)
  -c^{\rm inv}_+r(c^{\rm inv}_-)\big) - J^{\rm inv}_+ + J^{\rm inv}_-\,,
\end{aligned}
\end{equation}
where we define the remainders 
\begin{equation}
\begin{aligned}
  J^\kappa_\pm &= \varphi^\kappa\big( r(c^{\rm bl}_\pm)
  - r(c^{\rm bl}_\pm + (1-\varphi^\kappa)(c^{\rm inv}_\pm-c^{\rm bl}_\pm) \big)c^{\rm bl}_\mp\\
  J^{\rm inv}_\pm &= (1-\varphi^\kappa)\big(r(c^{\rm inv}_\pm) - r(c^{\rm inv}_\pm + \varphi^\kappa (c^{\rm bl}_\pm-c^{\rm inv}_\pm)) \big)c^{\rm inv}_\mp\,.
\end{aligned}
\end{equation}
These may be estimated using Lemma \ref{lem:matchingbd} and~\eqref{eq:tumblingassumptiononr} as 
\begin{equation}
\begin{aligned}
  &\norm{J^\kappa_\pm}_{L^\infty_tL^1_x} \\
  &\le \norm{r(c^{\rm bl}_\pm)
  - r(c^{\rm bl}_\pm + (1-\varphi^\kappa)(c^{\rm inv}_\pm-c^{\rm bl}_\pm)) }_{L^\infty_tL^\infty_x({\rm supp}(\varphi^\kappa))}\norm{\varphi^\kappa c^{\rm bl}_\mp}_{L^\infty_tL^1_x}\\
  &\qquad\lesssim \norm{r'}_{L^\infty}\norm{c^{\rm inv}_\pm-c^{\rm bl}_\pm}_{L^\infty_tL^\infty_x({\rm supp}(\varphi^\kappa)\cap \,{\rm supp}(1-\varphi^\kappa))}\\
  &\qquad\lesssim \kappa^\alpha\,,
\end{aligned}
\end{equation}
where we have used that $\norm{\varphi^\kappa c^{\rm bl}_\mp}_{L^\infty_tL^1_x}\lesssim 1$ (see \eqref{eq:haest}). 
Furthermore, we have
\begin{equation}
\begin{aligned}
  \norm{J^{\rm inv}_\pm}_{L^\infty_tL^1_x} &\le \norm{r(c^{\rm inv}_\pm) - r(c^{\rm inv}_\pm + \varphi^\kappa (c^{\rm bl}_\pm-c^{\rm inv}_\pm)) }_{L^\infty_tL^\infty_x({\rm supp}(1-\varphi^\kappa))}\norm{c^{\rm inv}_\mp}_{L^\infty_tL^1_x}\\
  &\lesssim \norm{r'}_{L^\infty}\norm{c^{\rm inv}_\pm-c^{\rm bl}_\pm}_{L^\infty_tL^\infty_x({\rm supp}(\varphi^\kappa)\cap \,{\rm supp}(1-\varphi^\kappa))}\\
  &\lesssim \kappa^\alpha\,.
\end{aligned}
\end{equation}

In summary, upon plugging $c_\pm^{\rm app}$ into the full system \eqref{eq:viscousequation}, we obtain 
\begin{equation}\label{eq:capp_eqn}
\begin{aligned}
  &\p_tc_\pm^{\rm app} +\p_x\big((V^{\rm app}\pm \beta_\pm)c_\pm^{\rm app}\big) - \kappa\p_x^2c_\pm^{\rm app} \mp f(c_+^{\rm app},c_-^{\rm app}) \\
  &\qquad = \varphi^\kappa \,\bigg( \underbrace{\p_tc^{\rm bl}_\pm + \p_x\big((V^{\rm inv}\pm \beta_\pm)c^{\rm bl}_\pm\big) - \kappa \p_x^2c^{\rm bl}_\pm \mp f(c^{\rm bl}_+,c^{\rm bl}_-)}_{=\mc{R}_\pm} \bigg) \\
  &\quad\qquad + (1-\varphi^\kappa)\bigg(\underbrace{\p_tc^{\rm inv}_\pm + \p_x\big((V^{\rm inv} \pm \beta_\pm)c^{\rm inv}_\pm \big) \mp f(c^{\rm inv}_+,c^{\rm inv}_-)}_{=0}\bigg) \\
  &\quad\qquad  + H^a_\pm + H^b_\pm + H^c_\pm - I^\kappa_\pm - I^{\rm inv}_\pm 
  \pm J^\kappa_+ \mp J^\kappa_-  \pm J^{\rm inv}_+ \mp J^{\rm inv}_- \,.
\end{aligned}
\end{equation}
Defining 
\begin{equation}
  E_\pm =\varphi^\kappa\mc{R}_\pm + H^a_\pm + H^b_\pm + H^c_\pm - I^\kappa_\pm - I^{\rm inv}_\pm \pm J^\kappa_+ \mp J^\kappa_-  \pm J^{\rm inv}_+ \mp J^{\rm inv}_-
\end{equation}
and using Lemma \ref{lem:BL_residual} for $\mc{R}_\pm$ and the above bounds for the $H$, $I$, $J$ remainders, we obtain Lemma \ref{lem:res_bds}. 
\end{proof}

\subsection{Construction of approximate solution and residual bounds with linear tumbling (Case 1)}\label{subsec:approxCase1}
We next turn to Case~1. To incorporate the effects of a more general drift velocity $V$ that does not vanish at the boundary, we will use the system \eqref{eq:MAIN_massvars} for the mass variables $m_\pm$ instead of working directly with the $c_\pm$ equations. 

Define
\begin{equation}\label{eq:minv_mkappa}
  m_\pm^{\rm inv}(t,x) = \int_0^x d\mu_\pm^{\rm inv}(t,x') \,,\qquad 
  m^{\rm bl}_\pm(t,x) = \int_0^xc^{\rm bl}_\pm(t,x')\,dx'\,,
\end{equation}
for $\mu_+^{\rm inv} =  c_+^{\rm inv} \, dx$, $\mu_-^{\rm inv} = c_-^{\rm inv} \, dx + \delta_0(x) b_-(t)$, and $c^{\rm bl}_\pm$ as in \eqref{eq:ckappa_pm}. We take
\begin{equation}\label{eq:mapp}
\boxed{
  m_\pm^{\rm app}(t,x) = \varphi^\kappa(x) m^{\rm bl}_\pm(t,x)+ (1-\varphi^\kappa(x))m_\pm^{\rm inv}(t,x)
}
\end{equation}
for $\varphi^\kappa$ as in \eqref{eq:varphikappa}, and 
\begin{equation}
    \label{eq:cpmappform}
\begin{aligned}
    c_{\pm}^{\rm app}(t,x) &:= \p_x m_\pm^{\rm app}(t,x) \\
    &= \varphi^\kappa(x) c^{\rm bl}_\pm(t,x)+ (1-\varphi^\kappa(x)) c_\pm^{\rm inv}(t,x) + \p_x \varphi^\kappa (m_\pm^{\rm bl} - m_{\pm}^{\rm inv}) \, .
\end{aligned}
\end{equation}
Notably,~\eqref{eq:cpmappform} differs from~\eqref{eq:capp} by the matching term $\p_x \varphi^\kappa (m_\pm^{\rm bl} - m_{\pm}^{\rm inv})$. We define
\begin{equation}
    \label{eq:Vappfromdefm}
\begin{aligned}
  V^{\rm inv}(t,x)&=V[\bm{c}^{\rm inv}] +b_{-} K (x, 0) \\
  V^{\rm app}(t,x)&=V[\bm{c}^{\rm app}] \\
  &= \underbrace{V[\varphi^\kappa\p_x \bm{m}^{\rm bl} + (1-\varphi^\kappa)\p_x \bm{m}^{\rm inv}]}_{=: \hat{V}^{\rm app}} + V[\p_x \varphi^\kappa (\bm{m}^{\rm app} - \bm{m}^{\rm inv})] \, .
\end{aligned} 
\end{equation}
Here, $\hat{V}^{\rm app}$ corresponds to the definition of $V^{\rm app}$ from Case~2.

By integrating the results of Lemmas \ref{lem:BL_residual} and \ref{lem:matchingbd}, we obtain the following bounds for $m^{\rm bl}_\pm$ within the boundary layer and matching region.
\begin{lemma}[Boundary layer and matching residuals for mass variable]\label{lem:mass_BL}
The inner mass variable solution $m^{\rm bl}_\pm(t,x)$ given by \eqref{eq:minv_mkappa} satisfies 
  \begin{equation}
  \begin{aligned}
    \p_t m^{\rm bl}_\pm + (V^{\rm inv}\pm\beta_\pm) \p_x m^{\rm bl}_\pm - \kappa \p_x^2 m^{\rm bl}_\pm \pm r_0 (m_+^{\rm bl} - m_-^{\rm bl}) &= \mc{R}^m_\pm \, .
    \end{aligned}
    \end{equation}
    Near the boundary, we have
    \begin{equation}
        \begin{aligned}
    \norm{\mc{R}_\pm^m}_{L^\infty_tL^1_x([0,T]\times[0,\ell_k])} &\lesssim \kappa^{2\alpha}+ \kappa^{3\alpha-1}\, .
  \end{aligned}
  \end{equation}

Furthermore, within the matching region $[0,T] \times [\kappa^\alpha,2\kappa^\alpha]$, we have 
  \begin{equation}
  \begin{aligned}
      \abs{m^{\rm bl}_\pm-m_\pm^{\rm inv}} \lesssim \kappa^{2\alpha}\,,
      \qquad
      \abs{\p_x(m^{\rm bl}_\pm-m_\pm^{\rm inv})} \lesssim \kappa^{\alpha}\, .
  \end{aligned}    
  \end{equation}  
\end{lemma}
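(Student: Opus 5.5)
The plan is to obtain both parts of Lemma~\ref{lem:mass_BL} by integrating in $x$ from $0$ the corresponding statements for $c^{\rm bl}_\pm$ in Lemmas~\ref{lem:BL_residual} and~\ref{lem:matchingbd}. We specialize to Case~1, where $f(c_+,c_-) = r_0(c_- - c_+)$ is linear, so that $\int_0^x f(c^{\rm bl}_+,c^{\rm bl}_-)\,dx' = -r_0(m^{\rm bl}_+ - m^{\rm bl}_-)$.

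\textbf{Residual equation.} Integrate the equation of Lemma~\ref{lem:BL_residual} over $[0,x]$. The time derivative gives $\p_t m^{\rm bl}_\pm$, and the transport term gives
\[
(V^{\rm inv}\pm\beta_\pm)c^{\rm bl}_\pm(x) - (V^{\rm inv}\pm\beta_\pm)c^{\rm bl}_\pm(0).
\]
The diffusion term gives $\kappa\p_x c^{\rm bl}_\pm(x) - \kappa \p_x c^{\rm bl}_\pm(0)$, and the tumbling term gives $\pm r_0(m^{\rm bl}_+ - m^{\rm bl}_-)$. We then check that the boundary fluxes cancel, i.e.\ that $c^{\rm bl}_\pm$ satisfies the no-flux condition exactly at $x=0$ with $V^{\rm inv}(t,0)=V^{(0)}$.
\begin{itemize}
\item For $C^{(0)}_-$ the no-flux condition holds by \eqref{eq:C0sol}.
\item For $C^{(1)}_+$ it holds by the boundary condition in \eqref{eq:Aequationforproposition}.
\item For $C^{(1)}_-$, the choice $C^{(1)}_-(t,0)=0$ together with \eqref{eq:initialequationforC1minus} at $X=0$ gives $\p_X C^{(1)}_-(t,0)=0$.
\end{itemize}
Hence $\mc{R}^m_\pm(t,x) = \int_0^x \mc{R}_\pm(t,x')\,dx'$.

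Using the crude bound $\|\int_0^x g\|_{L^1([0,\ell_\kappa])} \le \ell_\kappa \|g\|_{L^1([0,\ell_\kappa])}$ would give $\kappa^{2\alpha}+\kappa^{3\alpha-1}$ directly from Lemma~\ref{lem:BL_residual}, since $\ell_\kappa = 2\kappa^\alpha$. So this step is essentially immediate.

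\textbf{Matching bounds.} For $x\in[\kappa^\alpha,2\kappa^\alpha]$ we write
\[
m^{\rm bl}_\pm - m^{\rm inv}_\pm = \int_0^x c^{\rm bl}_\pm\,dx' - \Big(\int_0^x c^{\rm inv}_\pm\,dx' + b_-\,\mathbf{1}_{-}\Big).
\]
The derivative bound is the easy part. The derivative equals $c^{\rm bl}_\pm - c^{\rm inv}_\pm$, which is $\lesssim\kappa^\alpha$ by Lemma~\ref{lem:matchingbd}.

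For the undifferentiated bound we cannot simply integrate Lemma~\ref{lem:matchingbd}, since that lemma only applies on the matching region. Instead we decompose as in \eqref{eq:ckappaminus_eq}:
\begin{itemize}
\item The term $\frac1\kappa\int_0^x C^{(0)}_-(t,x'/\kappa)\,dx'$ equals $B_- = b_-$ up to an error $O(e^{-\delta\kappa^{\alpha-1}})$, by \eqref{eq:Bminusexpression} and $B_-=b_-$. This cancels the Dirac mass.
\item The term $\int_0^x \big(C^{(1)}_\pm(t,x'/\kappa) - c^{\rm inv}_\pm(t,0)\big)\,dx'$ is $\lesssim \kappa$ by the exponential decay \eqref{eq:C1expdecaytoendstate}.
\item The term $\int_0^x \big(c^{\rm inv}_\pm(t,0)-c^{\rm inv}_\pm(t,x')\big)\,dx'$ is $\lesssim x^2\lesssim\kappa^{2\alpha}$ by Taylor expansion.
\end{itemize}
Since $\alpha<1$, these combine to $\kappa+\kappa^{2\alpha}\lesssim\kappa^{2\alpha}$.

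\textbf{Main obstacle.} The step most likely to need care is verifying the exact no-flux identity for $c^{\rm bl}_-$ at $x=0$. This identity is what makes $\mc{R}^m$ the primitive of $\mc{R}$ with no boundary constant. If it failed, a time-dependent $O(1)$ constant would spoil the bound, so I would check it first against the explicit formula \eqref{eq:C1minusexplicitformula}.
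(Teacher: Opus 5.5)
Your route is the paper's: integrate the $c$-level Lemmas~\ref{lem:BL_residual} and~\ref{lem:matchingbd}. The residual part is correct. Your check that $c^{\rm bl}_\pm$ carries exactly zero flux at $x=0$ is what makes $\mc{R}^m_\pm=\int_0^x\mc{R}_\pm$, and the factor $\ell_\kappa$ then gives $\kappa^{2\alpha}+\kappa^{3\alpha-1}$. The derivative bound in the matching region is also fine.

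The gap is the last line of the matching argument. Your three pieces are correctly bounded by $e^{-\delta\kappa^{\alpha-1}}$, $\kappa$ and $\kappa^{2\alpha}$. But for $\alpha\in[1/2,1)$ and $\kappa\le 1$ we have $2\alpha\ge 1$, hence $\kappa^{2\alpha}\le\kappa$. So the sum is $\lesssim\kappa$, and $\kappa+\kappa^{2\alpha}\lesssim\kappa^{2\alpha}$ holds only at $\alpha=1/2$. The relevant comparison is $2\alpha$ versus $1$, not $\alpha$ versus $1$.

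A sharper estimate of the middle piece cannot fix this, because it is genuinely of size $\kappa$. For $x\ge\kappa^\alpha$ it equals $\kappa\int_0^\infty\bigl(C^{(1)}_\pm-c^{\rm inv}_\pm|_{x=0}\bigr)\,dX$ up to an exponentially small error, and this is generically nonzero. In Case~1, \eqref{eq:Aincase1} gives for the $+$ component the value $-\kappa\, r_0 b_-(t)/\bigl((\beta_--V^{(0)})(\beta_++\beta_-)\bigr)$. This is independent of $x$, so the $O(x^2)$ Taylor piece cannot cancel it. Hence whenever $b_-(t)\neq 0$ and $\alpha>1/2$, one has $|m^{\rm bl}_+-m^{\rm inv}_+|\gtrsim\kappa\gg\kappa^{2\alpha}$ on the matching region.

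So the first matching bound, as stated, is only valid at $\alpha=1/2$. The paper's one-line justification (each bound ``gains a factor $\kappa^\alpha$'') misses that $\int_0^x$ runs through the layer, where Lemma~\ref{lem:matchingbd} says nothing. What your decomposition actually proves is the sharp bound $|m^{\rm bl}_\pm-m^{\rm inv}_\pm|\lesssim\kappa$, and you should state and prove that instead of $\kappa^{2\alpha}$.

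This weaker bound is all that is used later:
\begin{itemize}
\item In Lemma~\ref{lem:resbds_mass}, $H^{m,c}_\pm$ and the bound \eqref{eq:extraVterm} become $O(\kappa)$.
\item The matching part of $I^m_\pm$ becomes $O(\kappa^{2-\alpha})$.
\item In the proof of Theorem~\ref{thm:mainthm}, $\bm{J}^b=O(\kappa)$.
\end{itemize}
All of these are absorbed into $\kappa^\alpha+\kappa^{2\alpha-1}$, so the final convergence rate is unchanged.
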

Note that each of the bounds gains a factor of $\kappa^\alpha$ over the analogous bound for $c^{\rm bl}_\pm = \p_xm^{\rm bl}_\pm$.

We may then show the following residual bounds for $m_\pm^{\rm app}$ given by \eqref{eq:mapp}.
\begin{lemma}[Residual bounds for $m_\pm^{\rm app}$]\label{lem:resbds_mass}
The mass variable approximation $m_\pm^{\rm app}$ given by \eqref{eq:mapp} satisfies
  \begin{equation}
    \p_t m_\pm^{\rm app} + (V^{\rm app}\pm\beta_\pm) \p_x m_\pm^{\rm app} - \kappa \p_x^2 m_\pm^{\rm app} \pm r_0 (m_+^{\rm app} - m_-^{\rm app}) = E_\pm^m \,,
  \end{equation}
where $E_\pm^m$ satisfies
\begin{equation}
  \norm{E_\pm^m}_{L^\infty_tL^1_x([0,T]\times\R_+)} \lesssim \kappa^{\alpha}+\kappa^{2\alpha-1}\,.
\end{equation}
\end{lemma}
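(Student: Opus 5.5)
We mirror the proof of Lemma~\ref{lem:res_bds}, working at the level of the mass variables. Write $\mc{L}^{\rm inv}_\pm[m] := \p_t m + (V^{\rm inv}\pm\beta_\pm)\p_x m - \kappa\p_x^2 m$. Since $m^{\rm app}_\pm = \varphi^\kappa m^{\rm bl}_\pm + (1-\varphi^\kappa) m^{\rm inv}_\pm$ and $\varphi^\kappa$ is time-independent, the Leibniz rule gives
\begin{equation*}
\begin{aligned}
\mc{L}^{\rm inv}_\pm[m^{\rm app}_\pm] \pm r_0(m^{\rm app}_+ - m^{\rm app}_-) &= \varphi^\kappa \mc{R}^m_\pm + (1-\varphi^\kappa)\mc{R}^{m,\rm inv}_\pm \\
&\quad + (V^{\rm inv}\pm\beta_\pm)\p_x\varphi^\kappa (m^{\rm bl}_\pm - m^{\rm inv}_\pm) \\
&\quad - \kappa\big(\p_x^2\varphi^\kappa (m^{\rm bl}_\pm - m^{\rm inv}_\pm) + 2\p_x\varphi^\kappa\p_x(m^{\rm bl}_\pm - m^{\rm inv}_\pm)\big).
\end{aligned}
\end{equation*}
The linear tumbling term commutes exactly with the gluing, so no $J$-type terms arise.

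The outer residual $\mc{R}^{m,\rm inv}_\pm$ is handled by integrating the inviscid equation from $0$ to $x$. On $x>0$ this gives $\p_t m^{\rm inv}_\pm + (V^{\rm inv}\pm\beta_\pm)\p_x m^{\rm inv}_\pm \pm r_0(m^{\rm inv}_+ - m^{\rm inv}_-) = 0$. Here the boundary fluxes cancel: the Dirac mass is included in $m^{\rm inv}_-$, and one uses the boundary ODE \eqref{eq:bminuseqn}--\eqref{eq:cinv_plus_bval}, as in the mass conservation computation. Hence $\mc{R}^{m,\rm inv}_\pm = -\kappa\p_x c^{\rm inv}_\pm$, whose $L^1$ norm is $\lesssim \kappa$ because $\bm{c}^{\rm inv}\in X^{2,1}(T)$. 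The inner residual is controlled by Lemma~\ref{lem:mass_BL}, giving $\lesssim \kappa^{2\alpha}+\kappa^{3\alpha-1}$.

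For the matching terms, supported on an interval of length $\kappa^\alpha$, we use $|\p_x^j\varphi^\kappa|\lesssim\kappa^{-j\alpha}$ and the matching bounds of Lemma~\ref{lem:mass_BL}. This gives $L^1$ bounds $\kappa^{\alpha}\kappa^{-\alpha}\kappa^{2\alpha}=\kappa^{2\alpha}$ for the transport term, and $\kappa\cdot\kappa^{\alpha}\kappa^{-2\alpha}\kappa^{2\alpha}=\kappa^{1+\alpha}$ and $\kappa\cdot\kappa^{\alpha}\kappa^{-\alpha}\kappa^{\alpha}$ for the diffusion terms. Note that $V^{\rm inv}$ is bounded.

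The main step is to replace $V^{\rm inv}$ by $V^{\rm app}$, which produces the extra term $(V^{\rm app}-V^{\rm inv})\p_x m^{\rm app}_\pm = (V^{\rm app}-V^{\rm inv}) c^{\rm app}_\pm$. Its $L^1$ norm is at most $\|V^{\rm app}-V^{\rm inv}\|_{L^\infty}\|c^{\rm app}_\pm\|_{L^1}$. By \eqref{eq:cpmappform}, $\|c^{\rm app}_\pm\|_{L^1}\lesssim 1$: the $c^{\rm bl}$ part contains the $O(1)$-mass spike $\kappa^{-1}C^{(0)}_-$, and the matching term is $O(\kappa^{2\alpha})$. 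So it suffices to show $\|V^{\rm app}-V^{\rm inv}\|_{L^\infty}\lesssim \kappa^\alpha+\kappa^{2\alpha-1}$. This is where $V|_{x=0}\neq0$ is harmless, since we never pair $V^{\rm app}-V^{\rm inv}$ with $\p_x c$.

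To bound $V^{\rm app}-V^{\rm inv}$, use the splitting \eqref{eq:Vappfromdefm}. For $\hat V^{\rm app}-V^{\rm inv}$, the $R_1+R_2+R_3$ decomposition and \eqref{eq:app_inv_diff} give $\lesssim \kappa^{2\alpha}+\kappa$, using the boundedness of $K$ and $\p_yK$ from \eqref{eq:decayonkernel}; the refinement \eqref{eq:app_inv_diff_refine}, which needs $K(0,\cdot)=0$, is not required. The remaining piece $V[\p_x\varphi^\kappa(\bm{m}^{\rm bl}-\bm{m}^{\rm inv})]$ is bounded by $\|K\|_\infty\kappa^{-\alpha}\kappa^{\alpha}\kappa^{2\alpha}=O(\kappa^{2\alpha})$. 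Since $\kappa\le1$ and $\alpha\ge1/2$, summing all contributions gives $\|E^m_\pm\|_{L^\infty_tL^1_x}\lesssim\kappa^\alpha+\kappa^{2\alpha-1}$.

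The step needing care is the outer identity, specifically checking that the Dirac mass makes the boundary flux terms cancel exactly when integrating from $x=0^+$.
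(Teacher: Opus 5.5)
Your proposal is correct and follows the paper's proof essentially step for step. Both use the same Leibniz splitting of $m^{\rm app}_\pm$, the inner residual from Lemma~\ref{lem:mass_BL}, the $O(\kappa)$ outer diffusion term, the matching terms on $[\kappa^\alpha,2\kappa^\alpha]$, and the bound $|V^{\rm app}-V^{\rm inv}|\lesssim\kappa^{2\alpha}+\kappa$ from \eqref{eq:app_inv_diff} plus the extra $O(\kappa^{2\alpha})$ matching contribution to $V$, always paired with $\|c^{\rm app}_\pm\|_{L^1}\lesssim 1$ rather than with a derivative. The differences are cosmetic: you collect the paper's $H^{m,a}_\pm$, $H^{m,b}_\pm$ and the $V^{\rm app}-V^{\rm inv}$ part of $H^{m,c}_\pm$ into the single term $(V^{\rm app}-V^{\rm inv})\p_x m^{\rm app}_\pm$, and you explicitly verify that the Dirac mass in $m^{\rm inv}_-$ together with \eqref{eq:bminuseqn}--\eqref{eq:cinv_plus_bval} makes the outer identity hold exactly, a step the paper only asserts.
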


\begin{proof}
First, using an analogous decomposition to \eqref{eq:diff_app_term} for the diffusion term, we may write   
\begin{equation}
\begin{aligned}
  &\p_t m_\pm^{\rm app}-\kappa \p_x^2 m_\pm^{\rm app}\pm r_0(m_+^{\rm app}-m_-^{\rm app})\\
  &\quad =
  \varphi^\kappa \left(\p_tm^{\rm bl}_\pm - \kappa\p_x^2m^{\rm bl}_\pm 
  \pm r_0(m^{\rm bl}_+-m^{\rm bl}_-) \right)\\
  &\qquad + (1-\varphi^\kappa) \left( \p_tm_\pm^{\rm inv} \pm r_0(m_+^{\rm inv}-m_-^{\rm inv}) \right)
  - I^m_\pm\,,\\
  I_\pm^m &= \kappa\big(\p_x^2\varphi^\kappa(m^{\rm bl}_\pm-m_\pm^{\rm inv})+2\p_x\varphi^\kappa\p_x(m^{\rm bl}_\pm-m_\pm^{\rm inv})\big)
  +(1-\varphi^\kappa)\kappa\p_x^2m_\pm^{\rm inv}\,.
\end{aligned}
\end{equation}
Using the matching region bounds of Lemma \ref{lem:matchingbd} and that $\p_x^2m^{\rm inv}_\pm=\p_xc^{\rm inv}_\pm$ in the interior, we may bound
\begin{equation}
  \norm{I^m_\pm}_{L^\infty_tL^1_x} \lesssim \kappa \,.
\end{equation}

Next, turning to the transport/propulsion term, we note that the difference $\hat{V}^{\rm app}-V^{\rm inv}$ continues to satisfy the bound \eqref{eq:app_inv_diff}. However, we are no longer able to take advantage of additional smallness in the boundary layer since $V$ is no longer required to vanish at $x=0$. We will thus use that, from \eqref{eq:app_inv_diff}, we have 
\begin{equation}\label{eq:Vdiff_non0}
\begin{aligned}
  \abs{\hat{V}^{\rm app}-V^{\rm inv}} 
  &\lesssim \kappa^{2\alpha}+\kappa\,.
\end{aligned}
\end{equation}
Additionally, from Lemma~\ref{lem:mass_BL} and the mapping property $V : L^1(\R_+) \to L^\infty(\R_+)$,
\begin{equation}
    \label{eq:extraVterm}
     |V[\p_x \varphi^\kappa (\bm{m}^{\rm app} - \bm{m}^{\rm inv})]| \lesssim \kappa^{2\alpha} \, . 
\end{equation}
Combining~\eqref{eq:Vappfromdefm},~\eqref{eq:Vdiff_non0}, and~\eqref{eq:extraVterm}, we obtain
\begin{equation}
    \label{eq:VappVinvdiff}
    |V^{\rm app} - V^{\rm inv}| \lesssim \kappa^{2\alpha} +\kappa\, . 
\end{equation}
We may then write 
\begin{equation}
\begin{aligned}
  (V^{\rm app}\pm\beta_\pm)\p_xm^{\rm app}_\pm &= 
  \varphi^\kappa(V^{\rm inv}\pm\beta_\pm)\p_xm^{\rm bl}_\pm
  +(1-\varphi^\kappa)(V^{\rm inv}\pm\beta_\pm)\p_xm^{\rm inv}_\pm\\
  &\quad + H^{m,a}_\pm + H^{m,b}_\pm + H^{m,c}_\pm
\end{aligned}
\end{equation}
where the remainder terms are given by
\begin{equation}
\begin{aligned}
  H^{m,a}_\pm &= \varphi^\kappa(V^{\rm app}-V^{\rm inv})\p_xm^{\rm bl}_\pm \\
  H^{m,b}_\pm &= (1-\varphi^\kappa)(V^{\rm app}-V^{\rm inv})\p_xm^{\rm inv}_\pm \\
  H^{m,c}_\pm &= (V^{\rm app}\pm\beta_\pm)\p_x\varphi^\kappa (m^{\rm bl}_\pm-m^{\rm inv}_\pm)\,.
\end{aligned}
\end{equation}
Using \eqref{eq:VappVinvdiff} and Lemma \ref{lem:mass_BL} and proceeding as in \eqref{eq:haest}, we may bound each of these remainders in turn as
\begin{equation}
\begin{aligned}
  \norm{H^{m,a}_\pm}_{L^\infty_tL^1_x} &\le \norm{V^{\rm app}-V^{\rm inv}}_{L^\infty_tL^\infty_x}\norm{\varphi^\kappa c^{\rm bl}_\pm}_{L^\infty_tL^1_x} \\
  &\lesssim  \kappa^{2\alpha}  +\kappa \\ 
  \norm{H^{m,b}_\pm}_{L^\infty_tL^1_x} &\le \norm{V^{\rm app}-V^{\rm inv}}_{L^\infty_tL^\infty_x}\norm{(1-\varphi^\kappa)c^{\rm inv}_\pm}_{L^\infty_tL^1_x} \\
  &\lesssim (\kappa^{2\alpha} +\kappa) \norm{c^{\rm inv}_\pm}_{L^\infty_tL^1_x}\ \\
  \norm{H^{m,c}_\pm}_{L^\infty_tL^1_x} &\le  \norm{V^{\rm app}\pm\beta_\pm}_{L^\infty_tL^\infty_x}\norm{\p_x\varphi^\kappa(m^{\rm bl}_\pm-m^{\rm inv}_\pm)}_{L^\infty_tL^1_x}\\
  &\lesssim \kappa^{2\alpha}\,.
\end{aligned}
\end{equation}

Altogether, inserting $m^{\rm app}_\pm$ into the system \eqref{eq:MAIN_massvars}, we obtain
\begin{equation}
\begin{aligned}
  &\p_t m_\pm^{\rm app} + (V^{\rm app}\pm\beta_\pm) \p_x m_\pm^{\rm app} - \kappa \p_x^2 m_\pm^{\rm app} \pm r_0 (m_+^{\rm app} - m_-^{\rm app})\\
  &\qquad = \varphi^\kappa\left(\underbrace{\p_t m^{\rm bl}_\pm + (V^{\rm inv}\pm\beta_\pm) \p_x m^{\rm bl}_\pm - \kappa \p_x^2 m^{\rm bl}_\pm \pm r_0 (m^{\rm bl}_+ - m^{\rm bl}_-)}_{=\mc{R}^m_\pm} \right)\\
  &\qquad\quad +(1-\varphi^\kappa)\left(\underbrace{\p_t m_\pm^{\rm inv} + (V^{\rm inv}\pm\beta_\pm) \p_x m_\pm^{\rm inv} \pm r_0 (m_+^{\rm inv} - m_-^{\rm inv})}_{=0} \right)\\
  &\qquad\qquad - I^m_\pm +H^{m,a}_\pm + H^{m,b}_\pm + H^{m,c}_\pm\,.
\end{aligned}
\end{equation}
Defining 
\begin{equation}
  E_\pm^m = \varphi^\kappa\mc{R}^m_\pm - I^m_\pm +H^{m,a}_\pm + H^{m,b}_\pm + H^{m,c}_\pm\,,
\end{equation}
we obtain Lemma~\ref{lem:resbds_mass}.
\end{proof}

\section{Vanishing diffusivity limit and proof of Theorem~\ref{thm:mainthm}}
\label{sec:vanishingdiff}

In this section, we estimate the difference between a solution to the diffusive system~\eqref{eq:viscousequation}-\eqref{eq:Vexample} and a \emph{given} approximate solution to the same system. We have in mind that the approximate solutions are those furnished by Section~\ref{sec:approxsols}. We suppose that the kernel $K$ satisfies~\eqref{eq:smoothnessofK}-\eqref{eq:decayonkernel} and $f$ satisfies~\eqref{eq:fintermsofr}-\eqref{eq:tumblingassumptiononr}. It will be convenient to write the system \eqref{eq:viscousequation}-\eqref{eq:nofluxBCs} in the compact form
\begin{align}
    \p_t \bm{c} + \p_x ((V + B) \bm{c}) &= \kappa \p_x^2 \bm{c} +\bm{R}[\bm{c}]  \label{eq:mainequation}\\
    (V + B - \kappa \p_x) \bm{c}\big|_{x=0} &= 0  \\
V &= V[\bm{c}] \, , \label{eq:mainequationc}
\end{align}
where $\bm{c} = (c_+,c_-)$, $B = {\rm diag}(\beta_+,-\beta_-)$, and $(\bm{R}[\bm{c}])_{\pm} = \pm f(c_+,c_-)$.

\begin{proposition}[Error estimate in Case 2]
    \label{pro:errorestcase2}
    Let $T > 0$ and $\bm{c}^{\rm in} \in L^1(\R_+)$. Invoke the above assumptions on $K$ and $f$ and assume that Case 2 ($K(0,y) = 0$ for all $y \geq 0$) holds. Suppose that there exists $\kappa_0 > 0$ such that, for every $\kappa \in (0,\kappa_0]$, there exists an approximate solution $\bm{c}^{\rm app}$ satisfying
    \begin{align}
    \sup_{\kappa \in (0,\kappa_0]} \| |\bm{c}^{\rm app}| + \min(|x|,1) &|\p_x \bm{c}^{\rm app}| \|_{L^\infty_t L^1_x([0,T] \times \R_+)} < +\infty \,, \label{eq:capp_1}\\
\label{eq:capp_2}
    \p_t \bm{c}^{\rm app} + \p_x ((V^{\rm app} + B) \bm{c}^{\rm app}) &= \kappa \p_x^2 \bm{c}^{\rm app} +\bm{R}[\bm{c}^{\rm app}] - \bm{E}^{\kappa} \, , \\
    (B - \kappa \p_x) \bm{c}^{\rm app}\big|_{x=0} &= 0 \, , \label{eq:capp_3}
\end{align}
where $\bm{E}^{\kappa} \in L^\infty_t L^1_x([0,T] \times \R_+)$, $V^{\rm app} := V[\bm{c}^{\rm app}]$, and
\begin{equation}
    \bm{c}^{\rm in} - \bm{c}^{\rm app}\big|_{t=0} =: \bm{E}_0^{\kappa} \in L^1(\R_+) \, .
\end{equation}
Let $\bm{c}^\kappa$ be the solution to~\eqref{eq:mainequation}-\eqref{eq:mainequationc} with initial condition $\bm{c}^{\rm in}$. Then
\begin{equation}
    \| \bm{c}^\kappa - \bm{c}^{\rm app} \|_{L^\infty_t L^1_x([0,T] \times \R_+)} \lesssim_T \| \bm{E}^{\kappa} \|_{L^\infty_t L^1_x([0,T] \times \R_+)} + \| \bm{E}_0^{\kappa} \|_{L^1_x(\R_+)} \, .
\end{equation}
\end{proposition}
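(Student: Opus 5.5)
We estimate the difference $\bm{w} := \bm{c}^\kappa - \bm{c}^{\rm app}$ by an $L^1$ contraction-type argument: test the difference equation against a regularized sign, $\sgn_\epsilon(w_\pm)$, and use Gr\"onwall. Subtracting \eqref{eq:capp_2} from \eqref{eq:mainequation} gives
\begin{equation*}
\p_t \bm{w} + \p_x((V^\kappa + B)\bm{w}) - \kappa \p_x^2 \bm{w} = -\p_x\big((V^\kappa - V^{\rm app})\bm{c}^{\rm app}\big) + \bm{R}[\bm{c}^\kappa] - \bm{R}[\bm{c}^{\rm app}] + \bm{E}^\kappa ,
\end{equation*}
where $V^\kappa - V^{\rm app} = V[\bm{w}]$. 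Because $K(0,y)=0$, we have $V^\kappa|_{x=0} = V^{\rm app}|_{x=0}=0$, so the boundary conditions for $\bm{c}^\kappa$ and \eqref{eq:capp_3} combine to the homogeneous no-flux condition $(B - \kappa\p_x)\bm{w}|_{x=0} = 0$. This is where Case~2 enters.

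First, the Kato-type inequality. Multiplying by $\sgn_\epsilon(w_\pm)$ and integrating over $\R_+$, the diffusion term produces a non-positive bulk contribution. The transport term $\p_x((V^\kappa \pm \beta_\pm) w_\pm)$ contributes, after integration by parts, a boundary term that combines with the diffusive boundary term into the total flux $((V^\kappa\pm\beta_\pm) w_\pm - \kappa \p_x w_\pm)|_{x=0}\sgn(w_\pm) = 0$. It also leaves a bulk term bounded by $\|\p_x V^\kappa\|_{L^\infty}\|w_\pm\|_{L^1}$ plus an $o(1)$ error as $\epsilon\to0$. Since $\p_x K$ is bounded, $\|\p_x V^\kappa\|_{L^\infty}\lesssim \|\bm{c}^\kappa\|_{L^1}$. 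The latter is bounded because total mass is conserved and, assuming nonnegative data, positivity is preserved; otherwise one bounds $\|\bm{c}^\kappa\|_{L^1}\le \|\bm{c}^{\rm app}\|_{L^1}+\|\bm{w}\|_{L^1}$ and closes with a bootstrap. The tumbling difference is handled by the Lipschitz bound $|f(\bm{c}^\kappa)-f(\bm{c}^{\rm app})|\lesssim |\bm{w}|(1+|\bm{c}^{\rm app}|)$, using boundedness of $r$ and $r'$. This is not quite Lipschitz in $L^1$ because of the factor $c^{\rm app}$. The remedy is to write $f(c^\kappa)-f(c^{\rm app}) = -w_+ r(c_-^\kappa) + w_- r(c_+^\kappa) - c_+^{\rm app}(r(c_-^\kappa)-r(c_-^{\rm app})) + c_-^{\rm app}(r(c_+^\kappa)-r(c_+^{\rm app}))$. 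The first two terms are bounded by $\|r\|_\infty|\bm{w}|$. For the last two, use the bound $|c^{\rm app}(r(a)-r(b))|\le 2\|r\|_\infty |c^{\rm app}|$ only where needed, and otherwise $\|r'\|_\infty|c^{\rm app}||w|$. This remains delicate; the cleaner observation is that the summed quantity $\sum_\pm \pm f\,\sgn(w_\pm)$ should be organized so that only $\|r\|_\infty, \|r'\|_\infty$ and an $L^1$ bound appear. I expect to need $L^\infty$ control of $c^{\rm app}$ away from the layer, and inside the layer the structure $c^{\rm app}_+ = O(1)$ and $c^{\rm app}_-\sim \kappa^{-1}$ paired with $r(c_+)$, which is bounded. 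So the term $c_-^{\rm app}(r(c_+^\kappa)-r(c_+^{\rm app}))$ is the dangerous one.

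The main obstacle is the nonlocal commutator term $\p_x(V[\bm{w}]\, \bm{c}^{\rm app})$, whose $L^1$ norm is at most $\|\p_xV[\bm{w}]\|_\infty\|\bm{c}^{\rm app}\|_{L^1} + \|V[\bm{w}]\,\p_x\bm{c}^{\rm app}\|_{L^1}$. The first piece is $\lesssim\|\bm{w}\|_{L^1}$. For the second, $\p_x\bm{c}^{\rm app}$ is of size $\kappa^{-1}$ in $L^1$, but \eqref{eq:capp_1} only controls $\min(|x|,1)|\p_x\bm{c}^{\rm app}|$. Case~2 gives exactly this: $K(0,y)=0$ implies $|V[\bm{w}](x)| \le |x|\,\|\p_xK\|_\infty\|\bm{w}\|_{L^1}$, and also $|V[\bm{w}]|\lesssim \|\bm{w}\|_{L^1}$. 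Hence $|V[\bm{w}]|\lesssim \min(|x|,1)\|\bm{w}\|_{L^1}$, and the term is bounded by $\|\bm{w}\|_{L^1}$ times the uniform constant in \eqref{eq:capp_1}. The same weighted bound also handles the dangerous tumbling piece if one applies the $\min(|x|,1)$ mechanism to $c^{\rm app}$ itself. Failing that, I would split the tumbling nonlinearity as $\pm f$ and note that $\sum_\pm \sgn(w_\pm)(\pm\cdot)$ of the $c_-^{\rm app}$ term has a sign favourable to an $L^1$-contraction, using monotonicity in the style of Kru\v{z}kov. Collecting terms gives $\frac{d}{dt}\|\bm{w}\|_{L^1}\le C\|\bm{w}\|_{L^1} + \|\bm{E}^\kappa\|_{L^1}$, and Gr\"onwall on $[0,T]$ yields the claim, with constants uniform in $\kappa\in(0,\kappa_0]$.
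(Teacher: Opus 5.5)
Your skeleton is the paper's. You subtract the equations and use $K(0,\cdot)=0$ so that $V^\kappa$ and $V^{\rm app}$ vanish at $x=0$ and $\bm w$ inherits $(B-\kappa\p_x)\bm w|_{x=0}=0$. You then test against a regularized sign, control $\p_x(V[\bm w]\bm c^{\rm app})$ via $|V[\bm w]|\lesssim\min(x,1)\|\bm w\|_{L^1}$ and \eqref{eq:capp_1}, and close with Gr\"onwall. The paper handles $\p_x(V^\kappa\bm w)$ more economically: $\int\p_x(V^\kappa w_\pm)\sgn w_\pm\,dx=\int\p_x(V^\kappa|w_\pm|)\,dx=-V^\kappa(0)|w_\pm(0)|=0$, so it needs no bound on $\|\bm c^\kappa\|_{L^1}$. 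All of that is fine.

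\textbf{The gap is the tumbling term.} You leave it open, and none of your three remedies closes it. Since $\p_{c_+}f=-r(c_-)+c_-r'(c_+)$, any splitting of $f(\bm c^\kappa)-f(\bm c^{\rm app})$ puts a factor $c_-^\kappa$ or $c_-^{\rm app}$ in front of $r(c_+^\kappa)-r(c_+^{\rm app})$. Neither has a $\kappa$-uniform $L^\infty$ bound; for the approximate solutions of Section~2, $c_-^{\rm app}\approx\kappa^{-1}q\,e^{-\beta_-x/\kappa}$ near the wall. A plain $L^1$ Gr\"onwall therefore runs at rate $\sim\|r'\|_\infty q/\kappa$. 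Your three fixes fail as follows.
\begin{enumerate}
\item Replacing $|r(a)-r(b)|$ by $2\|r\|_\infty$ leaves $\int|c_-^{\rm app}|$ over that region. This is an $O(1)$ quantity controlled neither by $\|\bm w\|_{L^1}$ nor by $\bm E^\kappa,\bm E_0^\kappa$.
\item The $\min(|x|,1)$ trick works only because $V[\bm w]$ vanishes at the wall. The factor $r(c_+^\kappa)-r(c_+^{\rm app})$ does not vanish there: $w_+$ satisfies the Robin condition $\beta_+w_+=\kappa\p_xw_+$. Moreover, \eqref{eq:capp_1} puts no weight on $\bm c^{\rm app}$ itself.
\item Where $w_+>0>w_-$, the term contributes $2c_-^{\rm app}r'(\xi)w_+$ to $\frac{d}{dt}\int(|w_+|+|w_-|)$, which has the sign of $r'$. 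So the Kru\v{z}kov-type argument works exactly when $r'\le0$ (with nonnegative densities). It fails for \eqref{eq:example_r} with $r_1>0$.
\end{enumerate}

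\textbf{The paper does not supply the missing idea either.} Its proof disposes of this term in one line, asserting $\int|f(\bm c^\kappa)-f(\bm c^{\rm app})|\,dx\lesssim\int|\tilde{\bm c}|\,dx$ ``by \eqref{eq:tumblingassumptiononr}'', i.e.\ treating $f$ as globally Lipschitz. Your suspicion is justified: that inequality needs $L^\infty$ control of $c_-$ and is off by $\kappa^{-1}$ in the layer. For example, $w_-=0$ and $w_+=\epsilon$ on $[0,\kappa]$ (zero elsewhere) give a left side $\sim q\epsilon$ against a right side $\epsilon\kappa$ when $r'\neq0$.

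To close the estimate you need an extra input. One option is to restrict to $r'\le0$. Then your item 3 gives a complete proof, since with nonnegative densities the whole tumbling contribution is nonpositive. The other option is a weighted $L^1$ estimate with weights depending on $x/\kappa$. It would exploit that $w_+$ is swept out of the layer at speed $\beta_+$ in time $O(\kappa)$, so each passage amplifies it by an $O(1)$ factor rather than $e^{O(T/\kappa)}$. In the stretched variable this is a stability (positive supersolution) property of the linearized incoming ODE $\beta_+\p_X\tilde A-\p_X^2\tilde A=qe^{-\beta_-X}r'(A)\tilde A$. The same $\kappa^{-1}$ term also feeds $w_-$, so both components must be weighted. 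This is plausible under a smallness condition like \eqref{eq:assumptiononqandwhatnot}, but not automatic.
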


\begin{remark}
    In Case~2, the existence and uniqueness of an exact solution $\bm{c}$ to the diffusive system~\eqref{eq:mainequation}-\eqref{eq:mainequationc} can be established via a fixed point argument in the space $C([0,T];L^1(\R_+))$.
\end{remark}

\begin{proof}
Let $\tilde{\bm{c}} := \bm{c}^\kappa - \bm{c}^{\rm app}$ and $V^\kappa := V[\bm{c}^\kappa]$. Then
\begin{equation}
\begin{aligned}
    &\p_t \tilde{\bm{c}} + B \p_x \tilde{\bm{c}}  - \kappa \p_x^2 \tilde{\bm{c}}  = - \p_x ([V^\kappa - V^{\rm app}] \bm{c}^{\rm app}) - \partial_x (V^\kappa \tilde{\bm{c}})
     \\
    &\qquad   + (\bm{R}[\bm{c}^\kappa] - \bm{R}[\bm{c}^{\rm app}]) + \bm{E}^{\kappa}
\end{aligned}
\end{equation}
with boundary conditions
\begin{equation}
    (B - \kappa \p_x) \tilde{\bm{c}}\big|_{x=0} = 0 \, .
\end{equation}
We estimate both components of $\tilde{\bm{c}}$ in $L^1$. That is, we multiply each equation by $\sgn \tilde{c}_{\pm}$, respectively, integrate over $\R_+$, and sum the equations.\footnote{To justify the computations, one actually multiplies by  $\eta_\varepsilon'(\tilde{c}_{\pm})$, where $\eta_\varepsilon$ is a suitable approximation of $|x|$, e.g., $\eta_\varepsilon(u) = \sqrt{u^2 + \varepsilon^2} - \varepsilon$ (cf. \cite{berlyand2026multiscale}).} This requires estimating various terms in $L^1$; in particular, 
\begin{equation}
\begin{aligned}
        &\int_{\R_+} |\p_x ([V^\kappa - V^{\rm app}] \bm{c}^{\rm app})|\,dx\\
        &\quad\leq \int_{\R_+} |\p_x (V^\kappa - V^{\rm app})| \, |\bm{c}^{\rm app}|\,dx + \int_{\R_+} |V^\kappa - V^{\rm app}| \, |\p_x \bm{c}^{\rm app}|\,dx\\
        & \quad \lesssim  \int_{\R_+} \big(|\tilde{c}_+| + |\tilde{c}_-| \big)\,dx \, ,
\end{aligned}
\end{equation}
where
\begin{equation}
\begin{aligned}
    \| V^\kappa - V^{\rm app} \|_{\rm Lip} &\lesssim \| \tilde{\bm{c}} \|_{L^1} \, , \quad \| \bm{c}^{\rm app} \|_{L^1} \lesssim 1\\
    \| (V^\kappa - V^{\rm app})/\min(x,1) \|_{L^\infty} &\lesssim \| \tilde{\bm{c}} \|_{L^1} \, , \quad \| \min(x,1) \p_x \bm{c}^{\rm app} \|_{L^1} \lesssim 1\,,
\end{aligned}
\end{equation}
Furthermore, expanding $\partial_x (V^\kappa \tilde{c}_{\pm})$, integrating by parts, and using the fact that $V^\kappa|_{x=0}=0$, we obtain
\begin{align*}
    \int_{\R_{+}} \partial_x (V^\kappa \tilde{c}_{\pm}) \, ({\rm sgn} \,  \tilde{c}_{\pm}) \, dx  = 0 \, .
\end{align*}
Next, by \eqref{eq:tumblingassumptiononr},
\begin{equation}
\begin{aligned}
    \int_{\R_+} |f(\bm{c}^\kappa) - f(\bm{c}^{\rm app})| \, dx \lesssim \int_{\R_+} |\tilde{\bm{c}}| \,dx\, .
\end{aligned}
\end{equation}
Altogether, we ultimately arrive at the differential inequality
\begin{equation}
    \frac{d}{dt} \int_{\R_+} \big(|\tilde{c}_+| + |\tilde{c}_-|\big)\,dx \leq C \int_{\R_+} \big(|\tilde{c}_+| + |\tilde{c}_-| \big)\,dx + \int_{\R_+} \big(|E_+^\kappa| + |E_-^\kappa|\big)\,dx \, .
\end{equation}
By Gr{\"o}nwall's inequality, we then obtain 
\begin{equation}
    \int_{\R_+} \big(|\tilde{c}_+| + |\tilde{c}_-|\big)\,dx \leq e^{Ct}\big( \| E_{0,+}^\kappa \|_{L^1} + \| E_{0,-}^\kappa \|_{L^1} +  t \| \bm{E}^\kappa \|_{L^\infty_t L^1_x([0,t] \times \R_+)}\big) \, .
\end{equation}
\end{proof}

In Case~1, we exploit the mass variable formulation:
\begin{align}
    \p_t \bm{m} + (V + B) \p_x \bm{m} &= \kappa \p_x^2 \bm{m} + R \bm{m}  \label{eq:massvariablesec3a} \\
    {\bm{m}}\big|_{x=0} &= 0  \label{eq:massvariablesec3b} \\
    V &= V[\p_x \bm{m}] \,, \label{eq:massvariablesec3c}
\end{align}
with the correspondence $\bm{m}(x) = \int_0^x \bm{c}(y) \, dy$ and tumbling operator $R := r_0 \left[\begin{smallmatrix}
    -1 & 1 \\
    1 & -1
\end{smallmatrix} \right]$.

\begin{proposition}[Error estimate in Case 1]
    \label{pro:errorestcase1}
Let $T > 0$ and $\bm{c}^{\rm in} \in L^1(\R_+)$. Define $\bm{m}^{\rm in}(x) := \int_0^x \bm{c}^{\rm in}(y) \, dy$. Invoke the above assumptions on $K$ and suppose that Case~1 ($r \equiv r_0 > 0$ const.) holds.

Suppose that there exists $\kappa_0 > 0$ such that, for every $\kappa \in (0,\kappa_0]$, there exists an approximate solution $\bm{m}^{\rm app}$ satisfying 
    \begin{align}
    \sup_{\kappa \in (0,\kappa_0]} \| \p_x \bm{m}^{\rm app} &\|_{L^\infty_t L^1_x([0,T] \times \R_+)} < +\infty  \label{eq:bmappass} \\
\label{eq:bmappass2}
    \p_t \bm{m}^{\rm app} + (V^{\rm app} + B) \p_x \bm{m}^{\rm app} &= \kappa \p_x^2 \bm{m}^{\rm app} + R \bm{m}^{\rm app} - \bm{E}^{m,\kappa} \\
    {\bm{m}}^{\rm app}\big|_{x=0} &= 0  \label{eq:bmappass3}
\end{align}
where $\bm{E}^{m,\kappa} \in L^\infty_t L^1_x([0,T] \times \R_+)$, $V^{\rm app} = V[\p_x \bm{m}^{\rm app}]$, 
\begin{equation}
    \bm{m}^{\rm in} - \bm{m}^{\rm app}\big|_{t=0} =: \bm{E}_0^{m,\kappa} \in L^1(\R_+) \, .
\end{equation}
Let $\bm{m}^\kappa$ be the solution to~\eqref{eq:massvariablesec3a}-\eqref{eq:massvariablesec3c}.
Then
\begin{equation}
    \| \bm{m}^\kappa - \bm{m}^{\rm app} \|_{L^\infty_t L^1_x([0,T] \times \R_+)} \lesssim \| \bm{E}^{m,\kappa} \|_{L^\infty_t L^1_x([0,T] \times \R_+)} + \| \bm{E}_0^{m,\kappa} \|_{L^1_x(\R_+)} \, .
\end{equation}
\end{proposition}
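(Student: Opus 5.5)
We estimate the error in the mass formulation exactly as in Proposition~\ref{pro:errorestcase2}, via an $L^1$ energy (Kato-type) argument. Set $\tilde{\bm{m}} := \bm{m}^\kappa - \bm{m}^{\rm app}$ and $V^\kappa := V[\p_x \bm{m}^\kappa]$. Subtracting~\eqref{eq:bmappass2} from~\eqref{eq:massvariablesec3a} gives
\begin{equation*}
\p_t \tilde{\bm{m}} + (V^\kappa + B)\p_x \tilde{\bm{m}} - \kappa \p_x^2 \tilde{\bm{m}} = -(V^\kappa - V^{\rm app})\p_x \bm{m}^{\rm app} + R\tilde{\bm{m}} + \bm{E}^{m,\kappa} \, ,
\end{equation*}
with homogeneous Dirichlet data $\tilde{\bm{m}}|_{x=0}=0$ from~\eqref{eq:massvariablesec3b} and~\eqref{eq:bmappass3}. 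We multiply the $\pm$ components by (a regularization $\eta_\varepsilon'$ of) $\sgn \tilde m_\pm$, integrate over $\R_+$, and sum.

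We treat the terms in turn. The diffusion term gives a nonpositive contribution: the Dirichlet condition kills the boundary term, and $-\kappa\int \eta_\varepsilon''(\tilde m)|\p_x\tilde m|^2\le 0$. The transport term is handled by writing $(V^\kappa\pm\beta_\pm)\p_x \tilde m_\pm \,\sgn\tilde m_\pm = (V^\kappa\pm\beta_\pm)\p_x|\tilde m_\pm|$ and integrating by parts. The boundary contribution vanishes since $|\tilde m_\pm|=0$ at $x=0$, and this is the key point where, unlike Case~2, no assumption $V|_{x=0}=0$ is needed. What remains is $-\int \p_x V^\kappa |\tilde m_\pm|$, bounded by $\|\p_xV^\kappa\|_{L^\infty}\|\tilde m_\pm\|_{L^1}$. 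Since~\eqref{eq:decayonkernel} gives $|\p_x K|\lesssim 1$, we have $\|\p_x V^\kappa\|_{L^\infty}\lesssim \|\p_x\bm{m}^\kappa\|_{L^1}$. This is uniformly bounded on $[0,T]$, because the $L^1$ norm (the mass) of $\bm{c}^\kappa=\p_x\bm{m}^\kappa$ is conserved for nonnegative solutions, or at worst is controlled by Gr\"onwall from the $c$-equation in $L^1$ using the boundedness of $V$ and $\p_x V$. The tumbling term $R\tilde{\bm{m}}$ is trivially bounded by $2r_0\|\tilde{\bm m}\|_{L^1}$, and the forcing contributes $\|\bm{E}^{m,\kappa}\|_{L^1}$.

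The term I expect to be the main obstacle is the nonlocal term $(V^\kappa - V^{\rm app})\p_x \bm{m}^{\rm app}$, where $\p_x \bm m^{\rm app}$ is concentrated in the boundary layer. We need $\|V^\kappa - V^{\rm app}\|_{L^\infty}\lesssim \|\tilde{\bm m}\|_{L^1}$, which we obtain by integrating by parts in the kernel:
\begin{equation*}
V[\p_x\tilde{\bm m}](x) = \int_{\R_+} K(x,y)\,\p_y(\tilde m_++\tilde m_-)\,dy = -\int_{\R_+}\p_yK(x,y)\,(\tilde m_++\tilde m_-)(y)\,dy \, .
\end{equation*}
The boundary term vanishes at $y=0$ by Dirichlet and at $y=\infty$ because $\tilde{\bm m}$ tends to a constant (the difference of total masses). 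That constant need not vanish, but $\tilde{\bm m}\in L^1$ is part of the hypothesis-driven framework, so we either assume $\tilde{\bm m}\in L^1$ (which forces equal masses) or handle the tail via the decay of $K$. With $|\p_yK|\lesssim 1$ this yields $\|V^\kappa-V^{\rm app}\|_{L^\infty}\lesssim\|\tilde{\bm m}\|_{L^1}$. Combined with~\eqref{eq:bmappass}, the term is bounded by $C\|\tilde{\bm m}\|_{L^1}$; the crucial gain is that only $\|\p_x\bm m^{\rm app}\|_{L^1}=O(1)$ appears, never a $1/\kappa$ factor.

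Collecting the estimates gives $\frac{d}{dt}\|\tilde{\bm m}\|_{L^1}\le C\|\tilde{\bm m}\|_{L^1}+\|\bm E^{m,\kappa}\|_{L^1}$, with $C$ independent of $\kappa$. Gr\"onwall's inequality then yields the claimed bound with a constant $e^{CT}(1+T)$. The $\varepsilon$-regularization and the justification of the integrations by parts are routine and proceed as in the footnote to Proposition~\ref{pro:errorestcase2}.
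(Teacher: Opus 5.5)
Your proof is correct and is essentially the paper's argument: an $L^1$ estimate for $\tilde{\bm m}$ in which the Dirichlet condition removes the boundary terms, the bound $\|V^\kappa-V^{\rm app}\|_{L^\infty}\lesssim\|\tilde{\bm m}\|_{L^1}$ obtained by integrating by parts against $\p_y K$, and Gr\"onwall. The only difference is the splitting. You pair the velocity difference with $\p_x\bm m^{\rm app}$ and transport $\tilde{\bm m}$ by $V^\kappa$, which is how the paper splits in Case~2. In Case~1 the paper instead writes $-(V^\kappa-V^{\rm app})\p_x\bm m^\kappa - V^{\rm app}\p_x\tilde{\bm m}$. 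The same two inputs, hypothesis~\eqref{eq:bmappass} and a $\kappa$-uniform bound on $\|\bm c^\kappa\|_{L^1}$, are therefore just used on swapped terms.

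For that $\kappa$-uniform bound you need neither nonnegativity (which the statement does not assume) nor a Gr\"onwall argument through $\|\p_x V^\kappa\|_{L^\infty}$, which would presuppose the bound. Test the $c_\pm$ equations with $\sgn c_\pm$: the advective and diffusive boundary contributions combine into the total flux at $x=0$, which vanishes by the no-flux condition, and the tumbling term has a good sign. This gives $\|\bm c^\kappa(t)\|_{L^1}\le\|\bm c^{\rm in}\|_{L^1}$, which is what the paper uses.
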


\begin{remark}
    In Case~1, existence and uniqueness of an exact solution $\bm{m}$ to~\eqref{eq:massvariablesec3a}-\eqref{eq:massvariablesec3c} can be established via fixed point argument in $C([0,T];X)$, where $X$ consists of $\dot W^{1,1}(\R_+)$ functions vanishing at $x=0$. Then $\bm{c} = \p_x \bm{m}$ furnishes a solution to~\eqref{eq:mainequation} which satisfies the boundary conditions in a weak sense. That solutions $\tilde{\bm{m}} \in C([0,T];X)$ to the equation~\eqref{eq:tildemequatoin} below additionally belong to $C([0,T];L^1)$ may also be established by fixed point argument.
\end{remark}

\begin{proof}
    Let $\tilde{\bm{m}} := \bm{m}^\kappa - \bm{m}^{\rm app}$. Then $\tilde{\bm{m}}$ satisfies
    \begin{equation}\label{eq:tildemequatoin}
    \p_t \tilde{\bm{m}} + B \p_x \tilde{\bm{m}} -\kappa \p_x^2 \tilde{\bm{m}}=- (V^\kappa - V^{\rm app}) \p_x \bm{m}^\kappa - V^{\rm app} \p_x \tilde{\bm{m}}  + R \tilde{\bm{m}} + \bm{E}^{m,\kappa}
\end{equation}
with the zero Dirichlet condition
\begin{equation}
    \tilde{\bm{m}}\big|_{x=0} = 0 \, .
\end{equation}
As in the proof of Proposition~\ref{pro:errorestcase2}, we multiply the equations by suitable approximation of ${\rm sgn} \, \tilde{m}_{\pm}$ and perform $L^1$ estimates on both components of $\tilde{\bm{m}}$. This again requires estimating various terms in $L^1$. First,
\begin{equation}
    \int_{\R_+} |V^\kappa -  V^{\rm app}| \, |\p_x \bm{m}^{\kappa}|\,dx \leq \| V^\kappa -V^{\rm app} \|_{L^\infty} \| \p_x \bm{m}^{\kappa} \|_{L^1} \lesssim \| \tilde{\bm{m}} \|_{L^1} \, ,
\end{equation}
by the assumptions on $V[\cdot]$ and the \emph{a priori} control
\begin{equation}
    \| |\p_x m_+| + |\p_x m_-| \|_{L^1} = \| |c_+| + |c_-| \|_{L^1} \leq \| |c^{\rm in}_+| + |c^{\rm in}_-| \|_{L^1} \, .
\end{equation}
Second, integrating by parts in the integral involving the second term on the right-hand side of~\eqref{eq:tildemequatoin}, we have
\begin{equation}
    \int_{\R_+} |\p_x V^{\rm app}| \, |\tilde{\bm{m}}|\,dx \lesssim \| \tilde{\bm{m}} \|_{L^1} \, ,
\end{equation}
by the assumptions on $V[\cdot]$ and~\eqref{eq:bmappass}. Third, we have
\begin{equation} 
    R \bm{\tilde{m}} \cdot (\sgn \tilde{m}_+,\sgn \tilde{m}_-) = (\tilde{m}_- - \tilde{m}_+) \sgn \tilde{m}_+ + (\tilde{m}_+ - \tilde{m}_-) \sgn \tilde{m}_- \leq 0 \, .
\end{equation}
Altogether, we again have
\begin{equation}
\begin{aligned}
    \frac{d}{dt} \int_{\R_+} \big(|\tilde{m}_+| + |\tilde{m}_-|\big)\,dx &\leq C \int_{\R_+} \big(|\tilde{m}_+| + |\tilde{m}_-|\big)\,dx \\
    &\qquad + \int_{\R_+} \big(|E_+^{m,\kappa}| + |E_-^{m,\kappa}|\big)\,dx \, .
\end{aligned}
\end{equation}
We therefore conclude by Gr{\"o}nwall's inequality.
\end{proof}

With Propositions~\ref{pro:errorestcase2} and~\ref{pro:errorestcase1} in hand, we can complete the proof of Theorem~\ref{thm:mainthm}.

\begin{proof}[Proof of Theorem~\ref{thm:mainthm}]
Fix $\alpha \in [1/2,1)$. We begin with \textbf{Case~2}, in which case $\bm{c}^{\rm app}$ is given directly by \eqref{eq:capp} for $\varphi^\kappa$ as in \eqref{eq:varphikappa}.

 Suppose the hypotheses of the theorem hold; in particular, $b_-^{\rm in}=0$. Recalling the form \eqref{eq:ckappa_pm} of $c_\pm^{\rm bl}(x,t)$, we may then calculate that $c_\pm^{\rm bl}\big|_{t=0}= C^{(1)}_{\pm}\big|_{t=0}$, so that the initial boundary layer approximation is bounded independent of $\kappa$:
    \begin{equation}\label{eq:cbl_initial}
        \abs{c_\pm^{\rm bl}\big|_{t=0}} \lesssim 1\,.
    \end{equation}
Since $\bm{c}^{\rm app}\big|_{t=0}= \varphi^\kappa\bm{c}^{\rm bl}\big|_{t=0} + (1-\varphi^\kappa) \bm{c}^{\rm in}$, by \eqref{eq:cbl_initial}, the initial condition error satisfies
    \begin{equation}
        \norm{\bm{E}_0^{\kappa}}_{L^1_x(\R_+)} \lesssim \kappa^\alpha \,.
    \end{equation}
   
We further note that 
\begin{equation}
    \norm{\bm{c}^{\rm app}}_{L^\infty_tL^1_x([0,T]\times \R_+)} \le \norm{\bm{c}^{\rm bl}}_{L^\infty_tL^1_x([0,T]\times\rm{supp}(\varphi^\kappa))}
    + \norm{\bm{c}^{\rm inv}}_{L^\infty_tL^1_x([0,T]\times \R_+)}
    \lesssim 1\,,
\end{equation}
independent of $\kappa$, and by \eqref{eq:C0sol}-\eqref{eq:Bminusexpression}, \eqref{eq:C1plus_derivs}-\eqref{eq:C1minus_derivs}, and Lemma~\ref{lem:matchingbd},
\begin{equation}
\begin{aligned}
    &\norm{{\rm min}(x,1)\abs{\p_x\bm{c}^{\rm app}}}_{L^\infty_tL^1_x([0,T]\times \R_+)}
    \le \norm{x\p_x\bm{c}^{\rm bl}}_{L^\infty_tL^1_x([0,T]\times {\rm supp}(\varphi^\kappa))}
    \\
    &\quad 
    + \norm{\p_x \bm{c}^{\rm inv}}_{L^\infty_tL^1_x([0,T]\times \R_+)}
    + \norm{x\abs{\p_x\varphi^\kappa (\bm{c}^{\rm bl} - \bm{c}^{\rm inv})}}_{L^\infty_tL^1_x([0,T]\times {\rm supp}(\p_x\varphi^\kappa))} \\
    &\qquad\qquad \lesssim \norm{\frac{x}{\kappa^2}e^{-\frac{\delta x}{2\kappa}}}_{L^\infty_tL^1_x([0,T]\times {\rm supp}(\varphi^\kappa))}
    + \sum_{\pm} \norm{c^{\rm inv}_{\pm}}_{X^{1,1}(T)} + \kappa^\alpha \, ,
\end{aligned}
\end{equation}
again independent of $\kappa$; in particular, $\bm{c}^{\rm app}$ satisfies the condition \eqref{eq:capp_1}.

Using Lemma~\ref{lem:res_bds}, we then have that $\bm{c}^{\rm app}(t,x)$ satisfies \eqref{eq:capp_1}-\eqref{eq:capp_3} with 
\begin{equation}
    \norm{\bm{E}^\kappa}_{L^\infty_tL^1_x([0,T]\times\R_+)}
    \lesssim \kappa^\alpha +\kappa^{2\alpha-1}\,.
\end{equation}
Thus by Proposition \ref{pro:errorestcase2}, we have that 
\begin{equation}
    \norm{\bm{c}^\kappa-\bm{c}^{\rm app}}_{L^\infty_tL^1_x([0,T]\times\R_+)}
    \lesssim \kappa^\alpha +\kappa^{2\alpha-1}\,.
\end{equation}

 We next verify convergence of $c^{\rm app}_{\pm}$ to $\mu^{\rm inv}_{\pm}$ as in \eqref{eq:cappdiff2}. It is evident that $\norm{\cdot}_{\rm FM(\overline{\R}_+)} \leq \| \cdot \|_{L^1(\R_+)}$ on $L^1(\R_+)$ functions, and we first note that
\begin{equation}\label{eq:needbd1}
    \sup_{t\in[0,T]}\norm{(1-\varphi^\kappa) \bm{c}^{\rm inv}-\bm{c}^{\rm inv}}_{{\rm FM}(\overline{\R}_+)}
    \le
    \sup_{t\in[0,T]}\norm{\varphi^\kappa \bm{c}^{\rm inv}}_{L^\infty_tL^1_x([0,T]\times \R_+)} \lesssim \kappa^\alpha \,.
\end{equation}
Furthermore, using that $C^{(1)}_\pm(t,\frac{x}{\kappa})$ in \eqref{eq:ckappa_pm} is bounded in $x$ (see \eqref{eq:C1expdecaytoendstate}), we have 
\begin{equation}\label{eq:needbd2}
    \sup_{t\in[0,T]}\norm{\varphi^\kappa C^{(1)}_\pm}_{{\rm FM}(\overline{\R}_+)} \lesssim \kappa^\alpha\,.
\end{equation}
Finally, we show that $\frac{\varphi^\kappa}{\kappa}C^{(0)}_-$ converges to $\delta_0(x) b_-(t)$. We will require a variation of the fact that, for any $\epsilon>0$,
\begin{equation}\label{eq:to_delta}
    \| \frac{\varphi^{c\epsilon}}{\epsilon} e^{-x/\epsilon} - \delta_0 \|_{\rm FM(\bar{\R}_+)} \lesssim \epsilon \, ,
\end{equation}
where $\varphi^{c\epsilon}$ is a cutoff as in \eqref{eq:varphikappa} for any $\alpha \in[1/2,1)$ and any $c>0$.
Indeed, by the test function characterization \eqref{eq:fmnorm} of the ${\rm FM}$ norm, we obtain 
\begin{equation}
\begin{aligned}
    &\left| \int_0^{+\infty} \frac{\varphi^{c\epsilon}}{\epsilon} e^{-x/\epsilon} \psi(x) \, dx - \psi(0) \right| \\
    &\quad\le \left| \int_0^{+\infty} \frac{1}{\epsilon} e^{-x/\epsilon} \psi(x) \, dx - \psi(0) \right| + \left| \int_0^{+\infty} \frac{1-\varphi^{c\epsilon}}{\epsilon} e^{-x/\epsilon} \psi(x) \, dx  \right|\\
    &\quad\leq \underbrace{\| \p_x \psi \|_{L^\infty} \int_0^{+\infty} \frac{x}{\epsilon} e^{-x/\epsilon}\,dx}_{\leq \epsilon} 
    + \underbrace{\left| \int_{(c\epsilon)^\alpha}^{+\infty} \frac{1}{\epsilon} e^{-x/\epsilon} \psi(x) \, dx  \right|}_{\lesssim  \exp(-c^{\alpha} \varepsilon^{\alpha-1})  }   \, ,
\end{aligned}
\end{equation}
by Taylor expansion of $\psi$ around zero.
Recalling the form \eqref{eq:Cminusexpressionwithq} of $C^{(0)}_-$, at each time, we will make use of \eqref{eq:to_delta} with $\epsilon = \frac{\kappa}{\beta_--V^{(0)}(t)}$ and a prefactor of $\frac{q(t)}{\beta_--V^{(0)}(t)}$. In particular, we have
\begin{equation}\label{eq:to_delta2}
    \| \frac{\varphi^\kappa}{\kappa}C^{(0)}_-(t,\cdot) - \frac{q(t)}{\beta_--V^{(0)}(t)}\delta_0 \|_{\rm FM(\bar{\R}_+)} \lesssim \kappa \, .
\end{equation}
Recalling \eqref{eq:Bminusexpression}, i.e., that $b_-(t)=\frac{q(t)}{\beta_--V^{(0)}(t)}$, we obtain
\begin{equation}\label{eq:to_delta3}
    \| \frac{\varphi^\kappa}{\kappa}C^{(0)}_-(t,\cdot) - b_-(t)\delta_0 \|_{\rm FM(\bar{\R}_+)} \lesssim \kappa \, .
\end{equation}
Combining \eqref{eq:needbd1}, \eqref{eq:needbd2}, and \eqref{eq:to_delta3}, we obtain 
\begin{equation}\label{eq:capp_muinv}
    \sup_{t\in[0,T]}\norm{c_\pm^{\rm app}-\mu^{\rm inv}_{\pm}}_{{\rm FM}(\overline{\R}_+)} \lesssim \kappa^\alpha\,.
\end{equation}

Finally, an application of the triangle inequality completes the proof of Theorem~\ref{thm:mainthm} in Case~2 with $1-\theta = 2\alpha-1$.

We now turn to \textbf{Case~1}. In this case, the approximate solution was defined in \eqref{eq:mapp} as $\bm{m}^{\rm app}$ (gluing the inner and outer approximate solution was done at the level of $\bm{m}$ rather than $\bm{c}$), and $\bm{c}^{\rm app} := \p_x \bm{m}^{\rm app}$. (This is required to correctly interpret~\eqref{eq:cappdiff0}-\eqref{eq:cappdiff2}.)

Note that
$\bm{m}^{\rm app}\big|_{t=0}= \varphi^\kappa\int_0^x\bm{c}^{\rm bl}\big|_{t=0}dx' + (1-\varphi^\kappa) \bm{m}^{\rm in}$ so that, by \eqref{eq:cbl_initial}, the initial error again satisfies
\begin{equation}
    \norm{\bm{E}_0^{m,\kappa}}_{L^1_x(\R_+)} \lesssim \kappa^\alpha\,.
\end{equation}
Using Lemma~\ref{lem:resbds_mass}, we have that $\bm{m}^{\rm app}(t,x)$ satisfies 
\eqref{eq:bmappass}-\eqref{eq:bmappass3} with
\begin{equation}
    \norm{\bm{E}^{m,\kappa} }_{L^\infty_tL^1_x([0,T]\times\R_+)}
    \lesssim \kappa^\alpha +\kappa^{2\alpha-1}\,.
\end{equation}
By Proposition \ref{pro:errorestcase1}, we then have
\begin{equation}\label{eq:mappmkapp}
    \norm{\bm{m}^\kappa-\bm{m}^{\rm app}}_{L^\infty_tL^1_x([0,T]\times\R_+)}
    \lesssim \kappa^\alpha +\kappa^{2\alpha-1}\,.
\end{equation}

To quantify the error at the $\bm{c}$ level in the ${\rm FM}$ norm, we require the following observation: Suppose that $c \in L^1(\R_+)$ and $m(x) := \int_0^x c(y) \, dy$. Suppose that, additionally, $m \in L^1(\R_+)$. Then
\begin{equation}\label{eq:L1controlofFM}
    \| c(x) \, dx \|_{\rm FM(\overline{\R}_+)} \leq \| m \|_{L^1(\R_+)} \, .
\end{equation}
Indeed, by integration by parts,
\begin{equation}
    \left| \int_{\overline{\R}_+} \phi(x) c(x) \, dx \right| = \left| \phi(0) m(0) - \int \p_x \phi(x) m(x) \, dx \right| \, .
\end{equation}
A consequence of \eqref{eq:mappmkapp}-\eqref{eq:L1controlofFM} is that
\begin{equation}\label{eq:case1FMbd1}
    \sup_{t\in[0,T]}\norm{c_\pm^\kappa-c_\pm^{\rm app}}_{{\rm FM}(\overline{\R}_+)} \lesssim \kappa^\alpha +\kappa^{2\alpha-1}\,.
\end{equation}

We next turn to the difference $\bm{c}^{\rm app}-\bm{c}^{\rm inv}$. We may write
\begin{equation}
\begin{aligned}
    \bm{c}^{\rm app} &= \p_x\bm{m}^{\rm app}
    = \p_x\big(\varphi^\kappa\bm{m}^{\rm bl}+(1-\varphi^\kappa)\bm{m}^{\rm inv} \big)\\
    &= \underbrace{\varphi^\kappa \bm{c}^{\rm bl}+(1-\varphi^\kappa)\bm{c}^{\rm inv}}_{=:\bm{J}^a} +\underbrace{\p_x\varphi^\kappa(\bm{m}^{\rm bl}-\bm{m}^{\rm inv})}_{=:\bm{J}^b}\,.
\end{aligned}
\end{equation}
Following the same arguments as in \eqref{eq:capp_muinv}, we have
\begin{equation}
    \sup_{t\in[0,T]}\norm{J_\pm^a - \mu^{\rm inv}_\pm}_{{\rm FM}(\overline{\R}_+)} \lesssim \kappa^\alpha +\kappa^{2\alpha-1}\,.
\end{equation}
Furthermore, using Lemma \ref{lem:mass_BL}, we may bound
\begin{equation}
    \sup_{t\in[0,T]}\norm{J_\pm^b}_{{\rm FM}(\overline{\R}_+)} \le 
    \sup_{t\in[0,T]}\norm{\p_x\varphi^\kappa(m_\pm^{\rm bl}-m_\pm^{\rm inv})}_{L^1(\R_+)} 
    \lesssim  \kappa^{2\alpha}\,.
\end{equation}
Altogether, we obtain
\begin{equation}\label{eq:case1FMbd2}
    \sup_{t\in[0,T]}\norm{c^{\rm app}_\pm-\mu^{\rm inv}_\pm}_{{\rm FM}(\overline{\R}_+)} \lesssim \kappa^\alpha+\kappa^{2\alpha-1}\,.
\end{equation}

In conclusion, applying the triangle inequality to \eqref{eq:case1FMbd1} and \eqref{eq:case1FMbd2} yields Theorem \ref{thm:mainthm} in Case~1 with $1-\theta = 2\alpha-1$. \end{proof}

\section{The incoming ODE}\label{sec:ODE}

We consider the following nonlinear ODE problem for a bounded function $A : [0,+\infty) \to \R$ depending on parameters $\nu, \gamma > 0$ and $q \in \R$:
\begin{equation}
    \label{eq:Aproblem}
\begin{aligned}
        \gamma \p_X A - \p_X^2 A &= q e^{- \nu X} r(A) \\
        (\gamma - \p_X) A\big|_{X=0} &= 0 \, .
\end{aligned}
\end{equation}
In the context of Section~\ref{sec:approxsols}, we have (at any fixed time)
\begin{equation}
    A(X) = C_+^{(1)}(X) \, , \quad q = C_-^{(0)}\big|_{X=0} \, , \quad \gamma = V^{(0)}+\beta_+ \, , \quad \nu = V^{(0)}-\beta_- \, ;
\end{equation}
i.e., $A$ corresponds to the swimmers reentering the domain from the wall. 
Integrating once in $X$, equation~\eqref{eq:Aproblem} is equivalent to
\begin{equation}
    \gamma A - \p_X A = q \int_0^X e^{- \nu Z} r(A(Z)) \, dZ \, .
\end{equation}
The solutions which remain bounded as $X \to +\infty$ are precisely the solutions to the integral equation\footnote{One can justify this by writing the standard Duhamel formula and observing that there is a unique choice of $A(0)$ (which determines the contribution $e^{\gamma X} A(0)$ to Duhamel's formula) for which solutions will be bounded. This is akin to justification of the forward-backward Duhamel formula used in the proof of the stable manifold theorem, see, e.g.,~\cite[Section 9.2, p. 256-257]{TeschlODEBook}.}
\begin{equation}
    \label{eq:integralequationforA}
    A(X) = q \int_X^{+\infty} e^{\gamma (X-Y)} \int_0^{Y} e^{-\nu Z} r(A(Z)) \, dZ \, dY \, .
\end{equation}
Such solutions moreover decay to a constant as $X \to +\infty$, and we wish to quantify the decay. By the saturation assumption~\eqref{eq:tumblingassumptiononr} on $r(\cdot)$, there exists $R_0 > 0$ such that $|r(A(Z))| \leq R_0$, and therefore
\begin{equation}
    \label{eq:cformula}
    c_\infty(A) := \lim_{X \to +\infty} A(X) = \frac{q}{\gamma} \int_0^{+\infty} e^{-\nu Z} r(A(Z)) \, dZ
\end{equation}
converges, and
\begin{equation}\label{eq:tailest1}
    \left| \int_Y^{+\infty} e^{-\nu Z} r(A(Z)) \, dZ \right| \leq \frac{R_0}{\nu} e^{-\nu Y}
\end{equation}
\begin{equation}\label{eq:tailest2}
    |A(X) - c_\infty(A)| \leq |q| \int_X^{+\infty} e^{\gamma(X-Y)} \frac{R_0}{\nu} e^{-\nu Y}  \, dY \leq |q| \frac{R_0}{\nu} \frac{1}{\gamma+\nu} e^{-\nu X} \, .
\end{equation}
From this, one may differentiate the expression for $A$ to obtain exponential decay estimates on $\p_x^k A$, $k \in \N$. Given the assumption above~\eqref{eq:tumblingassumptiononr} that $r > 0$, such solutions $A$ have the same sign as $q$.

We now apply the implicit function theorem~\cite[Theorem I.1.1]{KielhoferBook} to obtain solutions to equation~\eqref{eq:integralequationforA} for certain values of the parameters. 
For $\nu > 0$, let $\mathbb{X}_\nu$ be the Banach space consisting of pairs $(B,c) \in C_0([0,+\infty)) \times \R$ satisfying
\begin{equation}
    \| (B,c) \|_{\mathbb{X}_\nu} := \| e^{\nu X} B \|_{L^\infty(\R_+)} + |c| < +\infty \, .
\end{equation}
We may identify $\mathbb{X}$ with the Banach space of functions which decay exponentially with rate $e^{-\nu X}$ to a constant value at infinity, so that there is a unique decomposition $A = B+c$. 
By the change of variables
\begin{equation}
    \label{eq:changeofvariablesforbarA}
    \bar{A}(\bar{X}) = A(q,\gamma,\nu)(X) \, , \quad \bar{q} = q/\nu^2 \, , \quad \bar{\gamma} = \gamma/\nu \, , \quad \bar{X} = \nu X \, ,
\end{equation}
it will be sufficient to consider the problem with $\nu=1$: 
\begin{equation}
    \label{eq:barAproblem}
    \begin{aligned}
        \bar{\gamma} \p_{\bar{X}} \bar{A} - \p_{\bar{X}}^2 \bar{A} &= \bar{q} e^{-\bar{X}} r(\bar{A}) \\
        (\bar{\gamma} - \p_{\bar{X}}) \bar{A}\big|_{\bar{X}=0} &= 0 \, .
\end{aligned}
\end{equation}
We abbreviate $\mathbb{X} := \mathbb{X}_1$ and omit bars from our notation when convenient. Consider the nonlinear function
\begin{equation}
\begin{aligned}
    \mathbf{F}(A,q,\gamma) &: \mathbb{X} \times \R \times \R_+ \to \mathbb{X} \, , \\
   \mathbf{F}(A,q,\gamma) &= A - q \int_X^{+\infty} e^{\gamma (X-Y)} \int_0^Y e^{-Z} r(A(Z)) \, dZ \, dY \, ,
\end{aligned}
\end{equation}
or, in $(B,c)$ components,
\begin{equation}
    [\mathbf{F}(A,q,\gamma)]_c = c - \frac{q}{\gamma} \int_0^{+\infty} e^{-Z} r(A(Z)) \, dZ
\end{equation}
\begin{equation}
    [\mathbf{F}(A,q,\gamma)]_B = B + q \int_X^{+\infty} e^{\gamma (X-Y)} \int_{Y}^{+\infty} e^{-Z} r(A(Z)) \, dZ \, dY \, .
\end{equation}
Under the assumptions on $r$, it is not difficult to verify that $\mathbf{F}$ is smooth, and
\begin{equation}
    \label{eq:derivativeofF}
    D_A \mathbf{F}\big|_{(A,q,\gamma)} \tilde{A} = \tilde{A} - q \int_X^{+\infty} e^{\gamma (X-Y)} \int_0^Y e^{-Z} r'(A(Z)) \tilde{A}(Z) \, dZ \, dY \, ,
\end{equation}
or, in components,
\begin{equation}
    \label{eq:derivativeofFccomp}
    [D_A \mathbf{F}(A,q,\gamma) \tilde{A}]_c = \tilde{c} - \frac{q}{\gamma} \int_0^{+\infty} e^{-Z} r'(A(Z)) \tilde{A}(Z) \, dZ
\end{equation}
\begin{equation}
\label{eq:derivativeofFBcomp}
    [D_A \mathbf{F}(A,q,\gamma) \tilde{A}]_B = \tilde{B} + q \int_X^{+\infty} e^{\gamma (X-Y)} \int_{Y}^{+\infty} e^{-Z} r'(A(Z)) \tilde{A}(Z) \, dZ \, dY \, .
\end{equation}

Since, for any $\gamma_0 > 0$, $(A,q,\gamma) = (0,0,\gamma_0)$ is a solution and $D_A \mathbf{F}\big|_{(0,0,\gamma)} = {\rm Id}_{\mathbb{X}}$, the implicit function theorem yields a unique small solution $(q,\gamma) \mapsto A(X;q,\gamma)$ for $(q,\gamma)$ in a neighborhood of $(0,\gamma_0)$. We continue this solution to a maximal open parameter region $E \subset \R_q \times (\R_+)_{\gamma}$ containing an open neighborhood of the $\gamma$-semi-axis by continuation in $q$.\footnote{This is done by successive application of the implicit function theorem to obtain that, for each $\gamma_0$, there exists a unique maximal smooth curve $[q_{\rm min}(\gamma_0),q_{\rm max}(\gamma_0)] \to \mathbb{X} : q \mapsto A(X;q,\gamma_0)$ satisfying $A(X;0,\gamma) \equiv 0$. The implicit function theorem implies that these curves are also smooth jointly in $(q,\gamma_0)$.} 
The function $A$ is smooth in the $\mathbb{X}$ topology as a function of $(q,\gamma) \in E$. Moreover, from the integral formulation~\eqref{eq:integralequationforA}, we may further deduce that $\p_x^k A$, $k \in \N$, is smooth in the $\| e^X \cdot \|_{L^\infty_X(\R_+)}$ topology as a function of $(q,\gamma) \in E$. In conclusion, we have

\begin{proposition}
    \label{pro:Abar}
There exists a maximal $q$-connected\footnote{In this context, we mean that any open set $E' \supsetneq E$ having such a smooth function $\bar{A} : E' \to \mathbb{X}$ satisfying the ODE problem~\eqref{eq:barAproblem} must contain two points $(q_1,\gamma_0) \ne (q_2,\gamma_0)$ for which the line connecting them does not belong to $E'$.} open set $E \subset \R_{\bar{q}} \times (\R_+)_{\bar{\gamma}}$ containing the $\bar{\gamma}$-semi-axis $\{ (0,\bar{\gamma}) \in \R \times \R_+ \}$ and a smooth function
\begin{equation}
    \label{eq:barAlives}
    \bar{A}(\bar{q},\bar{\gamma}) : E \to \mathbb{X}
\end{equation}
satisfying the ODE problem~\eqref{eq:barAproblem}. Hence, $\bar{c}_\infty(\bar{q},\bar{\gamma}) := \lim_{X \to +\infty} \bar{A}(\bar{q},\bar{\gamma})(X)$ is a smooth function. The functions $(\bar{q},\bar{\gamma}) \mapsto \p_X^k A$, $k \in \N$, are smooth in the topology induced by $\| e^X \cdot \|_{L^\infty_X(\R_+)}$.
\end{proposition}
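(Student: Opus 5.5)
The plan is to assemble the argument sketched just before the statement into a clean continuation argument for the fixed-point map $\mathbf{F}$ on $\mathbb{X}\times\R\times\R_+$. First I will verify that $\mathbf{F}$ is well defined and $C^\infty$ from $\mathbb{X}\times\R\times\R_+$ into $\mathbb{X}$. The $c$-component is a bounded integral of $r(A)$ against $e^{-Z}$. For the $B$-component, the tail bound \eqref{eq:tailest1} gives
\[
\Bigl|q\int_X^{+\infty}e^{\gamma(X-Y)}\int_Y^{+\infty}e^{-Z}r(A(Z))\,dZ\,dY\Bigr|\le \frac{|q|R_0}{1+\gamma}e^{-X},
\]
so $[\mathbf{F}]_B$ indeed lies in the weighted space. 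Smoothness in $A$ follows from \eqref{eq:tumblingassumptiononr}. The Nemytskii map $A\mapsto r(A)$ is $C^\infty$ from $L^\infty$ to $L^\infty$, since all derivatives of $r$ are bounded and uniformly continuous. The outer operators are bounded linear maps from $L^\infty$ into $\mathbb{X}$, with operator norms depending smoothly on $\gamma>0$ (differentiating in $\gamma$ produces factors $(X-Y)$, which are absorbed by the $e^{\gamma(X-Y)}$ decay). This yields smoothness jointly in $(A,q,\gamma)$, with derivative \eqref{eq:derivativeofF}.

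Next I will define $E$ by continuation. At $(0,0,\gamma_0)$ we have $D_A\mathbf{F}=\mathrm{Id}$, so the implicit function theorem \cite[Theorem I.1.1]{KielhoferBook} gives a smooth local branch $A(q,\gamma)$ near $(0,\gamma_0)$ with $A(0,\gamma)\equiv0$. I will then let $E$ be the union of all open sets $U$ that contain the semi-axis, are $q$-connected (each slice $U\cap\{\gamma=\gamma_0\}$ is an interval containing $0$), and carry a smooth solution map $U\to\mathbb{X}$ that vanishes on the axis.

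To glue these maps I need uniqueness. On each slice, two smooth branches agree near $q=0$ by the local uniqueness in the implicit function theorem. Their agreement set is closed by continuity, and it is open by local uniqueness at any point where $D_A\mathbf{F}$ is invertible. Invertibility holds along any branch used in the construction, since each branch is built by repeated implicit-function steps, so the agreement set is the whole interval. Hence the maps patch together to a smooth map on $E$. Maximality in the stated sense is automatic from taking the union.

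This uniqueness/gluing step is the main obstacle. Close to a saddle node (cf. Figure~\ref{fig:ODEforA}), $D_A\mathbf{F}$ fails to be invertible. A naive union could then pick incompatible branches, so I will define $E$ slice by slice as the maximal interval on which the continued branch has invertible $D_A\mathbf{F}$, and argue the following. First, $D_A\mathbf{F}=\mathrm{Id}+K$ with $K$ compact in the relevant sense (smoothing Volterra-type integrals with exponential weights), so $D_A\mathbf{F}$ is Fredholm of index zero. Invertibility is therefore an open condition, which makes $E$ open in $(q,\gamma)$. Second, joint smoothness in $(q,\gamma)$ follows from applying the implicit function theorem in both parameters at each point.

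Finally, I will read off the consequences. $\bar c_\infty=[\bar A]_c$ is a continuous linear projection of a smooth map, hence smooth. For $k\ge1$, differentiating the integral formula \eqref{eq:integralequationforA} gives
\[
\p_X A=\gamma A - q\int_0^X e^{-Z}r(A)\,dZ=\gamma B+q\int_X^\infty e^{-Z}r(A)\,dZ,
\]
where the second form uses \eqref{eq:cformula}. This is a smooth function of $(A,q,\gamma)$ valued in $\|e^X\cdot\|_{L^\infty}$. Higher derivatives follow by induction using the ODE $\p_X^2A=\gamma\p_XA-qe^{-X}r(A)$, which shows that $\p_X^kA$ is smooth in that weighted topology. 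Undoing the scaling \eqref{eq:changeofvariablesforbarA} then yields Proposition~\ref{pro:ODEsummaryforstuff}.
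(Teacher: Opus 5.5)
Your plan is essentially the paper's argument, and it is correct at the paper's level of detail. The ingredients match: the implicit function theorem at $(A,q,\gamma)=(0,0,\gamma_0)$, where $D_A\mathbf{F}=\mathrm{Id}$; slice-by-slice continuation in $q$ for as long as $D_A\mathbf{F}$ stays invertible (your revised definition of $E$ is exactly the paper's footnote); joint smoothness in $(q,\gamma)$ from the two-parameter implicit function theorem; and smoothness of $\p_X^k A$ in the $e^X$-weighted norm, read off from the integral formulation via $\p_X A=\gamma B+q\int_X^\infty e^{-Z}r(A)\,dZ$.

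Two small remarks. First, openness of the set of invertible operators needs no Fredholm input; openness of $E$ really comes from running the implicit function theorem along the compact segment $[0,q_1]\times\{\gamma_1\}$. Second, your remark that maximality is ``automatic from taking the union'' applies only to the union definition you then discard, so for the invertibility-based $E$ the footnote's maximality is asserted rather than proved, exactly as in the paper's own sketch.
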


Under certain assumptions, the curve of solutions can be continued indefinitely in $q \geq 0$:
\begin{corollary}\label{cor:rprimeneg}
If $r' \leq 0$, then there exists such $E$ containing $[0,+\infty)_{\bar{q}} \times (\R_+)_{\bar{\gamma}}$.
\end{corollary}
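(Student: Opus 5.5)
The continuation argument preceding Proposition~\ref{pro:Abar} shows that for fixed $\bar\gamma$ the curve $q\mapsto \bar A(q,\bar\gamma)$ can be extended as long as $D_A\mathbf F$ stays invertible along the branch and the branch stays bounded in $\mathbb X$. So the plan is to prove two things for every $q\ge 0$ along the branch: an \emph{a priori bound} on $\bar A$ in $\mathbb X$, and \emph{invertibility} of $D_A\mathbf F(\bar A,q,\bar\gamma)$ with a uniform bound on its inverse. Given these, a standard continuation argument applies. Suppose $q_{\max}(\gamma_0)<\infty$ and take $q_n\uparrow q_{\max}$. The a priori bound together with compactness lets us pass to a limit solution $\bar A_*$, which can be done via the integral equation \eqref{eq:integralequationforA} and Arzelà--Ascoli with the exponential tail bounds \eqref{eq:tailest1}--\eqref{eq:tailest2}. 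Invertibility of $D_A\mathbf F$ at $\bar A_*$ then lets the implicit function theorem extend the curve past $q_{\max}$, a contradiction. Joint smoothness in $\gamma$ follows as before, so $E\supset[0,\infty)\times\R_+$.

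\emph{A priori bound.} By \eqref{eq:cformula} and \eqref{eq:tailest2}, with $r\le R_0$, we get $|c_\infty|\le qR_0/\gamma$ and $\|e^X B\|_{L^\infty}\le qR_0/(\gamma+1)$, locally uniformly in $(q,\gamma)$. This bound does not even need $r'\le0$; it uses only saturation.

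\emph{Invertibility (main step).} Since $D_A\mathbf F$ is the identity plus a compact perturbation on $\mathbb X$ (the integral operator gains smoothness and exponential decay), Fredholm theory reduces invertibility to injectivity. So suppose $\tilde A\in\mathbb X$ solves the linearized problem
\[
\gamma\p_X\tilde A-\p_X^2\tilde A=q e^{-X}r'(A)\tilde A,\qquad (\gamma-\p_X)\tilde A|_{X=0}=0,
\]
with $\tilde A$ bounded. Set $p:=-qe^{-X}r'(A)\ge 0$, so that $-\tilde A''+\gamma\tilde A'+p\tilde A=0$. I would use a maximum-principle argument. Let $w=e^{-\gamma X}\tilde A'$. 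Then $(e^{-\gamma X}\tilde A')'=e^{-\gamma X}p\tilde A$, and the boundary condition reads $\tilde A'(0)=\gamma\tilde A(0)$.

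Assume without loss of generality that $\tilde A(0)\ge0$ (otherwise replace $\tilde A$ by $-\tilde A$).

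If $\tilde A(0)>0$, then $\tilde A'(0)>0$. While $\tilde A>0$, the quantity $e^{-\gamma X}\tilde A'$ is nondecreasing, so $\tilde A'\ge \tilde A'(0)e^{\gamma X}$. Hence $\tilde A$ grows exponentially, contradicting boundedness.

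If $\tilde A(0)=0$, then $\tilde A'(0)=0$, and uniqueness for the linear ODE initial value problem gives $\tilde A\equiv0$.

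So the kernel is trivial. The same ODE reasoning applies to the limit solution $\bar A_*$, giving invertibility there as well.

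The step I expect to be most delicate is making the continuation rigorous: specifically, extracting the limit $\bar A_*$ in the $\mathbb X$ topology and ensuring the resulting $E$ is open and still "$q$-connected". I would handle this by noting that $\mathbf F(\cdot,q,\gamma)=\mathrm{Id}-\mathcal K_{q,\gamma}$ with $\mathcal K$ compact and continuous in $q$. Then bounded sequences of solutions have convergent subsequences, and by local uniqueness from the implicit function theorem the continued branch is single-valued.
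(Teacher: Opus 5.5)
Your proposal is correct and follows essentially the same route as the paper. Both arguments use the same ingredients: the a priori bound from the saturation of $r$, and a trivial kernel for the linearization via the boundary condition $\tilde A'(0)=\gamma\tilde A(0)$ together with the monotonicity forced by $q\ge 0$, $r'\le 0$. They then upgrade injectivity to invertibility by Fredholm theory and continue in $q$; you spell out the continuation step, extracting a limit solution at $q_{\max}$, more carefully than the paper's footnote does. The one small difference is that the paper applies the Fredholm alternative in an auxiliary space with the weaker decay rate $e^{-X/2}$ and then restricts to $\mathbb{X}$, whereas you work in $\mathbb{X}$ directly. That is also fine, though you should give the justification: the blocks of the linearized integral operator that involve the end state (input $\tilde c$ or the $c$-component of the output) are finite rank, and the $\tilde B\mapsto B$ block gains the extra decay $e^{-2X}$, so the operator is compact on $\mathbb{X}$.
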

This is relevant for saturated nonlinearities of type~\eqref{eq:example_r} with \emph{decreasing} reaction rate:
\begin{equation}
    r(u) = r_0 - \frac{r_1 u^2}{1+\zeta u^2} \, , \quad r_0, r_1 > 0 \, , \; \frac{r_1}{\zeta} < r_0 \, , \quad u \in \R \, .
\end{equation}

\begin{proof}
Particular to the case $r' \leq 0$, we observe that the Frech{\'e}t derivative is invertible whenever $q \geq 0$, so that, given the obvious \emph{a priori} bound on $\mathbf{F}$,\footnote{The \emph{a priori} bound ensures that the curve of solutions does not escape to infinity at finite~$q$, so that the only obstruction to continuation would be the (lack of) invertibility of $D_A\mathbf{F}$.}
\begin{align}
    \big|[\mathbf{F}(A,q,\gamma)]_c\big| &\leq |c| + \frac{|q|}{\gamma} R_0\,,
    \label{eq:aprioribdFc} \\
    \| e^X [\mathbf{F}(A,q,\gamma)]_B \|_{L^\infty(\R_+)} &\leq \| e^X B \|_{L^\infty(\R_+)} + \frac{|q|}{1+\gamma} R_0 \, , \label{eq:aprioribdFB}
\end{align}
the curve of solutions can be extended to $\{ q \geq 0 \}$.

Suppose that $\tilde{A}$ is a bounded\footnote{We do not impose exponential decay to a constant for this argument.} solution to the linearized problem
\begin{equation}
    \label{eq:tildeAproblem}
\begin{aligned}
        \gamma \p_X \tilde{A} - \p_X^2 \tilde{A} &= q e^{- X} r'(A) \tilde{A}\\
        (\gamma - \p_X) \tilde{A}\big|_{X=0} &= 0 \, .
\end{aligned}
\end{equation}
The essential feature will be that the potential in~\eqref{eq:tildeAproblem} has an advantageous sign.
Integrating once, we have
\begin{equation}
    \label{eq:tildeApositive}
    \p_X \tilde{A} = \gamma \tilde{A} - q \int_0^X e^{- Y} r'(A(Y)) \tilde{A} \, dY \geq \gamma \tilde{A} \, .
\end{equation}
Without loss of generality, we may assume that $\tilde{A}(0) \neq 0$. (If $\tilde{A}(0) = 0$, then the boundary conditions imply that $\p_X \tilde{A}(0) = 0$, so by ODE uniqueness, $\tilde{A}$ must be the zero solution.) Then the differential inequality~\eqref{eq:tildeApositive} implies that $\tilde{A}$ grows exponentially, which contradicts the boundedness. This demonstrates that $D_A \mathbf{F}$ has a trivial kernel in $\mathbb{X}$.

To see the invertibility of $D_A \mathbf{F}$, we observe that $D_A \mathbf{F}$ is a compact perturbation of the identity (hence, Fredholm of index zero) in a suitable Banach space $\mathbb{Y}$ of continuous functions which decay to a constant with the sub-optimal exponential rate $e^{-X/2}$. The above argument yields that $D_A \mathbf{F}$ has trivial kernel in $\mathbb{Y}$, so by the Fredholm theory, $D_A \mathbf{F}$ is boundedly invertible on $\mathbb{Y}$. Since $D_A \mathbf{F} : \mathbb{X} \to \mathbb{X}$, we therefore also have invertibility\footnote{and bounded invertibility, by the open mapping theorem} on the smaller space $\mathbb{X}$.
\end{proof}

\begin{corollary}
    Let $R_1 \geq |r'|$ denote an upper bound on the derivative of $r$.
    Let $\gamma > 0$. Then $E$ contains the open interval of $(\bar{q},\gamma)$ satisfying
    \begin{equation}
        \label{eq:assumptiononqandwhatnot}
    R_1 |\bar{q}| \leq D(\gamma)^{-1} \, ,
    \end{equation}
    where $D(\gamma)$ is defined in~\eqref{eq:Dgammadef}.
\end{corollary}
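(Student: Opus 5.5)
The plan is to show that, for $R_1|\bar q| \le D(\gamma)^{-1}$, the linearization $D_A\mathbf{F}$ is invertible along the whole continuation path, and that the solution curve stays bounded in $\mathbb{X}$. Then the continuation in $q$ from $q=0$, which defines $E$ in Proposition~\ref{pro:Abar}, cannot terminate inside this interval. We take $D(\gamma)$ to be the operator norm on $\mathbb{X}$ (or on the auxiliary space $\mathbb{Y}$) of the linear map
\begin{equation*}
  \mathcal{L}_\gamma : g \mapsto \int_X^{+\infty} e^{\gamma(X-Y)}\int_0^Y e^{-Z} g(Z)\,dZ\,dY ,
\end{equation*}
acting on bounded $g$. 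Concretely, one gets an explicit bound like
\begin{equation*}
  D(\gamma) \lesssim \frac{1}{\gamma} + \frac{1}{1+\gamma} ,
\end{equation*}
from the $c$- and $B$-components of the map, exactly as in \eqref{eq:aprioribdFc}--\eqref{eq:aprioribdFB}.

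First step: by \eqref{eq:derivativeofF}, $D_A\mathbf{F}|_{(A,q,\gamma)} = \mathrm{Id} - q\,\mathcal{L}_\gamma\big(r'(A)\,\cdot\big)$. Since $|r'(A)| \le R_1$ pointwise, the perturbation has norm at most $|q| R_1 D(\gamma)$. For $R_1|q| D(\gamma) < 1$ a Neumann series then inverts $D_A\mathbf{F}$, uniformly in $A$. The boundary case of equality needs a little care. Either $D(\gamma)$ is defined so that the bound is strict (for example, the estimate is not sharp because $\int_0^Y e^{-Z}\,dZ < 1$), or we treat the interval as open, as stated.

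Second step: use the a priori bounds \eqref{eq:aprioribdFc}--\eqref{eq:aprioribdFB}, applied to solutions $\mathbf{F}=0$. Any solution with $|q|$ in this range has $\|A\|_{\mathbb{X}}$ bounded by a constant depending only on $R_0$, $q$, $\gamma$. So along the continuation curve $A$ cannot blow up.

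Third step: combine these. Fix $\gamma$ and continue from $q=0$ in both directions, using the implicit function theorem at each point. The set of attainable $q$ is open, because $D_A\mathbf{F}$ is invertible there. It is also closed within the interval: take a limit of $q_n$, extract a convergent subsequence of $A(q_n)$ using uniform invertibility (the curve is Lipschitz in $q$, with $\|\partial_q A\| \le \|D_A\mathbf{F}^{-1}\|\,\|\partial_q\mathbf{F}\|$ bounded), and the limit solves $\mathbf{F}=0$. Hence the whole interval lies in $E$, by maximality of $E$. Joint smoothness in $\gamma$ comes from the same implicit function theorem.

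The main obstacle is pinning down $D(\gamma)$ so that the bound is genuinely a contraction in the $\mathbb{X}$ norm. The $B$-component involves the tail $\int_Y^\infty$, and one must check that the weight $e^{X}$ is preserved; this is the computation $\int_X^\infty e^{\gamma(X-Y)}e^{-Y}\,dY = e^{-X}/(1+\gamma)$. If this fails in $\mathbb{X}$, I would do the Neumann series in $L^\infty$, get injectivity, and transfer invertibility to $\mathbb{X}$ via the Fredholm argument of Corollary~\ref{cor:rprimeneg}.
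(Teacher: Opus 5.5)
Your proposal is correct and follows essentially the same route as the paper. Both arguments use a Neumann-series bound $\|D_A\mathbf{F}-\mathrm{Id}\|_{\mathbb{X}\to\mathbb{X}}\le |q|R_1 D(\gamma)$, uniform in $A$, together with the a priori bounds \eqref{eq:aprioribdFc}--\eqref{eq:aprioribdFB} to continue in $q$. The only difference is cosmetic: the paper bounds the four blocks of $D_A\mathbf{F}-\mathrm{Id}$ and takes the maximum column sum, while your factorization $\mathbb{X}\hookrightarrow L^\infty\to\mathbb{X}$ through $\mathcal{L}_\gamma$ gives exactly $\frac{1}{\gamma}+\frac{1}{1+\gamma}$, which is the larger entry of the max in \eqref{eq:Dgammadef}, so you recover the same $D(\gamma)$ with constant $1$ (and your caveat about the equality case $R_1|\bar q|=D(\gamma)^{-1}$ is fair, since the paper passes over it).
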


This is relevant because solutions of the outer equation have an \emph{a priori} bound on the total mass, which bounds $b_- = q/(\beta_- - V^{(0)})$ and $V^{(0)}$ and can therefore keep the parameter values within $E$ for solutions with sufficiently small mass.

\begin{proof}
Let $\mathbb{Y}$ be the Banach space of $L^\infty(\R_+)$ functions $B$ satisfying $\| B \|_{\mathbb{Y}} := \| e^{X} B \|_{L^\infty_X(\R_+)} < +\infty$. We estimate $D_A \mathbf{F}$ componentwise directly from the formulas~\eqref{eq:derivativeofFccomp}-\eqref{eq:derivativeofFBcomp} (cf. \eqref{eq:tailest1}-\eqref{eq:tailest2}): 
    \begin{equation}
        |[D_c \mathbf{F}]_c - 1| \leq \frac{|q|}{\gamma} R_1
    \end{equation}
    \begin{equation}
        \| e^X [D_c \mathbf{F}]_B \|_{\mathbb{Y}} \leq \frac{|q|}{\gamma+1} R_1
    \end{equation}
    \begin{equation}
        \| [D_B \mathbf{F}]_c \|_{\mathbb{Y} \to \R} \leq \frac{|q|}{2\gamma} R_1
    \end{equation}
    \begin{equation}
        \| [D_B \mathbf{F} - I]_B \|_{\mathbb{Y} \to \mathbb{Y}} \leq \frac{|q|}{2(2+\gamma)} R_1 \, .
    \end{equation}
    Hence, bounding the operator norm of $D_A \mathbf{F} - I$, thought of as a block operator, by the maximum of the norms of its columns, we obtain
    \begin{equation}
        \label{eq:Dgammadef}
        \| D_A \mathbf{F} - I\|_{\mathbb{X} \to \mathbb{X}} \leq \underbrace{\max \left( \frac{1}{\gamma} + \frac{1}{\gamma+1}, \frac{1}{2\gamma} + \frac{1}{2(2+\gamma)} \right)}_{=: D(\gamma)} |q| R_1 \, , 
    \end{equation}
    so that when~\eqref{eq:assumptiononqandwhatnot} is satisfied, $\| D_A \mathbf{F} - I\|_{\mathbb{X} \to \mathbb{X}} < 1$, and hence $D_A \mathbf{F}$ is invertible by Neumann series. Given the \emph{a priori} upper bounds~\eqref{eq:aprioribdFc}-\eqref{eq:aprioribdFB} on $\mathbf{F}$, the invertibility of $D_A \mathbf{F}$ is enough to continue the solution in the parameter $q$.
\end{proof}

Define
\begin{equation}
    A(q,\gamma,\nu)(X) := \bar{A}(q/\nu^2,\gamma/\nu)(\nu X) \, , \quad X \geq 0 \, ,
\end{equation}
whenever $\nu > 0$ and $(q/\nu^2,\gamma/\nu) \in E$. Notably, $A$ decays to a constant with the exponential rate $e^{-\nu X}$ is smooth in $(q,\gamma)$ in the $\mathbb{X}_\nu$ topology. However, differentiation in $\nu$ multiplies by $X$.

Let $c_\infty(q,\gamma,\nu) := c_\infty(A(q,\gamma,\nu))$, as given by the formula~\eqref{eq:cformula}. Then
\begin{equation}
    c_\infty(q,\gamma,\nu) = \bar{c}_\infty(q/\nu^2,\gamma/\nu)
\end{equation}
is a smooth function in all variables.


\section{Solvability of the outer problem}
\label{sec:outerproblem}

In this section, we prove Proposition~\ref{pro:inviscidsolutionexists} in several stages:

First, we record a solution theory for the initial boundary-value problem (IBVP) for the transport equation in Sobolev spaces on a half-line based on the method of characteristics. See Section~\ref{sec:estimatestransport}.

Second, we incorporate semilinear terms into the equation, including the ODE $\dot b_- = f({\rm tr} \, c_-,b_-,t)$ on the boundary, by contraction mapping. This requires some structural observations to properly exploit the gain of one time derivative from the smoothing of the boundary ODE. See Section~\ref{sec:semilinearterms}.

Third, we incorporate the quasilinear drift term $\p_x (V c_{\pm})$ by a suitable iteration procedure which exploits \emph{a priori} estimates on $V$ from the conservation of mass. A subtle point is to keep the conditions $V\big|_{x=0} + \beta_+ > 0$ and $V\big|_{x=0} - \beta_- < 0$ in the iteration. See Section~\ref{sec:quasilinearterms}.

Finally, we prove uniqueness and characterize the maximal time of existence $T^*$.

\subsection{Estimates on the transport equation}
    \label{sec:estimatestransport}

In this section, we consider the following IBVP for the transport equation:
\begin{equation}
            \label{eq:IBVP}
            \begin{aligned}
        \partial_t c + v \partial_x c+ \phi c &= h \, ,\quad  t,x > 0 \\
        c\big|_{t=0}&=c^{\rm in} \,,
        \end{aligned} 
\end{equation}
where $v$, $\phi$, and $h$ are each functions of $x$ and $t$.

If the velocity at the boundary is outgoing ($v\big|_{x=0} < 0$), then under suitable background assumptions, it is not difficult to solve~\eqref{eq:IBVP} by the method of characteristics; no additional boundary data is required.
If the velocity at the boundary is incoming ($v\big|_{x=0} > 0$), then we supplement~\eqref{eq:IBVP} with the boundary condition
\begin{align}
    \label{eq:cbdrycondition}
        c\big|_{x=0}&=g \, .
    \end{align}
While the approach to the the inflow problem~\eqref{eq:IBVP}-\eqref{eq:cbdrycondition} via the method of characteristics is, in principle, well known, it is difficult to locate the precise statements we need in the literature. Therefore, we present two statements and the main ingredients needed to prove them. 

Let $T > 0$. Suppose that $v \in  L^{\infty} (0, T; W^{1, \infty} (\R_{+}))$ satisfies
\begin{equation}
    \inf_{t \in [0,T]} v(t,0) > 0 \, .
\end{equation}
Define the \emph{characteristic curves} $X(s;t,x)$ according to
\begin{align}
    \label{ODE}
 &   \frac{d}{ds} X (s; t, x)= v (s, X (s; t, x))\,, \quad X (t; t, x)=x
 \end{align}
 and the \emph{backward exit time}
 \begin{align}
 \label{taub}
  &  \tau_b (t, x) := \sup\{s \ge 0\;:\; X (\tau; t,  x) > 0\,, \tau  \in (t-s, t) \} \, .
\end{align}
Let $x^*(t)$ be the forward characteristic emanating from the origin. Let $c^{\rm in} \in L^1(\R_+)$ and $g \in L^1(0,T)$. Then
\begin{equation}
    \label{eq:ccandidatesol}
    c(x,t) := \begin{cases}
        c^{\rm in} \circ X(0;t,x)\,, & x > x^*(t) \\
        g(t-\tau_b(t,x))\,, & x < x^*(t)
    \end{cases}
\end{equation}
is a candidate solution to~\eqref{eq:IBVP}-\eqref{eq:cbdrycondition}  on $(0,T) \times \R_+$ in the special case $\phi = h = 0$. To ensure that the solution is well defined, it is necessary to verify properties of $X$ (standard) and $\tau_b$. Since $\tau_b(t,x)$ solves
\begin{equation}
    X(t-\tau;t,x) = 0 \, , \quad \text{ or, equivalently,} \quad X(t;t-\tau,0) = x \, ,
\end{equation}
we can study its regularity via (a Lipschitz version of) the implicit function theorem.  Differentiating $X(t-\tau;t,x)$ in $\tau$ at $\tau_b(t,x)$, we obtain
\begin{equation}
    v(t-\tau,X(t-\tau_b;t,x)) = v(t-\tau_b,0) > 0 \, ,
\end{equation}
from which we obtain that $\tau_b$ is locally a Lipschitz function of $(t,x)$. Crucially, it will be necessary to estimate $(\p_x \tau_b)^{-1}$, which appears in the change of variables
\begin{equation}
    \label{eq:changeofvars}
    \int_0^{x^*(t)} |c(t,x)| \, dx = \int_0^{x^*(t)} |g(t-\tau_b(t,x))| \, dx = \int_0^t g(t-\tau) |\p_x \tau_b|^{-1} \, d\tau \, .
\end{equation}
 Here, we used the fact that the curve $t=\tau_b (t, x), t \ge 0$ coincides with $x^{*} (t), t \ge 0$, which can be proved via an implicit function theorem argument  sketched below.
By differentiating $X(t-\tau_b(t,x);t,x) = 0$ in $x$ and rearranging, we obtain
\begin{equation}
    \label{eq:tauxb}
    \partial_x \tau_b  (t, x) =  \frac{1}{v (t-\tau_b, 0)}  (\partial_x X) (t-\tau_b; t, x) \, .
\end{equation}
Since~\eqref{eq:changeofvars} contains $|\p_x \tau_b|^{-1}$, the $v (t-\tau_b, 0)$ coefficient from~\eqref{eq:tauxb} will appear in the numerator. %
With this in hand, it is possible to prove that the candidate solution $c(t,x)$ in~\eqref{eq:ccandidatesol} belongs to $C([0,T];L^1(\R_+))$ with $c\big|_{t=0} = c^{\rm in}$,
\begin{equation}
    c(\cdot,x) \to g(\cdot) \text{ in } L^1(0,T) \text { as } x \to 0^+ \, ,
\end{equation}
and $c(t,x)$ is the unique weak solution to~\eqref{eq:IBVP}-\eqref{eq:cbdrycondition}.

It will be convenient to introduce the notation
\begin{equation}
    \Sigma_T := (0,T) \times \R_+\,.
\end{equation}
When $\phi \in L^\infty_t L^\infty_x(\Sigma_T)$ and $h \in L^\infty_t L^1_x(\Sigma_T)$, for any $0 \le s \le t \le T$, we set
\begin{align}
    \label{Phi}
    \Phi (s, t, x) := \exp \left(-\int_s^t \phi (\tau, X (\tau; t, x)) \, d\tau \right) \, .
\end{align}
Then, letting $H(x)$ denote the Heaviside function, the solution formula is instead
\begin{align}
    \label{IBVPsol}
      & c (t, x)=   H (\tau_b (t, x) -  t) c_1 (t, x) +  H (t- \tau_b (t, x)) c_2 (t, x)\,, \\
   & c_1 (t, x) = c^{\rm in} (X (0; t, x)) \Phi (0, t, x) + \int_0^t h (s, X (s; t, x)) \Phi (s, t, x) \, ds\,, \\
   & c_2 (t, x)=g (t-\tau_b) \Phi (t-\tau_b, t, x) + \int_{t-\tau_b}^t h (s, X (s; t, x)) \Phi (s, t, x) \, ds\,, 
\end{align}
which alternatively may be realized piecewise, as in~\eqref{eq:ccandidatesol}.

To propagate an $x$-derivative, we can differentiate the above formulas directly. Specializing to $\phi=h=0$, we make the following observation in the region $\{ x < x^*(t) \}$:
\begin{equation}
    \p_x c(t,x) = - g'(t-\tau_b) \p_x \tau_b
\end{equation}
\begin{equation}
    \int_0^{x^*(t)} |\p_x c(t,x)| \, dx = \int_0^{x^*(t)} |g'(t-\tau_b) \p_x \tau_b| \, dx = \int_0^t |g'(t-\tau)| \, d\tau \, ,
\end{equation}
since $|\p_x \tau_b| \, dx = d\tau$. To ensure continuity across $x < x^*(t)$ and $x > x^*(t)$ (hence, weak differentiability across $x^*(t)$), the compatibility condition $g(0) = c^{\rm in}(0)$ will be necessary.

Subsequently, estimates on a $t$-derivative can be obtained from the equation~\eqref{eq:IBVP} and the $x$-derivative. 

Following this reasoning, one may obtain solvability in the space $X^{1,1}(T)$ defined in~\eqref{eq:Xkdef}-\eqref{eq:Xknormdef}. To state the bounds, we introduce the functional spaces
\begin{equation}
    \label{eq:y11norm}
Y^{1,1}(T) = C([0,T];W^{1,1}(\R_+)) \, , \quad     \| c \|_{Y^{1,1}(T)} := \| c \|_{L^\infty_t W^{1,1}_x(\Sigma_T)}\,,
\end{equation}
\begin{equation}
    Y^{2,1}(T) = \{ c \in Y^{1,1}(T) : \p_t c, \p_x c \in C([0,T];W^{1,1}(\R_+)) \}\,,
\end{equation}
\begin{equation}
    \| c \|_{Y^{2,1}(T)} := \| c \|_{L^\infty_t W^{2,1}_x(\Sigma_T)} + \| \p_t c \|_{L^\infty_t W^{1,1}_x(\Sigma_T)} \, .
\end{equation}

\begin{lemma}[$X^{1,1}(T)$-solvability of the transport equation]
    \label{lemma 2.3}
Let $T > 0$. Suppose 
    \begin{align}
        \label{eq2.3.1}
      v, \phi \in L^\infty_t W^{1,\infty}_x(\Sigma_T) \, , \quad h \in L^1_t W^{1,1}_x(\Sigma_T)\,,
    \end{align}
    \begin{equation}
        \label{eq:deltaboundinlemma}
        \inf_{t \in [0, T]} v (t, 0) \geq \delta > 0\,,
    \end{equation}
    and
    \begin{equation}
        c^{\rm in} \in W^{1, 1}(\R_{+}) \, , \quad g \in W^{1,1}(0, T)
    \end{equation}
    satisfying the zeroth-order compatibility condition
    \begin{align}
           \label{transportcomp1}
    g (0)= c^{\rm in} (0) \, .
\end{align}
Then, the IBVP \eqref{eq:IBVP}-\eqref{eq:cbdrycondition} has a unique strong solution $c \in Y^{1,1}(T)$, and
\begin{equation}
\label{eq:Y1bds}
\begin{aligned}
  &  \|c\|_{ Y^{1, 1} (T)  }  \leq N_1 \exp \left (N_2 T \||\phi| +|\partial_x v|\|_{L^\infty_t L^\infty_x(\Sigma_T)} \right)\times  \\
  &\qquad \times \left( \|c^{\rm in}\|_{  W^{1, 1}(\R_{+}) } + \|g\|_{ W^{1,1}(0, T)} + \|h\|_{L^1_t W^{1,1}_x(\Sigma_T)} \right)
\end{aligned}
\end{equation}
where 
\begin{align}
    N_1 = N_1 \left( \|v\|_{L^\infty_t L^\infty_x(\Sigma_T)} , \|\phi\|_{  L^{\infty}_t W^{1, \infty}_x(\Sigma_T) } \right) \, .
\end{align} 

If, in addition, 
\begin{align}
    \label{eq2.3.2}
    v, \phi \in C([0,T];W^{1,\infty}_x(\R_+)), \quad h \in C([0,T];L^1(\R_+)) \, ,
\end{align}
then $c \in X^{1, 1} (T)$, and
\begin{equation}
      \| c \|_{X^{1,1}(T)} \leq (1 + \| |v| + |\phi| \|_{L^\infty_t L^\infty_x(\Sigma_T)}) \| c \|_{Y^{1,1}(T)} + \| h \|_{ L^\infty_t L^1_x(\Sigma_T)} \, .
\end{equation}
\end{lemma}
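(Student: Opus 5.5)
We build the solution from the explicit characteristic representation~\eqref{IBVPsol} and then justify that it is a strong solution in $Y^{1,1}(T)$ satisfying~\eqref{eq:Y1bds}. First, since $v \in L^\infty_t W^{1,\infty}_x$, the flow $X(s;t,x)$ from~\eqref{ODE} is well defined, bi-Lipschitz in $x$, and satisfies
\begin{equation*}
e^{-\int_s^t \|\p_x v\|_{L^\infty}} \le \p_x X(s;t,x) \le e^{\int_s^t \|\p_x v\|_{L^\infty}} .
\end{equation*}
Using~\eqref{eq:deltaboundinlemma} and the Lipschitz implicit function theorem applied to $X(t-\tau;t,x)=0$, we obtain that $\tau_b$ is locally Lipschitz, that $\{\tau_b = t\}$ coincides with the curve $x = x^*(t)$, and we obtain formula~\eqref{eq:tauxb}. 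The factor $\Phi$ in~\eqref{Phi} is bounded above and below by $e^{\pm T\|\phi\|_{L^\infty}}$, and its $x$-derivative is bounded by $T\|\p_x\phi\|_{L^\infty} e^{CT(\|\phi\|+\|\p_x v\|)}$ times $\p_x X$. This is where $N_1$ picks up its dependence on $\|\phi\|_{W^{1,\infty}}$.

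\textbf{$L^1$ bound.} We split $\int_{\R_+}|c|$ into the regions $x>x^*(t)$ and $x<x^*(t)$.
\begin{itemize}
\item In the region $x>x^*(t)$, we change variables $y = X(0;t,x)$ in the $c^{\rm in}$ term and $y = X(s;t,x)$ in the Duhamel term. The Jacobians are controlled by $e^{T\|\p_x v\|}$.
\item In the region $x<x^*(t)$, we change variables $x \mapsto \tau = \tau_b(t,x)$ as in~\eqref{eq:changeofvars}. By~\eqref{eq:tauxb}, $|\p_x\tau_b|^{-1} = v(t-\tau_b,0)\,(\p_x X)^{-1} \le \|v\|_{L^\infty} e^{T\|\p_x v\|}$, so the boundary term contributes $\lesssim \|v\|_{L^\infty} e^{CT\|\p_x v\|}\|g\|_{L^1(0,T)}$. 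The Duhamel part is handled as before.
\end{itemize}
This gives the $L^\infty_t L^1_x$ part of~\eqref{eq:Y1bds}, with $N_1$ depending on $\|v\|_{L^\infty}$.

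\textbf{Derivative bound.} We differentiate~\eqref{IBVPsol} in $x$ separately in each region. For $x>x^*$, we get the terms $(c^{\rm in})'(X)\,\p_x X\,\Phi$, then $c^{\rm in}\,\p_x\Phi$, and finally $\int_0^t(\p_x h\,\p_x X\,\Phi + h\,\p_x\Phi)\,ds$. All of these are bounded in $L^1$ after the same change of variables, using $\|h\|_{L^1_tW^{1,1}_x}$.

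For $x<x^*$, the derivative $\p_x$ hits $g(t-\tau_b)\Phi(t-\tau_b,t,x)$ and the lower limit of the Duhamel integral. This yields the terms
\begin{equation*}
-g'(t-\tau_b)\,\p_x\tau_b\,\Phi, \qquad g\,\p_x[\Phi(t-\tau_b,t,x)], \qquad -h(t-\tau_b,0)\,\Phi\,\p_x\tau_b .
\end{equation*}
The factor $\p_x\tau_b$ exactly cancels the Jacobian $|\p_x \tau_b|^{-1}$, as in the computation preceding the lemma. So the first term gives $\lesssim\|g'\|_{L^1}$. The last term gives $\int_0^t |h(s,0)|\,ds \le \|h\|_{L^1_tW^{1,1}_x}$, by the trace inequality $|h(s,0)| \le \|h(s)\|_{W^{1,1}}$. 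For the middle term, we use $\p_x[\Phi(t-\tau_b,t,x)] = \Phi\,(\phi(t-\tau_b,0)\,\p_x\tau_b + \dots)$, which again gives $\lesssim \|g\|_{L^\infty} \lesssim \|g\|_{W^{1,1}}$. The compatibility condition~\eqref{transportcomp1} ensures $c$ is continuous across $x=x^*(t)$. Hence the pointwise derivatives on each side assemble into the weak derivative, with no singular part on the curve.

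\textbf{Main obstacle.} The main difficulty is justifying rigorously that this formula is the unique strong solution, continuous in time with values in $W^{1,1}$, under only Lipschitz coefficients and $L^1$-in-time data. We plan to handle it by density. We mollify $v,\phi,h,c^{\rm in},g$ while preserving $v(t,0)\ge\delta/2$ and adjusting $g$ near $t=0$ to keep compatibility. For smooth data the formula is a classical solution, the above bounds are uniform, and we pass to the limit in $Y^{1,1}$, using continuity of translations in $L^1$ for time continuity. Uniqueness follows from an $L^1$ (Kato-type) estimate on the difference of two solutions, with zero boundary and initial data, using $\sgn$ multipliers and the fact that the boundary flux $-v(t,0)|c(t,0)|$ vanishes when $c(t,0)=0$.

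\textbf{Upgrade to $X^{1,1}(T)$.} Under~\eqref{eq2.3.2}, we read off $\p_t c = h - v\p_x c - \phi c$ from the equation. This gives $\|\p_t c\|_{L^\infty_tL^1_x} \le \||v|+|\phi|\|_{L^\infty}\|c\|_{Y^{1,1}} + \|h\|_{L^\infty_tL^1_x}$. Time continuity in $L^1$ follows from continuity of $v,\phi$ in $W^{1,\infty}$ combined with $c,\p_x c\in C_tL^1_x$, which yields the stated $X^{1,1}$ bound.
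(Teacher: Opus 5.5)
Your proposal is correct and follows essentially the same route as the paper's sketch: the characteristic formula \eqref{IBVPsol}; the Lipschitz implicit function theorem for $\tau_b$, with $|\p_x\tau_b|^{-1}=v(t-\tau_b,0)/\p_x X$ putting $v$ in the numerator so the bound is $\delta$-independent; the cancellation $|\p_x\tau_b|\,dx=d\tau$ for the $g'$ term; continuity across $x=x^*(t)$ from $g(0)=c^{\rm in}(0)$; and $\p_t c$ read off from the equation. Your mollification and Kato-type $L^1$ uniqueness steps just fill in justifications that the paper only asserts.

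Two small corrections. First, the lower-limit term is $+h(t-\tau_b,0)\,\Phi\,\p_x\tau_b$, though the sign is irrelevant for $L^1$ bounds. Second, the factor $T\|\p_x\phi\|_{L^\infty}$ coming from $\p_x\Phi$ really makes $N_1$ depend on $T$ as well, since it cannot in general be absorbed into $\exp(N_2T\||\phi|+|\p_x v|\|)$. This is harmless, because the later applications either have $T\le 1$ or allow $T$-dependent constants.
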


Related calculations (though more difficult, due to the presence of the Boltzmann kernel) can be found in \cite[Proposition 1]{CaoKimLee}. 

For $\p_x^2 c$, copies of $1/v(\tau,0)$ will appear through $\p_x \tau_b$ (see~\eqref{eq:tauxb}) and $\p_x^2 \tau_b$ in $\int_0^{x^*(t)} |\p_x^2 c(t,x)| \, dx$, even after the change of variables; therefore, the bounds may depend on how transversal the characteristics are at $x=0$, measured via the $\delta$-bound~\eqref{eq:deltaboundinlemma}.

The nonlinear problem \eqref{eq3.1}-\eqref{eq3.5} in Section~\ref{sec:semilinearterms} will further require estimates on $\p_t \p_x c$, which can be obtained from differentiating the equation~\eqref{eq:IBVP} in $x$ and using the $\p_x^2 c$ estimates. 
Alternatively, one may differentiate the equation in time,
\begin{equation}
(\p_t + v \p_x + \phi) \p_t c + \p_t v \p_x c + \p_t \phi c = \p_t h \, ,
\end{equation}
and consider the IBVP for $\p_t c$ with the initial condition
\begin{equation}
    \p_t c\big|_{t=0} = - v \p_x c^{\rm in} - \phi c^{\rm in} + h\big|_{t=0}
\end{equation}
 (notice the presence of $h$) and boundary condition
\begin{equation}
    \p_t c\big|_{x=0} = \dot g \, .
\end{equation}
Thus, $\p_t c$ should satisfy estimates of the type in Lemma~\ref{lemma 2.3}.

\begin{lemma}[$X^{2,1}(T)$-solvability of the transport equation]
    \label{lemma 2.8}
Invoke the assumptions \eqref{eq2.3.1}-\eqref{transportcomp1}  of Lemma \ref{lemma 2.3} and assume, additionally, 
    \begin{equation}
            \p_t v, \p_t \phi, \p_x v, \p_x \phi \in L^\infty_t W^{1, \infty}_x (\Sigma_T) \, , \quad   \p_t h, \p_x h \in L^1_t W^{1, 1}_x (\Sigma_T) \, ,
    \end{equation}
    and
    \begin{equation}
          c^{\rm in} \in W^{2,1}(\R_{+})\, , \quad   g \in W^{2,1} (0, T) \,,
    \end{equation}
    satisfying the second-order compatibility condition
\begin{align}
    \label{transportcomp2}
    g' (0)+ v (0, 0)  (c^{\rm in})'(0) + \phi (0, 0) c^{\rm in} (0) = h (0, 0)\,.
\end{align}
    Then the IBVP \eqref{eq:IBVP}-\eqref{eq:cbdrycondition} has a unique strong solution $c \in Y^{2,1}(T)$, and
\begin{equation}\label{eq2.8.1}
  \begin{aligned}
  & \| |\p_x^2 c| + |\p_t \p_x c| \|_{L^\infty_t L^1_x(\Sigma_T)} 
  \leq N_1 \exp \left(N_2 T \||\phi|+|\partial_x v|\|_{L^{\infty}_t L^\infty_x(\Sigma_T)} \right) \times \\
  &\quad  \times \Big( \|c^{\rm in}\|_{  W^{2, 1}(\R_{+}) } + \|g\|_{ W^{2, 1} (0, T) }   
  + \|h\|_{L^1_t W^{2, 1}_x (\Sigma_T)} \\
  &\qquad\qquad  + \| \p_t h \|_{L^1_t W^{1,1}_x(\Sigma_T)}
  + \| h\big|_{t=0} \|_{W^{1,1}(\R_{+})} \Big) \, ,
  \end{aligned} 
\end{equation}
 where 
 \begin{align}
    N_1= N_1 \left( \delta, \|[v, \phi]\|_{L^{\infty}_t W^{2, \infty}_x (\Sigma_T)},  \|\partial_t [v, \phi]\|_{ L^{\infty}_t L^\infty_x (\Sigma_T)} \right) \, .
 \end{align}
 If, in addition,
 \begin{align*}
     \p_t v, \p_t \phi, \p_x v, \p_x \phi  \in  C([0,T];W^{1,\infty}(\R_+)) \, , \quad \partial_t h, \p_x h \in C([0,T];L^1(\R_+)) \,,
 \end{align*}
 then $c  \in X^{2, 1} (T)$, and
 \begin{align*}
     \|\partial_t^2 c\|_{ L^{\infty}_t L^1_x (\Sigma_T) } \le \|\partial_t h\|_{ L^{\infty}_t L^1_x (\Sigma_T) } + \text{RHS of \,} \eqref{eq2.8.1}.
 \end{align*}
 \end{lemma}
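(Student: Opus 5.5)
The plan is to build on Lemma~\ref{lemma 2.3}: it supplies $c \in Y^{1,1}(T)$, and we upgrade by one derivative by looking at the time derivative $w := \p_t c$. Formally, $w$ solves the same type of transport IBVP,
\begin{equation*}
(\p_t + v\p_x + \phi) w = \p_t h - \p_t v\, \p_x c - \p_t \phi\, c =: h_1 \, , \quad w\big|_{x=0} = g' \, , \quad w\big|_{t=0} = w^{\rm in} := -v\p_x c^{\rm in} - \phi c^{\rm in} + h\big|_{t=0} \, .
\end{equation*}
The hypotheses give $g' \in W^{1,1}(0,T)$ and $w^{\rm in} \in W^{1,1}(\R_+)$, using $c^{\rm in} \in W^{2,1}$, $v,\phi \in W^{1,\infty}$, and $h|_{t=0} \in W^{1,1}$. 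The compatibility condition \eqref{transportcomp2} is exactly the zeroth-order condition $g'(0) = w^{\rm in}(0)$. The source $h_1$ lies in $L^1_t W^{1,1}_x$: we have $\p_t h \in L^1_tW^{1,1}_x$, and $\p_x(\p_t v\,\p_x c)$ involves $\p_x^2 c$, which is controlled as explained below. Applying Lemma~\ref{lemma 2.3} to $w$ then yields $\p_t c \in Y^{1,1}(T)$ and bounds $\p_t\p_x c$ in $L^\infty_tL^1_x$.

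To close the loop, we recover $\p_x^2 c$ from the equation differentiated in $x$:
\begin{equation*}
v\,\p_x^2 c = \p_x h - \p_t\p_x c - \p_x v\,\p_x c - \p_x(\phi c) \, .
\end{equation*}
Dividing by $v$ only works near $x=0$, where $v \geq \delta/2$ on a strip of width $\sim \delta/\|\p_x v\|_\infty$; there this gives $\|\p_x^2 c\|_{L^1}$ with a factor $\delta^{-1}$. This is why $N_1$ depends on $\delta$. Away from the boundary, $v$ may vanish, so instead we let $u := \p_x c$ solve
\begin{equation*}
(\p_t + v\p_x + \phi + \p_x v)u = \p_x h - \p_x\phi\, c \, ,
\end{equation*}
and differentiate once more to get a transport equation for $\p_x u$. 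Its coefficients involve $\p_x^2 v$ and $\p_x^2\phi$, which is why $W^{2,\infty}$ norms enter. On the interior region $\{x > \delta'\}$, the characteristics reaching it either come from the initial datum (controlled by $c^{\rm in} \in W^{2,1}$) or cross the strip, where we already have control.

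A cleaner alternative, which I would try first, works directly with the characteristic representation \eqref{IBVPsol}. In the boundary region $x < x^*(t)$, differentiating twice gives terms like $g''\,(\p_x\tau_b)^2$, $g'\,\p_x^2\tau_b$, and derivatives of $\Phi$. After the change of variables $|\p_x\tau_b|\,dx = d\tau$, the first term is bounded by $\|g''\|_{L^1}\,\|\p_x\tau_b\|_\infty \lesssim \delta^{-1}$. For the second, $\p_x^2\tau_b$ is obtained from \eqref{eq:tauxb} by differentiating, using $\p_x^2X$ (bounded via $\p_x^2 v$) and $\p_t v$ along the boundary. The remaining step is to check that no jump in $\p_x c$ appears across $x^*(t)$; this continuity is exactly what the compatibility condition \eqref{transportcomp2} guarantees, so $c \in W^{2,1}$ across the curve.

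The main obstacle is this matching across $x^*(t)$, together with the quantitative dependence on $\delta$ of $\p_x^2\tau_b$. I would handle it via the $w = \p_t c$ formulation, since it reduces both issues to Lemma~\ref{lemma 2.3} and \eqref{transportcomp2}. A Grönwall argument on the coupled estimates for $(\p_t\p_x c, \p_x^2 c)$ produces the factor $\exp(N_2 T \||\phi| + |\p_x v|\|_\infty)$. Uniqueness is inherited from Lemma~\ref{lemma 2.3}. For $c \in X^{2,1}(T)$, continuity in time follows by density of smooth data and coefficients. Finally, $\p_t^2 c = \p_t w$ is read off from the $w$-equation: $\p_t w = h_1 - v\p_x w - \phi w$. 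This gives the stated bound $\|\p_t^2 c\|_{L^\infty_tL^1_x} \le \|\p_t h\|_{L^\infty_tL^1_x} + \text{RHS of } \eqref{eq2.8.1}$.
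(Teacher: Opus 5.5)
Your argument works as an a priori estimate, but your main line runs in the opposite order from the paper's. The paper controls $\p_x^2 c$ first, directly from the characteristic formula. In $\{x<x^*(t)\}$, two $x$-derivatives bring in $\p_x\tau_b$ and $\p_x^2\tau_b$, hence powers of $1/v(\tau,0)$ that survive the change of variables $|\p_x\tau_b|\,dx=d\tau$; this is your ``cleaner alternative''. The paper then reads off $\p_t\p_x c$ from the $x$-differentiated equation, and mentions the IBVP for $\p_t c$ only as another route to $\p_t\p_x c$, after the $\p_x^2 c$ bound is in hand.

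Starting from $w=\p_t c$ puts $\p_t v\,\p_x^2 c$ into $\p_x h_1$, so Lemma~\ref{lemma 2.3} cannot actually be applied to $w$ before $\p_x^2 c$ is known. Your strip/interior scheme with a coupled Gr\"onwall does close, but only with two repairs. First, replace the sharp interface $x=\delta'$ by a cutoff $\chi$ with $\chi'$ supported in the strip: the inflow flux $v|\p_x^2 c|$ at a fixed $x=\delta'$ is a trace that the $L^1$ bound on the strip does not control. Second, justify the computation on smooth compatible approximations, using linearity to get a Cauchy sequence rather than mere uniform bounds. Even then, the Gr\"onwall exponent involves $\delta^{-1}\|\p_t v\|_{L^\infty}$, not the factor $\exp(N_2T\||\phi|+|\p_x v|\|_{L^\infty})$ you claim.

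All of this is avoidable by differentiating in $x$ on the whole half-line. Then $u=\p_x c$ solves $(\p_t+v\p_x+\phi+\p_x v)u=\p_x h-\p_x\phi\, c$ with inflow datum $u\big|_{x=0}=(h-g'-\phi g)/v\,\big|_{x=0}\in W^{1,1}(0,T)$, which is exactly where $\delta$ enters. The compatibility $u(0,0)=(c^{\rm in})'(0)$ required by Lemma~\ref{lemma 2.3} is precisely \eqref{transportcomp2}, given $g(0)=c^{\rm in}(0)$. The source contains no second derivatives of $c$. So Lemma~\ref{lemma 2.3} applies directly and gives $\p_x^2 c$ and $\p_t\p_x c$ at once. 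This is the PDE form of the paper's characteristic computation, and it leaves your $w$-equation needed only to read off $\p_t^2 c$.
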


Finally, we record the following elementary $L^1$ estimate: 
\begin{lemma}[Eulerian $L^{\infty}_t L^1_x$ estimates]
    \label{lemma 2.6}

Let $T>0$, $v \in L^{\infty}_t W^{1, \infty}_x(\Sigma_T)$, $\phi \in L^\infty_t L^\infty_x(\Sigma_T)$, and 
$h \in 
L^1(\Sigma_T)$.

\noindent $(i)$ Assume that $\sup_{t \in [0,T]} v\big|_{x=0} < 0$.   
    Let $c \in Y^{1, 1} (T)$ be the strong solution to the transport equation \eqref{eq:IBVP}.
    Then, for any $t \in [0, T]$,
    \begin{equation}
    \begin{aligned}
        \label{eq2.6.1}
       & \|c (t, \cdot)\|_{L^1 (\R_{+})} + \int_0^t \abs{vc\big|_{x=0}} \, ds  \\
       &\quad \le \exp (t \||\phi|+|\partial_x v|\|_{ L^{\infty}(0, t;L^\infty(\R_+))} \big(\|c^{\rm in}\|_{L^1 (\R_{+})} +\|h\|_{ L^1(\Sigma_t) }\big)\,.
    \end{aligned}
    \end{equation}

\noindent $(ii)$ Assume that $\inf_{t \in [0,T]} v\big|_{x=0} > 0$. Let $c\in Y^{1,1}(T)$ be the strong solution to the IBVP~\eqref{eq:IBVP}-\eqref{eq:cbdrycondition}.
    Then, for any $t \in [0, T]$,
    \begin{equation}
    \begin{aligned}
        \label{eq2.6.2}
     \|c (t, \cdot)\|_{L^1 (\R_{+})} 
     & \le \exp (t \||\phi|+|\partial_x v|\|_{ L^{\infty}(0,t;L^\infty(\R_+)) }) \times \\
     &\quad \times \big(\|c^{\rm in}\|_{L^1 (\R_{+})} +\|h\|_{ L^1 (\Sigma_t) } + \int_0^t \abs{v\big|_{x=0}\, g} \, ds\big)\,.
    \end{aligned}    
    \end{equation}
\end{lemma}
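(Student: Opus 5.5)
The plan is to derive both estimates from the Lagrangian solution formula \eqref{IBVPsol}. In the region $\{x > x^*(t)\}$, which is the only region in case (i), we use $c_1$. In the region $\{x<x^*(t)\}$ of case (ii) we use $c_2$. In each region we change variables to Lagrangian coordinates. Alternatively, one can argue in Eulerian form: multiply \eqref{eq:IBVP} by a regularized sign $\eta_\varepsilon'(c)$, integrate over $\R_+$, and let $\varepsilon\to0$. We follow the Eulerian route, since it is shorter and makes the boundary flux appear explicitly. The strong solution $c\in Y^{1,1}(T)$ has enough regularity to justify this: $\partial_t c,\partial_x c\in L^1$, and the trace at $x=0$ exists because $W^{1,1}\subset C$.

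\textbf{Step 1 (identity).} Write $v\partial_x c=\partial_x(vc)-(\partial_x v)c$. Then, formally,
\begin{equation*}
\frac{d}{dt}\int_{\R_+}|c|\,dx+\int_{\R_+}\partial_x(v|c|)\,dx=\int_{\R_+}(\partial_x v-\phi)|c|\,dx+\int_{\R_+}h\,\sgn c\,dx .
\end{equation*}
To justify it, work with $\eta_\varepsilon(c)$. The term $\partial_x v\,(\eta_\varepsilon(c)-c\eta_\varepsilon'(c))$ tends to zero, because $|u\eta_\varepsilon'(u)-\eta_\varepsilon(u)|\le\varepsilon$ and we may first localize with a cutoff in $x$ and then remove it using $c\in L^1$. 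Integrating the flux term gives
\begin{equation*}
\int_{\R_+}\partial_x(v|c|)\,dx=-v(t,0)|c(t,0)|,
\end{equation*}
since $v|c|\in W^{1,1}$ vanishes at infinity.

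\textbf{Step 2 (sign of the boundary term).} In case (i), $v|_{x=0}<0$, so $-v|c|\big|_{x=0}=|vc|_{x=0}|\ge 0$ sits on the left side as a dissipative flux. This gives
\begin{equation*}
\frac{d}{dt}\|c\|_{L^1}+|vc|_{x=0}|\le \||\phi|+|\partial_x v|\|_{L^\infty}\|c\|_{L^1}+\|h(t)\|_{L^1}.
\end{equation*}
In case (ii), the boundary term equals $v(t,0)|g(t)|$, using the trace identification $c|_{x=0}=g$. It moves to the right side as a source bounded by $|v|_{x=0}\,g|$.

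\textbf{Step 3 (Grönwall).} Integrate in time with the factor $e^{-Ms}$, where $M=\||\phi|+|\partial_x v|\|_{L^\infty(0,t;L^\infty)}$. Bounding $e^{M(t-s)}\le e^{Mt}$ yields \eqref{eq2.6.1} and \eqref{eq2.6.2}. In case (i), the time-integrated flux term is kept on the left; it is multiplied by $e^{-Ms}\ge e^{-Mt}$, so after multiplying through by $e^{Mt}$ it is still bounded by the right-hand side.

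The only delicate point is the regularization in Step 1. We must show that the $\eta_\varepsilon$ commutator terms vanish and that the boundary term converges to $|vc|$ at $x=0$. This uses continuity of the trace of $W^{1,1}$ functions and dominated convergence in time. Since $v\in L^\infty_t W^{1,\infty}_x$ and $c\in L^\infty_tW^{1,1}_x$, all terms are integrable and the limit passes without difficulty.
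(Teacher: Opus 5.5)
Your proposal is correct and follows the same route as the paper. The paper's proof simply multiplies the equation by $\sgn c$ (regularized, as in the footnote to the proof of Proposition~\ref{pro:errorestcase2}), integrates over $(0,t)\times\R_+$, and applies Gr\"onwall; your treatment of the regularization, the sign of the boundary flux $-v(t,0)|c(t,0)|$ in each case, and keeping the flux term on the left in case (i) just fills in the details the paper calls standard.
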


\begin{proof}
    The proof is standard. We multiply the equation by $\text{sgn} \, (c)$,  integrate over $(0, t) \times \R_{+}$, and use Gronwall's inequality. 
\end{proof}

\subsection{Incorporating semilinearities}
\label{sec:semilinearterms}

The goal of this section is to construct a unique large data solution to the problem
\begin{align}
    \label{eq3.1}
    \p_t c_+ + \p_x ((V+\beta_+) c_+) &=  f(c_+,c_-) \\
        \label{eq3.2}
    \p_t c_- + \p_x ((V-\beta_-) c_-) &= -f(c_+,c_-)
\end{align}
\begin{equation}
    \label{eq3.3}
    \dot b_- = f_b({\rm tr} \,  c_-, b_-,t) \, ,  \quad b_{-} (0) =b_-^{\rm in}
\end{equation}
\begin{equation}
    \label{eq3.5}
    c_+\big|_{x=0} = g_b(b_-,t) \, , \quad
    c_{\pm}\big|_{t=0} = c^{\rm in}_{\pm} \, ,
\end{equation}
where $\beta_{\pm} > 0$, $V(x,t)$ is a given background velocity field, and $f,f_b,g_b$ are (possible) nonlinearities. The constants below may implicitly depend on $\beta_{\pm}$.

The unknowns are $c_{\pm}(x,t)$ and $b_-(t)$.  To measure them, it will be convenient to introduce the spaces
\begin{equation}
    \label{eq:boldXdef}
    \mathbb{Y}^{k,1}(T) := Y^{k,1}(T) \times Y^{k,1}(T) \times W^{k,1}(T) \, , \quad k =1,2 \, ,
\end{equation}
with the sum norm. In~\eqref{eq:boldXdef}, $W^{k,1}(T)$ is the space $W^{k,1}(0,T)$ renormed with 
\begin{equation}
    \label{eq:redefinebnorm}
    \| b \|_{W^{k,1}(T)} := \sum_{j=0}^{k-1} \max (\| \p_t^j b \|_{L^1},\| \p_t^j b \|_{L^\infty}) + \| \p_t^k b \|_{L^1(0,T)} \, , \; \; \; \forall b \in W^{k,1}(0,T) \, ,
\end{equation}
since embeddings of the type $\| b \|_{L^\infty(0,T)} \lesssim \frac{1}{T} \| b \|_{L^1} + \| \p_t b \|_{L^1}$ have a disadvantageous constant for small $T$.

Solutions to the transport equation have an improved trace estimate along non-characteristic hypersurfaces, e.g., solutions to $\p_t c - \p_x c = 0$ satisfy $c(z,0) = c(0,z)$, $z \geq 0$, so that $c^{\rm in} \in W^{1,p}_x(\R_+)$ implies $c\big|_{x=0} \in W^{1,p}_t(\R_+)$. For our construction, we require only the \emph{na{\"i}ve trace estimate} 
\begin{equation}
    \label{eq:naivetrace}
{\rm tr} : L^\infty_t W^{1,1}_x([0,T] \times \R_+) \to L^\infty_t(0,T) 
\end{equation}
obtained by applying the spatial $W^{1,1}_x(\R_+)$ trace inequality $\| c \|_{L^\infty(\R_+)} \leq \| \p_x c \|_{L^1(\R_+)}$, $\forall c \in W^{1,1}(\R_+)$, on a.e. time slice. This is because the ODE for $b_-$ smooths one degree of regularity before $b_-$ enters the inflow condition~\eqref{eq3.5}.

\begin{proposition}[$Y^{1,1}$ solution]
\label{pro:X1psol}
Let $\bar{T} > 0$ and consider $V \in C([0,\bar{T}];W^{2,\infty}(\R_+))$ satisfying
\begin{equation}\label{eq:Vandbetabounds}
    V\big|_{x=0} + \beta_+ \geq \delta \, , \quad V\big|_{x=0} - \beta_- \leq -\delta
\end{equation}
for some $\delta > 0$.
Let
\begin{equation}
    \label{eq:fBUCcondts}
      f \in {\rm BUC}^2(\R^2) \, , \; f_b \in C([0,\bar{T}];{\rm BUC}^1(\R^2))\,,
      \end{equation}
      \begin{equation}
        \label{eq:gBUCconds}
        g_b \in C([0,\bar{T}];{\rm BUC}^2(\R)) \, , \; \p_t g_b \in C([0,\bar{T}];{\rm BUC}^1(\R)) \,.
 \end{equation}
Consider $c^{\rm in}_{\pm} \in W^{1,1}(\R_+)$ and $b_-^{\rm in} \in \R$ satisfying the compatibility condition
\begin{equation}
    c_+^{\rm in}(0) = g_b(b_-^{\rm in},0) \, .
\end{equation}

Then there exists $T \in (0,\bar{T}]$ such that there exists a unique solution
\begin{equation}
(c_+,c_-,b_-) \in \mathbb{Y}^{1,1}(T)
\end{equation}
to the system~\eqref{eq3.1}-\eqref{eq3.5} with initial data $(c_+^{\rm in},c_-^{\rm in},b_-^{\rm in})$. The guaranteed existence time satisfies
\begin{equation}
    T = T(\| c_{\pm}^{\rm in} \|,|b^{\rm in}|, \| f \|, \| f_b \|, \| g_b \|, \| V \|) > 0 \, .
\end{equation}
The solution satisfies a bound
\begin{equation}
    \label{eq:cboundfromprob}
    \| (c_+,c_-,b_-) \|_{\mathbb{Y}^{1,1}(T)} \lesssim 1\,,
\end{equation}
where the implied constant depends on the same quantities as $T$.\footnote{Here and in Proposition~\ref{pro:X2psol}, it is understood that $T$ depends on the norms in a decreasing way, and the implied constant in~\eqref{eq:cboundfromprob} depends on the norms in an increasing way.}
\end{proposition}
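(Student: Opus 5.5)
The plan is to prove Proposition~\ref{pro:X1psol} by a contraction mapping argument in a closed ball of $\mathbb{Y}^{1,1}(T)$, with the data of the ball fixed by the initial data. Given $(\bar c_+,\bar c_-,\bar b_-)$ in the ball
\[
\mathcal{B} := \{ (c_+,c_-,b_-) \in \mathbb{Y}^{1,1}(T) : \| (c_+,c_-,b_-) \|_{\mathbb{Y}^{1,1}(T)} \leq M, \; c_{\pm}|_{t=0} = c^{\rm in}_{\pm}, \; b_-(0) = b_-^{\rm in} \},
\]
define the map $\Gamma(\bar c_+,\bar c_-,\bar b_-) = (c_+,c_-,b_-)$ in three steps. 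First, set
\[
b_-(t) = b_-^{\rm in} + \int_0^t f_b({\rm tr}\, \bar c_-(s),\bar b_-(s),s)\,ds .
\]
Second, solve the outflow transport problem for $c_-$ with velocity $V-\beta_-$, zeroth-order coefficient $\partial_x V$, and source $h_- = -f(\bar c_+,\bar c_-)$. No boundary data is needed here, since $V|_{x=0}-\beta_- \leq -\delta$. Third, solve the inflow problem of Lemma~\ref{lemma 2.3} for $c_+$ with velocity $V+\beta_+ \geq \delta$ at $x=0$, coefficient $\phi = \partial_x V$, source $h_+ = f(\bar c_+,\bar c_-)$, and boundary data $g(t) = g_b(b_-(t),t)$.

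The order is deliberate. The inflow datum uses the \emph{new} $b_-$, whose time derivative is $f_b$ evaluated at a bounded argument. Hence $\|g\|_{W^{1,1}(0,T)}$ is controlled by $T$ times sup norms of $g_b$, $\partial_b g_b$, $\partial_t g_b$ and $f_b$, together with $|b_-^{\rm in}|$. This is where the naive trace estimate~\eqref{eq:naivetrace} suffices: ${\rm tr}\,\bar c_-$ only needs to lie in $L^\infty_t$, and the ODE supplies the extra time derivative. The compatibility condition $g(0) = g_b(b_-^{\rm in},0) = c_+^{\rm in}(0)$ holds automatically, so Lemma~\ref{lemma 2.3} applies. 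The outflow analogue of Lemma~\ref{lemma 2.3}, obtained by characteristics with no boundary term, handles $c_-$.

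For the self-map bounds, the $W^{1,1}_x$ norm of the sources satisfies
\[
\|f(\bar c)\|_{W^{1,1}} \lesssim \|\bar c\|_{W^{1,1}},
\]
using $f(0,0)=0$ and $f\in{\rm BUC}^2$, or more generally boundedness of $f$ on the range of $\bar c$. Then $\|h\|_{L^1_tW^{1,1}_x} \lesssim TM$. The estimate~\eqref{eq:Y1bds} gives
\[
\|c_\pm\|_{Y^{1,1}(T)} \leq N_1 e^{N_2T\|V\|_{W^{2,\infty}}}\bigl(\|c^{\rm in}_\pm\|_{W^{1,1}} + C(1+|b^{\rm in}_-|) + TC(M)\bigr).
\]
Choose $M$ to be twice $N_1$ times the data size; then $T$ small keeps $\Gamma(\mathcal B)\subset\mathcal B$. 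The $b_-$ component in the norm~\eqref{eq:redefinebnorm} is bounded by $|b_-^{\rm in}| + T\|f_b\|_\infty$. The renormed $W^{1,1}(T)$ avoids the bad $1/T$ embedding constant here.

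The main obstacle is contraction. Differences of the $c$ components must be measured in a weaker norm, because the $Y^{1,1}$ difference would require $W^{2}$-type control of the sources. I would contract in $L^\infty_tL^1_x\times L^\infty_tL^1_x\times L^\infty_t$, using Lemma~\ref{lemma 2.6}. For $c_-$, part~(i) applies directly. For $c_+$, part~(ii) applies with boundary difference $g_b(b_-,t)-g_b(b_-',t)$, which is Lipschitz in $b_-$. In turn, $b_-$ differences are bounded by $T\,\|f_b\|_{{\rm BUC}^1}\,\|{\rm tr}(\bar c_- - \bar c_-')\|_{L^\infty_t}$.

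The difficulty is that this trace cannot be controlled in $L^1_x$. I would handle it in one of two ways. One option is to include $\int_0^T|{\rm tr}\,(c_- - c_-')|\,dt$ in the weak metric, since part~(i) of Lemma~\ref{lemma 2.6} controls the outflow flux at $x=0$ and $|V-\beta_-|\geq\delta$ there. This suffices, because $b_-$ differences only need $L^1_t$ of the trace. The other option is to interpolate against the uniform $Y^{1,1}$ bound. With the first choice, each difference gains a factor of $T$ (or of $e^{CT}-1$), giving a contraction for small $T$. The ball $\mathcal B$ is closed in the weak metric by weak-$*$ lower semicontinuity of the $W^{1,1}$ norms, so the fixed point lies in $\mathbb{Y}^{1,1}(T)$. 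Continuity in time follows from Lemma~\ref{lemma 2.3}, and uniqueness follows from the same difference estimate.
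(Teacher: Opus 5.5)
Your map $\Gamma$ and the self-map bounds coincide with the paper's construction. The paper also feeds the \emph{updated} $b_-$ into $g_b$ and uses only the na{\"i}ve trace estimate. The contraction step, however, has a genuine gap. You contract in $L^\infty_tL^1_x$ and assert that $\mathcal B$ is closed in that metric ``by weak-$*$ lower semicontinuity of the $W^{1,1}$ norms.'' This is false. $W^{1,1}(\R_+)$ is not a dual space, and the $L^1$-limit of a $W^{1,1}$-bounded sequence is in general only $BV$ (think of smooth approximations of a step function). So the complete space on which Banach's theorem actually runs is an $L^\infty_tBV_x$-type closure of $\mathcal B$, and your fixed point is only known to lie there. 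The source $f(c_+,c_-)$ is then only $BV$ in $x$. Lemma~\ref{lemma 2.3} needs $h\in L^1_tW^{1,1}_x$, so it cannot be invoked to recover $\p_x c_\pm\in C([0,T];L^1(\R_+))$. Ruling out a singular part of $\p_x c_\pm$ is precisely the strong-norm estimate you set out to avoid.

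That avoidance was unnecessary. Your premise that $Y^{1,1}$ differences need $W^2$-type control of the sources is wrong here, because $V$ is given (the problem is semilinear) and $W^{1,1}(\R_+)\hookrightarrow L^\infty$.
\begin{itemize}
\item From $\p_x[f(c)-f(c')]=\nabla f(c)\cdot\p_x(c-c')+(\nabla f(c)-\nabla f(c'))\cdot\p_x c'$ you get $\|f(c)-f(c')\|_{W^{1,1}}\lesssim \|f\|_{{\rm BUC}^2}(1+M)\|c-c'\|_{W^{1,1}}$.
\item Likewise $\|g_b(b,\cdot)-g_b(b',\cdot)\|_{W^{1,1}(T)}\lesssim_M\|b-b'\|_{W^{1,1}(T)}$, using $g_b\in{\rm BUC}^2$, $\p_tg_b\in{\rm BUC}^1$ and the renormed norm~\eqref{eq:redefinebnorm}.
\item The $b_-$-update gains a factor of $T$, because ${\rm tr}\,c_-$ enters only under $\int_0^t$ and is bounded in $L^\infty_t$ by~\eqref{eq:naivetrace}.
\end{itemize}
This is exactly the paper's proof: $\Phi$ is a contraction on $\mathcal{B}_{M_1,M_2}(T)$ in the full $\mathbb{Y}^{1,1}(T)$ norm. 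No closure issue arises, and the fixed point lies in $\mathbb{Y}^{1,1}(T)$ directly.

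Your weak-metric version has two smaller defects as well. First, the interpolation option cannot work: the trace at $x=0$ is not bounded by $\|w\|_{L^1}^\theta\|w\|_{W^{1,1}}^{1-\theta}$, as $w_n=(1-nx)_+$ shows. Second, with the flux option the new $b_-$ difference is bounded by $\|f_b\|_{{\rm BUC}^1}\int_0^T|{\rm tr}(\bar c_--\bar c_-')|\,dt$ with no factor of $T$. You would need a weighted metric, or to pass to the second iterate, to obtain a contraction.
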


\begin{remark}
In Section~\ref{sec:quasilinearterms}, we want to apply Propositions~\ref{pro:X1psol} and~\ref{pro:X2psol} in instances when $f, f_b$ and $g_b$ do \emph{not} belong to ${\rm BUC}$, ${\rm BUC}^1$, or ${\rm BUC}^2$ (in our applications, $f_b$ and $g_b$ may not even be everywhere defined). To circumvent this, one may modify $f, f_b$, and $g_b$ away from the support of $c_{\pm}^{\rm in}$ and $b^{\rm in}_-$ to obtain $\tilde{f},\tilde{f}_b$, and $\tilde{g}_b$ which \emph{do} satisfy the assumptions of Propositions~\ref{pro:X1psol} and~\ref{pro:X2psol}. Then, since $c_{\pm} \in C([0,T];W^{1,1}(\R_+))$ from Lemma~\ref{lemma 2.3} and $b_- \in C([0,T])$, the solution remains in the region where $f = \tilde{f}$, $f_b = \tilde{f}_b$, and $g_b = \tilde{g}_b$ up to some time $\tilde{T} \in (0,T]$. See~\eqref{eq:gbmodification} and \eqref{eq:gbmodification2} for related cut-offs of $g_b$.
\end{remark}

\begin{remark}
    \label{rmk:tildeY11rmk}
    The space $Y^{1,1}(T)$ has continuity in time built into the norms, which provides more information on the solution. However, the fixed point argument also holds in the space of functions $\tilde{Y}^{1,1}(T) := \{ c \in L^\infty_t L^1_x(\Sigma_T) : \p_x c \in L^\infty_t L^1_x(\Sigma_T) \}$ (with the $\tilde{Y}^{1,1}(T)$ norm the same as the $Y^{1,1}(T)$ norm in~\eqref{eq:y11norm}), and with time-continuity replaced by $L^\infty_t$ in~\eqref{eq:fBUCcondts} and \eqref{eq:gBUCconds}. Hence, uniqueness holds in $\tilde{Y}^{1,1}(T)$. This will be utilized once, in the compactness argument below~\eqref{eq:wherecpmlives}.
\end{remark}

\begin{proof}[Proof of Proposition~\ref{pro:X1psol}]
Suppose the hypotheses of the proposition. For now, let $T \in (0,\bar{T}]$ be arbitrary and let $Q_+[T,c^{\rm in}_+;h,g] = Q_+[h,g]$,
\begin{equation}
    Q_+ : Y^{1,1}(T) \times W^{1,1}(T) \to Y^{1,1}(T) \, ,
\end{equation}
be the solution operator to the problem
\begin{align}
    \p_t c_+ + \p_x ((V+\beta_+) c_+) = h(x,t) \\
    c_+\big|_{x=0} = g(t) \, , \quad c_+\big|_{t=0} = c_+^{\rm in} \, ,
\end{align}
as discussed in the previous section. 
Likewise, let $Q_-[T,c^{\rm in}_-;h] = Q_-[h]$,
\begin{equation}
    Q_- : Y^{1,1}(T) \to Y^{1,1}(T)\,,
\end{equation}
denote the solution operator to the problem
\begin{align}
    \p_t c_- + \p_x ((V-\beta_-) c_-) &= h(x,t) \\
    c_-\big|_{t=0} &= c_-^{\rm in} \, .
\end{align}

Let $\Phi_3 : Y^{1,1}(T) \times W^{1,1}(T) \to W^{1,1}(T)$
be the operator
\begin{equation}
    \Phi_3[c_-,b_-](t) = b_-^{\rm in} + \int_0^t f_b({\rm tr} \, c_-,b_-,s)(s) \, ds \, .
\end{equation}
In fact, $\Phi_3$ maps into $W^{1,\infty} \supset W^{1,1}$ by the na{\"i}ve trace estimate~\eqref{eq:naivetrace} on $c_-$, the assumption on $f_b$ (composition of $f$ with $L^\infty(0,T)$ functions yields an $L^\infty(0,T)$ function), and the gain of one derivative from integration.

Let $\Phi[T;c_+,c_-,b_-] = \Phi[c_+,c_-,b_-]$,
\begin{equation}
    \Phi : \mathbb{Y}^{1,1}(T) \to \mathbb{Y}^{1,1}(T) \, ,
\end{equation}
be the operator with components
\begin{align}
    \Phi[c_+,c_-,b_-]_1 &= Q_+[T,c^{\rm in}_+;f(c_+,c_-),g_b(\Phi_3[c_-,b_-],t)]  \\
    \Phi[c_+,c_-,b_-]_2 &= Q_-[T,c^{\rm in}_-;-f(c_+,c_-)]\\
    \Phi[c_+,c_-,b_-]_3 &= \Phi_3[c_-,b_-] \, .
\end{align}
The operators $\Phi_1$ and $\Phi_2$ satisfy the claimed mapping properties under the assumptions on $f$ and $g_b$; more specifically, composition of $f$ with two $Y^{1,1}(T)$ functions yields an $Y^{1,1}(T)$ function, and composition of $g_b$ with a $W^{1,1}(T)$ function yields a $W^{1,1}(T)$ function. Fixed points of $\Phi$ are precisely the desired solutions to~\eqref{eq3.1}-\eqref{eq3.5}.

We demonstrate that $\Phi$ is a contraction in
\begin{equation}
    \mathcal{B}_{M_1,M_2}(T) := \big\{ (c_+,c_-,b_-) \in \mathbb{Y}^{1,1}(T) : \| c_{\pm} \|_{Y^{1,1}(T)} \leq M_1 \, , \; \| b_- \|_{W^{1,1}} \leq M_2 \big\}
\end{equation}
for appropriately chosen $M_1,M_2 > 0$ and $T \leq 1$. We restrict $T$ small enough such that the prefactor in~\eqref{eq:Y1bds} with $v = V$ and $\phi = \p_x V$ satisfies
\begin{equation}
    \label{eq:restrictionofT}
    N_1 \exp \left( 2N_2 T \| \p_x V \|_{L^\infty_t L^\infty_x(\Sigma_{\bar{T}})} \right) \leq 2N_1 \, . 
\end{equation}

\textbf{The map $\Phi$ stabilizes a ball}. First, we demonstrate that $\Phi : \mathcal{B}_{M_1,M_2}(T) \to \mathcal{B}_{M_1,M_2}(T)$ for appropriately chosen $M_1,M_2 > 0$ and $T$. Suppose $(c_+,c_-,b_-) \in \mathcal{B}_{M_1,M_2}(T)$. For $\Phi_3$, we have
\begin{equation}
    \| \Phi[c_+,c_-,b_-]_3(t) \|_{W^{1,1}(T)} \leq |b_-^{\rm in}| + \| \int_0^t f_b({\rm tr} \, c_-,b_-,s)(s) \, ds \|_{W^{1,1}(T)} \, .
\end{equation}
For the $L^\infty$ part of the norm, we have
\begin{equation}
    \label{eq:Phi3firstest}
     \| \Phi[c_+,c_-,b_-]_3(t) \|_{L^\infty(0,T)} \leq |b_-^{\rm in}| + C_{f_b} T \, ,
\end{equation}
where we require the na{\"i}ve trace estimate~\eqref{eq:naivetrace}. For the derivative part of the norm, we have
\begin{equation}\label{eq:Phi3secondest}
\begin{aligned}
    \| \p_t \int_0^t f_b({\rm tr} \, c_-,b_-,s)(s) \, ds \|_{L^1(0,T)} &= \| f_b({\rm tr} \, c_-,b_-,\cdot) \|_{L^1(0,T)}\\ 
    &\leq C_{f_b} \,T \, .
\end{aligned}
\end{equation}
This gain of $T$ in the $b_-$ equation is crucial. For $\Phi_1$, we have
\begin{equation}
    \label{eq:Phi1upperbound}
    \begin{aligned}
        &\| \Phi[c_+,c_-,b_-]_1 \|_{Y^{1,1}(T)} = \| Q_+[f(c_+,c_-),c^{\rm in}_+,g_b(\Phi_3,t)] \|_{Y^{1,1}(T)} \\
        &\quad \leq 2N_1 \left( \| c^{\rm in}_+ \|_{W^{1,1}(\R_+)} + T \| f(c_+,c_-) \|_{Y^{1,1}(T)} + \| g_b(\Phi_3,t) \|_{W^{1,1}(T)} \right) \\
        &\quad \leq 2N_1 \| c^{\rm in}_+ \|_{W^{1,1}(\R_+)} + C_f T + C_{g_b} \, ,
    \end{aligned}
\end{equation}
according to Lemma~\ref{lemma 2.3} and~\eqref{eq:restrictionofT}. For $\Phi_2$, we similarly have
\begin{equation}
    \label{eq:Phi2upperbound}
    \| \Phi[c_+,c_-,b_-]_2 \|_{Y^{1,1}(T)} \leq 2N_1 \| c_-^{\rm in} \|_{W^{1,1}(\R_+)} + C_{f} T \, .
\end{equation}

Fix $M_2 \geq 2 |b^{\rm in}_-| + 1$. Fix $M_1 \geq 3 N_1 \max_\pm \| c_{\pm}^{\rm in} \|_{W^{1,1}(\R_+)}  + 2C_{g_b}+ 1$. Restrict $T$ small enough, depending on $M_1$, to ensure that the sum of~\eqref{eq:Phi3firstest} and~\eqref{eq:Phi3secondest} is bounded above by $2|b_-^{\rm in}| + 1 \leq M_2$. Further restrict $T$ small enough, depending on $M_1,M_2$, to ensure that~\eqref{eq:Phi1upperbound} and~\eqref{eq:Phi2upperbound} are each bounded above by $M_1$. With these restrictions, $\Phi$ stabilizes the ball $\mathcal{B}_{M_1,M_2}(T)$.

\textbf{Contractivity of $\Phi$}. We now verify the contraction property for two inputs $c_{\pm}^{(i)}, b_-^{(i)}$, $i=1,2$, in $\mathcal{B}_{M_1,M_2}(T)$ after possibly shrinking $T$. We abbreviate $\Phi[c_+^{(i)},c_-^{(i)},b_-^{(i)}] = \Phi^{(i)}$, $i=1,2$. The $\Phi_3$ differences satisfy
\begin{equation}
    \label{eq:Phi3diffest}
\begin{aligned}
      &\| \Phi_3^{(1)} - \Phi_3^{(2)} \|_{W^{1,1}(T)} \lesssim \| \int_0^t \big(f_b({\rm tr} \, c_-^{(1)},b_-^{(1)}) - f_b({\rm tr} \, c_-^{(2)},b_-^{(2)})\big) \, ds \|_{L^\infty(0,T)} \\
      &\quad\quad\quad + \| f_b({\rm tr} \, c_-^{(1)},b_-^{(1)}) - f_b({\rm tr} \, c_-^{(2)},b_-^{(2)}) \|_{L^1(0,T)} \\
      &\quad \leq C T ( \| c_-^{(1)} - c_-^{(2)} \|_{Y^{1,1}(T)} + \| b_-^{(1)} - b_-^{(2)} \|_{W^{1,1}(T)} ) \, .
\end{aligned}
\end{equation}
Here, the $C^1$ condition on $f_b$ in~\eqref{eq:fBUCcondts} was used. The $\Phi_2$ differences satisfy (notice the zero initial condition in $Q_-$ below)
\begin{equation}
    \label{eq:Phi2diffest}
\begin{aligned}
    \| \Phi_2^{(1)} - \Phi_2^{(2)} \|_{Y^{1,1}(T)} 
    &= \| Q_-[T,V,0;- f(c_+^{(1)},c_-^{(1)}) + f(c_+^{(2)},c_-^{(2)})] \|_{Y^{1,1}(T)} \\
    &\leq C T (\| c_+^{(1)} - c_+^{(2)} \|_{Y^{1,1}(T)} + \| c_-^{(1)} - c_-^{(2)} \|_{Y^{1,1}(T)} ) \, .
    \end{aligned}
    \end{equation}
Here, the $C^2$ condition on $f$ in~\eqref{eq:fBUCcondts} was used. Finally, the $\Phi_1$ differences satisfy (notice the zero initial condition in $Q_+$ below)
\begin{equation}
    \label{eq:Phi1diffest}
\begin{aligned}
    &\| \Phi_1^{(1)} - \Phi_1^{(2)} \|_{Y^{1,1}(T)} \\
    &\quad = \| Q_+[T,V,0;f(c_+^{(1)},c_-^{(1)}) - f(c_+^{(2)},c_-^{(2)}), g_b(\Phi_3^{(1)},t)- g_b(\Phi_3^{(2)},t)] \|_{Y^{1,1}(T)}
    \\
    &\quad \leq C (T \sum_{\pm} \| c^{(1)}_{\pm} - c^{(2)}_{\pm} \|_{Y^{1,1}(T)} + \| \Phi_3^{(1)} - \Phi_3^{(2)} \|_{W^{1,1}(T)}) \, .
\end{aligned}
\end{equation}
Here, the $C^1$ conditions on $f$ and $\p_t g_b$ in~\eqref{eq:fBUCcondts}-\eqref{eq:gBUCconds} and the $C^2$ condition on $g_b$ in~\eqref{eq:gBUCconds} were used.

Combining~\eqref{eq:Phi3diffest},~\eqref{eq:Phi2diffest}, and~\eqref{eq:Phi1diffest}, we may restrict $T$ small enough to guarantee
\begin{equation}
    \begin{aligned}
        \| \Phi^{(1)} - \Phi^{(2)} \|_{\mathbb{Y}^{1,1}(T)}  \leq \frac{1}{2} \| (c_{\pm}^{(1)}, b_-^{(1)}) - (c_{\pm}^{(2)}, b_-^{(2)}) \|_{\mathbb{Y}^{1,1}(T)} \, .
    \end{aligned}
\end{equation}

In conclusion, the existence and uniqueness of solutions belonging to $\mathcal{B}_{M_1,M_2}(T)$ follows from the contraction mapping principle.

For uniqueness more generally, we observe that for any two solutions, there exist $M_1,M_2$ and $T$ such that the solutions fall into the contractive regime, which lets one propagate equality forward in time.
\end{proof}

\begin{proposition}[$Y^{2,1}$ solution]
\label{pro:X2psol}
In the setting of Proposition~\ref{pro:X1psol}, suppose furthermore that
\begin{gather}
    V \in C([0,\bar{T}];W^{3,\infty}(\R_+)) \, , \quad \p_t V \in C([0,\bar{T}];W^{2,\infty}(\R_+)) \label{eq:Vass2} \\
      f \in {\rm BUC}^3(\R^2) \label{eq:fbuc2} \\
      f_b \in C([0,\bar{T}];{\rm BUC}^2(\R^2)) \, , \quad \p_t f_b \in C([0,\bar{T}];{\rm BUC}^1(\R^2)) \label{eq:fbbuc2} \\
        \p_t^k g_b \in C([0,\bar{T}];{\rm BUC}^{3-k}(\R)) \, , \quad  k = 0,1,2 \, , \label{eq:gbbuc2}
 \end{gather}
 and
$c^{\rm in}_{\pm} \in W^{2,1}(\R_+)$ satisfy the additional compatibility condition
\begin{equation}
    (\p_t c_+)(0,0) = (\p_{b_-} g_b)(b^{\rm in}
    _-,0) \dot b_-(0) + (\p_t g_b)(b_-^{\rm in},0) \, ,
\end{equation}
where $(\p_t c_+)(0,0)$ is interpreted in the sense of equation~\eqref{eq3.1} and $\dot b_-(0)$ is interpreted in the sense of equation~\eqref{eq3.3}.

Then there exists $T > 0$ (possibly shorter than the $T$ in Proposition~\ref{pro:X1psol}) such that
\begin{equation}
(c_+,c_-,b_-) \in \mathbb{Y}^{2,1}(T) \, .
\end{equation}
The guaranteed existence time satisfies
\begin{equation}
    T = T(\| c_{\pm}^{\rm in} \|,|b_-^{\rm in}|, \| f \|, \| f_b \|, \| g_b \|, \| V \|, \delta) > 0 \, ,
\end{equation}
where the norms are those corresponding to the above function spaces. The solution satisfies the bounds
\begin{equation}
    \| (c_+,c_-,b_-) \|_{\mathbb{Y}^{2,1}(T)} \lesssim 1
\end{equation}
where the implied constant depends on the same quantities as $T$.
\end{proposition}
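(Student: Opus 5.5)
The plan is to run the same contraction scheme as in Proposition~\ref{pro:X1psol}, now in the higher-order space, but first to split off the part of the $Y^{2,1}$ norm that is "automatic". By Proposition~\ref{pro:X1psol} we already have a unique solution $(c_+,c_-,b_-)\in\mathbb{Y}^{1,1}(T_1)$. It therefore suffices to show that the map $\Phi$ built from the solution operators $Q_\pm$ and $\Phi_3$ stabilizes a ball
\[
\mathcal{B}^{(2)}_{M_1,M_2}(T)=\big\{\|c_\pm\|_{Y^{2,1}(T)}\le M_1,\ \|b_-\|_{W^{2,1}(T)}\le M_2\big\}
\]
for $T$ small. We also need this ball to be closed in the $\mathbb{Y}^{1,1}(T)$ topology, which holds by lower semicontinuity of the norms under weak-$*$ limits. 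The fixed point then exists, agrees with the $Y^{1,1}$ solution by uniqueness, and lies in $\mathbb{Y}^{2,1}(T)$. This way the contraction is checked only in the weaker $\mathbb{Y}^{1,1}$ norm, where it was already established, and we avoid needing Lipschitz bounds for $f$, $g_b$ at the top order.

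The main tool for the a priori bound is Lemma~\ref{lemma 2.8}, applied to $Q_+$ with $v=V+\beta_+$, $\phi=\p_xV$, $h=f(c_+,c_-)$, $g=g_b(\Phi_3,t)$, and to $Q_-$ with $v=V-\beta_-$, which has no boundary data. The hypotheses \eqref{eq:Vass2} supply the required control of $v,\phi$ in $W^{2,\infty}$ together with their time derivatives, and the constant $N_1$ depends on $\delta$ from \eqref{eq:Vandbetabounds}. For the source terms we use:
\begin{itemize}
\item[(i)] $f\in{\rm BUC}^3$ and the chain rule. Together with the embedding $W^{1,1}(\R_+)\subset L^\infty$, this gives $\|h\|_{L^1_tW^{2,1}_x}+\|\p_th\|_{L^1_tW^{1,1}_x}\lesssim T\,C(M_1)$, plus $\|h|_{t=0}\|_{W^{1,1}}\lesssim C(\|c^{\rm in}\|_{W^{1,1}})$. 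The latter term does not carry a factor of $T$, but it depends only on the data.
\item[(ii)] $\Phi_3$ gains one derivative. We have $\ddot b_-=\p_tf_b+\p_1f_b\,\p_t{\rm tr}\,c_-+\p_2f_b\,\dot b_-$. Since ${\rm tr}\,\p_tc_-\in L^\infty_t$ by the naive trace estimate \eqref{eq:naivetrace} applied to $\p_tc_-\in L^\infty_tW^{1,1}_x$, we get $\|\Phi_3\|_{W^{2,1}(T)}\le |b^{\rm in}_-|+|\dot b_-(0)|+T\,C(M_1,M_2)$.
\item[(iii)] $g=g_b(\Phi_3,t)\in W^{2,1}(0,T)$, with a bound by $C(M_2')$ that comes from \eqref{eq:gbbuc2} and the previous item.
\end{itemize}
Choosing $M_2$ first, in terms of the data, and then $M_1$ large compared with the data and $C(M_2)$, and finally $T$ small, closes the ball exactly as in the proof of Proposition~\ref{pro:X1psol}.

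The main obstacle, and the step that needs care, is the second-order compatibility condition \eqref{transportcomp2} in Lemma~\ref{lemma 2.8}. It has to hold for every iterate $\Phi_1[c_+,c_-,b_-]$, not only for the true solution. With $g=g_b(\Phi_3,t)$, the quantity $g'(0)=\p_{b}g_b(b^{\rm in}_-,0)\,f_b({\rm tr}\,c_-(0),b_-^{\rm in},0)+\p_tg_b(b^{\rm in}_-,0)$ depends only on ${\rm tr}\,c_-|_{t=0}=c_-^{\rm in}(0)$, which is fixed by the data. Likewise $h(0,0)=f(c^{\rm in}_+(0),c^{\rm in}_-(0))$. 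So if the ball is restricted to triples with the prescribed initial values, \eqref{transportcomp2} reduces precisely to the assumed compatibility condition. The restriction is automatic, since elements of $Y^{1,1}(T)$ are continuous into $W^{1,1}$ and hence take the initial data $c^{\rm in}_\pm$. The zeroth-order condition $g(0)=c^{\rm in}_+(0)$ holds as before.

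A secondary subtlety is the $L^\infty$ part of the $W^{2,1}(T)$ norm on $\dot b_-$ in \eqref{eq:redefinebnorm}. We control it by $|\dot b_-(0)|+\|\ddot b_-\|_{L^1}$ rather than through an embedding that would lose a factor $1/T$. After these checks, the stated dependence of $T$ and of the bound, including the dependence on $\delta$, follows from the dependence of $N_1$ in Lemma~\ref{lemma 2.8}.
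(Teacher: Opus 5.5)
The gap is in the claim that the high-norm ball $\{\|c_\pm\|_{Y^{2,1}(T)}\le M_1,\ \|b_-\|_{W^{2,1}(T)}\le M_2\}$ is closed in the $\mathbb{Y}^{1,1}(T)$ topology ``by lower semicontinuity under weak-$*$ limits''. That argument works for $H^s$-type norms, but here every norm is $L^1$-based in $x$, and $L^1$ is not a dual space. Suppose $c^n\to c$ in $L^\infty_tW^{1,1}_x$ with $\|\p_x^2c^n\|_{L^\infty_tL^1_x}+\|\p_t\p_xc^n\|_{L^\infty_tL^1_x}\le M_1$. You only learn that $\p_xc(t,\cdot)$ has total variation at most $M_1$, and that $\p_x^2c$ and $\p_t\p_xc$ are finite measures. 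Smooth functions converging to a function with a kink show the ball really is not closed. In addition, the continuity of $\p_xc,\p_tc$ into $W^{1,1}(\R_+)$, which is part of the definition of $Y^{2,1}(T)$, is not inherited at all. So your fixed point is not shown to lie in $\mathbb{Y}^{2,1}(T)$. You also cannot bootstrap it through Lemma~\ref{lemma 2.8}, because the source $f(c_+,c_-)$ would then have only measure-valued second derivatives. The paper points out exactly this phenomenon in the proof of Proposition~\ref{pro:existence}.

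What is missing is a top-order difference estimate. The paper keeps your uniform bound: your items (i)--(iii) and the ordering of $M_2$, $M_1$, $T$ are essentially its argument. It then shows that the Picard iterates are Cauchy in $\mathbb{Y}^{2,1}(T)$ itself, via
\[
\|\Delta^{n+1}\|_{\mathbb{Y}^{2,1}(T)}\le \tfrac12\|\Delta^{n}\|_{\mathbb{Y}^{2,1}(T)}+C\|\Delta^{n}\|_{\mathbb{Y}^{1,1}(T)},
\]
where $\Delta^n$ is the difference of consecutive iterates. Combined with the geometric decay of $\|\Delta^n\|_{\mathbb{Y}^{1,1}(T)}$ from Proposition~\ref{pro:X1psol}, this gives summability in the complete space $\mathbb{Y}^{2,1}(T)$, and time continuity passes to the uniform limit. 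Proving this estimate needs precisely the top-order Lipschitz bounds you set out to avoid. That is why the hypotheses $f\in{\rm BUC}^3$, $\p_t^kg_b\in{\rm BUC}^{3-k}$, $f_b\in{\rm BUC}^2$ and $\p_tf_b\in{\rm BUC}^1$ each carry one more derivative than your uniform bound uses.

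A smaller point: membership in $Y^{1,1}(T)$ does not force an element to take the values $c^{\rm in}_\pm$, $b^{\rm in}_-$ at $t=0$. You must impose that constraint on the ball explicitly (it is closed and preserved by $\Phi$). Only then does your reduction of \eqref{transportcomp2} to the assumed compatibility condition hold.
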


\begin{proof}
Assume the hypotheses of the proposition. Let $c^0_{\pm} \equiv c_\pm^{\rm in}$ and $b_-^0 \equiv b_-^{\rm in}$. Let $(c_{\pm}^n,b_-^n)$, $n \geq 1$, be the Picard iterates corresponding to $\Phi$ from the proof of Proposition~\ref{pro:X1psol}, which guarantees that, for sufficiently small $T$, the iterates have a uniform bound in $\mathbb{Y}^{1,1}(T)$ and satisfy
\begin{equation}
    \label{eq:iamdecreasingfosho}
    \| (c_\pm^{n+1},b_-^{n+1}) - (c_\pm^{n},b_-^{n}) \|_{\mathbb{Y}^{1,1}(T)} \leq \frac{1}{2} \| (c_\pm^{n},b_-^{n}) - (c_\pm^{n-1},b_-^{n-1}) \|_{\mathbb{Y}^{1,1}(T)} \, , \quad n \geq 1 \, .
\end{equation}
Our aim is to demonstrate that the iterates satisfy a uniform bound and are Cauchy in the space $\mathbb{Y}^{2,1}(T)$ for $T$ sufficiently small. The assumptions~\eqref{eq:Vass2} on $V$ will be used throughout in order to apply Lemma~\ref{lemma 2.8} (whose constant may depend on $\delta$).

\textbf{Uniform bound in $\mathbb{Y}^{2,1}(T)$}. Let $n \geq 1$. First, we estimate $b_-^{n+1}$. We have
\begin{equation}
    \| \dot b^{n+1}_{-} \|_{L^\infty(0,T)} = \| f_b({\rm tr} \, c_-^n,b_-^n,t) \|_{L^\infty(0,T)} \lesssim 1 \, .
\end{equation}
By differentiating the ODE for $b_-$, we obtain
\begin{equation}
    \label{eq:differentiationoftheODEforb}
    \ddot b^{n+1}_-(t) = \p_{c_-} f_b \, ({\rm tr} \, \p_t c_-^n) + \p_{b_-} f_b \, \dot b_-^n + \p_t f_b \, ,
\end{equation}
where $\p_{c_-} f_b, \p_{b_-} f_b$ and $\p_t f_b$ are evaluated at $({\rm tr} \, c_-^n, b_-^n, t)$. Hence,
\begin{equation}
    \| \ddot b^{n+1}_{-} \|_{L^1(0,T)} \lesssim T \| c_-^n \|_{Y^{2,1}(T)} + 1 \, .
\end{equation}
In summary,
\begin{equation}
    \label{eq:bnuniformbd}
    \| b^{n+1} \|_{W^{2,1}(T)} \leq T \| c_-^n \|_{Y^{2,1}(T)} + C_1 \, .
\end{equation}

Second, we estimate $c_-^{n+1}$:
\begin{equation}
    \label{eq:cminusuniformbd}
\begin{aligned}
       &\| c_-^{n+1} \|_{Y^{2,1}(T)} = \| Q_-[c_-^{\rm in};-f(c_+^n,c_-^n)] \|_{Y^{2,1}(T)} \\
       &\quad \lesssim \| c_-^{\rm in} \|_{W^{2,1}(\R_+)} + T \| f(c_+^n,c_-^n) \|_{L^\infty_t W^{2,1}_x(\Sigma_T)} + T \| \p_t [f(c_+^n,c_-^n)] \|_{L^\infty_t W^{1,1}_x(\Sigma_T)} \\
       &\quad\quad + \| f(c_+^{\rm in},c_-^{\rm in}) \|_{W^{1,1}(\R_+)} \\
       &\quad \leq CT \sum_{\pm} \| c_{\pm}^n \|_{Y^{2,1}(T)} + C_2 \, .
\end{aligned}
\end{equation}

Third, we estimate $c_+^{n+1}$ according to Lemma~\ref{lemma 2.8}:
\begin{equation}
    \label{eq:cplusuniformbd}
\begin{aligned}
    &\| c_+^{n+1}  \|_{Y^{2,1}(T)} = \| Q_+[c_+^{\rm in};f(c_+^n,c_-^n),c_-^{n-1}),g_b(b^{n+1}_{-},t)] \|_{Y^{2,1}(T)} \\
    &\quad \lesssim \| c_+^{\rm in} \|_{W^{2,1}(\R_+)} + T \| f(c_+^n,c_-^n) \|_{L^\infty_t W^{2,1}_x(\Sigma_T)} + T \| \p_t [f(c_+^n,c_-^n)] \|_{L^\infty_t W^{1,1}_x(\Sigma_T)} \\
       &\quad\quad + \| g_b(b^{n+1}_{-},t) \|_{W^{2,1}(0,T)} + \| f(c_+^{\rm in},c_-^{\rm in}) \|_{W^{1,1}(\R_+)} \\
       &\quad \leq C T \sum_{\pm} \| c_{\pm}^n \|_{Y^{2,1}(T)} + C \| b^{n+1}_- \|_{W^{2,1}(0,T)} + C_2 \, .
\end{aligned}
\end{equation}
Consider the constant $C$ above as a fixed constant.

Let $M_1' = 2C_1$ and $M_2' = 2C_2 + 2C M_1'$. Restrict $T$ sufficiently small such that $C T 2M_2' + C_2 \leq M_2'$ in~\eqref{eq:cminusuniformbd} and~\eqref{eq:cplusuniformbd}. Further restrict $T$ sufficiently small such that $T M_2' + C_1 \leq M_1'$ in~\eqref{eq:bnuniformbd}. Then, by induction, we have
\begin{equation}
    \| b^n_-\|_{W^{2,1}(T)} \leq M_1' \, , \quad \|c_{\pm}^n \|_{Y^{2,1}(T)} \leq M_2' \, , \quad \forall n \geq 2 \, .
\end{equation}

\textbf{Cauchy in $\mathbb{Y}^{2,1}(T)$}. We now prove, for $n \geq 2$, the property
\begin{equation}
    \label{eq:mysequenceiscauchyiny21}
    \begin{aligned}   
    &\| (c_\pm^{n+1},b_-^{n+1}) - (c_\pm^{n},b_-^{n}) \|_{\mathbb{Y}^{2,1}(T)} \\
    &\quad \leq \frac{1}{2}
    \| (c_\pm^{n},b_-^{n}) - (c_\pm^{n-1},b_-^{n-1}) \|_{\mathbb{Y}^{2,1}(T)} + C\| (c_\pm^{n},b_-^{n}) - (c_\pm^{n-1},b_-^{n-1}) \|_{\mathbb{Y}^{1,1}(T)}\,. 
    \end{aligned}
\end{equation}

First, we have
\begin{equation}
    \label{eq:bplushigherderivdiff}
\begin{aligned}
     \| \p_t (b^{n+1}_{-} - b^n_{-}) \|_{L^\infty(0,T)} & =\| f_b({\rm tr} \, c_-^n,b_-^n,t) - f_b({\rm tr} \, c_-^{n-1},b_-^{n-1},t) \|_{L^\infty(0,T)} \\
     &\lesssim \| c_-^n - c_-^{n-1} \|_{Y^{1,1}(T)} + \| b^n_{-} - b^{n-1}_{-} \|_{L^\infty(0,T)} \,,
\end{aligned}
\end{equation}
and, from the identity~\eqref{eq:differentiationoftheODEforb} for $\ddot b^{n+1}$,
\begin{equation}
\begin{aligned}
    &\| \p_t^2 (b^{n+1}_{-} - b^n_{-}) \|_{L^1(0,T)} \lesssim T \| {\rm tr} \, \p_t (c_-^n - c_-^{n-1}) \|_{L^\infty(0,T)} + T \| c_-^n - c_-^{n-1} \|_{Y^{1,1}(T)}\\
    &\qquad + T \| \p_t (b^n_{-} - b^{n-1}_{-}) \|_{L^\infty(0,T)} + T \| b^n_{-} - b^{n-1}_{-} \|_{L^\infty(0,T)} \, .
\end{aligned}
\end{equation}
In addition to the assumptions~\eqref{eq:fbbuc2} on $f_b$, the uniform bounds are necessary when estimating, for example, $\p_{b_-} f_b({\rm tr} \, c^n,b_-^n,t) \dot b_-^n - \p_{b_-} f_b({\rm tr} \, c^{n-1},b_-^{n-1},t) \dot b_-^{n-1}$.

Second, we have
\begin{equation}
\begin{aligned}
       \| c_-^{n+1} - c_-^n \|_{Y^{2,1}(T)} &= \| Q_-[T,V,0;-f(c_+^n,c_-^n)+f(c_+^{n-1},c_-^{n-1})] \|_{Y^{2,1}(T)} \\
       &\lesssim T \| f(c_+^n,c_-^n)-f(c_+^{n-1},c_-^{n-1}) \|_{L^\infty_t W^{2,1}_x(\Sigma_T)} \\
       &\quad\quad + T \| \p_t f(c_+^n,c_-^n)- \p_t f(c_+^{n-1},c_-^{n-1}) \|_{L^\infty_t W^{1,1}_x(\Sigma_T)} \\ 
       & \lesssim T \sum_{\pm} \| c_\pm^{n} - c_{\pm}^{n-1} \|_{Y^{2,1}(T)} \,,
\end{aligned}
\end{equation}
using that $f(c_+^n,c_-^n)\big|_{t=0} - f(c_+^{n-1},c_-^{n-1})\big|_{t=0} = 0$ to cancel the $h\big|_{t=0}$ term in~\eqref{eq2.8.1}. Here, we have also used $C^3$ assumption~\eqref{eq:fbuc2} on $f$ and the uniform bounds on $c_{\pm}^n$.

Third, we have
\begin{equation}
    \label{eq:cplushigherderivdiff}
\begin{aligned}
       &\| c_+^{n+1} - c_+^n \|_{Y^{2,1}(T)}\\
       &\quad= \| Q_+[0;f(c_+^n,c_-^n) - f(c_+^{n-1},c_-^{n-1}),g_b(b^{n+1}_{-},t) - g_b(b^{n}_{-},t)] \|_{Y^{2,1}(T)} \\
       &\quad \lesssim T \sum_{\pm} \| c_\pm^{n} - c_{\pm}^{n-1} \|_{Y^{2,1}(T)} + \| g_b(b^{n+1}_{-},t) - g_b(b^{n}_{-},t) \|_{W^{2,1}(T)} \\
       &\quad \lesssim T \sum_{\pm} \| c_\pm^{n} - c_{\pm}^{n-1} \|_{Y^{2,1}(T)} + \| c_-^n - c_-^{n-1} \|_{Y^{1,1}}\\
       &\quad\qquad + \| b^n_{-} - b^{n-1}_{-} \|_{L^\infty(0,T)} + T \| b_-^n - b_-^{n-1} \|_{W^{2,1}(T)} \, .
\end{aligned}
\end{equation}
This is where the assumptions~\eqref{eq:gbbuc2} on $g_b$ are used.

Finally, to complete the proof of~\eqref{eq:mysequenceiscauchyiny21} and the proposition, we sum~\eqref{eq:iamdecreasingfosho} and~\eqref{eq:bplushigherderivdiff}-\eqref{eq:cplushigherderivdiff} and restrict $T$ to be sufficiently small.
\end{proof}

We now establish two important properties of the above solutions.

\begin{lemma}[Conservation of total mass]
    \label{lem:massconservation}
Let $T>0$ and suppose that $(c_+,c_-,b_-)$ is a solution on $\Sigma_T$ belonging to the class in Proposition~\ref{pro:X1psol}. Suppose additionally that
\begin{equation}
    f(c_+,c_-) = - c_+ r(c_-) + c_- r(c_+) \, ,
\end{equation}
where $r(u) \geq 0$ for $u \geq 0$, and
\begin{equation}\label{eq:formoffb}
f_b(u,b_-,t) = (\beta_- - V\big|_{x=0}) u - (V\big|_{x=0}+\beta_+) g_b(b_-,t) \, .  
\end{equation}
Then 
\begin{equation}
 b_{-} (t) + \int_{\R_{+}} \big(c_{+} (t, x) + c_{-} (t, x)\big) \, dx =   b^{\rm in}_{-} + \int_{\R_{+}} \big( c^{\rm in}_{+}(x)+c^{\rm in}_{-} (x) \big)\, dx =: M
\end{equation}
for all $t\in[0,T]$.
\end{lemma}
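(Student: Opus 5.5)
The plan is to differentiate the total mass in time and show the derivative vanishes, using the regularity of the class in Proposition~\ref{pro:X1psol}. Since $c_\pm \in Y^{1,1}(T) = C([0,T];W^{1,1}(\R_+))$ and the equations~\eqref{eq3.1}-\eqref{eq3.2} hold in the strong sense, we have $\p_t c_\pm = -\p_x((V\pm\beta_\pm)c_\pm) \pm f(c_+,c_-) \in C([0,T];L^1(\R_+))$, using $V \in C([0,T];W^{2,\infty})$. Hence $t \mapsto \int_{\R_+} c_\pm \, dx$ is $C^1$ (or at least absolutely continuous, which suffices). Likewise $b_- \in W^{1,1}(0,T)$ is absolutely continuous with $\dot b_- = f_b({\rm tr}\, c_-, b_-, t)$ a.e.

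Integrating the sum of~\eqref{eq3.1} and~\eqref{eq3.2} over $\R_+$, the tumbling terms $\pm f(c_+,c_-)$ cancel identically. The flux terms are integrals of $x$-derivatives of $W^{1,1}(\R_+)$ functions: $\int_{\R_+} \p_x F \, dx = -F(0)$, since a $W^{1,1}(\R_+)$ function has a trace at $0$ and tends to $0$ at infinity. This gives
\begin{equation*}
\frac{d}{dt}\int_{\R_+} (c_+ + c_-)\, dx = (V\big|_{x=0}+\beta_+)\, c_+\big|_{x=0} + (V\big|_{x=0}-\beta_-)\, c_-\big|_{x=0} .
\end{equation*}
We then substitute the inflow condition $c_+\big|_{x=0} = g_b(b_-,t)$ from~\eqref{eq3.5} and add $\dot b_-$ given by~\eqref{eq:formoffb} with $u = {\rm tr}\, c_-$. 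The two terms then cancel exactly:
\begin{equation*}
(\beta_- - V\big|_{x=0})\,{\rm tr}\,c_- - (V\big|_{x=0}+\beta_+) g_b + (V\big|_{x=0}+\beta_+) g_b + (V\big|_{x=0}-\beta_-)\,{\rm tr}\,c_- = 0 .
\end{equation*}
Integrating in time from $0$ yields the identity with the initial data.

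The main point requiring care is the justification of the time differentiation and the boundary term. The cleanest route is to integrate the equation over $(0,t)\times\R_+$ directly rather than differentiate. By Fubini, $\int_{\R_+}(c_\pm(t)-c_\pm^{\rm in})\,dx = \int_0^t\int_{\R_+}\p_t c_\pm \, dx\, ds$, which is valid since $\p_t c_\pm \in L^1(\Sigma_T)$. The spatial integral of the flux derivative is handled slice by slice via the $W^{1,1}$ trace. The boundary traces ${\rm tr}\,c_\pm$ lie in $L^\infty(0,T)$ by the naïve trace estimate~\eqref{eq:naivetrace}, so every term is integrable in time.

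The hypothesis $r \ge 0$ is not needed for the cancellation itself; it only matters if one wants to interpret $M$ as a nonnegative mass.
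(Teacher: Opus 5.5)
Your proposal is correct and is precisely the ``direct computation'' the paper invokes. The computation is the same identity $\frac{d}{dt}\big(b_- + \int_{\R_+}\rho\big) = \dot b_- + (V\big|_{x=0}+\beta_+)c_+\big|_{x=0} + (V\big|_{x=0}-\beta_-)c_-\big|_{x=0} = 0$ displayed in the introduction, and your justification via integration over $(0,t)\times\R_+$ with $W^{1,1}$ traces is sound, as is your remark that $r \geq 0$ plays no role in the cancellation.
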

\begin{proof}
    This follows from a direct computation.
\end{proof}

\begin{lemma}[Non-negativity]\label{lem:nonnegativity}
  Under the assumptions of Lemma~\ref{lem:massconservation}, suppose furthermore that the initial data is non-negative ($c^{\rm in}_{\pm} \ge 0$, $b_{-}^{\rm in} \geq 0$) and $g_b$ satisfies the property ${\rm sgn} \, g_b(b,t)={\rm sgn} \, b$ for all $b \in \R$ and $t \in [0,\bar{T}]$. Then $b_{-}, c_{\pm} \ge 0$.
\end{lemma}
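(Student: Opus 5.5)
We propose to prove non-negativity by a continuity (first-exit-time) argument along characteristics. The key structural facts are that the tumbling term is quasi-monotone, that the inflow datum for $c_+$ inherits the sign of $b_-$, and that the ODE for $b_-$ is driven by the non-negative trace of $c_-$.

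\textbf{Step 1 (rewrite the equations).} We write each equation in the form \eqref{eq:IBVP}:
\begin{equation*}
\p_t c_+ + v_+ \p_x c_+ + \phi_+ c_+ = c_-\, r(c_+), \qquad \p_t c_- + v_- \p_x c_- + \phi_- c_- = c_+\, r(c_-),
\end{equation*}
with $v_\pm = V\pm\beta_\pm$ and $\phi_\pm = \p_x V + r(c_\mp)$. Since $c_\pm\in Y^{1,1}(T)$ is bounded, the coefficients $\phi_\pm$ are bounded. In the representation \eqref{IBVPsol}, $\Phi>0$, so $c_\pm$ is non-negative wherever its initial/boundary datum and its source are non-negative.

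The source $c_\mp r(c_\pm)$ is non-negative as long as $c_\mp\ge0$. A subtlety is that $r$ is only assumed non-negative on $[0,\infty)$. Since $r(c_\pm)$ is evaluated at the unknown, which we are trying to show is non-negative, we treat it via a truncation: replace $r$ by $r(\max(\cdot,0))$, and use uniqueness to identify the truncated solution with the original once positivity is shown.

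\textbf{Step 2 (the boundary ODE).} From \eqref{eq:formoffb},
\begin{equation*}
\dot b_- = (\beta_- - V|_{x=0})\,{\rm tr}\,c_- - (V|_{x=0}+\beta_+)\,g_b(b_-,t).
\end{equation*}
We treat $g_b(b_-,t)=\lambda(t)\,b_-$ with $\lambda(t):=g_b(b_-(t),t)/b_-(t)\ge 0$; this is bounded because $g_b$ is $C^1$ and vanishes at $b=0$ by the sign condition. Then
\begin{equation*}
b_-(t)=e^{-\int_0^t \Lambda}\,b_-^{\rm in}+\int_0^t e^{-\int_s^t\Lambda}\,(\beta_--V|_{x=0})\,{\rm tr}\,c_-\,ds,
\end{equation*}
with $\Lambda = (V|_{x=0}+\beta_+)\lambda$. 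This gives $b_-\ge0$ whenever ${\rm tr}\,c_-\ge0$. Consequently the inflow datum $g_b(b_-,t)$ for $c_+$ has the sign of $b_-$, and is therefore non-negative.

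\textbf{Step 3 (closing the argument).} To avoid circularity, we set up a monotone iteration instead of a bare exit-time argument. Starting from $(c_\pm^0,b_-^0)$ equal to the initial data, we define Picard-type iterates using the linear representation with frozen coefficients $\phi_\pm$ and $\lambda$. Each iterate solves a linear problem whose data and sources are non-negative by induction, so every iterate is non-negative. The contraction from Proposition~\ref{pro:X1psol}, adapted to this splitting, then shows the iterates converge to the solution, which is therefore non-negative on a short time interval. Since the guaranteed existence time depends only on norms that are controlled on $[0,T]$, we iterate in time to cover all of $[0,T]$.

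We expect the main obstacle to be verifying that this modified, linearly split iteration is still contractive in $\mathbb{Y}^{1,1}$. In particular, $\lambda$ must depend Lipschitz-continuously on $b_-$. Alternatively, one can run an $L^1$ estimate on the negative parts $(c_\pm)_-$ and $(b_-)_-$ via Lemma~\ref{lemma 2.6}, and close with Gronwall; we would try this Gronwall estimate on the negative parts first, as it avoids the iteration entirely.
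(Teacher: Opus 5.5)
Your proposal is viable, but it is not the route the paper takes. The paper never iterates and never propagates signs sequentially. It works directly with the given $\mathbb{Y}^{1,1}$ solution, setting $\phi(u)=-\min(u,0)$. It multiplies the $c_\pm$ equations by $\phi'(c_\pm)$ and the $b_-$ equation by $\phi'(b_-)$, integrates by parts, and sums.

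Two structural facts then close the argument:
\begin{itemize}
\item The tumbling contributions satisfy $\mathfrak R_1+\mathfrak R_2\le 0$ pointwise, by a three-case check.
\item The boundary fluxes pair exactly with the two terms of the $b_-$ equation. The outflow of $(c_-)_-$ pairs with the trace-driven gain of $b_-$ into $J_1=(\beta_--V|_{x=0})\,[\phi(c_-|_{x=0})-c_-|_{x=0}\,\phi'(b_-)]\ge 0$. The inflow of $(c_+)_-$ pairs with the loss term into $J_2=(V|_{x=0}+\beta_+)\,[g_b\,\phi'(b_-)-\phi(g_b)]=0$, using ${\rm sgn}\,g_b={\rm sgn}\,b$.
\end{itemize}
Hence $\phi(b_-)+\int(\phi(c_+)+\phi(c_-))\,dx$ is non-increasing from zero. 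This needs no Gronwall step, no exit time, and no iteration. The circularity you worry about (trace of $c_-$ gives the sign of $b_-$, which gives the inflow sign, which gives the sign of $c_-$) disappears because the estimate treats all three unknowns simultaneously. This is your stated fallback. The idea you would need there is exactly this pairing: the $b_-$ negative part must be estimated jointly with the negative parts of $c_\pm$, with the $c_-$ outflow term absorbing the trace term.

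Your semi-implicit iteration has a real advantage. It moves the loss term $c_\pm r(c_\mp)$ into the damping coefficient and keeps the gain term as a source. Since the iterates are non-negative by induction, $r$ is only ever evaluated at non-negative arguments. The paper's case (c), by contrast, tacitly uses $r(c_-)\ge0$ at $c_-<0$, i.e.\ a non-negative extension of $r$, which the hypothesis ``$r\ge0$ on $[0,\infty)$'' does not supply. So in your scheme the truncation is unnecessary. Where truncation is needed (your Gronwall variant), use a smooth non-negative extension, since $r(\max(\cdot,0))$ is not $C^2$ and would break the ${\rm BUC}^2$ hypothesis on $f$.

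The cost is a new contraction argument, and one point is more than routine. Lemma~\ref{lemma 2.3} as stated requires $\phi\in L^\infty_tW^{1,\infty}_x$. Your coefficient $\phi_\pm=\p_xV+r(c^n_\mp)$ fails this when $c^n_\mp\in Y^{1,1}$ only, because $\p_x r(c^n_\mp)$ lies only in $L^1_x$. You would have to redo the linear estimate with $\p_x\phi\in L^\infty_tL^1_x$. This works because the extra source $(\p_x\phi)c$ is controlled using $\|c\|_{L^\infty}\le\|\p_xc\|_{L^1}$ on $\R_+$. Alternatively, run the scheme for mild solutions in $L^\infty_tL^1_x$ and identify the limit with the solution by uniqueness.
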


\begin{proof}
We denote $\phi(x)= - \min(x, 0)$ and record that
\begin{equation}
    \phi' (x)=-1_{x < 0}, \quad  x \phi' (x)=\phi (x).
\end{equation}
Multiplying the  equations for $c_{\pm}$  by $\phi' (c_{\pm})$, respectively, and multiplying the equation for $\dot b_{-}$ by $\phi' (b_{-})$, we obtain
\begin{equation}\label{eq:cpmIDs}
\begin{aligned}
    \partial_t \phi (c_{+}) + (V+\beta_{+}) \partial_x  \phi (c_+) + (\partial_x V) \phi (c_+) &= \underbrace{\phi' (c_{+})  c_{-} r (c_{+})  - \phi (c_{+}) r (c_{-})}_{=: \mathfrak{R}_1}\,, \\
    \partial_t \phi (c_{-}) + (V-\beta_{-}) \partial_x  \phi (c_{-}) + (\partial_x V) \phi (c_{-}) &=\underbrace{\phi' (c_{-})  c_{+} r (c_{-})  - \phi (c_{-}) r (c_{+})}_{=:\mathfrak{R}_2}\,, \\
    \frac{d}{dt} \phi (b_{-}) &=  
    f_b({\rm tr}\,c_-,b_-,t) \phi'(b_-)\,.
\end{aligned}
\end{equation}
We claim that, for a.e. $t$ and $x$, 
\begin{equation}
  \mathfrak{R}_1+\mathfrak{R}_2 \le 0\,. 
\end{equation}
To show this, it suffices to consider three cases: (a) $c_{\pm} (t, x) \ge 0$, (b) $c_{\pm} (t, x) \le 0$, (c) one of $c_\pm(x,t)\ge0$ and the other is non-positive. 
In case (a), $\mathfrak{R}_j=0$ for $j=1,2$. In case (b), we have
\begin{equation}
    \mathfrak{R}_1+\mathfrak{R}_2  = |c_{-}| \,r(c_{+}) - |c_{+}| \,r(c_{-}) + |c_{+}|\, r(c_{-})- |c_{-}| \,r(c_{+})=0.
\end{equation}
Finally, in case (c), taking $c_+(x,t)\ge 0$ without loss of generality, we have
\begin{equation}
    \mathfrak{R}_1 (t, x)=0\,, \quad  \mathfrak{R}_2 (t, x) = -c_{+} r (c_{-}) - \phi (c_{-}) r (c_{+}) \le 0
\end{equation}
since $\phi (c_{-}) \ge 0$. The case $c_-(x,t)\ge 0$ follows similarly.

Integrating the identities \eqref{eq:cpmIDs} for $\phi (c_{\pm})$ over $(0, t) \times \R_{+}$ and the identity for $\phi (b_{-})$ over $(0, t)$, upon integrating by parts in $x$, we obtain
\begin{equation}\label{eq:J1J2defs}
\begin{aligned}
\phi (b_{-} (t)) &+ \int_0^{\infty} \bigg(\phi (c_{+} (t, x))+\phi (c_{-} (t, x))\bigg) \, dx \\
&\qquad\qquad\qquad+ \int_0^t \big(J_1 (s) + J_2 (s)\big) \, ds \le 0\,, \\
J_1 &=\big(\beta_{-}-V\big|_{x=0}\big)   \big[\phi (c_{-}\big|_{x=0})  -  c_-\big|_{x=0}\phi'(b_{-})\big]\,,\\
J_2 &= \big(V\big|_{x=0}+\beta_+\big)  \big[g_b(b_-,t) \phi'(b_{-}) - \phi(g_b(b_-,t))\big]\,.
\end{aligned}
\end{equation}
Here in $J_2$ we have used the form \eqref{eq:formoffb} of $f_b$ and the boundary condition for $c_+$.

We claim that for any $t$,
\begin{equation}
    J_1 (t) \ge 0\,, \quad J_2 (t)=0\,.
\end{equation}
First, if $c_{-}\big|_{x=0} (t) \ge 0$, then, using the property \eqref{eq:Vandbetabounds},
\begin{equation}
    J_1 (t) =  \big(\beta_{-}-V\big|_{x=0}\big) \,c_-\big|_{x=0}1_{b_{-} < 0 } \ge 0\,.
\end{equation}
Otherwise, 
\begin{equation}
     J_1 (t) = \big(\beta_{-}-V\big|_{x=0}\big)\left(\left|c_{-}\big|_{x=0} (t)\right| + c_{-}\big|_{x=0} (t)1_{b_{-} < 0 }\right) \ge 0\,.
\end{equation}
Furthermore, if $b_{-} (t) \ge 0$, then, by assumption, $g_b(b_-,t)\ge 0$, and, since $\phi' (b_{-})=0$, we have
\begin{equation}
    J_2 (t) = 0\,.
\end{equation}
Similarly, if $b_{-} (t) < 0$, then $g_b(b_-,t)< 0$, which gives
\begin{equation}
  J_2(t) = \big(V\big|_{x=0}+\beta_+\big)\left( |g_b(b_-,t)|-|g_b(b_-,t)|\right)=0\,.
\end{equation}
Hence, we may drop the integrals containing $J_1$ and $J_2$ from the left-hand side of \eqref{eq:J1J2defs}. Since $\phi$ is non-negative, we conclude that $\phi(c_{\pm}) \equiv 0$ and $\phi(b_-) \equiv 0$, which implies that $c_{\pm}, b_- \ge 0$.
\end{proof}

\subsection{Incorporating nonlinear advection}
\label{sec:quasilinearterms}

Finally, we specialize to the system
\begin{align}
\label{eq:inviscidsystem3}
    \p_t c_+ + \p_x ((V+\beta_+) c_+) &=  f(c_+,c_-)  \\
    \p_t c_- + \p_x ((V-\beta_-) c_-) &= - f(c_+,c_-)
\end{align} 
\begin{gather}
    \dot b_- = \underbrace{-(V\big|_{x=0} + \beta_+) g_b(b_-,V\big|_{x=0}) + (\beta_- - V\big|_{x=0}) c_-\big|_{x=0}}_{=: f_b(c_-,b_-,V)}  \label{eq:bminuseqncase3} \\
    c_+\big|_{x=0} = g_b(b_-,V\big|_{x=0}) \label{eq:cinv_plus_bvalcase2_sec5} \\
    b_{-}(0) = b_-^{\rm in} \geq 0\,, \quad
    c_{\pm}\big|_{t=0} = c_{\pm}^{\rm in} \in W^{2,1}(\R_+) \, , \;\; c_{\pm}^{\rm in} \geq 0
\end{gather}
with the constitutive equation
\begin{align}
    \label{eq:Vlaw3}
     V[\rho,b_-](x) = \int_{\R_+} K(x,y) \rho(y) \, dy + K(x,0) b_{-} \, ,
\end{align}
where we recall $\rho=c_++c_-$.
This system is more general than the one for which we prove inviscid limits, as it combines Case~1 and Case~2.
Here, $f, K$ are assumed to satisfy the assumptions in Section~\ref{sec:introduction}, namely,~\eqref{eq:smoothnessofK}-\eqref{eq:decayonkernel} and~\eqref{eq:fintermsofr}-\eqref{eq:tumblingassumptiononr}, and $g_b : [0,+\infty) \times \R \to [0,+\infty)$
is a non-negative function satisfying 
\begin{equation}
\label{eq:gconditionsforquasilinearity}
 g_b(0,V) \equiv 0\,, \quad   
 |\p_{b_-}^{j+1} \p_V^k g_b| \leq C \, , \quad 
 |\p_V^k g_b| \leq C|b_-| \,,   \quad |j|, |k| \leq 4 \, ,
\end{equation}
where the last condition is a consequence of the first two.
Here $g_b(b_-,V)$ plays the role of $g_b(b_-,t)$ in the previous section except that the $t$ dependence is now through $V\big|_{x=0}$.\footnote{We have in mind that $g_b$ is an extension of a restriction of the function $c_\infty$ in Section~\ref{sec:ODE}, but we can avoid discussing extensions until we prove Proposition~\ref{pro:inviscidsolutionexists} at the end.} For brevity, we refer to the function on the right-hand side of the ODE~\eqref{eq:bminuseqncase3} as $f_b(c_-,b_-,V)$. 

\begin{proposition}[Existence]
    \label{pro:existence}
    Invoke the above assumptions on $f$, $K$, and $g_b$. Define $V^{\rm in} := V[\rho^{\rm in},b_-^{\rm in}]$. Assume the compatibility conditions
    \begin{equation}
        \label{zeroordercomp}
        g_b(b_-^{\rm in},V^{\rm in}(0)) = c_+^{\rm in}(0)\,,
    \end{equation}
    \begin{equation}
        \label{firstordercomp}
        (\p_t c_+)(0,0) = (\p_{b_-} g_b)(b_-^{\rm in},V^{\rm in}(0)) \dot b_-(0) + (\p_V g_b) (b_-^{\rm in},V^{\rm in}(0)) (\p_t V)(0,0) \, ,
    \end{equation}
    where $\p_t c_+$, $\dot b_-$, and $\p_t V$ are interpreted in the sense of the initial conditions via the equation. Suppose that there exists $\delta > 0$ such that
    \begin{equation}
        \label{eq:initialguysatisfies2deltabound}
        V^{\rm in}\big|_{x=0} + \beta_+ \geq 2\delta \, , \quad V^{\rm in}\big|_{x=0} - \beta_- \leq - 2\delta \, .
    \end{equation}
    Then there exists $T > 0$, depending on $|b^{\rm in}|,\| c_{\pm}^{\rm in} \|_{W^{2,1}(\R_+)},\delta,f,K$, and $g_b$ through~\eqref{eq:gconditionsforquasilinearity}, such that there exists a solution
    \begin{equation}
        (c_+,c_-,b_-) \in \mathbb{Y}^{2,1}(T)
    \end{equation}
    to the system~\eqref{eq:inviscidsystem3}-\eqref{eq:Vlaw3}, and
    \begin{equation}
    \label{eq:lowerboundstobepropagated}
        V[\rho,b_-]\big|_{x=0} + \beta_+ \geq \delta \, , \quad V[\rho,b_-]\big|_{x=0} - \beta_- \leq - \delta \, .
    \end{equation}
\end{proposition}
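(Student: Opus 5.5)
We would construct the solution by an iteration that freezes the velocity field. Set $V^0(t,x) := V^{\rm in}(x)$ and $(c_\pm^0,b_-^0) := (c_\pm^{\rm in},b_-^{\rm in})$. Given an iterate $(c_\pm^n,b_-^n)$, define $V^n := V[\rho^n,b_-^n]$. We then let $(c_\pm^{n+1},b_-^{n+1})$ be the solution, furnished by Proposition~\ref{pro:X2psol}, of the semilinear problem \eqref{eq3.1}--\eqref{eq3.5} with background velocity $V = V^n$. In that problem we take $f_b(u,b,t) := f_b(u,b,V^n(t,0))$ and $g_b(b,t) := g_b(b,V^n(t,0))$. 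Before applying the proposition, $f$ (and $f_b$, $g_b$ if needed) are modified away from the relevant range, as in the remark after Proposition~\ref{pro:X1psol}.

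The hypotheses of Proposition~\ref{pro:X2psol} are checked from the kernel assumptions \eqref{eq:smoothnessofK}--\eqref{eq:decayonkernel}. These give that $V[\cdot]$ maps $L^1\times\R$ boundedly into $W^{k,\infty}$ for every $k$. Hence
\[
\|V^n\|_{L^\infty_t W^{3,\infty}_x} \lesssim \|\rho^n\|_{L^\infty_tL^1_x} + |b_-^n| ,
\qquad
\|\p_t V^n\|_{L^\infty_t W^{2,\infty}_x} \lesssim \|\p_t\rho^n\|_{L^\infty_tL^1_x} + |\dot b_-^n| ,
\]
and both right-hand sides are controlled by the $\mathbb{Y}^{2,1}(T)$ norm of the previous iterate. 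The regularity of $t\mapsto g_b(b,V^n(t,0))$ required in \eqref{eq:gbbuc2} follows from \eqref{eq:gconditionsforquasilinearity} and the chain rule. This uses $\p_t^2 V^n(t,0)$, which is controlled through $\p_t^2\rho^n\in L^\infty_tL^1_x$ (that is, the $X^{2,1}$ time derivatives) and $\ddot b_-^n\in L^1$. This point needs a little care, since $\mathbb{Y}^{2,1}$ only contains $\p_t\p_x c$, not $\p_t^2 c$. We would therefore work with the slightly larger norm including $\p_t^2 c$, using the last part of Lemma~\ref{lemma 2.8}.

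The compatibility conditions for the $(n+1)$-st problem must also hold. The zeroth-order one is \eqref{zeroordercomp}, because $V^n(0,\cdot)=V^{\rm in}$. The first-order one holds because $\p_tV^n(0,0)$ equals $\p_tV(0,0)$ as computed from the equation. This requires that every iterate have the same time derivative at $t=0$, which holds since each iterate solves the equation driven by $V^{\rm in}$ at time zero. This is exactly \eqref{firstordercomp}.

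\textbf{Uniform bounds.} We run the argument of the proof of Proposition~\ref{pro:X2psol} inductively. We choose a ball $\|(c^n_\pm,b^n_-)\|_{\mathbb{Y}^{2,1}(T)}\le M$ together with the condition \eqref{eq:lowerboundstobepropagated} for $V^n$. The lower bounds propagate because
\[
|V^n(t,0)-V^{\rm in}(0)| \le \int_0^t |\p_tV^n(s,0)|\,ds \lesssim T M ,
\]
so for $T$ small the margin $2\delta$ in \eqref{eq:initialguysatisfies2deltabound} degrades to at most $\delta$. Hence Lemma~\ref{lemma 2.8} applies with a fixed $\delta$, and its constant $N_1$ stays uniform. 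The existence time in Proposition~\ref{pro:X2psol} depends only on $M$, $\delta$ and the data, so it is uniform in $n$. Moreover, all terms beyond the data carry a factor $T$, except $g_b(b^{n+1}_-,\cdot)$, which is bounded using $\|b^{n+1}_-\|_{W^{2,1}}$. That quantity in turn gains a factor $T$ from the ODE, exactly as in \eqref{eq:bnuniformbd}. This closes the ball for small $T$.

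\textbf{Convergence; the main obstacle.} Contraction cannot hold in the top norm. The difference $c^{n+1}-c^n$ sees the term $\p_x((V^n-V^{n-1})c^{n})$, which loses a derivative. We would therefore prove contraction in a low norm: $L^\infty_tL^1_x$ for $c_\pm$ and $W^{1,1}$-type norms for $b_-$, using Lemma~\ref{lemma 2.6}. We have
\[
\|\p_x((V^n-V^{n-1})c^n)\|_{L^1} \lesssim \big(\|\rho^n-\rho^{n-1}\|_{L^1}+|b^n_--b^{n-1}_-|\big)\, \|c^n\|_{W^{1,1}} ,
\]
which is fine by the uniform bound. The boundary term $\int_0^t|v g|$ in \eqref{eq2.6.2} involves $g_b(b^{n+1}_-,V^n)-g_b(b^n_-,V^{n-1})$, which is controlled by differences of $b$ and $V$. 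The $b$ difference comes from the ODE with the trace of $c_-$. Here the naive trace \eqref{eq:naivetrace} is not available in $L^1$. Instead we use the outgoing flux term $\int_0^t|vc|_{x=0}|\,ds$ in \eqref{eq2.6.1}, which is the key structural point. With $|v|\ge\delta$ this controls $\int_0^t|{\rm tr}\,(c^n_- -c^{n-1}_-)|$ and hence $\|b^{n+1}_--b^n_-\|_{L^\infty}$ with a small factor.

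The iterates then converge in the low norm. By the uniform $\mathbb{Y}^{2,1}$ bounds and weak-$*$ compactness, the limit lies in $\tilde Y$-type spaces and solves the system; Remark~\ref{rmk:tildeY11rmk} gives uniqueness there. Time continuity in the top norm would be recovered a posteriori by applying Proposition~\ref{pro:X2psol} once more with the now-fixed $V=V[\rho,b_-]$. The limit satisfies \eqref{eq:lowerboundstobepropagated} by passing to the limit. We expect this low-norm contraction, especially the handling of the boundary trace and the loss of derivative in the $V$-difference, to be the hardest step.
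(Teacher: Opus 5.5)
\textbf{The gap: closing the $\mathbb{Y}^{2,1}$ ball.} As written, this step of your proposal is circular. You bound $\|V^n\|_{L^\infty_tW^{3,\infty}_x}$ and $\|\p_tV^n\|_{L^\infty_tW^{2,\infty}_x}$ (and, through $\p_t^2c^n$, also $\p_t^2V^n$) by the $\mathbb{Y}^{2,1}$ radius $M$ of the previous iterate. But these are exactly the quantities on which the constant $N_1$ of Lemma~\ref{lemma 2.8} depends, and likewise the implied constant of Proposition~\ref{pro:X2psol}. Moreover, $N_1$ multiplies the data term $\|c^{\rm in}_\pm\|_{W^{2,1}(\R_+)}$, which carries no factor of $T$. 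What you actually get is therefore $\|c^{n+1}_\pm\|_{Y^{2,1}(T)}\le N_1(\delta,M)\,\|c^{\rm in}_\pm\|_{W^{2,1}(\R_+)}+T\,C(M)$. Closing the ball needs $N_1(\delta,M)\|c^{\rm in}_\pm\|_{W^{2,1}(\R_+)}\le M/2$, and shrinking $T$ does not give this, because $N_1$ does not improve as $T\to0$.

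The missing idea is to control the coefficient norms independently of the top-order radius. The paper gets this from non-negativity and mass conservation of every iterate (Lemmas~\ref{lem:nonnegativity} and~\ref{lem:massconservation}, which use ${\rm sgn}\,g_b={\rm sgn}\,b_-$ and the precise form of $f_b$), together with the flux identity \eqref{eq:ptV}. Because $\dot b^n_-$ exactly cancels the boundary flux,
\[
\p_tV^n=\int_{\R_+}\p_yK(x,y)\,\big[(V^{n-1}+\beta_+)c^n_++(V^{n-1}-\beta_-)c^n_-\big]\,dy .
\]
Hence \eqref{eq:VnboundedbyM} and \eqref{eq:timederivvncc0} hold with constants depending only on the mass and $C_0$. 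This gives an $n$-independent time $\bar T=\min(\delta(CC_0)^{-1},1)$ on which the $\delta$-bounds hold, and the linear-growth Gr\"onwall estimates of Lemma~\ref{lem:aprioriestsunderdeltacondition} then close.

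If you want to avoid positivity, you must restructure the bootstrap, for instance with a hierarchy of radii:
\begin{enumerate}
\item bound $\|c^n_\pm\|_{L^\infty_tL^1_x}+\|b^n_-\|_{L^\infty}$ via Lemma~\ref{lemma 2.6};
\item then bound $\|c^n_\pm\|_{Y^{1,1}(T)}$ via Lemma~\ref{lemma 2.3}, whose constant involves only $\|V^n\|_{W^{2,\infty}}$;
\item observe that $\p_t\rho^n$ and $\dot b^n_-$ are controlled at that level, hence so are $\p_tV^n$ and $\p_t^2V^n$ (via the flux identity, not via $\p_t^2c^n$);
\item only then fix the $\mathbb{Y}^{2,1}$ radius.
\end{enumerate}

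\textbf{First iterate.} With $V^0\equiv V^{\rm in}$ you have $\p_tV^0(0,0)=0$. So the second-order compatibility condition for the first iterate lacks the term $\p_Vg_b\,(\p_tV)(0,0)$ and is not \eqref{firstordercomp}; in general $c^1_+\notin Y^{2,1}$. For $n\ge1$ one does have $\p_tV^n(0,\cdot)=(\p_tV)(0,\cdot)$. So either start the $\mathbb{Y}^{2,1}$ bounds at $n\ge2$, or take $V^0:=V^{\rm in}+t\,(\p_tV)(0,\cdot)$.

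\textbf{Convergence step.} This part is sound once uniform $Y^{1,1}$ bounds and the $\delta$-bounds are in hand, and it is a genuinely different route from the paper's. It is the argument of Proposition~\ref{pro:uniqueness} applied to consecutive iterates, with the outgoing-flux term in \eqref{eq2.6.1} controlling $\int_0^t|{\rm tr}\,(c^{n+1}_--c^n_-)|\,ds$. The paper instead extracts a locally uniformly convergent subsequence by compactness and appeals to uniqueness in $\tilde{Y}^{1,1}$. Your contraction gives convergence of the whole sequence, so the limit is automatically a fixed point. With only a subsequence, one still has to check that $V^{n-1}$, which is what drives $c^n$, converges to the same $V$.
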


In the following, we allow the implicit constants to depend on $f,K$, and $g_b$ through~\eqref{eq:gconditionsforquasilinearity}.

\begin{lemma}[\emph{A priori} estimates]
    \label{lem:aprioriestsunderdeltacondition}
    Let $T > 0$ and $V_0 \in L^\infty_t W^{3,\infty}_x(\Sigma_T)$.
    Suppose
    \begin{equation}
        V_0\big|_{x=0} + \beta_+ \geq \delta > 0 \, , \quad V_0\big|_{x=0} - \beta_- \leq -\delta \, .
    \end{equation}
    
    Suppose that $(c_+,c_-,b_-)$ is a $\mathbb{Y}^{1,1}(T)$ solution to~\eqref{eq:inviscidsystem3}-\eqref{eq:cinv_plus_bvalcase2_sec5} with $V=V_0$ and initial conditions satisfying the zeroth compatibility condition \eqref{zeroordercomp}. Let
    \begin{equation}
    \label{eq:C0def}
           C_0 := M + \sum_{\pm} \| c_\pm^{\rm in} \|_{L^\infty(\R_+)}  \, , \quad
        C_0' = C_0 + \sum_{\pm} \| c_\pm^{\rm in}\|_{W^{1,1}(\R_+)}\, .
    \end{equation}

    Then the following assertions hold.

 $(i)$ One has
    \begin{equation}
        \label{eq:maximumprincipleest}
        \sum_{\pm} \| c_\pm \|_{L^\infty_t L^\infty_x(\Sigma_{T})} \lesssim C_0 \, .
    \end{equation}
The implicit constant depends only on $T$ and $\| V_0 \|_{L^\infty_t W^{2,\infty}_x(\Sigma_T)}$.
 
 $(ii)$   Let $V = V[\rho,b]$. Then
    \begin{align}
        \label{eq:onetimederivativeest}
     \| \p_t V \|_{L^\infty_t W^{j, \infty}_x(\Sigma_T)} &\lesssim_j M, \\
     \label{eq:twotimederivativeest}
     \| \p_t^2  V \|_{L^\infty_t W^{j, \infty}_x (\Sigma_T)} &\lesssim_j  C_0 \, ,
    \end{align}
 for all $j \in \N$. The implicit constants depend only on $j$, $T$, and $\| V_0 \|_{L^\infty_t W^{1,\infty}_x(\Sigma_T)}$ (for~\eqref{eq:onetimederivativeest}) and $\| V_0 \|_{ W^{1,\infty}_{t,x}(\Sigma_T) }$ (for~\eqref{eq:twotimederivativeest}).
    
$(iii)$ Suppose that $\p_t V_0 \in L^\infty_t W^{2,\infty}_x(\Sigma_T)$. Then
    \begin{equation} \label{eq:y11aprioriest}
        \sum_{\pm} \| c_{\pm} \|_{Y^{1,1}(T)} \lesssim C_0' \, .
    \end{equation}
    The implicit constants may depend on the previous quantities (including $C_0$) and $\| \p_t V_0 \|_{L^\infty_t W^{2,\infty}_x(\Sigma_T)}$.

 $(iv)$ Suppose that $\p_t^2 V_0 \in L^\infty_t W^{2,\infty}_x(\Sigma_T)$ and that $(c_+,c_-,b_-) \in \mathbb{Y}^{2,1}(T)$ with initial conditions also satisfying the second-order compatibility condition \eqref{firstordercomp}.
    Then
    \begin{equation}  \label{eq:y21aprioriest}
        \sum_{\pm} \| c_{\pm} \|_{Y^{2,1}(T)} \lesssim C_0' + \| c_\pm^{\rm in} \|_{W^{2,1}(\R_+)} \, .
    \end{equation}
    The implicit constant may depend on the previous quantities, $\| \p_t^2 V_0 \|_{L^\infty_t W^{2,\infty}_x(\Sigma_T)}$, and $\delta$.
\end{lemma}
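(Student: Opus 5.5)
The plan is to prove (i)--(iv) in order, each time feeding the previous bounds into the transport estimates of Lemmas~\ref{lemma 2.3}, \ref{lemma 2.8} and \ref{lemma 2.6}. Throughout, $V_0$ is the given field in the transport operator, and $V=V[\rho,b_-]$ is the field generated by the solution.

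\textbf{(i).} By Lemmas~\ref{lem:massconservation} and~\ref{lem:nonnegativity}, $c_\pm,b_-\ge0$ and $b_-+\|\rho\|_{L^1}=M$. Hence $0\le b_-\le M$ and $\|c_\pm\|_{L^\infty_tL^1_x}\le M$. For the $L^\infty$ bound I would follow characteristics. Write the equations as $(\p_t+(V_0\pm\beta_\pm)\p_x)c_\pm=-\p_xV_0\,c_\pm\pm f$, and use $|f|\lesssim c_++c_-$, which follows from \eqref{eq:tumblingassumptiononr} (with $r>0$ bounded). Along characteristics that start from the initial data, Gronwall applied to $\max_\pm\|c_\pm\|_{L^\infty}$ gives a bound $e^{CT}\sum_\pm\|c_\pm^{\rm in}\|_{L^\infty}$. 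Along characteristics of $c_+$ emanating from the boundary, the starting value is $c_+|_{x=0}=g_b(b_-,V_0|_{x=0})\le C|b_-|\le CM$, using \eqref{eq:gconditionsforquasilinearity}. Combining the two gives \eqref{eq:maximumprincipleest}, with constants depending on $T$ and $\|\p_xV_0\|_{L^\infty}$.

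\textbf{(ii).} Differentiate \eqref{eq:Vlaw3} in time and substitute the equations: $\p_t c_\pm=-\p_x((V_0\pm\beta_\pm)c_\pm)\pm f$ and $\dot b_-=f_b$. Integrating by parts in $y$ gives
\[
\p_tV=\int\p_yK(x,y)\big[(V_0+\beta_+)c_++(V_0-\beta_-)c_-\big]dy+\int K(x,y)\,(f-f)\,dy+\text{boundary terms}.
\]
The tumbling contributions cancel because they enter $\rho$ with opposite signs. The boundary term $K(x,0)\big[(V_0+\beta_+)c_+ +(V_0-\beta_-)c_-\big]\big|_{x=0}$ cancels exactly against $K(x,0)\dot b_-$. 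Here one must use $f_b$ evaluated with the same boundary velocity, i.e.\ the conservation structure of \eqref{eq:formoffb}. What remains is controlled by $\|\p_y^{\le j}\p_x^j K\|_{L^\infty}\,\|V_0\|_{L^\infty}\,M$, which gives \eqref{eq:onetimederivativeest}.

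Differentiating once more produces $\p_t V_0\, c_\pm$ terms, plus terms in $\p_t c_\pm$ that are integrated by parts again onto $\p_y^2K$. These involve $V_0\p_x(V_0c)$, and after a further integration by parts only $c$, $V_0$, $\p_xV_0$ and $\p_tV_0$ appear. The boundary traces no longer cancel, since $\p_t$ of the trace terms meets $\p_t c_\pm|_{x=0}$. I would handle them by writing $\p_t c|_{x=0}$ through the equation as well, which introduces $\p_xc|_{x=0}$. I expect this to be the main obstacle: the trace $\p_x c_\pm|_{x=0}$ is not controlled by $C_0$. My first attempt is to avoid it by working with $\frac{d}{dt}$ of the boundary flux combination, using $\dot b_-$ and the $L^\infty$ bounds on the traces from (i), so that only $c|_{x=0}$ and not its derivatives appears. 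This would give \eqref{eq:twotimederivativeest} with the constant $C_0$.

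\textbf{(iii) and (iv).} With (i)--(ii) in hand, $f$, $f_b$ and $g_b$ are effectively bounded in the relevant ${\rm BUC}$ norms along the solution. Apply Lemma~\ref{lemma 2.3} to each of $c_\pm$, with $v=V_0\pm\beta_\pm$, $\phi=\p_xV_0$, and $h=\pm f$, where $\|f\|_{W^{1,1}}\lesssim\sum_\pm\|c_\pm\|_{W^{1,1}}$. The boundary datum is $g=g_b(b_-,V_0|_{x=0})$, whose $W^{1,1}(0,T)$ norm is bounded by $\dot b_-$ (bounded via $C_0$ and $M$) and $\p_tV_0$. Gronwall in time, applied on short subintervals and iterated, closes \eqref{eq:y11aprioriest}. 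For (iv), repeat the argument with Lemma~\ref{lemma 2.8}. Here $\ddot b_-$ involves ${\rm tr}\,\p_tc_-$, which is bounded by $\|c_-\|_{Y^{2,1}}$ through the naive trace estimate~\eqref{eq:naivetrace}. This term enters with a factor of time, as in \eqref{eq:bnuniformbd}, so it is absorbed on short intervals and iterated up to $T$. The $\delta$-dependence enters only through Lemma~\ref{lemma 2.8}.
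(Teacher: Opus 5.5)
Your proposal follows the paper's proof essentially step by step: characteristics plus Gr\"onwall with boundary data $g_b\lesssim M$ for (i), the integration-by-parts identity for $\p_t V$ in which the trace terms cancel against $\dot b_-$ for (ii), and the linear estimates of Lemmas~\ref{lemma 2.3} and~\ref{lemma 2.8} closed by Gr\"onwall, with $\ddot b_-$ controlled through the naive trace of $\p_t c_-$, for (iii)--(iv). The one correction is that the ``main obstacle'' you anticipate for $\p_t^2V$ does not arise. Since the trace terms cancel identically in $t$, you simply differentiate the simplified formula $\p_tV=\int\p_yK\,[(V_0+\beta_+)c_++(V_0-\beta_-)c_-]\,dy$, substitute the equations and integrate by parts once more. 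This produces only the boundary terms $\p_yK(x,0)(V_0\pm\beta_\pm)^2c_\pm|_{x=0}$, which are bounded by (i), together with interior terms bounded by $M$, including the non-cancelling $(\beta_++\beta_-)\int\p_yK\,f\,dy$. No $\p_tc_\pm|_{x=0}$ or $\p_xc_\pm|_{x=0}$ ever appears.
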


\begin{proof}[Proof of Lemma~\ref{lem:aprioriestsunderdeltacondition}]
\textbf{Maximum principle estimate~\eqref{eq:maximumprincipleest}.} 
First, by the non-negativity and conservation of total mass in Lemmas \ref{lem:massconservation}-\ref{lem:nonnegativity}, $\|b_{-}\|_{L^{\infty} (0, T)} \le M$. Next, recall the solution formula~\eqref{IBVPsol} from the method of characteristics. By estimating the ODE along characteristics, we have
\begin{equation}
\begin{aligned}
    \| c_- \|_{L^\infty_t L^\infty_x(\Sigma_t)} &\leq \exp \left( t\| \p_x V_0 \|_{L^\infty (\Sigma_t)} \right) \left( \| c_-^{\rm in} \|_{L^\infty(\R_+)} +\int_0^t \| f \|_{L^\infty(\R_+)} \, ds \right)\\
    \| c_+ \|_{L^\infty_t L^\infty_x(\Sigma_t)} &\leq \exp \left( t\| \p_x V_0 \|_{L^\infty (\Sigma_t)} \right)\times\\
    &\quad \times\left( \| c_+^{\rm in} \|_{L^\infty(\R_+)} + \| g_b \|_{L^\infty(0,t)} + \int_0^t \| f \|_{L^\infty(\R_+)} \, ds \right) \, .
\end{aligned}
\end{equation}
Since
\begin{equation}
    \| f(c_+,c_-) \|_{L^\infty(\R_+)} \lesssim \| c_+ \|_{L^\infty(\R_+)} + \| c_- \|_{L^\infty(\R_+)} \, , \quad \abs{g_b(b_-,V_0\big|_{x=0})} \lesssim M \, ,
\end{equation}
we have that $\sum_{\pm} \| c_\pm \|_{L^\infty(\Sigma_t)}$ satisfies an integral inequality amenable to Gr{\"o}nwall's lemma, which yields
\begin{equation}
    \sum_{\pm} \| c_\pm \|_{L^\infty(\Sigma_t)} + \| b_- \|_{L^\infty(0,t)} \leq C e^{Ct} \left( \sum_{\pm} \| c_\pm^{\rm in} \|_{L^\infty(\R_+)} + M \right) \, .
\end{equation}

\textbf{Estimate~\eqref{eq:onetimederivativeest} on $\p_t V$.} With this bound in hand, we can estimate
\begin{equation}
    \label{eq:ptVexpression}
    \p_t V = \int_{\R_+} K(x,y) \p_t \rho\,dy + K(x,0) \dot b_- \, .
\end{equation}
We recall that 
\begin{equation}
    \p_t \rho = -\p_y[(V_0+\beta_{+})c_{+} + (V_0 -\beta_{-})c_{-}] \, ,
\end{equation}
since the two $f$ terms cancel. Integrating by parts and using \eqref{eq:bminuseqncase3}-\eqref{eq:cinv_plus_bvalcase2_sec5}, we obtain
\begin{equation}
\begin{aligned}
    & \p_t V (t, x) = \int_{\R_+} \p_y K(x,y) [(V_0+\beta_{+})c_{+}+(V_0-\beta_{-})c_{-}] \, dy \\
    &\qquad + K(x,0) \big[ \underbrace{\dot b_- + (V_0|_{x=0}+\beta_{+})c_{+}|_{x=0}+ (V_0|_{x=0}-\beta_{-})c_{-}|_{x=0} }_{=0}  \big] \, \\
    \label{eq:ptV}
    &\quad=  \int_{\R_+} \p_y K(x,y)  [(V_0+\beta_{+})c_{+}+(V_0-\beta_{-})c_{-}] \, dy.
\end{aligned}
\end{equation}
This last identity combined with mass conservation and positivity of $c_{\pm}$ yield \eqref{eq:onetimederivativeest}. To derive the estimate~\eqref{eq:twotimederivativeest} on $\partial_t^2 V$, we use a similar argument: differentiate \eqref{eq:ptV} in $t$, use the equations \eqref{eq:bminuseqncase3}-\eqref{eq:cinv_plus_bvalcase2_sec5}, integrate by parts in $x$, estimate the resulting integral term via the $L^{\infty}_t L^1_x$-norms of $c_{\pm}$, and the remaining boundary terms via the $L^{\infty}$-estimate \eqref{eq:maximumprincipleest}.

\textbf{Propagation of $Y^{1,1}$.} We calculate
\begin{equation} \label{eq:ptgb}
    \p_t \big[ g_b(b_-(t),V_0\big|_{x=0}(t)) \big] = \p_{b_-} g_b \dot b_- + \p_V g_b \p_t V_0\big|_{x=0} \, .
\end{equation}
This yields
\begin{equation}
    \label{eq:linftyongb}
    \| g_b(b_-(t),V_0\big|_{x=0}(t)) \|_{W^{1,\infty}(0,T) } \lesssim M + \| c_- \|_{L^\infty_t L^\infty_x([0,T] \times \R_+)}\, .
\end{equation}
With this in hand, we can apply the linear estimate of Lemma~\ref{lemma 2.3} to propagate control in $Y^{1,1}$. The key point is that, due to the assumptions on $V_0$ and the saturation of the tumbling,\footnote{From a certain perspective, these assumptions ``linearize" the equation, at least in terms of the possible growth.} the norm cannot explode in finite time. Indeed, from Lemma~\ref{lemma 2.3} and the $L^{\infty}_{t, x}$-bound \eqref{eq:maximumprincipleest} on $c_{\pm}$, we have
\begin{equation}
\begin{aligned}
        &\|c_- \|_{ Y^{1,1}(t)} \lesssim \left\|c_-^{\rm in} \right\|_{W^{1,1}(\R_{+})} + \int_0^t \| f\|_{ W^{1,1}(\R_+)}(s) \, ds \\
        &\quad\lesssim \left\|c_-^{\rm in} \right\|_{W^{1,1}(\R_{+})} + \sum_{\pm}  \underbrace{ \| c_\pm \|_{L^\infty_s L^\infty_x([0,t] \times \R_+)} }_{ \lesssim C_0 }   \int_0^t \sum_{\pm} \| c_{\pm} \|_{ Y^{1,1}(s)} \,ds   
\end{aligned}
\end{equation}
and
\begin{align}
\label{eq:cplusboundsendofpaper}
    &\| c_+ \|_{ Y^{1,1}(t)} \lesssim \left\|c_+^{\rm in} \right\|_{W^{1,1}(\R_{+})} + \underbrace{\| g_b \|_{ W^{1,1} (0,t)}}_{\text{bdd. by } \eqref{eq:linftyongb}} + C_0 \int_0^t \sum_{\pm} \| c_{\pm} \|_{ Y^{1,1}(s)} \, ds \, ,
\end{align}
where the implicit constants may depend on $T$ and norms of $V_0$. Gr{\"o}nwall's inequality yields $\| (c_+,c_-,b_-) \|_{\mathbb{Y}^{1,1}(T)} \lesssim C_1$.

\textbf{Propagation of $Y^{2,1}$.} Finally, we propagate the $Y^{2,1}$ regularity. This is done as in the $Y^{1,1}$ propagation, though now we appeal to the linear estimates in Lemma~\ref{lemma 2.8}, which require the $\delta$-transversality bound on $V_0$ (see \eqref{eq:Vandbetabounds}). Below is a sketch of the argument. All the implicit constants may depend on $T$, $\|\partial_t V_0\|_{ L^{\infty}_t  W^{2,\infty}_x(\Sigma_T)}$, $\|V_0\|_{ L^{\infty}_t  W^{3,\infty}_x(\Sigma_T)}$,   $\|c_{\pm}\|_{ W^{2, 1} (\R_{+}) }$, and $b_-^{\rm in}$. First, we estimate $b_{-}$. By the equation \eqref{eq:bminuseqncase3}, the assumptions on $g_b$ \eqref{eq:gconditionsforquasilinearity},  and the control of the $Y^{1, 1}$-norm \eqref{eq:y11aprioriest}, 
\begin{equation}
    \|\dot b_{-}\|_{L^{\infty} (0, T) } \lesssim  \|b_{-}\|_{L^{\infty} (0, T) } + \|c_{-}\big|_{x=0}\|_{L^{\infty} (0, T)} \lesssim 1\,.
\end{equation}
Furthermore, by differentiating \eqref{eq:bminuseqncase3} and using \eqref{eq:linftyongb} and \eqref{eq:naivetrace}, for any $t \in [0, T]$, we obtain
\begin{equation}
\begin{aligned}
  \|\ddot b_{-}\|_{L^{1} (0, t) } &\lesssim (1+ \|V_0\big|_{x=0}\|_{W^{1, \infty} (0, T) }) \times\\
  &\quad\times \big(\|g_b (b_{-}, V_0\big|_{x=0})\|_{ W^{1, 1} (0, T) } +  \|c_{-}\big|_{x=0}\|_{ W^{1, 1} (0, t) }\big) \\
  & \lesssim 1+ \int_0^t \|c_{-}\|_{Y^{1, 1} (s)} \, ds\,.
\end{aligned}
\end{equation}
Next, by  differentiating the identity \eqref{eq:ptgb} and using \eqref{eq:gconditionsforquasilinearity} and the $L^{\infty}$-bound of $\dot b_{-}$, we get 
\begin{equation}
    \norm{\frac{d^2}{dt^2} g_b (b_{-}, V_0\big|_{x=0})}_{L^1 (0, T) } \lesssim 1+  \int_0^t |\ddot{b}_{-}|  \, ds \, .
\end{equation}
Finally, by using \eqref{eq2.8.1} in Lemma \ref{lemma 2.8}, and estimates of $\|c_{\pm}\|_{X}, X=L^{\infty} (\Sigma_T), Y^{1, 1} (T)$, we obtain, for $t \in [0, T]$,
\begin{equation}
\begin{aligned}
   & \sum_{\pm} \||\partial_x^2 c_{\pm}|+|\partial_t \partial_x c_{\pm}|\|_{ L^{\infty}_t L^1_x (\Sigma_t) } \\
   &\quad \lesssim  \sum_{\pm} \big(\|c^{\rm in}_{\pm}\| + \|f\|_{ L^1 (0, t; W^{2, 1}_x (\R_{+})) } + \|\partial_t f\|_{ L^1 (0, t; W^{1, 1}_x (\R_{+})) }\big) \\
   &\quad\qquad + \|g_b (b_{-}, V\big|_{x=0})\|_{W^{2, 1} (0, t)} \\
   &\quad   \lesssim 1+ \sum_{\pm} \int_0^t \|c_{\pm}\|_{ Y^{1, 1} (s) } \, ds\,.
\end{aligned}
\end{equation}
An application of Gronwall's inequality yields the desired estimate \eqref{eq:y21aprioriest}.
\end{proof}

\begin{proof}[Proof of Proposition~\ref{pro:existence} (Local existence)]
First, we define $\bm{c}^0 = \bm{c}^{\rm in}$,  $b^0_{-}=b^{\rm in}_{-}$, $V^0=V[\rho^{\rm in}, b^{\rm in}_-]$. For $n \ge 1$, we set $\bm{c}^n = (c^n_{+}, c^n_{-}, b^n_{-})$ to be the unique solution of the system
\begin{align}
    \label{eq4.1}
   & \p_t c^n_{+} + \p_x ((V^{n-1}+\beta_+) c^n_+) =  f(c^{n}_+,c^{n}_-)\,, \quad c_{+}^n\big|_{t=0}=c^{\rm in}_{+}\,, \\
     \label{eq4.2}
  &  \p_t c^n_{-} + \p_x ((V^{n-1}-\beta_-) c^n_-) = - f(c^{n}_+,c^{n}_-) \,, \quad  c_{-}^n\big|_{t=0}=c^{\rm in}_{-}\,, \\
     \label{eq4.3}
  & V^{n-1} (t, x) = V [\rho^{n-1}, b^{n-1}_{-}]\,, \quad \rho^n:=c^n_{+}+c^n_{-}\,, \\
     \label{eq4.4}
  &  \dot b^n_- = f_b({\rm tr} \, c_-^n,b^n_-,V^{n-1}\big|_{x=0})\,, \quad b_-^n (0) =b^{\rm in}_{-}\,,\\
    \label{eq4.5}
 &   c^n_+\big|_{x=0} = g_b(b^n_-,V^{n-1}\big|_{x=0})
\end{align}
in the class of functions $c^n_{\pm} \in Y^{2, 1} (T)$, $b^n_{-} \in W^{2, 1} (T)$, for all $T < T_n$, where $T_n \le T_{n-1}$ is the maximal time of existence of the $n$th iterate. Lemmas~\ref{lem:nonnegativity} and~\ref{lem:massconservation} guarantee that the iterates are non-negative and conserve the total mass $M$.

From mass conservation, we obtain 
\begin{equation}
    \label{eq:VnboundedbyM}
    \| V^n \|_{L^\infty_t W^{3,\infty}_x(\Sigma_{T_n})} + \|\partial_t V^n \|_{L^\infty_t W^{2,\infty}_x(\Sigma_{T_n})} \lesssim M\, ,
\end{equation}
under the assumptions on the kernel and due to \eqref{eq:onetimederivativeest}. This single estimate begets a string of estimates in Lemma~\ref{lem:aprioriestsunderdeltacondition} provided that the velocities $V\big|_{x=0} \pm \beta_{\pm}$ on the boundary retain the correct signs. An important point will therefore be to propagate these signs for some time. 
\begin{quote}
    \textbf{Claim.} There exist $\bar{T} \in (0,1]$ and $\bar{C} > 0$, depending on norms of the data and $\delta$, such that the iterates $(c_+^n,c_-^n,b_-^n)$ are defined in $\mathbb{Y}^{2,1}(\bar{T})$ and satisfy~\eqref{eq:lowerboundstobepropagated} with $V=V^n$, and
    \begin{equation}
        \| (c_+^n,c_-^n,b_-^n) \|_{\mathbb{Y}^{2,1}(\bar T)} \leq \bar{C} \, .
    \end{equation}
\end{quote}

  For $n=0$, $(c_+^0,c_-^0,b_-^0)$ is constant-in-time and therefore exists globally-in-time, conserves the mass, and satisfies the $\delta$-bound~\eqref{eq:lowerboundstobepropagated} with $V=V^0$. 

  First, we present a preliminary computation, which will determine $\bar{T}$. We consider the $n$th and $(n+1)$st solutions, with maximal times of existence $T_n \geq T_{n+1}$. Up to its maximal time of existence, the $n$th solution satisfies the estimate~\eqref{eq:onetimederivativeest} on $\p_t V^n$:
  \begin{equation}
    \label{eq:timederivvncc0}
      \| \p_t V^n \|_{L^\infty_t L^\infty_x(\Sigma_{T_n})} \leq CC_0 \, ,
  \end{equation}
  where $C_0$ is as in~\eqref{eq:C0def} and $C$ depends on $M$ and the $L^\infty_t W^{3,\infty}_x(\Sigma_{T_n})$ bound~\eqref{eq:VnboundedbyM} on $V^n$. This derivative estimate was granted by Lemma~\ref{lem:aprioriestsunderdeltacondition}. Since $V[\rho^{\rm in},b_-^{\rm in}]$ satisfies the $2\delta$ condition~\eqref{eq:initialguysatisfies2deltabound},~\eqref{eq:timederivvncc0} grants
  \begin{equation} 
      V^n\big|_{x=0} + \beta_+ \geq \delta \, , \quad V^n\big|_{x=0} - \beta_- \leq - \delta \quad \text{ on } [0,\delta (CC_0)^{-1}] \, .
  \end{equation}
  This demonstrates that $T_{n+1} \geq \min(T_n,\delta( CC_0)^{-1},1)$. Since $T_0 = +\infty$, we have
  \begin{equation}
      T_n \geq \min(\delta (CC_0)^{-1},1) =: \bar{T} \, .
  \end{equation}
  From here, an induction based on the \emph{a priori} estimates in Lemma~\ref{lem:aprioriestsunderdeltacondition} is enough to prove the \textbf{Claim}. (In particular, we first prove the claim with
  $\mathbb{Y}^{1,1}$ instead of $\mathbb{Y}^{2,1}$ by employing that \eqref{eq:timederivvncc0} is independent of time. We then use Lemma~\ref{lem:aprioriestsunderdeltacondition} to establish 
  \begin{align*}
      \|\partial_t^2 V^n \|_{L^\infty_t W^{2,\infty}_x(\Sigma_{\bar T})} \lesssim 1 \, ,
  \end{align*}
  where the constant depends only on $T$, $M$, and $\| c^{\rm in}_{\pm} \|_{L^\infty(\R_+)}$.)
  
Finally, the $\mathbb{Y}^{2,1}$ estimate and the equations~\eqref{eq4.1}-\eqref{eq4.2} themselves for $c_{\pm}$ grant uniform $X^{2,1}$ estimates:
\begin{equation}
    \label{eq:X21est}
    \sup_{n \geq 1} \sum_{\pm} \| c_{\pm}^n \|_{X^{2,1}(\bar{T})} < +\infty \, .
\end{equation}

We now use compactness to pass to the limit. By the \emph{a priori} estimates and compact embeddings guaranteed by the $\mathbb{Y}^{2,1}$ and $X^{2,1}$ bounds, there exist a subsequence (not relabeled) and
\begin{equation}
    \label{eq:wherecpmlives}
    c_{\pm} \in L^\infty_t L^1_x(\Sigma_{\bar{T}}) \, ,\;  \p_x c_{\pm} , \p_t c_{\pm} \in L^\infty_t L^1_x(\Sigma_{\bar{T}}) \, , \quad b_- \in W^{1,1}(0,\bar{T}) \, ,
\end{equation}
such that
\begin{equation}
    c_{\pm}^n \to c_{\pm} \text{ in } C([0,\bar{T}] \times [0,R]) \, , \quad \forall R > 0
\end{equation}
\begin{equation}
    b_-^n \to b_- \text{ in } C([0,\bar{T}]) \, .
\end{equation}
We analyze also the convergence of $V^n$. Since
\begin{equation}
    V^n(x,t) = \int_{\R_+} K(x,y) \rho^n(y,t) \, dy + K(x,0) b_-^n(t) \, ,
\end{equation}
we have 
\begin{equation}
\begin{aligned}
       (V^n - V)(x,t) &= \int_{\R_+} K(x,y) (\rho^n - \rho)(y,t) \, dy \\
       &= \int_{B_R(x) \cap \R_+} K(x,y) (\rho^n - \rho)(y,t) \, dy + O(R^{-\alpha}) M \, ,
\end{aligned}
\end{equation}
from which we prove that $V^n \to V$ locally uniformly in $[0,\bar{T}] \times [0,+\infty)$. By weak-$\ast$ convergence, 
\begin{equation}
    V \in L^\infty_t W^{3,\infty}_x(\Sigma_{\bar{T}}) \, , \quad \p_t V \in  L^\infty_t W^{2,\infty}_x(\Sigma_{\bar{T}}) \, .
\end{equation}
Together, the above convergence results imply that $(c_+,c_-,b_-)$ is a solution with the desired data. We did not yet establish that $c_{\pm} \in Y^{1,1}(T)$, since $Y^{1,1}(T)$ entails time continuity, and we only established~\eqref{eq:wherecpmlives}; however, the uniqueness argument in Remark~\ref{rmk:tildeY11rmk}
yields that $(c_+,c_-,b_-)$ agrees with the unique $\mathbb{Y}^{1,1}(\bar{T})$ solution with velocity $V$. Moreover, we have
\begin{equation}
    \label{eq:gotmysecondVderivs}
    V, \p_t V, \p_t^2 V \in L^\infty_t W^{j, \infty}_x (\Sigma_{\bar{T}}) \, , \quad \forall j \in \N_0 \, ,
\end{equation}
by the \emph{a priori} estimates for $\mathbb{Y}^{1,1}$ solutions in Lemma~\ref{lem:aprioriestsunderdeltacondition}.

It remains to prove $\mathbb{Y}^{2,1}(\bar{T})$ regularity on the solution. This is not immediate from weak compactness, since in principle, when taking $n \to \infty$, the second derivatives may become measures. However, \emph{a posteriori}, Proposition~\ref{pro:X2psol} guarantees that there exists a time $T \leq \bar{T}$ on which the limiting solution belongs to $\mathbb{Y}^{2,1}(T)$. (We can invoke Proposition~\ref{pro:X2psol} using the information~\eqref{eq:gotmysecondVderivs} on $V$, which in particular controls two time derivatives of $g_b$ and and one time derivative of $f_b$.) This solution is then propagated to $\bar{T}$ by the \emph{a priori} estimates in Lemma~\ref{lem:aprioriestsunderdeltacondition}.
\end{proof}

We next demonstrate uniqueness.

\begin{proposition}[Uniqueness]
\label{pro:uniqueness} Assume the hypotheses of Proposition~\ref{pro:existence} and let $T > 0$. Suppose that $(c_+^{(k)},c_-^{(k)},b_-^{(k)})$, $k=1,2$, are $\mathbb{Y}^{1,1}$ solutions to the system~\eqref{eq:inviscidsystem3}-\eqref{eq:Vlaw3} on $\Sigma_T$, equal at time $t=0$ and such that $\beta_++V^{(k)}\big|_{x=0}>0$, $-\beta_-+V^{(k)}\big|_{x=0}<0$, $k=1,2$ on $[0, T]$. Then
\begin{equation}
    (c_+^{(1)},c_-^{(1)},b_-^{(1)}) \equiv (c_+^{(2)},c_-^{(2)},b_-^{(2)}) \text{ on } \Sigma_T \, .
\end{equation}
\end{proposition}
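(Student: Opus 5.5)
We will prove uniqueness by an $L^1$ stability estimate for the difference of the two solutions, closed with Gr\"onwall's inequality. Set $\tilde c_\pm := c_\pm^{(1)} - c_\pm^{(2)}$, $\tilde b := b_-^{(1)} - b_-^{(2)}$, and $\tilde V := V^{(1)} - V^{(2)} = V[\tilde\rho,\tilde b]$. Since the $\mathbb{Y}^{1,1}$ solutions are continuous in time with values in $W^{1,1}$ and the sign conditions on $V^{(k)}\big|_{x=0}\pm\beta_\pm$ hold on the compact interval $[0,T]$, there is a common $\delta>0$ with $V^{(k)}\big|_{x=0}+\beta_+\ge\delta$ and $V^{(k)}\big|_{x=0}-\beta_-\le-\delta$. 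Moreover $\|c^{(k)}_\pm\|_{L^\infty_tW^{1,1}_x}$ and $\|b^{(k)}_-\|_{L^\infty}$ are finite, and so are the $L^\infty$ bounds from the naive trace \eqref{eq:naivetrace}.

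The difference satisfies
\[
\p_t\tilde c_\pm + \p_x\big((V^{(1)}\pm\beta_\pm)\tilde c_\pm\big) = -\p_x\big(\tilde V c_\pm^{(2)}\big) \pm \big(f(\bm c^{(1)})-f(\bm c^{(2)})\big),
\]
together with the boundary data $\tilde c_+\big|_{x=0} = g_b(b_-^{(1)},V^{(1)}\big|_{x=0}) - g_b(b_-^{(2)},V^{(2)}\big|_{x=0})$ and the ODE $\dot{\tilde b} = f_b(c^{(1)}_-,b^{(1)}_-,V^{(1)}) - f_b(c^{(2)}_-,b^{(2)}_-,V^{(2)})$. We treat $h_\pm := -\p_x(\tilde V c^{(2)}_\pm)\pm(\dots)$ as a source and apply Lemma~\ref{lemma 2.6} with $v = V^{(1)}\pm\beta_\pm$ and $\phi=\p_xV^{(1)}$. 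For $\tilde c_-$ (outflow) we use part (i), which also gives control of the boundary flux $\int_0^t|(V^{(1)}-\beta_-)\tilde c_-\big|_{x=0}|\,ds$. This flux term matters, because the $\tilde b$ equation contains the trace of $\tilde c_-$. For $\tilde c_+$ (inflow) we use part (ii), which requires $\int_0^t|\tilde g|\,ds$.

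We then estimate the sources. The kernel assumptions \eqref{eq:smoothnessofK}--\eqref{eq:decayonkernel} give $\|\tilde V\|_{W^{1,\infty}}\lesssim \|\tilde\rho\|_{L^1}+|\tilde b|$. Hence
\[
\|\p_x(\tilde V c^{(2)}_\pm)\|_{L^1}\lesssim (\|\tilde{\bm c}\|_{L^1}+|\tilde b|)\,\|c^{(2)}_\pm\|_{W^{1,1}},
\]
and by \eqref{eq:tumblingassumptiononr} together with the $L^\infty$ bounds on $c^{(k)}$, the reaction difference is bounded by $C\|\tilde{\bm c}\|_{L^1}$. By \eqref{eq:gconditionsforquasilinearity}, $|\tilde g|\lesssim |\tilde b| + |\tilde V(0)|\lesssim |\tilde b|+\|\tilde{\bm c}\|_{L^1}$. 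For the ODE,
\[
|\dot{\tilde b}|\lesssim |\tilde b| + \|\tilde{\bm c}\|_{L^1} + |\tilde c_-\big|_{x=0}|,
\]
where the factor $\beta_--V\big|_{x=0}$ multiplying the trace is bounded above and below.

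The main obstacle is the trace term $|\tilde c_-\big|_{x=0}|$. Bounding it by the naive trace inequality would require $\|\p_x\tilde c_-\|_{L^1}$, which breaks the closure at the $L^1$ level. The plan is to absorb it using the outgoing flux bound from Lemma~\ref{lemma 2.6}(i): the quantity $\int_0^t\delta|\tilde c_-\big|_{x=0}|\,ds$ is controlled by $\int_0^t(\|\tilde{\bm c}\|_{L^1}+|\tilde b|)\,ds$. Integrating the ODE then gives
\[
|\tilde b(t)|\lesssim\int_0^t\big(|\tilde b|+\|\tilde{\bm c}\|_{L^1}\big)\,ds.
\]
Setting $Y(t):=\|\tilde{\bm c}(t)\|_{L^1}+|\tilde b(t)|$ and combining the three estimates yields $Y(t)\le C\int_0^tY\,ds$ with $Y(0)=0$, so Gr\"onwall's inequality gives $Y\equiv0$ on $[0,T]$.

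As a fallback, if applying Lemma~\ref{lemma 2.6} to differences is delicate, we would run the argument on a short interval $[0,T_1]$ where all constants are uniform, and then iterate forward in time. This iteration is legitimate because $\delta$ and the norms are uniform on $[0,T]$.
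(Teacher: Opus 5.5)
Your proposal is correct and follows essentially the same route as the paper. Both arguments run an $L^1$ estimate on the differences via Lemma~\ref{lemma 2.6}. The outflow flux control in part~(i) absorbs the trace $\tilde c_-\big|_{x=0}$ in the $\tilde b$ equation. The Lipschitz bound on $g_b$ controls the inflow data in part~(ii). The bound $\|\tilde V\|_{W^{1,\infty}}\lesssim\|\tilde\rho\|_{L^1}+|\tilde b|$ handles the velocity difference, and Gr\"onwall closes the estimate.

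The only difference is cosmetic. You bound the source $\p_x(\tilde V c^{(2)}_\pm)$ by $Y$ immediately and close with a single Gr\"onwall step. The paper instead first reduces to $\mathcal{E}_1+\mathcal{E}_2$, then estimates these by the $V$-difference and applies Gr\"onwall again.
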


\begin{proof}[Proof of Proposition~\ref{pro:uniqueness}]

Defining the differences
\begin{align}
    \bar c_\pm = c_\pm^{(1)}-c_\pm^{(2)}\,, \quad
    \bar b = b_{-}^{(1)}-b_{-}^{(2)}\,, 
\end{align}
we first note that $\bar c_{\pm}$ satisfy 
\begin{equation}
\begin{aligned}
    \p_t \bar c_{+} + \p_x ((V^{(1)}+\beta_+) \bar c_{+}) &=  -\partial_x (c_{+}^{(2)} (V^{(1)}-V^{(2)})) + R (c_{\pm}^{(1)}, c_{\pm}^{(2)}) \\
    \p_t \bar c_{-} + \p_x ((V^{(1)}-\beta_-) \bar c_-) &= -\partial_x (c_{-}^{(2)} (V^{(1)}-V^{(2)})) - R (c_{\pm}^{(1)}, c_{\pm}^{(2)})\,.
\end{aligned}
\end{equation}
Here, using the trace inequality
\begin{equation}
    \|c_{\pm}^{(j)}\|_{  L^{\infty} ((0, T) \times \R_{+}) } \le 2\|c_{\pm}^{(j)}\|_{ Y^{1, 1} (T) }\,,
\end{equation}
we have 
\begin{equation}
\begin{aligned}
  R(c_{\pm}^{(1)}, c_{\pm}^{(2)}) &=c^{(1)}_{-} r (c^{(1)}_{+})-c^{(2)}_{-} r (c^{(2)}_{+}) - c^{(1)}_{+} r (c^{(1)}_{-}) + c^{(2)}_{+} r (c^{(2)}_{-}) \\
  & = \bar c_{-} r (c^{(1)}_{+}) + c^{(2)}_{-}\big(r (c^{(1)}_{+})-r (c^{(2)}_{+})\big) \\
  &\qquad - \bar c_{+}  r (c^{(1)}_{+}) + c^{(2)}_{+} \big(r (c^{(2)}_{-})-r (c^{(1)}_{-})\big)\\
  & = O (|\bar c_{+}|+|\bar c_{-}|)\,.
\end{aligned}
\end{equation}

 Furthermore, $\bar b_-$ satisfies
\begin{equation}
\label{eq:barbeqn}
\begin{aligned}
        &\frac{d\bar b_{-}}{dt}  = - (V^{(1)} + \beta_+) \big(g_b(b^{(1)}_-, V^{(1)})  -g_b(b^{(2)}_-, V^{(2)}) \big) \\
        &\quad + g_b(b^{(2)}_-, V^{(2)}) \big( V^{(2)} - V^{(1)} \big)  + (\beta_- - V^{(1)}) \bar c_- + c^{(2)}_- \big(V^{(2)} - V^{(1)} \big)
\end{aligned}
\end{equation}
with $\bar b_{-}(0) =0$, where we temporarily omit the evaluation notation $\big|_{x=0}$. By \eqref{eq:gconditionsforquasilinearity}, we may bound
\begin{equation} \label{eq:gbdiff}
    \abs{g_b(b^{(1)}_-,V^{(1)}\big|_{x=0}) -g_b(b^{(2)}_-, V^{(2)}\big|_{x=0})} \lesssim \abs{\bar b_-(t)} + \abs{V^{(1)}\big|_{x=0}-V^{(2)}\big|_{x=0}}\,,
\end{equation}
so that, upon integrating \eqref{eq:barbeqn}, we have 
\begin{align}\label{eq:barbbnd}
 \abs{\bar b_{-} (t)} \lesssim  \int_0^t \abs{\bar b_{-}(s)}\, ds + \int_0^t  \|\bar{\bm c}\|_{ L^1_x } \, ds+ \int_0^t \abs{\bar c_-\big|_{x=0} (s)} \, ds.
\end{align}

Next, by the $L^1$ estimates \eqref{eq2.6.1} and \eqref{eq2.6.2}, for $t \in [0, T]$, we have 
\begin{equation}\label{eq:barcplusbd}
\begin{aligned}
    \|\bar c_{+} (t, \cdot)\|_{L^1 (\R_{+})} &\le N \big(\|\abs{\bar c_{+}}+\abs{\bar c_{-}}\|_{ L^1 ((0, T) \times \R_{+}) } + \mathcal{E}_1 (t) \\
    &\qquad + \|(V^{(1)}\big|_{x=0}+\beta_{+}) \abs{\bar c_{+}\big|_{x=0}}\|_{L^1 (0, t)} \big)
\end{aligned}
\end{equation}
as well as
\begin{equation}\label{eq:barcminusbd}
\begin{aligned}
    & \|\bar c_{-} (t, \cdot)\|_{L^1 (\R_{+})} + \|\bar c_{-}\big|_{x=0}\|_{ L^1 (0, t) } \, \min_{[0,T]} \big(\beta_{-} - V^{(2)}\big|_{x=0}\big)  \\
   &\qquad \le N \left(\|\abs{\bar c_{+}}+\abs{\bar c_{-}}\|_{ L^1 ((0, T) \times \R_{+}) } + \mathcal{E}_2 (t) \right) \,,
 \end{aligned}
 \end{equation}
 where $N = N(V, T, \beta_{\pm}, M)$ and
 \begin{equation}
     \mathcal{E}_j (t)=\|\partial_x \big(c_{+}^{(j)} (V^{(1)}-V^{(2)})\big)\|_{  L^1 ((0, t) \times \R_{+})}\,.
 \end{equation}
By using the boundary condition \eqref{eq:cinv_plus_bvalcase2_sec5} for $c_{+}$ and \eqref{eq:gbdiff}, we may bound
\begin{equation}\label{eq:barcplustrace}
\begin{aligned}
    & \|(V^{(1)}\big|_{x=0}+\beta_{+}) \abs{\bar c_{+}\big|_{x=0}}\|_{L^1 (0, t)} \\  
    &\quad  \lesssim  \int_0^t \abs{\bar b_{-}(s)}\, ds +  \int_0^t  \|\bar{\bm c}\|_{ L^1_x } \, ds+ \int_0^t \abs{\bar c_-\big|_{x=0} (s)} \, ds\,.
\end{aligned}
\end{equation}

Gathering \eqref{eq:barbbnd}-\eqref{eq:barcplustrace}, we thus obtain
\begin{equation}\label{eq:barsineq}
\begin{aligned}
  &  \abs{\bar b_{-}(t)} + \|\abs{\bar c_{+}(t, \cdot)}+ \abs{\bar c_{-} (t, \cdot)}\|_{L^1 (\R_{+})}  \\
  &\qquad \le N \left(\|\abs{\bar c_{+}}+ \abs{\bar c_{-}}\|_{ L^1 ((0, T) \times \R_{+}) } + \|\bar b_{-}\|_{L^1 (0, t)} + (\mathcal{E}_1+  \mathcal{E}_2) (t)\right)\,.
\end{aligned}
\end{equation}
By Gronwall's inequality, we may replace the right-hand side of \eqref{eq:barsineq} with  $N (\mathcal{E}_1+  \mathcal{E}_2) (t)$.
Furthermore, if $\bm{c}^{(j)} \in Y^{1, 1} (T)$, then a simple argument gives
\begin{equation}
\begin{aligned}
   \mathcal{E}_j (t) 
   & \le  \|\bm{c}^{(j)}\|_{ L^{\infty} (0, T; W^{1, 1} (\R_{+})) } \|V^{(1)}-V^{(2)}\|_{ L^{1} (0, t; W^{1, \infty} (\R_{+})) } \,. 
\end{aligned}
\end{equation}

We therefore obtain the bound
\begin{equation}\label{eq:bdbyVdiff2_again}
\begin{aligned}
 & |(b^{(1)}_{-}-b^{(2)}_{-}) (t)|   + \|(\bm{c}^{(1)}-\bm{c}^{(2)}) (t, \cdot)\|_{ L^1 (\R_{+}) }  \\
 &\qquad \le N \|V [c^{(1)}_{+}+c^{(1)}_{-}, b^{(1)}_{-}]-V [c^{(2)}_{+}+c^{(2)}_{-}, b^{(2)}_{-}]\|_{ L^{1} (0, t; W^{1, \infty} (\R_{+})) }
\end{aligned}
\end{equation}
for $t \in [0, T]$, where $N$ is independent of $t$.
Using the properties of $V$, we may conclude that the right-hand side of \eqref{eq:bdbyVdiff2_again} is bounded by
\begin{equation}
    N \big( \|\bm{c}^{(1)}-\bm{c}^{(2)}\|_{ L^1 ((0, t) \times \R_{+}) } + \|b^{(1)}_{-}-b^{(2)}_{-}\|_{ L^1 (0, t) }\big)\,.
\end{equation}
The desired assertion thus follows from Gronwall's inequality.
\end{proof}

With uniqueness in hand, we can now prove Proposition~\ref{pro:inviscidsolutionexists} (Local well-posedness).

\begin{proof}[Proof of Proposition~\ref{pro:inviscidsolutionexists}.]

In Case~1, we define
\begin{equation}
    \label{eq:Tstardefcase1}
    T^* := \sup \{ T > 0 : \exists (c_+,c_-,b_-) \in \mathbb{Y}^{2,1}(T) \text{ solution to  \eqref{eq:inviscidequation}-\eqref{eq:V0condition}} \} \, ,
\end{equation}
where ``solution" entails the $\delta$-bounds~\eqref{eq:lowerboundstobepropagated} on $[0,T]$ for some $\delta$. Recall that $c_\infty = r_0 b_-/(\beta_+ + V|_{x=0})$ is explicit. Suppose that the initial condition satisfies the $2\delta$-bound~\eqref{eq:initialguysatisfies2deltabound}. To demonstrate that the set on the right-hand side of~\eqref{eq:Tstardefcase1} is non-empty, we modify the boundary condition $c_+|_{x=0} = r_0b_-/(\beta_+ + V|_{x=0})$ ($= c_\infty$ in Case~1) by choosing
\begin{equation}
    \label{eq:gbmodification}
    g_b(b_-,V) := \frac{r_0 b_-}{\beta_+ + V\big|_{x=0}} \chi(\beta_+ + V) \, ,
\end{equation}
where $\chi(z) \equiv 1$ when $z \geq \delta$ and $\chi(z) \equiv 0$ when $z \leq \delta/2$. This $g_b$ satisfies the assumptions (see~\eqref{eq:gconditionsforquasilinearity}) in Proposition~\ref{pro:existence}, which thereby guarantees a short-time solution to the system~\eqref{eq:inviscidsystem3}-\eqref{eq:Vlaw3} satisfying the $\delta$-bound~\eqref{eq:lowerboundstobepropagated}. Hence, $g_b = c_\infty$ on the support of the solution, so it is a solution of the original system.

By uniqueness, any of the solutions agree on a common time of existence, so we speak of \emph{the} solution on $[0,T^*) \times \R_+$. To complete Case~1, it remains to characterize $T^*$ by demonstrating (for the sake of contradiction) that, if $T^* < +\infty$ but the $\delta$-bounds~\eqref{eq:lowerboundstobepropagated} are satisfied on $[0,T^*)$ for some $\delta > 0$, then $(c_+,c_-,b_-) \in \mathbb{Y}^{2,1}(T^*)$ and thus the solution can be continued via Proposition~\ref{pro:existence} (the desired contradiction). This falls under the \emph{a priori} bounds in Lemma~\ref{lem:aprioriestsunderdeltacondition}.

In Case~2, the $\delta$-bounds~\eqref{eq:lowerboundstobepropagated} are automatically satisfied since $V\big|_{x=0} = 0$. Instead, we deal with the separate issue that $c_\infty$ is not globally defined. Let
\begin{equation}
\begin{aligned}
    T^* := \sup \{ T > 0 : \exists (c_+,c_-,b_-) \in \mathbb{Y}^{2,1}(T) \text{ solution to }&\text{\eqref{eq:inviscidequation}-\eqref{eq:Vinviscid}}, \\ &\text{\eqref{eq:bminuseqncase2}-\eqref{eq:cinv_plus_bvalcase2}} \} \, ,
\end{aligned}
\end{equation}
where ``solution" entails that $q(t) := b_-(t)/\beta_- < q^*(\beta_+,\beta_-)$ on $[0,T]$. The set on the right-hand side is non-empty, since we may apply Proposition~\ref{pro:existence} with
\begin{equation}
    \label{eq:gbmodification2}
    g_b(b_-) = \chi(q^* - b_-) c_\infty(q,\beta_+,\beta_-) \, ,
\end{equation}
where $\chi(z) \equiv 1$ when $z \leq -\delta$ and $\chi(z) \equiv 0$ when $z \geq -\delta/2$ for sufficiently small $\delta$; because $b_-(t)$ is continuous-in-time, we will have that $g_b = c_\infty$ on the support of the solution for sufficiently small time. To complete Case~2, we characterize $T^*$ by demonstrating that, if $T^* < +\infty$ but $q(t) \leq q^*(\beta_+,\beta_-) - \delta$ on $[0,T^*)$ for some $\delta > 0$, then $(c_+,c_-,b_-) \in \mathbb{Y}^{2,1}(T^*)$ and thus the solution can be continued by Proposition~\ref{pro:existence} (contradiction). This also follows from Lemma~\ref{lem:aprioriestsunderdeltacondition}.
\end{proof}

\begin{corollary}[Small data GWP]
Let $M := \sum_{\pm} \int_{\R_+} c_{\pm}^{\rm in} \, dy + b^{\rm in}_-$ be the initial mass. Suppose that Case~1 holds and
\begin{equation}
    \beta_+ + M \inf_y K(0,y) > 0 \quad \text{ and } \quad M \sup_y K(0,y) - \beta_- < 0 \, ,
\end{equation}
or that Case~2 holds and
\begin{equation}
    \frac{M}{\beta_-} < q^*(\beta_+,\beta_-)
\end{equation}
(in particular, this holds when $r' \leq 0$ in Case~2). Then $T^* = +\infty$.
\end{corollary}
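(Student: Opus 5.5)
The plan is to reduce global existence to the blow-up criteria of Proposition~\ref{pro:inviscidsolutionexists}, using conservation of total mass (Lemma~\ref{lem:massconservation}) and non-negativity (Lemma~\ref{lem:nonnegativity}) to show that neither blow-up alternative can occur.

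\textbf{Case~1.} Suppose for contradiction that $T^* < +\infty$. Then the criterion in Proposition~\ref{pro:inviscidsolutionexists} says that $V|_{x=0}+\beta_+$ or $\beta_- - V|_{x=0}$ must approach zero along a sequence $t \to T^*$. On $[0,T^*)$ the solution satisfies $c_\pm \ge 0$ and $b_- \ge 0$, and its total mass is $b_- + \int_{\R_+}\rho = M$. Since
\[
V|_{x=0}(t) = \int_{\R_+} K(0,y)\rho(t,y)\,dy + K(0,0)\,b_-(t),
\]
the value $V|_{x=0}(t)$ is an integral of $K(0,\cdot)$ against a non-negative measure of mass $M$. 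It therefore lies in $[M\inf_y K(0,y),\, M\sup_y K(0,y)]$. The hypotheses then give the uniform bounds
\[
V|_{x=0}+\beta_+ \ge \beta_+ + M\inf K(0,\cdot) > 0, \qquad V|_{x=0}-\beta_- \le M\sup K(0,\cdot)-\beta_- < 0.
\]
These bounds are independent of $t$, which contradicts the blow-up criterion. One point to check: the quantity $\inf_y K(0,y)$ must be finite. This holds by \eqref{eq:decayonkernel}, since $K$ is bounded.

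\textbf{Case~2.} Here $K(0,y)=0$, so $V|_{x=0}\equiv 0$, and the only possible obstruction is $\limsup_{t\to T^*} q(t) = q^*(\beta_+,\beta_-)$. Mass conservation and non-negativity give $0 \le b_- \le M$. Writing $q = \beta_- b_-$, this bounds $q$ uniformly away from $q^*$ on $[0,T^*)$, provided the smallness hypothesis is read in the normalization where it says $q(t) \le \beta_- M < q^*$. I would reconcile the normalization of $q$ used in the stated hypothesis with the relation $q = \beta_- b_-$ from \eqref{eq:Bminusexpression}, since the written condition appears to divide by $\beta_-$. The argument is the same either way: an a priori bound $\sup_t q(t) \le q^* - \delta$ contradicts the characterization of $T^*$. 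When $r' \le 0$, Corollary~\ref{cor:rprimeneg} shows the solution curve exists for all $q \ge 0$, so $q^* = +\infty$ and the condition is automatic.

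\textbf{Main obstacle.} The essential analytic work is already contained in the continuation part of Proposition~\ref{pro:inviscidsolutionexists}, namely that the $\delta$-bounds force the $\mathbb{Y}^{2,1}$ norm to stay finite through Lemma~\ref{lem:aprioriestsunderdeltacondition}. The one step that needs care is verifying that the solution remains non-negative and mass-conserving up to $T^*$ for the actual boundary nonlinearity rather than the cut-off $g_b$. For this I would check that the modified $g_b$ coincides with $c_\infty$ along the solution, and that $c_\infty$ has the sign of its argument (because $A$ has the sign of $q$). This is exactly the hypothesis $\sgn g_b = \sgn b$ required by Lemma~\ref{lem:nonnegativity}.
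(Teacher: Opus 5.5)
This is correct and is essentially the intended argument (the paper states the corollary without a separate proof): non-negativity and mass conservation on $[0,T^*)$ confine $V|_{x=0}$ to $[M\inf_y K(0,y),\,M\sup_y K(0,y)]$ in Case~1 and give $0\le q=\beta_- b_-\le \beta_- M$ in Case~2, which excludes both blow-up alternatives of Proposition~\ref{pro:inviscidsolutionexists}. Non-negativity is already part of that proposition's conclusion, so your extra check there is only a sanity check. Your normalization concern is well founded: the paper's proof of that proposition writes $q=b_-/\beta_-$, which contradicts $q=\beta_- b_-$ in its statement and in \eqref{eq:cinv_plus_bvalcase2}. The hypothesis your argument actually needs is therefore $\beta_- M<q^*(\beta_+,\beta_-)$, and the printed condition $M/\beta_-<q^*$ implies it in general only when $\beta_-\le 1$.
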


\subsubsection*{Acknowledgments}
DA was supported by NSF grant DMS-2406947, the Office of the Vice Chancellor for Research and Graduate Education at UW–Madison with funding from the Wisconsin Alumni Research Foundation, and a Sloan Fellowship. LO acknowledges support from NSF grant DMS-2406003. TY was partially supported by the National Science and Technology Council of Taiwan grant number 114-2115-M-001-011-MY3.

\subsubsection*{AI Statement} Gemini 3 (Pro for Education subscription through UW-Madison) produced Matlab code to generate Figure~\ref{fig:ODEforA}; suggested the odd reflection kernel~\eqref{eq:reflectedkernel}; helped to search for relevant literature; helped DA double-check calculations related to characteristics in Section~\ref{sec:outerproblem}; and aided the authors in proofreading the near-final draft. GPT-5.6 Sol also aided the authors in proofreading the near-final draft.

\subsubsection*{Conflict of Interest Statement} The authors have no conflicts of interest to report.

\bibliographystyle{abbrv}
\bibliography{bibliography.bib}

\end{document}